\newcommand{\diag}{\text{diag}}
\newcommand{\xx}{\bm{X}}
\newcommand{\wai}{\sum_{i=1}^p}
\newcommand{\waj}{\sum_{j=1}^p}
\newcommand{\wak}{\sum_{k=1}^p}
\newcommand{\wal}{\sum_{l=1}^p}
\newcommand{\wat}{\sum_{t=1}^p}
\newcommand{\was}{\sum_{s=1}^p}
\newcommand{\alphad}{\alpha '}
\newcommand{\bareu}[1]{{\bar{e}}_{#1}}
\newcommand{\bareo}[1]{{\bar{e}}^{#1}}
\newcommand{\barteta}[1]{{\bar{\theta}}^{#1}}
\newcommand{\aaa}[2]{A_{#1}^{#2}}
\newcommand{\bee}[2]{B_{#1}^{#2}}
\newcommand{\bbar}[2]{\bar{B}_{#1}^{#2}}
\newcommand{\cee}[2]{C_{#1}^{#2}}
\newcommand{\cbar}[2]{\bar{C}_{#1}^{#2}}
\newcommand{\dee}[2]{D_{#1}^{#2}}
\newcommand{\secffe}[2]{\overset{\;e}{A}\hspace{-2pt}\begin{smallmatrix}
{#2}\\{#1}\end{smallmatrix}}
\newcommand{\secfse}[1]{\overset{\;e}{A}\hspace{-2pt}\begin{smallmatrix}
{\rule{0pt}{4pt}}\\{#1}\end{smallmatrix}}
\newcommand{\secffm}[2]{\overset{\;\scalebox{0.6}{$m$}}{A}\hspace{-2pt}\begin{smallmatrix}
{#2}\\{#1}\end{smallmatrix}}
\newcommand{\secfsm}[1]{\overset{\;\scalebox{0.6}{$m$}}{A}\hspace{-2pt}\begin{smallmatrix}
{\rule{0pt}{4pt}}\\{#1}\end{smallmatrix}}
\newcommand{\secffa}[2]{\overset{\;\alpha}{A}\hspace{-2pt}\begin{smallmatrix}
{#2}\\{#1}\end{smallmatrix}}
\newcommand{\secfsa}[1]{\overset{\;\alpha}{A}\hspace{-2pt}\begin{smallmatrix}
{\rule{0pt}{4pt}}\\{#1}\end{smallmatrix}}
\newcommand{\secfsma}[1]{\overset{\;-\alpha}{A}\hspace{-2pt}\begin{smallmatrix}
{\rule{0pt}{4pt}}\\{#1}\end{smallmatrix}}
\newcommand{\cristoffe}[2]{\overset{\:e}{\varGamma}\hspace{-2pt}\begin{smallmatrix}{#2}\\{#1}\end{smallmatrix}}
\newcommand{\cristofse}[2]{\overset{\:e}{\varGamma}\hspace{-2pt}\begin{smallmatrix}{\rule{0pt}{4pt}}\\{#1,#2}\end{smallmatrix}}
\newcommand{\cristofte}[2]{\overset{\:e}{\varGamma}\hspace{-2pt}\begin{smallmatrix}{#1,#2}\\{\rule{0pt}{5pt}}\end{smallmatrix}}
\newcommand{\cristoffm}[2]{\overset{\:\scalebox{0.6}{$m$}}{\varGamma}\hspace{-2pt}\begin{smallmatrix}{#2}\\{#1}\end{smallmatrix}}
\newcommand{\cristofsm}[2]{\overset{\:\scalebox{0.6}{$m$}}{\varGamma}\hspace{-2pt}\begin{smallmatrix}{\rule{0pt}{4pt}}\\{#1,#2}\end{smallmatrix}}
\newcommand{\cristoftm}[2]{\overset{\:\scalebox{0.6}{$m$}}{\varGamma}\hspace{-2pt}\begin{smallmatrix}{#1,#2}\\{\rule{0pt}{5pt}}\end{smallmatrix}}
\newcommand{\cristoffa}[2]{\overset{\:\alpha}{\varGamma}\hspace{-2pt}\begin{smallmatrix}{#2}\\{#1}\end{smallmatrix}}
\newcommand{\cristofsa}[2]{\overset{\:\alpha}{\varGamma}\hspace{-2pt}\begin{smallmatrix}{\rule{0pt}{4pt}}\\{#1,#2}\end{smallmatrix}}
\newcommand{\cristofta}[2]{\overset{\:\alpha}{\varGamma}\hspace{-2pt}\begin{smallmatrix}{#1,#2}\\{\rule{0pt}{5pt}}\end{smallmatrix}}
\newcommand{\cristoffma}[2]{\overset{\:-\alpha}{\varGamma}\hspace{-2pt}\begin{smallmatrix}{#2}\\{#1}\end{smallmatrix}}
\newcommand{\cristofsma}[2]{\overset{\:-\alpha}{\varGamma}\hspace{-2pt}\begin{smallmatrix}{\rule{0pt}{4pt}}\\{#1,#2}\end{smallmatrix}}
\newtheorem{lemma}{Lemma}
\newtheorem{theo}{Theorem}
\newtheorem{coro}{Corollary}
  \def\mathcomposite{%
     \@ifstar
        {\def\@mathcomposite@option{%
            \baselineskip\z@skip\lineskiplimit-\maxdimen}%
         \@mathcomposite}%
        {\let\@mathcomposite@option\offinterlineskip
         \@mathcomposite}}
  \def\@mathcomposite{%
     \@ifnextchar[\@@mathcomposite{\@@mathcomposite[0]}}
  \def\@@mathcomposite[#1]#2#3#4{%
     #2{\mathchoice
        {\@mathcomposite@{#1}{#3}{#4}\displaystyle{1}}%
        {\@mathcomposite@{#1}{#3}{#4}\textstyle{1}}%
        {\@mathcomposite@{#1}{#3}{#4}%
         \scriptstyle\defaultscriptratio}%
        {\@mathcomposite@{#1}{#3}{#4}%
         \scriptscriptstyle\defaultscriptscriptratio}}}
  \def\@mathcomposite@#1#2#3#4#5{%
     \vcenter{\m@th\@mathcomposite@option
        \dimen@\f@size\p@\dimen@#1\dimen@\dimen@#5\dimen@
        \divide\dimen@ 18
        \edef\@mathcomposite@skipamount{\the\dimen@}%
        \ialign{\hfil$#4##$\hfil\cr
           #2\crcr
           \noalign{\vskip\@mathcomposite@skipamount}%
           #3\crcr}}}
\newcommand{\oset}[2]{%
  {\mathop{#2}\limits^{\vbox to -.5\ex@{\kern-\tw@\ex@
   \hbox{\scriptsize #1}\vss}}}}
\title {Asymptotic expansion of the risk of maximum likelihood estimator with respect to $\alpha$-divergence as a measure of the difficulty of specifying a parametric model --with detailed proof--}
\author{Yo Sheena\thanks{Faculty of Economics, Shinshu University; Faculty of Data Science, Shiga University}}
\date{Sep. 2017}
\begin{document}
\maketitle

\begin{abstract}
For a given parametric probability model, we consider the risk of the maximum likelihood estimator with respect to $\alpha$-divergence, which includes the special cases of Kullback--Leibler divergence, the Hellinger distance and  $\chi^2$ divergence. The asymptotic expansion of the risk is given with respect to sample sizes of up to order $n^{-2}$. Each term in the expansion is expressed with the geometrical properties of the Riemannian manifold formed by the parametric probability model. We attempt to measure the difficulty of specifying a model through this expansion. 
\end{abstract}
\noindent
MSC(2010) \textit{Subject Classification}: Primary 60F99; Secondary 62F12\\
\textit{Key words and phrases:} alpha divergence, asymptotic distribution, Fisher information metric, Riemannian curvature, connection.
\section{Introduction}
\label{section:int}
For parametric models of probability distributions, we are naturally concerned with the following comparisons:\\
1. Comparison of the risks of estimation among different parameters within a same model. \\
2. Overall comparison of the risks between different models. 

To carry out these comparisons, particularly the second, we need some way of measuring the risk of estimation that is common to all parametric probability models. The maximum likelihood method, in which the maximum likelihood estimator (m.l.e.) is plugged into the unknown parameters, is the most common approach, and is applicable to any parametric model. In this paper, we choose the m.l.e., and use its risk with respect to a certain loss function.

For comparisons above, particularly the first, the loss function should be independent of the choice of the parameter (coordinate). Consider the binomial distribution model $B(n, p)$, where $n$ is known and we wish to estimate $p$. Take the quadratic loss function as an example of a parameter-dependent loss function:
$$
L(\hat{p},p)=(\hat{p}-p)^2.
$$
The m.l.e. $\hat{p}$ is the sample ratio. The risk of the m.l.e. with respect to this quadratic loss function is given by 
$$
E_p[(\hat{p}-p)^2]=V(\hat{p})=p(1-p)/n,
$$
which says that the estimation of this model reaches the highest risk point when $p=1/2$. In contrast, if we use $p^{-1}$ as a parameter for the model, the risk of the m.l.e. is given by 
$$
E_{p^{-1}}[(\hat{p}^{-1}-p^{-1})^2]=p^{-3}(1-p)/n+o(n^{-1}).
$$
For a large sample size, the highest risk point is $p=0$.

Another consideration for the loss function is the invariance with respect to the transformation between the random variables. If $Y(X)$ is a sufficient statistic for the parametric model of a random object $X$, then the risk of estimation should be measured independently of the choice of $Y$ and $X$. In particular, when $X  \leftrightarrow Y$ is a one-to-one transformation, the parametric models for the distribution of $X$ and $Y$ are essentially equivalent. We wish to measure the risk of estimating the model without being concerned by the form in which the observations were acquired. 

Taking these considerations into account, \textit{f-divergence} is a natural loss function, as it satisfies both parameter independence and transformation invariance. It also has other favorable properties (see, e.g., Chapter 9 of Vajda \cite{Vajda} ), and has been widely used in engineering problems (e.g.  \cite{Maji}, \cite{Qiao&Minematsu}). Because f-divergence is quite a general class of divergence, we need more specific forms for a concrete result. Here, we focus on \textit{$\alpha$-divergence}. This is a subclass of f-divergence, but is still a general class of divergence, and includes the well-known Kullback--Leibler divergence ($\alpha=-1$), the Hellinger distance ($\alpha=0$). It is also equivalent, when $\alpha=-3$, to $\chi^2$-divergence in a small neighborhood of the point where the two probabilities coincide. 
More importantly, from the perspective of information geometry, $\alpha$-divergence gives rise to a ``dually flat'' structure for the manifold of the given parametric model (see Eguchi \cite{Eguchi1}, Amari \cite{Amari3}, and Amari and Cichocki \cite{Amari&Cichocki}). 

Consequently, in this paper, we consider the risk of the m.l.e. with respect to $\alpha$-divergence in estimating the parameter. However, an exact calculation of the risk of the m.l.e is often quite difficult, even if we choose a mathematically easy-to-handle quantity such as Kullback--Leibler divergence. Hence, it would be useful if we had an asymptotic expansion of the risk with respect to the sample size $n$. The main concern of this paper is to give this expansion for a general $\alpha$ up to order $n^{-2}$. The result is expressed with the geometrical properties of the Riemannian manifold formed by the parametric model. The geometric expression of the risk expansion provides an insight into which geometrical property of the model affects the estimation risk. As examples, we calculate the asymptotic risk expansion for some specific families of distributions such as an exponential family and a mixture family. 

The most relevant work to this paper is that of Komaki \cite{Komaki} and Corcuera and Giummol\`e  \cite{Corcuera&Giummole}. Both of these papers gave asymptotic expansions of the risk of \textit{predictive distributions} with respect to Kullback--Leibler divergence (Theorem 1 of \cite{Komaki}) and $\alpha$-divergence (Theorem 3.1 of  \cite{Corcuera&Giummole}). They considered a curved exponential family as the presupposed model, and developed a general method for the asymptotic improvement of an \textit{estimative distribution} (that is, a distribution within the model gained by parameter estimation) using a predictive distribution that belongs to a full exponential family but lies outside of the model. Note that the whole class of predictive distributions includes the estimative distributions as special cases, and the distribution with the estimated parameter by the m.l.e. is a typical estimative distribution. Hence, \cite{Komaki} and  \cite{Corcuera&Giummole} treat a more general class of estimation procedure than m.l.e. under the framework of curved exponential families. This paper differs in two points from these papers. First, we do not presuppose the exponential families. We gain the main result, Theorem 1 in Section 2,  for a general parametric family. Second, we present all the results through geometrical terms, which enables us to understand how the geometrical structure of the model is related to the risk of model estimation.

We now state the formal framework of the problem. 
First, we consider a parametric family of probability distributions on a space $\mathfrak{X}$ (say, $\mathcal P$), which is given by a family of positive-valued densities  $f(x; \theta)$ on $\mathfrak{X}$ with respect to a measure $\mu$:
\begin{equation}
\mathcal P=\{f(x; \theta)\: | \: \theta=(\theta^1, \dots, \theta^p), \theta \in \Theta \},
\end{equation}
where $\Theta$ is an open set in $R^p$, and $f(x;\theta_1)=f(x;\theta_2)$ almost everywhere if and only if $\theta_1=\theta_2$. We will treat  $\mathcal{P}$ as a Riemannian manifold, and define several geometrical properties on it.

$\alpha$-divergence $(-\infty < \alpha < \infty)$  between $f(x; \theta_1)$ and $f(x; \theta_2)$ is defined as 
\begin{equation}
\label{alphadive}
\overset{\alpha}{D}[\theta_1: \theta_2]=
\begin{cases}
\frac{4}{1-\alpha^2}\Bigl\{ 1- \int_{\mathfrak X}f^{(1-\alpha)/2}(x; \theta_1)  f^{(1+\alpha)/2}(x; \theta_2) d\mu \Bigr\}, & \text{ if $\alpha \ne \pm 1,$}\\
\int_{\mathfrak X}f(x; \theta_2) \log \Bigl(f(x; \theta_2)/f(x; \theta_1) \Bigr)d\mu, & \text{ if  $\alpha=1$,}\\
\int_{\mathfrak X}f(x; \theta_1) \log \Bigl(f(x; \theta_1)/f(x; \theta_2) \Bigr) d\mu, & \text{ if $\alpha=-1$.}
\end{cases}
\end{equation}
As a general divergence property, this satisfies $\overset{\alpha}{D}[\theta_1: \theta_2] \geq 0$, where the equality holds if and only if $\theta_1=\theta_2$. It is not symmetric between $\theta_1$ and $\theta_2$. Actually, the following relation holds:
$$
\overset{\alpha}{D}[\theta_1: \theta_2]=\overset{-\alpha}{D}[\theta_2: \theta_1].
$$

If we adopt $\alpha$-divergence as a loss function between the m.l.e. $\hat\theta$ and the true parameter $\theta$, the risk of the m.l.e. is given by
$$
\overset{\alpha}{E\!D}(\theta)\triangleq E_\theta\bigl[\overset{\alpha}{D}[\hat\theta(\bm{X}):\theta]\bigr],
$$
where $\bm{X}=(X_1,\ldots,X_n)$ are $n$ independent random samples from the distribution given by $f(x;\theta)$. 

The remainder of this paper is organized as follows. In Section 2, we give the main result (Theorem 1) and its corollaries for some distribution families and specific values of $\alpha$. In Section 3, we give examples of the adaptation of the main result to some specific distribution. The minimum knowledge of the information geometry and some preliminary results used for the derivation for Theorem 1 are presented in Appendix 1. The lengthy calculation of geometrical properties and the proof of some results are stated Appendix 2. 

Before closing this section, we state some technical conditions and notation used throughout this paper. 
We assume that $f(x;\theta)$ is differentiable at least five times with respect to $\theta$, and that differentiation and integration on $\mathfrak{X}$ is always exchangeable. Every expectation that appears in this context is assumed to be finite. (We refer to these conditions as "C.1".)
The following notation is used:
\begin{align*}
&\partial_i \triangleq \frac{\partial\ \ }{\partial \theta^i}\\
&f_i\triangleq f_i(x; \theta) \triangleq \partial_i f(x; \theta), \quad f_{ij}\triangleq f_{ij}(x; \theta)\triangleq \partial_i \partial_j f(x;\theta),\quad \cdots \\
&l_i \triangleq l_i(x; \theta) \triangleq \partial_i \log f(x ;\theta), \quad l_{ij}\triangleq l_{ij}(x; \theta) \triangleq \partial_i \partial_j \log f(x ;\theta), \quad \cdots \\
&E_\theta[h(x;\theta)] \triangleq \int_{\mathfrak X} h(x; \theta) f(x; \theta) d\mu, \\
&\sum_{i}\triangleq \sum_{1\leq i \leq p},\quad \sum_{i,j}\triangleq \sum_{1 \leq i, j \leq p},\quad \cdots
\end{align*}
\section{Asymptotic Expansion of the Risk of M.L.E.}
\label{section:largesam}
\quad In this section we consider the asymptotic expansion of the risk of the maximum likelihood estimator (m.l.e.) with respect to $\alpha$-divergence. 

Let $\bm{X}=(X_1,\ldots,X_n)$ be $n$ independent random sample from the distribution given by $f(x;\theta)$. The m.l.e. of $\theta$ based on $X$ is denoted by $\hat\theta(X)$. The expected divergence at $\theta$ between $f(x;\hat\theta(\bm{X}))$ and $f(x;\theta)$ is given by
$$
\overset{\alpha}{E\!D}(\theta)\triangleq E_\theta\bigl[\overset{\alpha}{D}[\hat\theta(\bm{X}):\theta]\bigr].
$$
Using the expansion of divergence \eqref{expansion_divergence}  in Section \ref{section:expan} , then we have
\begin{align}
&\overset{\alpha}{E\!D}(\theta)\nonumber\\
&=\wai \bigl(\epsilon_i \overset{\alpha}{D}[\theta: \theta]\bigr)E_\theta[(\hat{\theta}^i-\theta^i)] +
\frac{1}{2}\wai \waj \bigl(\epsilon_i \epsilon_j \overset{\alpha}{D}[\theta: \theta]\bigr)E_\theta[(\hat{\theta}^i-\theta^i) (\hat{\theta}^j-\theta^j)]\nonumber\\
&\ +\frac{1}{6}\wai \waj \wak \bigl(\epsilon_i \epsilon_j \epsilon_k \overset{\alpha}{D}[\theta: \theta]\bigr)E_\theta[(\hat{\theta}^i-\theta^i) (\hat{\theta}^j-\theta^j)(\hat{\theta}^k-\theta^k)]\nonumber\\
&\ +\frac{1}{24}\wai \waj \wak \wal \bigl(\epsilon_i \epsilon_j \epsilon_k \epsilon_l \overset{\alpha}{D}[\theta: \theta]\bigr)E_\theta[(\hat{\theta}^i-\theta^i) (\hat{\theta}^j-\theta^j)(\hat{\theta}^k-\theta^k)(\hat{\theta}^l-\theta^l)]\nonumber \\
&\ +E_\theta[O(||\hat{\theta}-\theta||^5)]. \label{expan_ED}
\end{align}
As we see in Section \ref{section:expan}, the terms
$$
\epsilon_i \overset{\alpha}{D}[\theta: \theta],\  \epsilon_i \epsilon_j \overset{\alpha}{D}[\theta: \theta],\ \epsilon_i \epsilon_j \epsilon_k \overset{\alpha}{D}[\theta: \theta],\ \epsilon_i \epsilon_j \epsilon_k \epsilon_l \overset{\alpha}{D}[\theta: \theta]
$$
could be  geometrically interpreted. In this section, we will show that the terms
\begin{equation}
\label{target_E}
\begin{split}
&E_\theta[(\hat{\theta}^i-\theta^i) (\hat{\theta}^j-\theta^j)],\quad
E_\theta[(\hat{\theta}^i-\theta^i) (\hat{\theta}^j-\theta^j)(\hat{\theta}^k-\theta^k)],\\
&E_\theta[(\hat{\theta}^i-\theta^i) (\hat{\theta}^j-\theta^j)(\hat{\theta}^k-\theta^k)(\hat{\theta}^l-\theta^l)]
\end{split}
\end{equation}
are also interpretable with the geometrical properties of $\mathcal{P}$.

First we evaluate  $\barteta{i}\triangleq\hat{\theta}^i-\theta^i$ following the way of Eguchi and Yanagimoto \cite{Eguchi&Yanagimoto}.
Let
$$
\bareu{i}(\xx;\theta)\triangleq \frac{1}{n}\sum_{a=1}^n \frac{\partial }{\partial\theta^i}\log f(X_a;\theta),\qquad \bareo{i}(\xx;\theta)\triangleq \waj g^{ij} \bareu{j}(\xx;\theta)
$$
for $1\leq i \leq p$.
Since m.l.e. $\hat\theta$ maximizes $\log$-likelihood $\sum_{a=1}^n \log f(x_a;\theta)$
\begin{equation}
\label{bareu=0}
\bareu{i}(\xx; \hat{\theta})=0.
\end{equation}
Taylor expansion of $\bareu{i}(\xx; \hat\theta)$ around $\theta$ is given by
\begin{align*}
&\bareu{i}(\xx;\hat\theta)\\
&=\bareu{i}(\xx;\theta)+\sum_{j}\frac{\partial\bareu{i}(\xx;\theta)}{\partial\theta^j}\barteta{j}+\frac{1}{2!}\sum_{j,k}\frac{\partial^2 \bareu{i}(\xx;\theta)}{\partial \theta^j \partial\theta^k}\barteta{j}\barteta{k}\\
&\qquad+\frac{1}{3!}\sum_{j,k,l}\frac{\partial^3 \bareu{i}(\xx;\theta)}{\partial \theta^j \partial\theta^k \partial\theta^l}\barteta{j}\barteta{k}\barteta{l}+\frac{1}{4!}\sum_{j,k,l,m}\frac{\partial^4 \bareu{i}(\xx;\theta_i^*)}{\partial \theta^j \partial\theta^k \partial\theta^l \partial\theta^m}
\barteta{j}\barteta{k}\barteta{l}\barteta{m},
\end{align*}
where $\theta_i^*$ is on the segment $\overline{\theta \hat{\theta}}$.  If we add $\sum_j g_{ij}(\theta)\barteta{j}$ on the both sides of the above expansion and use \eqref{bareu=0}, then we have
\begin{align*}
&\sum_j g_{ij}(\theta)\barteta{j}\\
&=\bareu{i}(\xx;\theta)+\sum_{j}\biggl(\frac{\partial\bareu{i}(\xx;\theta)}{\partial\theta^j}+g_{ij}(\theta)\biggr)\barteta{j}+\frac{1}{2!}\sum_{j,k}\frac{\partial^2 \bareu{i}(\xx;\theta)}{\partial \theta^j \partial\theta^k}\barteta{j}\barteta{k}\\
&\qquad+\frac{1}{3!}\sum_{j,k,l}\frac{\partial^3 \bareu{i}(\xx;\theta)}{\partial \theta^j \partial\theta^k \partial\theta^l}\barteta{j}\barteta{k}\barteta{l}+\frac{1}{4!}\sum_{j,k,l,m}\frac{\partial^4 \bareu{i}(\xx;\theta_i^*)}{\partial \theta^j \partial\theta^k \partial\theta^l \partial\theta^m}
\barteta{j}\barteta{k}\barteta{l}\barteta{m}.
\end{align*}
Furthermore if we multiply the both sides with $g^{is}$ and sum them up over $i$ from 1 to $p$, then  we have
\begin{align}
\barteta{s}&=\bareo{s}+\sum_j \aaa{j}{s}\barteta{j}+\sum_{j,k}\bee{jk}{s}\barteta{j}\barteta{k}+\sum_{j,k}\bbar{jk}{s}\barteta{j}\barteta{k}+
\sum_{j,k,l}\cee{jkl}{s}\barteta{j}\barteta{k}\barteta{l}+\sum_{jkl}\cbar{jkl}{s}\barteta{j}\barteta{k}\barteta{l}\nonumber\\
&\qquad+\sum_{j,k,l,m}\dee{jklm}{s}\barteta{j}\barteta{k}\barteta{l}\barteta{m},
\label{expand_barteta}
\end{align}
where we used the following notations: For $1\leq j, k, l, m, s \leq p,$
\begin{align*}
\aaa{j}{s}(\xx;\theta)&\triangleq \wai g^{is}(\theta) \biggl(\frac{\partial \bareu{i}(\xx;\theta)}{\partial \theta^j}+g_{ij}(\theta)\biggr)\\
\bee{jk}{s}(\xx;\theta)&\triangleq \frac{1}{2}\wai g^{is}(\theta)\biggl(\frac{\partial^2 \bareu{i}(\xx;\theta)}{\partial \theta^j \partial\theta^k}-E_\theta \biggl[\frac{\partial^2 \bareu{i}(\xx;\theta)}{\partial \theta^j \partial\theta^k}\biggr]\biggr)\\
\bbar{jk}{s}(\theta)&\triangleq \frac{1}{2}\wai  g^{is}(\theta) E_\theta \biggl[\frac{\partial^2 \bareu{i}(\xx;\theta)}{\partial \theta^j \partial\theta^k}\biggr]\\
\cbar{jkl}{s}(\theta)&\triangleq \frac{1}{3!}\wai  g^{is}(\theta) E_\theta \biggl[\frac{\partial^3 \bareu{i}(\xx;\theta)}{\partial \theta^j \partial\theta^k \partial\theta^l}\biggr]\\
\cee{jkl}{s}(\xx;\theta)&\triangleq \frac{1}{3!}\sum_i g^{is}(\theta)\biggl(\frac{\partial^3 \bareu{i}(\xx;\theta)}{\partial \theta^j \partial\theta^k \partial\theta^l}-E_\theta \biggl[\frac{\partial^3 \bareu{i}(\xx;\theta)}{\partial \theta^j \partial\theta^k \partial\theta^l}\biggr]\biggr),\\
\dee{jklm}{s}(\xx;\theta)&\triangleq \frac{1}{4!}\sum_i  g^{is}(\theta) \frac{\partial^4 \bareu{i}(\xx;\theta_i^*)}{\partial \theta^j \partial\theta^k \partial\theta^l \partial\theta^m}.
\end{align*}
Here we impose the moment conditions as follows. The suitably higher-order joint moments composed of the following variables are bounded with respect to $n$;
\begin{equation}
\label{comp_of_residuals}
\begin{split}
&\sqrt{n}\,\barteta{s}(\xx;\theta),\quad\sqrt{n}\,\bareo{s}(\xx;\theta),\quad\sqrt{n}\,\aaa{j}{s}(\xx;\theta),\quad \sqrt{n}\,\bee{jk}{s}(\xx;\theta), \\ &\sqrt{n}\,\cee{jkl}{s}(\xx;\theta),\quad \dee{jklm}{s}(\xx;\theta),
\end{split}
\end{equation}
where $1\leq j,  k,  l,  m,  s \leq p.$ (We refer to this condition as "C.2".)
We obtain the following results (for the proof, see Appendix 2). For $1\leq s \leq p$, 
\begin{equation}
\label{expan_barteta}
\begin{split}
\barteta{s}=&\bareo{s}+\sum_{j}\aaa{j}{s}\bareo{j}+\sum_{j,k}\bbar{jk}{s}\bareo{j}\bareo{k}+\sum_{i,j}\aaa{j}{s}\aaa{i}{j}\bareo{i}+\sum_{i,j,k}\aaa{j}{s}\bbar{ik}{j}\bareo{i}\bareo{k}\\
&\ +\sum_{j,k}\bee{jk}{s}\bareo{j}\bareo{k}+2\sum_{i,j,k}\bbar{jk}{s}\aaa{i}{k}\bareo{i}\bareo{j}+2\sum_{i,j,k,l}\bbar{jk}{s}\bbar{il}{k}\bareo{i}\bareo{j}\bareo{l}\\
&\ +\sum_{j,k,l}\cbar{jkl}{s}\bareo{j}\bareo{k}\bareo{l}+Re(4),
\end{split}
\end{equation}
where $Re(4)$ is the polynomial  with respect to the variables $\barteta{s}$, $\bareo{s}$, $\aaa{j}{s}$, $\bee{jk}{s}$, $\cee{jkl}{s}$, $\dee{jklm}{s}$ $(1\leq j, k, l, m, s \leq p)$, and each term is of at least fourth order with respect to $\barteta{s}$, $\bareo{s}$, $\aaa{j}{s}$, $\bee{jk}{s}$, $\cee{jkl}{s}$ $(1\leq j, k, l, s \leq p)$.

Combining this evaluation with Lemma \ref{int_log_der} in Section \ref{interpre_log}, we can express three expectations in \eqref{target_E} with geometrical terms (for the detailed calculation, see Appendix 2 ). The results are given as follows. For achieving notational brevity, we use Einstein summation convention (the summation is carried out as every pair of upper and lower index moves from 1 to $p$).
\begin{equation}
\label{expec_barteta^2}
\begin{split}
&E_\theta[(\hat{\theta}^i-\theta^i) (\hat{\theta}^j-\theta^j)]\\
&=n^{-1}g^{ij}+n^{-2}\times \\
&\biggl[
g^{sj}g^{it}g^{lm}\langle \secfse{sl}, \secfsm{tm}-\secfse{tm}\rangle
+g^{si}g^{jt}g^{lm}\langle \secfse{sl}, \secfsm{tm}-\secfse{tm}\rangle\\
&\ +g^{sj}g^{it}g^{lm}\cristofse{sl}{u}\bigl(\cristoffm{tm}{u}-\cristoffe{tm}{u}\bigr)
+g^{si}g^{jt}g^{lm}\cristofse{sl}{u}\bigl(\cristoffm{tm}{u}-\cristoffe{tm}{u}\bigr)\\
&\ +\bbar{lm}{j}g^{ik}g^{ls}g^{mt}\bigl(\cristofsm{ks}{t}-\cristofse{ks}{t}\bigr)
+\bbar{lm}{i}g^{jk}g^{ls}g^{mt}\bigl(\cristofsm{ks}{t}-\cristofse{ks}{t}\bigr)\\
&\ +g^{jk}g^{lu}g^{is}g^{mt}\bigl(\langle \secfse{kl}, \secfse{um}\rangle g_{st}
+\cristoffe{kl}{v}\cristoffe{um}{w}g_{vw}g_{st}+\cristofse{kl}{s}\cristofse{um}{t}+\cristofse{kl}{t}\cristofse{um}{s}\bigr)\\
&\ +g^{ik}g^{lu}g^{js}g^{mt}\bigl(\langle \secfse{kl}, \secfse{um}\rangle g_{st}
+\cristoffe{kl}{v}\cristoffe{um}{w}g_{vw}g_{st}+\cristofse{kl}{s}\cristofse{um}{t}+\cristofse{kl}{t}\cristofse{um}{s}\bigr)\\
&\ +\bbar{ms}{l}g^{jk}g^{it}g^{mu}g^{sv}\bigl(\cristofse{kl}{t}g_{uv}+\cristofse{kl}{u}g_{tv}+\cristofse{kl}{v}g_{tu}\bigr)\\
&\ +\bbar{ms}{l}g^{ik}g^{jt}g^{mu}g^{sv}\bigl(\cristofse{kl}{t}g_{uv}+\cristofse{kl}{u}g_{tv}+\cristofse{kl}{v}g_{tu}\bigr)\\
&\ +\frac{1}{2}g^{js}g^{it}g^{lu}g^{mv}\bigl\{
\bigl((\partial_m\cristoffe{sl}{k})g_{tk}+\cristoffe{sl}{k}\cristofse{km}{t}-\langle \secfse{sl}, \secfsm{mt}\rangle \bigr)g_{uv}\\
&\hspace{30mm}+\bigl((\partial_m\cristoffe{sl}{k})g_{uk}+\cristoffe{sl}{k}\cristofse{km}{u}-\langle \secfse{sl}, \secfsm{mu}\rangle \bigr)g_{tv}\\
&\hspace{30mm}+\bigl((\partial_m\cristoffe{sl}{k})g_{vk}+\cristoffe{sl}{k}\cristofse{km}{v}-\langle \secfse{sl}, \secfsm{mv}\rangle \bigr)g_{ut}\bigr\}\\
&\ +\frac{1}{2}g^{is}g^{jt}g^{lu}g^{mv}\bigl\{
\bigl((\partial_m\cristoffe{sl}{k})g_{tk}+\cristoffe{sl}{k}\cristofse{km}{t}-\langle \secfse{sl}, \secfsm{mt}\rangle \bigr)g_{uv}\\
&\hspace{30mm}+\bigl((\partial_m\cristoffe{sl}{k})g_{uk}+\cristoffe{sl}{k}\cristofse{km}{u}-\langle \secfse{sl}, \secfsm{mu}\rangle \bigr)g_{tv}\\
&\hspace{30mm}+\bigl((\partial_m\cristoffe{sl}{k})g_{vk}+\cristoffe{sl}{k}\cristofse{km}{v}-\langle \secfse{sl}, \secfsm{mv}\rangle \bigr)g_{ut}\bigr\}\\
&\ +2\bbar{lm}{j}g^{mt}g^{iu}g^{lv}g^{sw}\bigl(\cristofse{st}{u}g_{vw}+\cristofse{st}{v}g_{uw}+\cristofse{st}{w}g_{uv}\bigr)\\
&\ +2\bbar{lm}{i}g^{mt}g^{ju}g^{lv}g^{sw}\bigl(\cristofse{st}{u}g_{vw}+\cristofse{st}{v}g_{uw}+\cristofse{st}{w}g_{uv}\bigr)\\
&\ +2\bbar{lm}{j}\bbar{st}{m}g^{ik}g^{lu}g^{sv}g^{tw}\bigl(g_{ku}g_{vw}+g_{kv}g_{uw}+g_{kw}g_{uv}\bigr)\\
&\ +2\bbar{lm}{i}\bbar{st}{m}g^{jk}g^{lu}g^{sv}g^{tw}\bigl(g_{ku}g_{vw}+g_{kv}g_{uw}+g_{kw}g_{uv}\bigr)\\
&\ +\cbar{lmt}{j} g^{ik} g^{ls} g^{mu} g^{tv}\bigl( g_{ks}g_{uv}+g_{ku}g_{sv}+g_{kv}g_{su}\bigr)\\
&\ +\cbar{lmt}{i} g^{jk} g^{ls} g^{mu} g^{tv}\bigl( g_{ks}g_{uv}+g_{ku}g_{sv}+g_{kv}g_{su}\bigr)\\
&\ +g^{ik} g^{js} g^{lt} g^{mu}\bigl( \langle \secfse{kl}, \secfse{sm}\rangle g_{tu}+
\cristoffe{kl}{v}\cristoffe{sm}{w}g_{vw}g_{tu}+\cristofse{kl}{t}\cristofse{sm}{u}+\cristofse{kl}{u}\cristofse{sm}{t}\bigr)\\
&\ +\bbar{st}{j} g^{ik}g^{lu}g^{sv}g^{tw}\bigl(\cristofse{kl}{u}g_{vw}+\cristofse{kl}{v}g_{uw}+\cristofse{kl}{w}g_{uv}\bigr)\\
&\ +\bbar{st}{i} g^{jk}g^{lu}g^{sv}g^{tw}\bigl(\cristofse{kl}{u}g_{vw}+\cristofse{kl}{v}g_{uw}+\cristofse{kl}{w}g_{uv}\bigr)\\
&\ +\bbar{lm}{i}\bbar{st}{j}g^{lk}g^{mu}g^{sv}g^{tw}\bigl(g_{ku}g_{vw}+g_{kv}g_{uw}+g_{kw}g_{uv}\bigr)\biggr]+O(n^{-5/2}).
\end{split}
\end{equation}
\begin{equation}
\label{expec_barteta^3}
\begin{split}
&E_\theta[(\hat{\theta}^i-\theta^i) (\hat{\theta}^j-\theta^j)(\hat{\theta}^k-\theta^k)]\\
&=n^{-2}\bigl\{g^{is}g^{jt}g^{ku}\bigl(\cristofsm{st}{u}-\cristofse{st}{u}\bigr)\\
&\ +\cristoffe{st}{s}g^{it}g^{jk}+\cristoffe{st}{j}g^{sk}g^{it}+\cristoffe{st}{k}g^{sj}g^{it}\\
&\ +\cristoffe{st}{s}g^{jt}g^{ik}+\cristoffe{st}{i}g^{sk}g^{jt}+\cristoffe{st}{k}g^{si}g^{jt}\\
&\ +\cristoffe{st}{s}g^{kt}g^{ji}+\cristoffe{st}{j}g^{si}g^{kt}+\cristoffe{st}{i}g^{sj}g^{kt}\\
&\ +\bbar{st}{i}\bigl(g^{st} g^{jk}+g^{js}g^{kt}+g^{ks}g^{jt}\bigr)\\
&\ +\bbar{st}{j}\bigl(g^{st} g^{ik}+g^{is}g^{kt}+g^{ks}g^{it}\bigr)\\
&\ +\bbar{st}{k}\bigl(g^{st} g^{ji}+g^{js}g^{it}+g^{is}g^{jt}\bigr)
\bigr\}+O(n^{-5/2}).
\end{split}
\end{equation}
\begin{equation}
\label{expec_barteta^4}
E_\theta[(\hat{\theta}^i-\theta^i) (\hat{\theta}^j-\theta^j)(\hat{\theta}^k-\theta^k)(\hat{\theta}^l-\theta^l)]
=n^{-2}\bigl(g^{ij}g^{kl}+g^{ik}g^{jl}+g^{il}g^{jk}\bigr)+O(n^{-5/2}).
\end{equation}
From \eqref{Egur1}--\eqref{Egur3}, \eqref{epsi^4_Diverge} in Section \ref{section:expan} and \eqref{expec_barteta^2}--\eqref{expec_barteta^4}, we obtain the following result (for the detailed calculation, see  Appendix 2).
\begin{theo}
Under the conditions C.1 and C.2, the following expansion holds.
\begin{align}
&\overset{\alpha}{E\!D} \nonumber\\
&=\frac{p}{2n}+\frac{1}{24n^{^2}}\nonumber\\
&\times
\biggl[ 
(\alpha')^2\bigl\{3\overset{\:e}{F}+3T^{ijk}T_{ijk}-6\langle \secffe{i}{j}, (\secffm{j}{i}-\secffe{j}{i})\rangle-3\langle \secffe{i}{i}, (\secffm{j}{j}-\secffe{j}{j})\rangle+3p^2+6p\bigr\}\nonumber\\
&\hspace{8mm}+
\alpha'\bigl\{3\overset{\:e}{F}-5T^{ijk}T_{ijk}-6T_{is}^iT_j^{js}+6\langle \secffe{i}{j}, (\secffm{j}{i}-\secffe{j}{i})\rangle+3\langle \secffe{i}{i}, (\secffm{j}{j}-\secffe{j}{j})\rangle \nonumber\\
&\hspace{20mm}-3p^2-6p\bigr\}\nonumber\\
&\hspace{8mm}
+12\langle \secffe{j}{i}, \secffe{i}{j} \rangle -2\langle \secffe{j}{i}, \secffm{i}{j} \rangle -\langle \secffe{i}{i}, \secffm{j}{j} \rangle+T_{ijk}T^{ijk}+9T_{is}^i T^{js}_j+8\overset{\;e}{R}\hspace{-10pt}\begin{smallmatrix}{\ \  \ \ ij}\\{ij}\end{smallmatrix}-9\overset{\:e}{F}
\biggr]\nonumber\\
&+o(n^{-2}), \label{expan_ED_final}
\end{align}
where  $\alpha'=(1-\alpha)/2$, and for $1 \leq i, j, k, l \leq p$,
\begin{align}
&\Bigl(g^{ij}(\theta)\Bigr)\triangleq \Bigl(g_{ij}(\theta)\Bigr)^{-1},\label{notation_Fisher_info_and_its_inverse}\\
&T_{ijk}(\theta)\triangleq\cristofsm{ij}{k}(\theta)-\cristofse{ij}{k}(\theta)\label{notation_T_ijk}\\
&T_{ij}^k(\theta)\triangleq \sum_jT_{ijl}(\theta)g^{lk}(\theta), \label{def_T_{ij}^k}\\
&T_i^{jk}(\theta)\triangleq \sum_{l,m}T_{iml}(\theta)g^{mj}(\theta)g^{lk}(\theta), \label{def_T_i^{jk}}\\
&T^{ijk}(\theta)\triangleq \sum_{t,m,l}T_{tml}(\theta)g^{ti}(\theta)g^{mj}(\theta)g^{lk}(\theta),\label{def_T^{ijk}}\\
&\overset{\;e}{R}\hspace{-10pt}\begin{smallmatrix}{\ \  \ \ kl}\\{ij}\end{smallmatrix}(\theta)\triangleq \sum_s\overset{\;e}{R}\hspace{-8pt}\begin{smallmatrix}{\ \  \ \ \ l}\\{ijs}\end{smallmatrix}(\theta)g^{sk}(\theta),\label{def_R_{ij}^{kl}}\\
&\secffe{i}{j}\triangleq \sum_k \secfse{ik}g^{kj},\qquad \secffm{i}{j}\triangleq \sum_k \secfsm{ik}g^{kj}, \label{notation_secffe&m}\\
&\overset{\:\alpha}{F}(\theta)\triangleq \sum_{i,k,s}g^{ks}(\theta)\partial_s T_{ik}^i(\theta)-\sum_{i,j,s,t}g^{ti}(\theta)\cristofsa{it}{s}(\theta) T_j^{js}(\theta),\quad \overset{\:e}{F}(\theta)\triangleq \overset{\;\scalebox{0.6}{$1$}}{F}(\theta),\label{def_F}
\end{align}
which are the variations from the following fundamental geometric properties of Riemannian manifold $\mathcal{P}$ (for their formal definitions, see Section \ref{basic_con_info_geo});
\begin{align*}
&g_{ij}(\theta): \text{Fisher information metric, }\\
&\cristofse{ij}{k}(\theta),\ \cristofsm{ij}{k}(\theta): \text{ Christoffel's second symbol for $e$-connection and $m$-connection, }\\
&\overset{\;e}{R}\hspace{-8pt}\begin{smallmatrix}{\ \  \ \ \ l}\\{ijk}\end{smallmatrix}(\theta): \text{ Riemann curvature for $e$-connection }\\
&\secfse{ij}(\theta), \ \secfsm{ij}(\theta): \text{ Second fundamental form for $e$-connection and $m$-connection.}
\end{align*}
\end{theo}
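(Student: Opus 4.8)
The plan is to feed the geometric formulas already established into the Taylor expansion \eqref{expan_ED} of $\overset{\alpha}{E\!D}$ and then collect the result order by order in $n^{-1}$, keeping everything down to $O(n^{-2})$. Two inputs are needed: the geometric values of the divergence derivatives $\epsilon_i\overset{\alpha}{D}[\theta:\theta],\dots,\epsilon_i\epsilon_j\epsilon_k\epsilon_l\overset{\alpha}{D}[\theta:\theta]$ from Section~\ref{section:expan} (equations \eqref{Egur1}--\eqref{Egur3} and \eqref{epsi^4_Diverge}), and the geometric expansions \eqref{expec_barteta^2}--\eqref{expec_barteta^4} of the low-order central moments of the m.l.e. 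Before substituting, observe that the leading term of \eqref{expan_ED} drops out: since $\overset{\alpha}{D}[\,\cdot:\theta]$ is minimised at $\theta$, the first divergence derivative vanishes on the diagonal, $\epsilon_i\overset{\alpha}{D}[\theta:\theta]=0$, so the bias $E_\theta[\hat\theta^i-\theta^i]$ never enters; the second derivative is exactly the Fisher metric, $\epsilon_i\epsilon_j\overset{\alpha}{D}[\theta:\theta]=g_{ij}$, for every $\alpha$; and the remainder $E_\theta[O(\|\hat\theta-\theta\|^5)]$ is $O(n^{-5/2})=o(n^{-2})$ under C.2, while \eqref{expec_barteta^3}--\eqref{expec_barteta^4} show that no half-integer powers of $n^{-1}$ appear below that order.

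Next, carry out the substitution term by term. The quadratic term $\frac12 g_{ij}E_\theta[(\hat\theta^i-\theta^i)(\hat\theta^j-\theta^j)]$ contributes the leading $\frac{p}{2n}$ from the $n^{-1}g^{ij}$ part of \eqref{expec_barteta^2}, plus an $n^{-2}$ piece obtained by contracting $g_{ij}$ against the long bracket in \eqref{expec_barteta^2}; a large number of those contractions collapse immediately through $g_{ij}g^{ik}=\delta^k_j$. The cubic term $\frac16\,(\epsilon_i\epsilon_j\epsilon_k\overset{\alpha}{D})\,E_\theta[(\hat\theta^i-\theta^i)(\hat\theta^j-\theta^j)(\hat\theta^k-\theta^k)]$ is already $O(n^{-2})$ because of \eqref{expec_barteta^3}, so it suffices to pair the stated leading $n^{-2}$ form of that moment with \eqref{Egur3}. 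The quartic term $\frac1{24}\,(\epsilon_i\epsilon_j\epsilon_k\epsilon_l\overset{\alpha}{D})\,E_\theta[\cdots]$ is $O(n^{-2})$ by \eqref{expec_barteta^4}; using the full symmetry of the fourth divergence derivative in its indices together with $g^{ij}g^{kl}+g^{ik}g^{jl}+g^{il}g^{jk}$ it reduces to $\frac18\,n^{-2}(\epsilon_i\epsilon_j\epsilon_k\epsilon_l\overset{\alpha}{D})g^{ij}g^{kl}$, which \eqref{epsi^4_Diverge} evaluates.

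The remaining, and heaviest, step is to add the three $n^{-2}$ contributions and reduce them to \eqref{expan_ED_final}. I would organise the reduction by the power of $\alpha'=(1-\alpha)/2$: a $(\alpha')^2$ part, an $\alpha'$ part, and an $\alpha'$-free part. The affine relations among the $\alpha$-connections turn every $\alpha$-dependent object into an $e$-object plus an explicit multiple of $\alpha'$ times skewness terms --- in particular $\cristofsm{ij}{k}=\cristofse{ij}{k}+T_{ijk}$, $\cristofsa{ij}{k}=\cristofse{ij}{k}+\alpha'\,T_{ijk}$, and hence, from \eqref{def_F}, $\overset{\alpha}{F}=\overset{e}{F}$ plus a multiple of $\alpha'$ times a quadratic in $T$ --- so all the $\alpha$-dependence is funnelled into the $(\alpha')^2$ and $\alpha'$ coefficients. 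One then uses the total symmetry of $T_{ijk}$ in its three indices to merge the various contractions $T^{ijk}T_{ijk}$, $T_{is}^i T_j^{js}$, etc.; the Gauss--Codazzi-type identities relating the ambient derivatives of the connection, the $e$-curvature scalar $\overset{\;e}{R}\hspace{-10pt}\begin{smallmatrix}{\ \ \ \ ij}\\{ij}\end{smallmatrix}$, and the second fundamental forms $\secfse{ij}$, $\secfsm{ij}$; and the definition \eqref{def_F} of $\overset{e}{F}$ itself to absorb the $g^{ks}\partial_s(\cdot)$-type (Christoffel-derivative) terms coming out of \eqref{epsi^4_Diverge} and the bracket in \eqref{expec_barteta^2}. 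After these cancellations the surviving terms regroup into the invariants displayed in \eqref{expan_ED_final} --- $\overset{e}{F}$, $T^{ijk}T_{ijk}$, $T_{is}^i T_j^{js}$, the inner products $\langle\secffe{i}{j},\secffm{j}{i}-\secffe{j}{i}\rangle$, $\langle\secffe{i}{i},\secffm{j}{j}-\secffe{j}{j}\rangle$, $\langle\secffe{j}{i},\secffe{i}{j}\rangle$, $\langle\secffe{j}{i},\secffm{i}{j}\rangle$, $\langle\secffe{i}{i},\secffm{j}{j}\rangle$, the scalar $\overset{\;e}{R}\hspace{-10pt}\begin{smallmatrix}{\ \ \ \ ij}\\{ij}\end{smallmatrix}$, and the dimension terms $p$, $p^2$ --- with the coefficients as stated.

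The main obstacle is one of bookkeeping rather than of idea: the $n^{-2}$ coefficient is assembled from on the order of a hundred index-contracted monomials originating in \eqref{expec_barteta^2}, \eqref{expec_barteta^3}, \eqref{Egur3} and \eqref{epsi^4_Diverge}, and one must (i) keep the $\alpha'$-degree of each monomial straight and (ii) verify that the connection-derivative, curvature and second-fundamental-form contributions recombine into the above invariants with \emph{no} residue. A safe route is to fix in advance a reduced normal form for each class of tensor contraction, systematically eliminate every $\partial\varGamma$ in favour of $\overset{e}{F}$ and the curvature, and only at the very end read off the $(\alpha')^2$, $\alpha'$ and constant coefficients; this is the long calculation deferred to Appendix~2.
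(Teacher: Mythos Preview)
Your proposal is correct and follows essentially the same route as the paper's proof in Appendix~2: contract $g_{ij}$ with the bracket in \eqref{expec_barteta^2} (the paper names the resulting pieces $T_0,\dots,T_{12}$), pair \eqref{Egur3} with \eqref{expec_barteta^3}, pair \eqref{epsi^4_Diverge} with \eqref{expec_barteta^4}, and then reduce the combined $n^{-2}$ coefficient by eliminating the $\partial\varGamma$-terms via the explicit expression of $\overset{\;e}{R}\hspace{-10pt}\begin{smallmatrix}{\ \ \ \ ij}\\{ij}\end{smallmatrix}$ in Christoffel symbols and the definition \eqref{def_F} of $\overset{e}{F}$ (the paper's ``$A+B$'' step). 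Your one shortcut---using the full index symmetry of $\epsilon_i\epsilon_j\epsilon_k\epsilon_l\overset{\alpha}{D}[\theta:\theta]$ to collapse the three contractions in \eqref{expec_barteta^4} to $3\,g^{ij}g^{kl}$---is valid (these are mixed partials of a smooth function of $\theta_1$) and saves some writing, whereas the paper keeps all three and lets the equality emerge in the subsequent bookkeeping.
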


Roughly speaking, $\cristofse{ij}{k},\ \cristofsm{ij}{k}$ and $\overset{\;e}{R}\hspace{-8pt}\begin{smallmatrix}{\ \  \ \ \ l}\\{ijk}\end{smallmatrix}$ give us information on the \textit{intrinsic curvature} of $\mathcal{P}$, while $\secfse{ij}, \ \secfsm{ij}$ tell us how the manifold $\mathcal{P}$ is located in the ambient space (\textit{extrinsic curvature}). Since $\mathcal{P}$ is torsion-free, the following equivalence holds;
\begin{align*}
\text{$\mathcal{P}$ is intrinsically $e$-flat} &\Longleftrightarrow \text{$\mathcal{P}$ is intrinsically $m$-flat} \\
&\Longleftrightarrow \text{$\cristofse{ij}{k}(\theta)=0$ for some coordinate system} \\
&\Longleftrightarrow \text{$\cristofsm{ij}{k}(\theta)=0$ for some coordinate system}\\
&\Longleftrightarrow \text{$\overset{\;e}{R}\hspace{-8pt}\begin{smallmatrix}{\ \  \ \ \ l}\\{ijk}\end{smallmatrix}(\theta)=0$ for any(some) coordinate system}.
\end{align*}
If $\secfse{ij}(\theta)=0\ \bigl(\secfsm{ij}(\theta)=0 \bigr)$ for any(some) coordinate system, it means $\mathcal{P}$ is extrinsically $e$-flat ($m$-flat).

All properties from \eqref{notation_Fisher_info_and_its_inverse} to \eqref{notation_secffe&m} are tensors. $\overset{\:e}{F}$ is parameter invariant (see Appendix 2 for the proof ). Consequently every term in the right-hand side of  \eqref{expan_ED_final} is parameter invariant as is expected from the parameter independence of $\alpha$-divergence. This means that 
we can choose any coordinate system with which we can easily calculate the terms in  \eqref{expan_ED_final}. For example, if we have another coordinate system $\eta\triangleq (\eta_\alpha, \eta_\beta, \eta_\gamma,\ldots )$ for $\mathcal{P}$, we can choose to calculate such a term as $T_{\alpha \beta \gamma}T^{\alpha \beta \gamma}$ instead of $T_{ijk}T^{ijk}$.

We easily notice that $T_{ijk}T^{ijk}$ and $T_{is}^i T_j^{js}$ is nonnegative (see Appendix 2 for the proof), but other terms in the bracket in \eqref{expan_ED_final} could be negative. As we will see in Section \ref{examples}, the $n^{-2}$ term could be negative.

Note that the $n^{-1}$ term equals $p/2n$, hence the risk in estimating a model by m.l.e. is primarily determined by the  number of the parameters, in other words, ``model complexity''. The number of the parameters $p$ also appears in A.I.C. as the penalty to the model complexity. This is natural since A.I.C. (and some other information criteria for model selection) is considered to be an estimator of $\overset{-1}{E\!D}$.

A geometrical expression \eqref{expan_ED_final} immediately leads us to the simplified form for an exponential family or a mixture family. The canonical form of an exponential family is given by
\begin{equation}
\label{expo_family}
f(x;\theta)=\exp\bigl(\theta^1h_1(x)+\cdots +\theta^p h_p(x)-\psi(\theta)\bigr),
\end{equation}
where $h_i(x) (i=1,\ldots,p)$ is a measurable function on $\mathfrak{X}$,
while the one for a mixture family is given by
\begin{equation}
\label{mix_family}
f(x;\theta)=\theta^1 g_1(x)+\cdots+\theta^p g_p(x)+(1-\sum_{i=1}^p\theta^i) g_0(x),
\end{equation}
where $g_i(x) (i=0,\ldots,p)$ is a probability density function.
These families are characterized respectively as being extrinsically $e$-flat and $m$-flat.  Namely ${\oset{\scalebox{0.8}{$e$}}{a}}_{ij}(x; \theta)=0$ for an exponential family, and 
${\oset{\scalebox{0.8}{$m$}}{a}}_{ij}(x; \theta)=0$ for a mixture family.   Furthermore an exponential family is intrinsically $e$ and $m$-flat. A mixture family is also $e$ and  $m$-flat. This means $\overset{\;e}{R}\hspace{-10pt}\begin{smallmatrix}{\ \  \ \ ij}\\{ij}\end{smallmatrix}$ vanishes for both families.

Consequently we have the following corollaries. 
\begin{coro}
If the model $\mathcal P$ is an exponential family,
\begin{align}
&\overset{\alpha}{E\!D} \nonumber\\
&=\frac{p}{2n}+\frac{1}{24n^{^2}}\nonumber\\
&\times
\biggl[ 
(\alpha')^2\bigl\{3\overset{\:e}{F}+3T^{ijk}T_{ijk}+3p^2+6p\bigr\}\nonumber\\
&\hspace{8mm}+
\alpha'\bigl\{3\overset{\:e}{F}-5T^{ijk}T_{ijk}-6T_{is}^iT_j^{js}-3p^2-6p\bigr\}\nonumber\\
&\hspace{8mm}
+T_{ijk}T^{ijk}+9T_{is}^i T^{js}_j-9\overset{\:e}{F}\biggr]+o(n^{-2}).\label{expan_ED_expo_fam}
\end{align}
\end{coro}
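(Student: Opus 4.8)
\emph{Proof strategy.} The plan is to obtain Corollary~1 as a direct specialisation of Theorem~1, using only the two flatness properties of a canonical exponential family \eqref{expo_family} already recorded in the discussion preceding the corollary. Starting from $\log f(x;\theta)=\sum_i\theta^i h_i(x)-\psi(\theta)$, the second derivative $\partial_i\partial_j\log f(x;\theta)=-\partial_i\partial_j\psi(\theta)$ is non-random in $x$, so the centred acceleration $\partial_i\partial_j\log f(x;\theta)-E_\theta[\partial_i\partial_j\log f(x;\theta)]$ is identically $0$. By the definitions recalled in Section~\ref{basic_con_info_geo}, the tangential part of this centred acceleration determines the (lowered) coefficients $\cristofse{ij}{k}$, and its part orthogonal to the span of the scores is the $e$-second fundamental form $\secfse{ij}$; hence in the canonical chart $\cristofse{ij}{k}(x;\theta)\equiv 0$ and $\secfse{ij}(x;\theta)\equiv 0$. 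The former says the canonical parameters are $e$-affine, so the $e$-Riemann curvature $\overset{\;e}{R}\hspace{-8pt}\begin{smallmatrix}{\ \  \ \ \ l}\\{ijk}\end{smallmatrix}$ vanishes identically, and in particular the contraction $\overset{\;e}{R}\hspace{-10pt}\begin{smallmatrix}{\ \  \ \ ij}\\{ij}\end{smallmatrix}=0$; the latter gives $\secffe{i}{j}=\sum_k\secfse{ik}g^{kj}=0$ by \eqref{notation_secffe&m}.

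Next I would substitute these vanishings into \eqref{expan_ED_final}. Every inner-product term in its bracket carries an $e$-second-fundamental-form factor in its first slot — explicitly $\langle\secffe{i}{j},(\secffm{j}{i}-\secffe{j}{i})\rangle$, $\langle\secffe{i}{i},(\secffm{j}{j}-\secffe{j}{j})\rangle$, $\langle\secffe{j}{i},\secffe{i}{j}\rangle$, $\langle\secffe{j}{i},\secffm{i}{j}\rangle$ and $\langle\secffe{i}{i},\secffm{j}{j}\rangle$ — so all of them vanish, and the curvature term $8\overset{\;e}{R}\hspace{-10pt}\begin{smallmatrix}{\ \  \ \ ij}\\{ij}\end{smallmatrix}$ vanishes too. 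Collecting the surviving summands — $3\overset{\:e}{F}+3T^{ijk}T_{ijk}+3p^2+6p$ inside the $(\alpha')^2$-bracket, $3\overset{\:e}{F}-5T^{ijk}T_{ijk}-6T_{is}^iT_j^{js}-3p^2-6p$ inside the $\alpha'$-bracket, and $T_{ijk}T^{ijk}+9T_{is}^iT^{js}_j-9\overset{\:e}{F}$ in the last bracket — reproduces the right-hand side of \eqref{expan_ED_expo_fam} term by term, with the $o(n^{-2})$ remainder inherited unchanged.

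There is no substantive analytic obstacle; the corollary is a bookkeeping specialisation, and the only points needing care are the following. First, I must confirm that the vanishing of $\secfse{ij}$ and of $\overset{\;e}{R}$ is coordinate-independent, not an artefact of the canonical chart; this is immediate since $\secfse{ij}$ and $\overset{\;e}{R}\hspace{-8pt}\begin{smallmatrix}{\ \  \ \ \ l}\\{ijk}\end{smallmatrix}$ are tensors, so vanishing in one chart forces vanishing in all — consistent with Theorem~1 being parameter invariant. Second, I must check that none of the \emph{retained} scalars $\overset{\:e}{F}$, $T^{ijk}T_{ijk}$, $T_{is}^iT_j^{js}$ is itself forced to be zero: it is not, because $T_{ijk}=\cristofsm{ij}{k}-\cristofse{ij}{k}$ is the \emph{difference} of the $m$- and $e$-connections — equal to $\cristofsm{ij}{k}$ in the canonical chart, which is in general nonzero — and $\overset{\:e}{F}$ in \eqref{def_F} is built from derivatives and contractions of $T$; hence these three terms genuinely survive exactly as displayed. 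Third, I would double-check that no discarded term secretly contains an $m$-second-fundamental-form factor $\secfsm{ij}$ \emph{unpaired} with an $\secfse{ij}$ factor; inspection of \eqref{expan_ED_final} shows each such $m$-factor sits inside an inner product whose other slot is an $e$-factor, so everything dropped is legitimately zero.
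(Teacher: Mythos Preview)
Your proposal is correct and follows the same approach as the paper: both obtain the corollary by substituting the vanishing of the $e$-second fundamental form and of the $e$-Riemann curvature into Theorem~1 and collecting the survivors. The paper's own proof is a one-line observation that the five quantities $\langle\secffe{i}{j},\secffe{j}{i}\rangle$, $\langle\secffe{i}{i},\secffe{j}{j}\rangle$, $\langle\secffe{i}{j},\secffm{j}{i}\rangle$, $\langle\secffe{i}{i},\secffm{j}{j}\rangle$, $\overset{\;e}{R}\hspace{-10pt}\begin{smallmatrix}{\ \  \ \ ij}\\{ij}\end{smallmatrix}$ vanish; you supply the underlying justification (non-randomness of $l_{ij}$ in the canonical chart) and the bookkeeping checks, but the route is the same.
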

\begin{proof}
If the model $\mathcal P$ is an exponential family, the terms
$$
\langle \secffe{i}{j}, \secffe{j}{i}\rangle,\qquad \langle \secffe{i}{i}, \secffe{j}{j}\rangle,\qquad
\langle \secffe{i}{j}, \secffm{j}{i}\rangle,\qquad \langle \secffe{i}{i}, \secffm{j}{j}\rangle,\qquad  
\overset{\;e}{R}\hspace{-10pt}\begin{smallmatrix}{\ \  \ \ ij}\\{ij}\end{smallmatrix}
$$
vanish. 
\end{proof}
\begin{coro}
If $\mathcal P$ is a mixture family,
\begin{align}
&\overset{\alpha}{E\!D} \nonumber\\
&=\frac{p}{2n}+\frac{1}{24n^{^2}}\nonumber\\
&\times
\biggl[ 
(\alpha')^2\bigl\{3\overset{\:e}{F}+3T^{ijk}T_{ijk}+6\langle \secffe{j}{i}, \secffe{i}{j}\rangle+3\langle \secffe{i}{i}, \secffe{j}{j}\rangle+3p^2+6p\bigr\}\nonumber\\
&\hspace{8mm}+
\alpha'\bigl\{3\overset{\:e}{F}-5T^{ijk}T_{ijk}-6T_{is}^iT_j^{js}-6\langle \secffe{j}{i}, \secffe{i}{j}\rangle-3\langle \secffe{i}{i}, \secffe{j}{j}\rangle-3p^2-6p\bigr\}\nonumber\\
&\hspace{8mm}
+12\langle \secffe{j}{i}, \secffe{i}{j} \rangle+T_{ijk}T^{ijk}+9T_{is}^i T^{js}_j-9\overset{\:e}{F}
\biggr]+o(n^{-2}) \label{expan_ED_m_fam}.
\end{align}
\end{coro}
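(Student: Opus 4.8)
The plan is to derive \eqref{expan_ED_m_fam} from the general expansion \eqref{expan_ED_final} by specializing the geometric quantities to a mixture family, in exactly the way Corollary 1 was obtained for an exponential family. First I would invoke the two structural facts about a mixture family already recorded in the discussion preceding the corollary: since the canonical density \eqref{mix_family} is affine in $\theta$, the family is extrinsically $m$-flat, so its $m$-second fundamental form vanishes, ${\oset{\scalebox{0.8}{$m$}}{a}}_{ij}(x;\theta)=0$, and hence $\secffm{i}{j}(\theta)=0$ for all $1\le i,j\le p$; moreover, being $m$-flat it is intrinsically flat, so by the chain of equivalences stated after Theorem 1 the $e$-curvature vanishes, $\overset{\;e}{R}\hspace{-10pt}\begin{smallmatrix}{\ \  \ \ ij}\\{ij}\end{smallmatrix}(\theta)=0$.

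Then I would substitute $\secffm{i}{j}=0$ and $\overset{\;e}{R}\hspace{-10pt}\begin{smallmatrix}{\ \  \ \ ij}\\{ij}\end{smallmatrix}=0$ into \eqref{expan_ED_final} and collect the surviving terms. In the $(\alpha')^2$-bracket the difference $\secffm{j}{i}-\secffe{j}{i}$ collapses to $-\secffe{j}{i}$, so $-6\langle \secffe{i}{j},(\secffm{j}{i}-\secffe{j}{i})\rangle$ becomes $+6\langle \secffe{i}{j},\secffe{j}{i}\rangle$ and $-3\langle \secffe{i}{i},(\secffm{j}{j}-\secffe{j}{j})\rangle$ becomes $+3\langle \secffe{i}{i},\secffe{j}{j}\rangle$; in the $\alpha'$-bracket the corresponding two terms flip sign and become $-6\langle \secffe{i}{j},\secffe{j}{i}\rangle$ and $-3\langle \secffe{i}{i},\secffe{j}{j}\rangle$; and in the remaining factor-free part the three terms $-2\langle \secffe{j}{i},\secffm{i}{j}\rangle$, $-\langle \secffe{i}{i},\secffm{j}{j}\rangle$ and $8\overset{\;e}{R}\hspace{-10pt}\begin{smallmatrix}{\ \  \ \ ij}\\{ij}\end{smallmatrix}$ drop out, leaving $12\langle \secffe{j}{i},\secffe{i}{j}\rangle+T_{ijk}T^{ijk}+9T_{is}^i T^{js}_j-9\overset{\:e}{F}$. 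Using the symmetry of the metric pairing to identify $\langle \secffe{i}{j},\secffe{j}{i}\rangle$ with $\langle \secffe{j}{i},\secffe{i}{j}\rangle$, the assembled expression is exactly \eqref{expan_ED_m_fam}.

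There is no genuine obstacle here: given Theorem 1, the corollary is pure bookkeeping, and its only substantive content is the two vanishing statements, which are themselves immediate from the affine form \eqref{mix_family}. The one point that warrants a line of care is keeping track of the signs produced by deleting $\secffm{i}{j}$ from the parenthesized differences, since those differences enter the $(\alpha')^2$- and $\alpha'$-brackets with opposite coefficients; to make this transparent I would lay out the two brackets before and after the substitution and check them against \eqref{expan_ED_m_fam} term by term.
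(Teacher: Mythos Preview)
Your proposal is correct and matches the paper's own proof essentially line for line: the paper simply observes that for a mixture family the terms $\langle \secffe{i}{j}, \secffm{j}{i}\rangle$, $\langle \secffe{i}{i}, \secffm{j}{j}\rangle$ and $\overset{\;e}{R}\hspace{-10pt}\begin{smallmatrix}{\ \  \ \ ij}\\{ij}\end{smallmatrix}$ vanish, which is exactly the content of your two structural facts, and then the rest is the same sign-tracking substitution into \eqref{expan_ED_final} that you describe.
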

\begin{proof}
If $\mathcal P$ is a mixture family, the terms
$$
\langle \secffe{i}{j}, \secffm{j}{i}\rangle,\qquad \langle \secffe{i}{i}, \secffm{j}{j}\rangle,\qquad  
\overset{\;e}{R}\hspace{-10pt}\begin{smallmatrix}{\ \  \ \ ij}\\{ij}\end{smallmatrix}
$$
vanish. 
\end{proof}
It is notable that the asymptotic risk for an exponential family depends only on the intrinsic properties of the family. 

For a specific $\alpha$, the following result holds:
If $\alpha=-1$, then $\overset{\alpha}{D}[\theta_1: \theta_2]$ is Kullback-Leibler divergence, and 
\begin{align}
&\overset{-1}{E\!D} \nonumber\\
&=\frac{p}{2n}+\frac{1}{24n^{^2}}\nonumber\\
&\times
\biggl[ 
-3\overset{\:e}{F}-T^{ijk}T_{ijk}+3T_{is}^iT_j^{js}
+12\langle \secffe{j}{i}, \secffe{i}{j} \rangle -2\langle \secffe{j}{i}, \secffm{i}{j} \rangle -\langle \secffe{i}{i}, \secffm{j}{j} \rangle+8\overset{\;e}{R}\hspace{-10pt}\begin{smallmatrix}{\ \  \ \ ij}\\{ij}\end{smallmatrix}
\bigg]\nonumber\\
&+o(n^{-2});\label{expan_ED_KL}
\end{align}
if $\alpha=0$, then $\overset{\alpha}{D}[\theta_1: \theta_2]$ is equivalent to Hellinger-distance, and 
\begin{align}
&\overset{0}{E\!D} \nonumber\\
&=\frac{p}{2n}+\frac{1}{24n^{^2}}\nonumber\\
&\times
\biggl[ 
-(27/4)\overset{\:e}{F}-(3/4)T^{ijk}T_{ijk}+6T_{is}^iT_j^{js}
+(21/2)\langle \secffe{j}{i}, \secffe{i}{j} \rangle- (3/4)\langle \secffe{i}{i}, \secffe{j}{j} \rangle \nonumber \\
&\qquad-(1/2)\langle \secffe{j}{i}, \secffm{i}{j} \rangle -(1/4)\langle \secffe{i}{i}, \secffm{j}{j} \rangle+8\overset{\;e}{R}\hspace{-10pt}\begin{smallmatrix}{\ \  \ \ ij}\\{ij}\end{smallmatrix}
-(3/4)p^2-(3/2)p\bigg]\nonumber\\
&+o(n^{-2});\label{expan_ED_Hellinger}
\end{align}
if $\alpha=-3$, then $\overset{\alpha}{D}[\theta_1: \theta_2]$ is asymptotically equivalent to $\chi^2$-divergence, and 
\begin{align}
&\overset{-3}{E\!D} \nonumber\\
&=\frac{p}{2n}+\frac{1}{24n^{^2}}\nonumber\\
&\times
\biggl[ 
9\overset{\:e}{F}+3T^{ijk}T_{ijk}-3T_{is}^iT_j^{js}
+24\langle \secffe{j}{i}, \secffe{i}{j} \rangle+6\langle \secffe{i}{i}, \secffe{j}{j} \rangle -14\langle \secffe{j}{i}, \secffm{i}{j} \rangle -7\langle \secffe{i}{i}, \secffm{j}{j} \rangle\nonumber\\
&\qquad+8\overset{\;e}{R}\hspace{-10pt}\begin{smallmatrix}{\ \  \ \ ij}\\{ij}\end{smallmatrix}+6p^2+12p
\bigg]\nonumber\\
&+o(n^{-2}).\label{expan_ED_KL*}
\end{align}
Generally speaking, the components in the $n^{-2}$ term are not explicitly gained. For the feasibility of the calculation, the expression of these components by the expectation of the derivatives of log-likelihood is useful. 
Define the following notations; for $1\leq i, j, k, l \leq p$, 
\begin{align*}
&L_{(ij)}\triangleq E_\theta[l_{ij}],\quad L_{ij}\triangleq E_\theta[l_i l_j], \\
&L_{(ij)k}\triangleq E_\theta[l_{ij} l_k], \quad L_{ijk}\triangleq E_\theta[l_i l_j l_k] \\
&L_{(ij)(kl)}\triangleq E_\theta[l_{ij} l_{kl}], \quad L_{(ijk)l}\triangleq E_\theta[l_{ijk}l_l], \quad 
L_{(ij)kl}\triangleq E_\theta[l_{ij}l_k l_l],\quad L_{ijkl}\triangleq E_\theta[l_i l_j l_k l_l], \\
&L11=g^{ij}g^{kl}L_{(il)jk},\quad L12=g^{ij}g^{kl}L_{(ij)kl},\quad L13=g^{ij}g^{kl}L_{ijkl},\\
&L14=g^{ij}g^{kl}L_{(ik)(jl)},\quad L15=g^{ij}g^{kl}L_{(ij)(kl)},\\
&L21=g^{ij}g^{kl}g^{su}L_{(ik)s}L_{jlu},\quad L22=g^{ij}g^{kl}g^{su}L_{(ij)k}L_{lsu},\quad L23=g^{ij}g^{kl}g^{su}L_{iks}L_{jlu},\\
& L24=g^{ij}g^{kl}g^{su}L_{ijk}L_{lsu},\quad L25=g^{ij}g^{kl}g^{su}L_{(ik)s}L_{(jl)u},\quad L26=g^{ij}g^{kl}g^{su}L_{(ij)k}L_{(su)l}.
\end{align*}

Then we have the following equations (see Appendix 2 for the proof ).
\begin{align}
g_{ij}&=L_{ij}=-L_{(ij)},\label{int_exp_g}\\
\overset{\:\alpha}{F}&=g^{ij}g^{ks}\bigl(2L_{(is)jk}+L_{(ks)ij}+L_{ijks}\bigr)\nonumber\\
&\quad-g^{ks}g^{uj}g^{li}L_{ijk}\bigl(2L_{(su)l}+L_{sul}\bigr)\nonumber\\
&\quad -g^{ti}g^{uj}g^{ks}\bigl(L_{(it)s}+((1-\alpha)/2)L_{its}\bigr)L_{juk}\nonumber\\
&=2L11+L12+L13-2L21-L23-L22-\alpha'L24,\label{int_exp_F}\\
T_{ijk}T^{ijk}&= L_{ijk} L_{stu} g^{is}g^{jt} g^{ku}=L23, \label{int_exp_T_ikkT^ijk}\\
T_{is}^i T_j^{js}&=L_{ijk} L_{stu} g^{ij} g^{st} g^{uk}=L24, \label{int_exp_T_is^iT_j^js}\\
\overset{\;e}{R}\hspace{-10pt}\begin{smallmatrix}{\ \  \ \ ij}\\{ij}\end{smallmatrix}
&=g^{ij}g^{sk}\bigl(L_{(ki)(js)}-L_{(ij)(ks)}+L_{(ki)js}-L_{(ij)ks}\bigr)\nonumber\\
&\quad +g^{sk}g^{ti}g^{uj}\bigl(-L_{(ki)j}L_{(st)u}+L_{(it)s}L_{(uj)k}+L_{sit}L_{(uj)k}-L_{stu}L_{(ij)k}\bigr)\nonumber\\
&=L14-L15+L11-L12-L25+L26+L22-L21,\label{int_exp_R}\\
\langle \secffe{i}{j}, \secffe{j}{i} \rangle &= g^{jk}g^{li}L_{(ik)(jl)}-g^{jk}g^{li}g^{st}L_{(ik)s}L_{(jl)t}-p\nonumber\\
&=L14-L25-p, \label{int_exp_secffe{i}{j}_secffe{i}{j}}\\
\langle \secffe{i}{i}, \secffe{j}{j} \rangle &= g^{ik}g^{jl}L_{(ik)(jl)}-g^{ik}g^{jl}g^{st}L_{(ik)s}L_{(jl)t}-p^2\nonumber\\
&=L15-L26-p^2,\label{int_exp_secffe{i}{i}_secffe{j}{j}}\\
\langle \secffe{i}{j}, \secffm{j}{i} \rangle &= g^{jk}g^{li}L_{(ik)jl}+g^{jk}g^{li}L_{(ik)(jl)}\nonumber\\
&\hspace{20mm}-g^{jk}g^{li}g^{st}L_{(ik)s}L_{(jl)t}-g^{jk}g^{li}g^{st}L_{(ik)s}L_{jlt}\nonumber\\
&=L11+L14-L25-L21, \label{int_exp_secffm{i}{j}_secffm{i}{j}}\\
\langle \secffe{i}{i}, \secffm{j}{j} \rangle &= g^{ik}g^{jl}L_{(ik)jl}+g^{ik}g^{jl}L_{(ik)(jl)}\nonumber\\
&\hspace{20mm}-g^{ik}g^{jl}g^{st}L_{(ik)s}L_{(jl)t}-g^{ik}g^{jl}g^{st}L_{(ik)s}L_{jlt}\nonumber\\
&=L12+L15-L26-L22.\label{int_exp_secffm{i}{i}_secffm{j}{j}}
\end{align}
Using these expressions, we could find the value of the components of the $n^{-2}$ term by simulation.
\section{Examples}
\label{examples}
In this section, we take three parametric models as the examples and investigate the concrete form of $\overset{\alpha}{E\!D}$ up to the $n^{-2}$ term. 
\\
\\
--\textit{Example 1}--

First we consider a discrete model, that is, a multinomial distribution. 
Consider the family consisting of $p+1$ dimensional multinomial distributions given by
$$
P(X=x_i)=m_i,\quad i=0,1,\ldots,p,
$$
where $\sum_{i=0}^p m_i=1$. We use $m\triangleq (m_1,\ldots, m_p)$ as a free parameter.

The multinomial distribution is an exponential family, then from \eqref{expan_ED_expo_fam}, we notice that we only need to calculate three terms $T_{ijk}T^{ijk}$, $T_{is}^i T^{js}_j$, $\overset{\:e}{F}$. Because of the following relation (for the proof,  see Appendix 2),
\begin{equation}
\label{convert_eF_mF}
\overset{\:e}{F}=\overset{\;\scalebox{0.6}{$m$}}{F}+T_{is}^i T_j^{js}, \qquad \overset{\;\scalebox{0.6}{$m$}}{F}\triangleq \overset{\;\scalebox{0.6}{$-1$}}{F},
\end{equation}
we only have to calculate $\overset{\;\scalebox{0.6}{$m$}}{F}$ instead of $\overset{\:e}{F}$.
Actually we have the following results (for the proof, see Appendix 2);
\begin{align}
T_{ijk}T^{ijk}&=M-3p-1 \label{T_ijkT^ijk_multinomi}\\
T_{is}^i T^{js}_j&=M-(p+1)^2 \label{T_is^iT^js_j_multinomi}\\
\overset{\;\scalebox{0.6}{$m$}}{F}&=-M+p+1, \label{mF_multinomi}
\end{align}
where $M\triangleq \sum_{t=0}^p m_t^{-1}$. Consequently we have 
\begin{align}
&\overset{\alpha}{E\!D} (m)\nonumber\\
&=\frac{p}{2n}+\frac{1}{24n^{2}}\bigl\{ 
(\alpha')^2(3M-6p-3)+\alpha'(-11M+18p+11)+10M-12p-10\bigr\}
+o(n^{-2})\nonumber\\
&=\frac{p}{2n}+\frac{1}{24n^2}\biggl\{\frac{(1-\alpha)^2}{4}(3M-6p-3)+\frac{1-\alpha}{2}(-11M+18p+11)+10M-12p-10\biggr\}\nonumber\\
&\quad+o(n^{-2})\nonumber\\
&=\frac{p}{2n}+\frac{1}{96n^2}\biggl\{(3\alpha^2+16\alpha+21)(M-1)+(-6\alpha^2-24\alpha-18)p\biggr\}+o(n^{-2})\nonumber\\
&=\frac{p}{2n}+\frac{1}{96n^2}\Bigl\{(3+\alpha)(7+3\alpha)(M-1)-6(\alpha+3)(\alpha+1)p\Bigr\}+o(n^{-2}).
\label{expan_ED_multinom}
\end{align}
Note that this result could be gained in a more straightforward way, since the risk of m.l.e w.r.t.$\alpha$-divergence for the multinomial distribution could be expressed in a simple form (see Appendix 2 for the straightforward derivation of \eqref{expan_ED_multinom}). 

The $n^{-1}$ term is determined by the dimension of the multinomial distribution, while the  $n^{-2}$ order term depends only on $M$ once $p$ is fixed. If $(3+\alpha)(7+3\alpha)>0$,  then $n^{-2}$ order term is a monotonically increasing function of $M$. When $M$ is minimized, that is when $m_0=m_1=\cdots=m_p$,  $\overset{\alpha}{E\!D} (m)$ is minimized. The asymptotically lowest risk among the possible distributions is attained by the equi-probable distribution. The estimation becomes harder as $M$ increases.  The term $M$ could be very large when some $m_i$ is close to zero. This  justifies the treatment of merging a category of a possibly very low probability with another category for a better inference. The $\alpha$ that is statistically often used such as $\alpha=\pm 1, 0, 3$ satisfies the condition  $(3+\alpha)(7+3\alpha)>0$. However when $-3 < \alpha < -7/3$, these phenomena are vice versa. The equi-probable distribution is the asymptotically highest risk point. 

When $\alpha=-1, 0, -3$,  we have the following results;
\begin{align}
\overset{\scalebox{0.6}{\!\!$-1$}}{E\!D} (m)&=\frac{p}{2n}+\frac{1}{24n^{2}}(2M-2)+o(n^{-2}),\label{expan_KL_multinom}\\
\overset{\scalebox{0.6}{$0$}}{E\!D} (m)&=\frac{p}{2n}+\frac{1}{24n^{2}}((21/4)M-(9/2)p-21/4)+o(n^{-2}).\label{expan_Hell_multinom}\\
\overset{\scalebox{0.6}{\!\!$-3$}}{E\!D} (m)&=\frac{p}{2n}+o(n^{-2}),\label{expan_chi_multinom}
\end{align}
Rather surprisingly, the $n^{-2}$ term for $\chi^2$-divergence vanishes, hence the asymptotic risk up to $n^{-2}$ order is uniform in $m$. 

Figure \ref{fig:EML_binomi} and Table \ref{table:EML_binomi} show the approximated value of $\overset{\scalebox{0.6}{\!\!$-1$}}{E\!D} (m_1)$ 
up to the $n^{-2}$ term for the case $p=1, n=10$, i.e. $B(10, m_1)$ as $m_1$ varies. 
\begin{figure}
\centering
\includegraphics[width=10cm,clip]{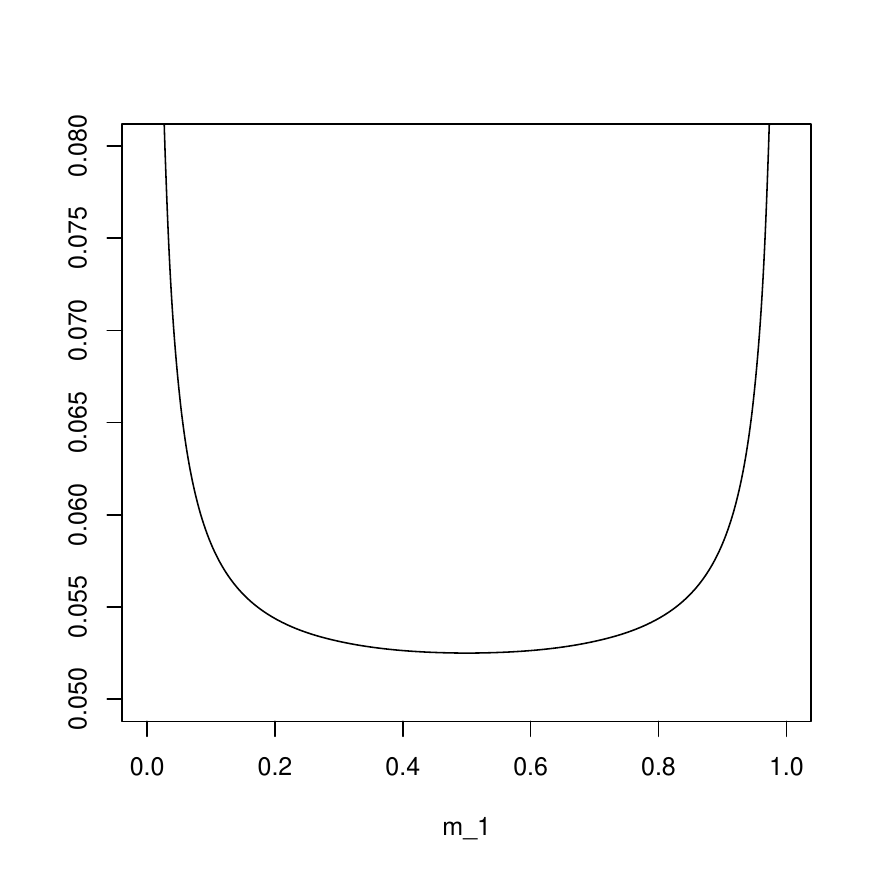}
\caption{ Approximated $\stackrel{-1}{E\!D} (m_1)$ for $B(10,m_1)$}
\label{fig:EML_binomi}
\end{figure}
\begin{table}
\caption{ Approximated $\stackrel{-1}{E\!D} (m_1)$ for $B(10,m_1)$}
\label{table:EML_binomi}
\centering
\begin{tabular}{c|c|c|c|c|c|c|c}
$m_1$ & 0.5 & 0.4 & 0.3 & 0.2 & 0.1 & 0.01 & 0.001 \\
\hline
$\overset{-1}{E\!D} (m_1)$ & 0.0525 & 0.0526 &0.0531 &0.0544 &0.0584& 0.1333 &0.8833
\end{tabular}
\end{table}
We observe that the risk of the estimation rapidly increases outside of the interval $(0.1, 0.9)$. It is really a hard task to estimate the probability which is less than 1/10 based on just 10 observations.
\\
\\
--\textit{Example 2}--

Second example is the $p$-dimensional multivariate normal distribution with zero means, that is, 
$$
X \sim N_p(0, \Sigma),  \quad \Sigma=(\sigma_{ij}).
$$
The m.l.e. is  the sample variance-covariance matrix. Note that if $\alpha$ equals $\pm 1$, the divergence is explicitly given (so called Stein's loss function), hence  we can derive the expansion of $\overset{\pm 1}{E\!D} (\Sigma)$ in a more straightforward way (e.g., for the case $\alpha=-1$, see Appendix 2) .

For this model, we can use the parameter $\sigma_{ij}(\ 1\leq i \leq  j \leq p)$ or 
$\sigma^{ij}(\ 1 \leq i \leq j \leq p)$, where
$$
\sigma^{ij}=(\Sigma^{-1})_{ij},\quad 1\leq i, j \leq p.
$$
We use the notation $(i,j),\ 1\leq i \leq j \leq p$ to specify the element of the parameters.

Since this model is also an exponential family, from \eqref{expan_ED_expo_fam} and \eqref{convert_eF_mF}, we only have to calculate  $T_{ijk}T^{ijk}$, $T_{is}^i T^{js}_j$, $\overset{\;\scalebox{0.6}{$m$}}{F}$. 
These turn out to be as follows (see Appendix 2 for the proof) ;
\begin{align}
&T_{(i,j)(k,l)(s,t)}T^{(i,j)(k,l)(s,t)}= T_{\sigma_{ij}\sigma_{kl}\sigma_{st}}T^{\sigma_{ij}\sigma_{kl}\sigma_{st}}=T_{\sigma^{ij}\sigma^{kl}\sigma^{st}}T^{\sigma^{ij}\sigma^{kl}\sigma^{st}}\nonumber\\
&=p^3+3p^2+4p, \label{Normal Model T^ijk T_ijk}\\
&T_{(i,j)(s,t)}^{(i,j)}T_{(k,l)}^{(k,l)(s,t)}=T_{\sigma_{ij}\sigma_{st}}^{\sigma_{ij}}T_{\sigma_{kl}}^{\sigma_{kl}\sigma_{st}}=T_{\sigma^{ij}\sigma^{st}}^{\sigma^{ij}}T_{\sigma^{kl}}^{\sigma^{kl}\sigma^{st}}\nonumber\\
&=2p^3+8p^2+8p, \label{Normal Model T_ij^i T_s^sj}\\
&\overset{\;\scalebox{0.6}{$m$}}{F}=-p^3-2p^2-p. \label{Normal Model F}
\end{align}
Therefore we have 
\begin{align}
&\overset{\alpha}{E\!D} (\Sigma)\nonumber\\
&=\frac{p(p+1)}{4n}\nonumber\\
&\quad+\frac{1}{24n^{2}}\biggl[ 
(\alpha')^2(6p^3+30p^2+39p)-\alpha'(14p^3+48p^2+53p)+10p^3+21p^2+13p
\biggr]\nonumber\\
&\quad+o(n^{-2}). \label{expan_ED_nomal_vacova}
\end{align}
Especially when $\alpha=-1, 0, -3$,
\begin{align}
\overset{\scalebox{0.6}{\!\!$-1$}}{E\!D} (\Sigma)&=\frac{p(p+1)}{4n}+\frac{1}{24n^{2}}(2p^3+3p^2-p)+o(n^{-2}),\label{expan_KL_normal_vacova}\\
\overset{\scalebox{0.6}{$0$}}{E\!D} (\Sigma)&=\frac{p(p+1)}{4n}+\frac{1}{32n^{2}}(6p^3+6p^2-5p)+o(n^{-2}).\label{expan_Hell_normalvacova}\\
\overset{\scalebox{0.6}{\!\!$-3$}}{E\!D} (\Sigma)&=\frac{p(p+1)}{4n}+\frac{1}{8n^{2}}(2p^3+15p^2+21p)+o(n^{-2}),\label{expan_chi_normal_vacova}
\end{align}
Notably $\overset{\alpha}{E\!D} (\Sigma)$ is not only parameter-invariant but also constant. The risk in estimating the true parameter $\Sigma$ by m.l.e.  is independent of $\Sigma.$ Actually we have the following lemma.
\begin{lemma}
Let $x | \theta $ denote the probability distribution on $\mathfrak{X}$ under the parameter $\theta (\in \Theta)$. Suppose that  there exists one to one transformations, 
$$
G(x): \mathfrak{X} \rightarrow \mathfrak{X},\qquad \tilde{G}(\theta): \Theta \rightarrow \Theta
$$
satisfying the relation
\begin{equation}
\label{condition_lem}
x | \tilde{G}(\theta) \stackrel{d}{=} G(x) | \theta \text{ or equivalently } G^{-1}(x) | \tilde{G}(\theta) \stackrel{d}{=} x | \theta.
\end{equation}
Then $\overset{\alpha}{E\!D}(\theta)\triangleq E_{\theta}\bigl[\overset{\alpha}{D}[\hat\theta(\bm{X}):\theta]\bigr]$ is equal to 
$\overset{\alpha}{E\!D}(\tilde{G}(\theta))\triangleq E_{\tilde{G}(\theta)}\bigl[\overset{\alpha}{D}[\hat{\theta}(\bm{X}):\tilde{G}(\theta)]\bigr]$.
\end{lemma}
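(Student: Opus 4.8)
The plan is to derive the identity from two equivariance facts plus one distributional identity, all of which follow quickly from \eqref{condition_lem}; the computation itself is short, and the only delicate point is the measure-theoretic bookkeeping around $\mu$. First I would record the \emph{sample-space invariance} of $\alpha$-divergence: for all $\theta_1,\theta_2\in\Theta$,
\[
\overset{\alpha}{D}[\tilde G(\theta_1):\tilde G(\theta_2)]=\overset{\alpha}{D}[\theta_1:\theta_2].
\]
Indeed, each integral in the definition \eqref{alphadive} is a functional of the pair of laws $(x\,|\,\theta_1,\;x\,|\,\theta_2)$ alone — a Hellinger-type bilinear functional for $\alpha\neq\pm1$ and a relative-entropy functional for $\alpha=\pm1$ — and such functionals are invariant under a common bi-measurable bijection of $\mathfrak X$ (this is the ``transformation invariance'' of $f$-divergences noted in Section \ref{section:int}). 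Since by \eqref{condition_lem} the map $G$ carries $x\,|\,\theta_i$ to $x\,|\,\tilde G(\theta_i)$, the displayed equality follows; concretely one changes variables $x=G(y)$ in each integral and observes that the Radon--Nikodym factor $d(G_*\mu)/d\mu$ cancels.

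Second I would prove \emph{equivariance of the m.l.e.}: writing $G(\bm y)\triangleq(G(y_1),\dots,G(y_n))$ for the coordinatewise action, the claim is that $\hat\theta(G(\bm y))=\tilde G(\hat\theta(\bm y))$ for $\mu^{\otimes n}$-almost every $\bm y$. By \eqref{condition_lem} the $\mu$-density of $G(Y)$ under parameter $\theta$ is $f(\,\cdot\,;\tilde G(\theta))$, while the change-of-variables formula writes that same density as $f(G^{-1}(\,\cdot\,);\theta)\,h(\,\cdot\,)$, where $h\triangleq d(G_*\mu)/d\mu$ does \emph{not} depend on $\theta$. Equating, substituting $x=G(y)$, and relabelling $\theta$ by $\tilde G^{-1}(\theta)$ yields $f(G(y);\theta)=f(y;\tilde G^{-1}(\theta))\,h(G(y))$; hence, as a function of $\theta$, the log-likelihood of the data $G(\bm y)$ equals the log-likelihood of $\bm y$ evaluated at $\tilde G^{-1}(\theta)$ plus the $\theta$-free term $\sum_a\log h(G(y_a))$, so maximising over $\theta$ returns $\tilde G(\hat\theta(\bm y))$.

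Finally I would assemble the pieces. If $\bm Y=(Y_1,\dots,Y_n)$ is i.i.d.\ from $f(\,\cdot\,;\theta)$, then $\bm X\triangleq G(\bm Y)$ is i.i.d.\ from $f(\,\cdot\,;\tilde G(\theta))$ by \eqref{condition_lem} and independence, so the chain of identities reads
\[
\overset{\alpha}{E\!D}(\tilde G(\theta))
=E_\theta\bigl[\overset{\alpha}{D}[\hat\theta(G(\bm Y)):\tilde G(\theta)]\bigr]
=E_\theta\bigl[\overset{\alpha}{D}[\tilde G(\hat\theta(\bm Y)):\tilde G(\theta)]\bigr]
=E_\theta\bigl[\overset{\alpha}{D}[\hat\theta(\bm Y):\theta]\bigr]
=\overset{\alpha}{E\!D}(\theta),
\]
using the distributional identity in the first equality, m.l.e.\ equivariance in the second, and divergence invariance in the third. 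This is the assertion.

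The bookkeeping above is routine; the main obstacle is purely measure-theoretic — one needs $G$ to be bi-measurable with $G_*\mu$ equivalent to $\mu$, so that $h=d(G_*\mu)/d\mu$ exists, is strictly positive, and (crucially) is free of $\theta$, which is exactly what lets it drop out of both the likelihood maximisation and the divergence integrals. In the smooth case ($\mathfrak X\subseteq R^q$, $\mu$ Lebesgue, $G$ a diffeomorphism) $h$ is just the Jacobian $|\det\partial G^{-1}|$ and every step is elementary; the two instances invoked in the paper — a permutation of the categories for the multinomial, and $x\mapsto Ax$ with $\tilde G(\Sigma)=A\Sigma A^{\!\top}$ for the centred normal, where transitivity of $GL_p$ on positive-definite matrices then forces $\overset{\alpha}{E\!D}$ to be constant — are of exactly this kind.
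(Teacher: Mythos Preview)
Your proof is correct and follows essentially the same approach as the paper: both arguments combine (i) transformation invariance of $\alpha$-divergence, (ii) equivariance of the m.l.e.\ under the pair $(G,\tilde G)$, and (iii) the distributional identity \eqref{condition_lem} to change the outer expectation. Your chain of equalities is a slightly streamlined version of the paper's five-step chain, and you are more careful than the paper on two points: you actually \emph{derive} the equivariance $\hat\theta(G(\bm y))=\tilde G(\hat\theta(\bm y))$ from the likelihood (the paper simply asserts $\hat\theta(G(\bm X))\stackrel{d}{=}\tilde G(\hat\theta(\bm X))$ ``because of \eqref{condition_lem}''), and you make explicit the measure-theoretic assumption $G_*\mu\sim\mu$ that is needed for the Jacobian factor to be well defined and $\theta$-free.
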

\begin{proof}
We use the notation  $\overset{\alpha}{D}[x |\theta_1: x |\theta_2]$ instead of  $\overset{\alpha}{D}[\theta_1: \theta_2]$ for the divergence between the two distributions $x | \theta_1$ and $x | \theta_2$. 
\begin{align*}
&E_{\theta}\bigl[\overset{\alpha}{D}[x|\hat{\theta}(\bm{X}):x|\theta]\bigr]\\
&=E_{\theta}\bigl[\overset{\alpha}{D}[G^{-1}(x)|\tilde{G}(\hat{\theta}(\bm{X})):G^{-1}(x)|\tilde{G}(\theta)]\bigr] \text{ (because of \eqref{condition_lem})}\\
&=E_{\theta}\bigl[\overset{\alpha}{D}[G^{-1}(x)|\hat{\theta}(G(\bm{X})):G^{-1}(x)|\tilde{G}(\theta)]\bigr]
\text{ (note that $\hat{\theta}(G(\bm{X}))\stackrel{d}{=}\tilde{G}(\hat{\theta}(\bm{X}))$ because of \eqref{condition_lem})}\\
&=E_{\tilde{G}(\theta)}\bigl[\overset{\alpha}{D}[G^{-1}(x)|\hat{\theta}(\bm{X}):G^{-1}(x)|\tilde{G}(\theta)]\bigr]\text{ (because of \eqref{condition_lem})}\\
&=E_{\tilde{G}(\theta)}\bigl[\overset{\alpha}{D}[x|\hat{\theta}(\bm{X}):x|\tilde{G}(\theta)]\bigr] \text{ (because of the invariance property of $\alpha$-divergence).}
\end{align*}
\end{proof}
For arbitrary $\Sigma_1$ and $\Sigma_2$,  if we define 
$$
G(X)=\Sigma_2^{1/2}\Sigma_1^{-1/2}X,\qquad 
\tilde{G}(\Sigma)=\Sigma_2^{1/2}\Sigma_1^{-1/2}\Sigma\Sigma_1^{-1/2}\Sigma_2^{1/2},
$$
then we have $X|\tilde{G}(\Sigma)\stackrel{d}{=}G(X)|\Sigma$, hence 
$$
\overset{\alpha}{E\!D}(\Sigma_1)=\overset{\alpha}{E\!D}(\tilde{G}(\Sigma_1))=\overset{\alpha}{E\!D}(\Sigma_2).
$$

We observe ``the curse of dimension'' in this example. Notice that $n^{-1}$ and $n^{-2}$ terms increase with the second and third power of $p$ respectively.  If we increases $n$ and $p$ with the constant ratio $p/n$, the both terms explode. Figure \ref{fig:EML_multinorm} and Table \ref{table:EML_multinorm}  show the $n^{-2}$-order-approximated values of $\overset{\scalebox{0.6}{\!\!$-1$}}{E\!D} $ as $n$ varies, when $p$ =10.
\begin{figure}
\centering
\includegraphics[width=10cm,clip]{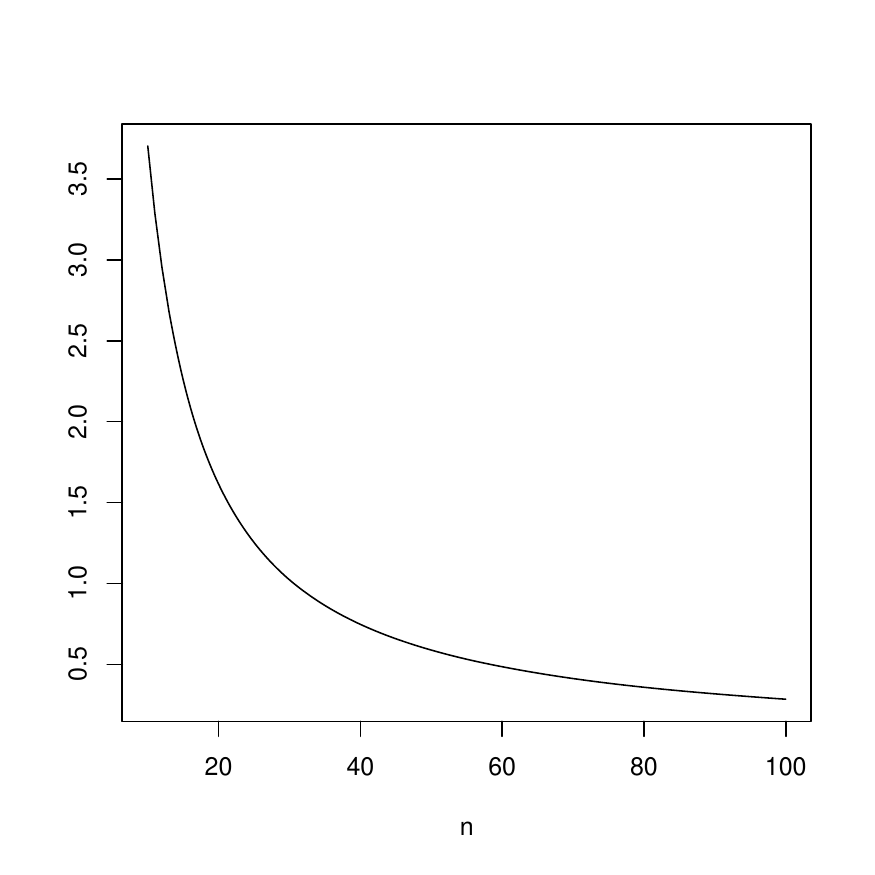}
\caption{ Approximated $\stackrel{-1}{E\!D}$ for $N_{10}(0, \Sigma)$}
\label{fig:EML_multinorm}
\end{figure}
\begin{table}
\caption{ Approximated $\stackrel{-1}{E\!D} $ for $N_{10}(0, \Sigma)$}
\label{table:EML_multinorm}
\centering
\begin{tabular}{c|c|c|c|c|c|c|c}
$n$ & 100 & 200 & 300 & 400 & 500 &  800 &1000 \\
\hline
$\overset{-1}{E\!D} $ & 0.2845 & 0.1399 & 0.0927 &0.0693 &0.0554& 0.0345 &0.0276
\end{tabular}
\end{table}
When the dimension is 10,  we need approximately 500 observations for the same risk as $B(10, 0.5)$ (that is,  a 10-times coin toss problem) in estimating the true parameter by m.l.e. 
\\
\\
--\textit{Example 3}--

As a last example, we take a mixture family.  For most cases of the mixture family, it is difficult to gain the components of \eqref{expan_ED_m_fam} explicitly so that we need to calculate them numerically. 

If we use the canonical form \eqref{mix_family} with the notation $h_i(x)\triangleq g_i(x)-g_0(x)$, we have
\begin{align*}
&l_i =\frac{h_i}{f},\quad l_{ij}=-\frac{h_i h_j}{f^2}=-l_i l_j, \\
& l_{ijk}=2\frac{h_i h_j h_k}{f^3}=2l_i l_j l_k,\quad l_{ijkl}=-6\frac{h_i h_j h_k h_l}{f^4}=-6l_i l_j l_k l_l.
\end{align*}
Using these relations, the components of \eqref{expan_ED_m_fam} are expressed as follows;
\begin{align*}
\overset{\:e}{F}&=-2g^{ij}g^{ks}L_{ijks}+g^{ks}g^{ui}g^{jk}L_{ijk}L_{sul}+g^{ti}g^{uj}g^{ks}L_{ist}L_{jku},\\
T^{ijk}T_{ijk}&=L_{ijk}L_{stu} g^{is}g^{jt}g^{ku},\\
T_{is}^iT_j^{js}&=L_{ijk}L_{stu}g^{ij}g^{st}g^{ku},\\
\langle \secffe{j}{i}, \secffe{i}{j}\rangle&=g^{jk}g^{li}L_{ikjl}-g^{jk}g^{li}g^{st}L_{iks}L_{jlt}-p,\\
\langle \secffe{i}{i}, \secffe{j}{j}\rangle&=g^{ik}g^{jl}L_{ikjl}-g^{ik}g^{jl}g^{st}L_{iks}L_{jlt}-p^2.
\end{align*}
As a more specific example, we consider the mixture of two normal distributions. Let 
\begin{equation}
\label{mix_normals}
X \sim (1-\theta_1)*N(0, \sigma^2)+\theta_1*N(1, \sigma^2),
\end{equation}
where $\sigma^2$ is a known parameter.

We numerically calculated $\overset{\:e}{F}$, $T^{ijk}T_{ijk}$, $T_{is}^iT_j^{js}$ and $\langle \secffe{j}{i}, \secffe{i}{j}\rangle$ from the above expression by Monte Carlo simulation. We actually calculated those components by generating $10^5$ random variables following the mixture distribution \eqref{mix_normals} under the values of $\theta_1$ from 0.1 to 0.99 by 0.01 increment, from which the $n^{-2}$-order-approximation of $\overset{\scalebox{0.6}{\!\!$-1$}}{E\!D}$ was gained. From \eqref{expan_ED_m_fam}, we notice that
$$
\overset{\scalebox{0.6}{\!\!$-1$}}{E\!D}=\frac{p}{2n}+\frac{1}{24n^{^2}}( -3\overset{\:e}{F}-T^{ijk}T_{ijk}+3T_{is}^iT_j^{js}+12\langle \secffe{j}{i}, \secffe{i}{j}\rangle)+o(n^{-2}).
$$

 In Figure \ref{fig:EKL_mix_fam}, we can see four U-curves  each of which corresponds to  the approximated $\overset{\scalebox{0.6}{\!\!$-1$}}{E\!D}$ of the model \eqref{mix_normals} with $\sigma^2=1/2, 1/5, 1/10$ from the top where $n=10$ is fixed. The U-curve at the lowest position corresponds to the approximated $\overset{\scalebox{0.6}{\!\!$-1$}}{E\!D}$  for $B(10,\theta_1)$.
\begin{figure}
\centering
\includegraphics[width=10cm,clip]{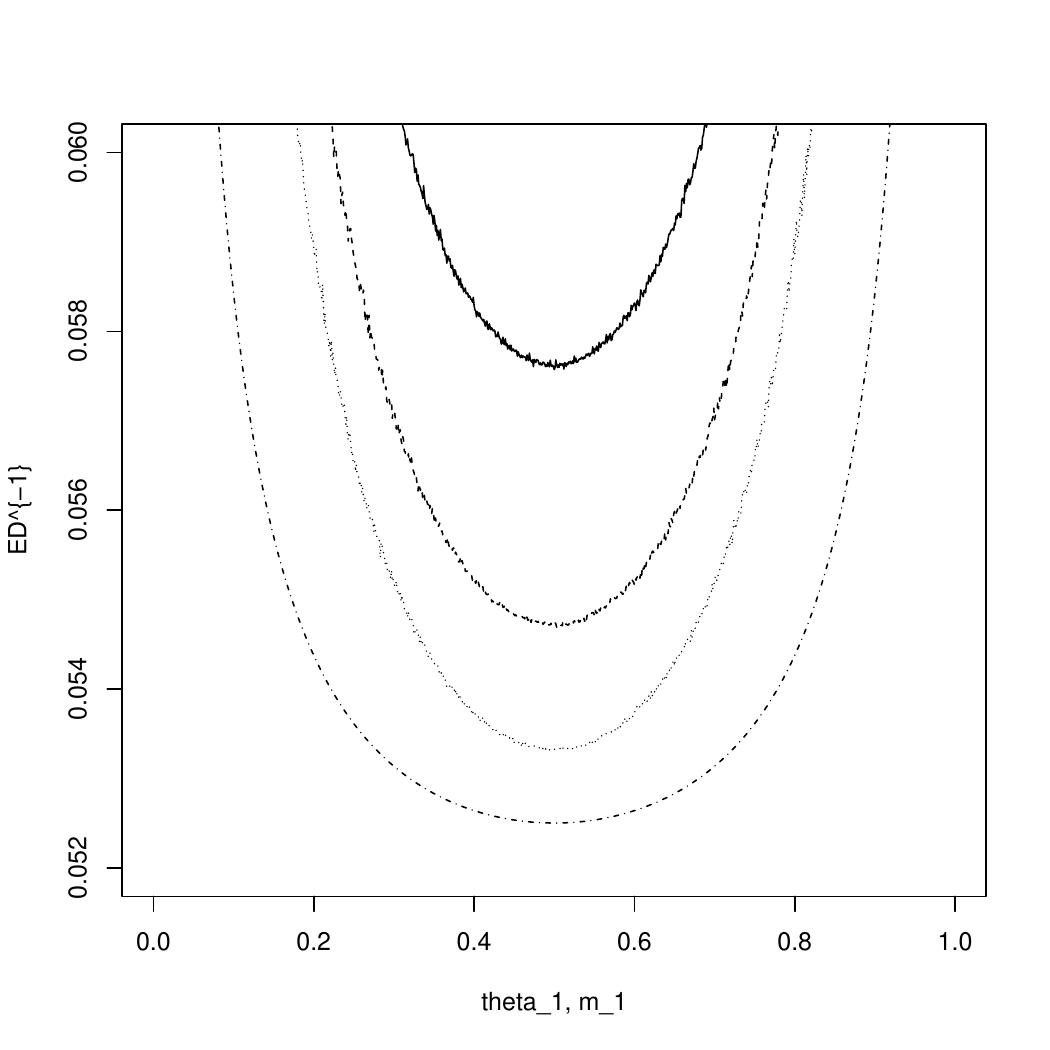}
\caption{ Approximated $\stackrel{-1}{E\!D}$ for the mixture model \eqref{mix_normals}}
\label{fig:EKL_mix_fam}
\end{figure}
It is much harder to specify the model \eqref{mix_normals} with a large variance  compared to the model 
$B(10,\theta_1)$, since many observations from the model \eqref{mix_normals} have no trace on whether it comes from $N(0, \sigma^2)$ or $N(1, \sigma^2)$. In contrast, if the variance is small, we can judge from the value of the observation which normal distribution  it came from. This information helps the inference on $\theta_1$ and the risk in estimating the parameter gets closer to the one for $B(10,\theta_1)$ where head or tail is completely clear. 
\section{Appendix 1}
\label{append1}
\subsection{Basic concepts of Information Geometry}
\label{basic_con_info_geo}
Amari \cite{Amari2}, Amari and Nagoka \cite{Amari&Nagaoka}, Murray and Rice\cite{Murray&Rice} and Calin and Udri\c{s}te \cite{Calin&Udriste} serve as a  a general guidance to the information geometry. We only briefly introduce the basic concepts of differential geometry and their concrete forms in the case of statistical manifolds.

We consider an ambient space 
$$
\mathcal{M}\triangleq \{f(x)\ | \text{ $f(x)$ is a measurable function on $\mathfrak{X}$.}\}
$$
and a scale extension model of $\mathcal{P}$
$$
\tilde{\mathcal{P}}\triangleq \{ \tilde{f}(x;\tilde\theta)\triangleq e^{\theta^0} f(x;\theta)\ |\  \tilde{\theta}=(\theta^0, \theta)=(\theta^i)_{i=0,1,\ldots,p},\quad \theta \in \Theta,\quad -\infty < \theta^0 <\infty \}.
$$
Then $\mathcal{P} \subset \tilde{\mathcal{P}} \subset \mathcal{M}$.
We will explain  how to construct a Riemmanian manifold structure in  $\tilde{\mathcal{P}}$ and $\mathcal{P}$ following the way of Amari (\cite{Amari2}). 

We start with $\mathcal{M}$. Consider the variation $g(x; u),\ -\epsilon < u < \epsilon $ in $\mathcal{M}$ and the corresponding tangent vector $\partial_u$ at $g(x;0)$. The $\alpha$-representation $(-\infty < \alpha < \infty)$ of $\partial_u$ at $g(x;0)$ is defined as 
$$
\{g(x;0) \}^{-(1+\alpha)/2} \dot{g}(x; 0), 
$$
where
$$
\dot{g}(x;0)\triangleq\frac{\partial}{\partial u} g(x; u) \biggl|_{u=0}.
$$
Suppose that another variation $h(x; t),\ -\epsilon < t < \epsilon $ such as $g(x; 0)=h(x; 0)$ is given. The inner product  between $\partial_u$ and $\partial_t$ at $g(x ;0)$ is defined as 
\begin{align*}
\langle \partial_u, \partial_t \rangle &\triangleq \int_{\mathfrak{X}} (\text{$\alpha$-representation of $\partial_u$})\times  (\text{$\alpha$-representation of $\partial_t$})\times
g^\alpha(x;0) d\mu \\
&=\int_{\mathfrak{X}}\{g(x;0)\}^{-1} \dot{g}(x;0) \dot{h}(x;0) d\mu \\
&=\int_{\mathfrak{X}} (\text{$\alpha$-representation of $\partial_u$})\times  (\text{$-\alpha$-representation of $\partial_t$}) d\mu \\
&=\int_{\mathfrak{X}}g(x;0) \dot{lg}(x;0) \dot{lh}(x;0) d\mu,
\end{align*}
where
$$
\dot{lg}(x;0) \triangleq \frac{\partial}{\partial u} \log g(x; u) \biggl|_{u=0},\qquad \dot{lh}(x;0) \triangleq \frac{\partial}{\partial t} \log h(x; t) \biggl|_{t=0}.
$$
Let $A(\tilde{\theta})$ be a vector field in $\mathcal{M}$ along $\tilde{\mathcal{P}}$. Its $\alpha$-representation at $\tilde{\theta}$ is denoted by $\oset{\scalebox{0.8}{$\alpha$}}{a}(x; \tilde{\theta})$. 
Consider a vector $\partial_i\triangleq \partial/\partial\theta^i (i=0,\ldots,p).$
The $\alpha$-covariant-derivative ($\alpha$-connection) $(-\infty < \alpha < \infty)$ of $A$ in the space of $\mathcal{M}$ in the direction of  $\partial_i$, ${\oset{\scalebox{0.8}{$\alpha$}}{\overline \nabla}}_{\partial_i} A$, is defined  as the vector field along $\tilde{\mathcal{P}}$ so that its $\alpha$-representation at $\tilde{\theta}$ is given by $\partial_i \oset{\scalebox{0.8}{$\alpha$}}{a}(x;\tilde{\theta})$. 

Now we introduce the geometrical properties of $\tilde{\mathcal{P}}$. First a base of tangent vectors are given by $\partial_i  (i=0,\dots,p)$.
The variation of $\tilde{f}(x ;\tilde{\theta})$ in $\tilde{\mathcal{P}}$ when $\theta^i$ changes gives rise to the tangent vector $\partial_i$ at each $\tilde{\theta}$, and its $\alpha$-representation is given by 
$$
\{\tilde{f}(x;\tilde{\theta}) \}^{-(1+\alpha)/2} \tilde{f}_i(x; \tilde{\theta}),
$$
where $\tilde{f}_i(x; \tilde{\theta})=\partial_i \tilde{f}(x; \tilde{\theta})$.
Components of a Riemannian metric on $\tilde{\mathcal{P}}$ are defined by
\begin{align*}
\tilde{g}_{ij}(\tilde{\theta})\triangleq \langle \partial_i, \partial_j \rangle_{\tilde{\theta}} &\triangleq 
\int_\mathfrak{X}  \{\tilde{f}(x;\tilde{\theta}) \}^{-1} \tilde{f}_i(x; \tilde{\theta}) \tilde{f}_j(x; \tilde{\theta}) d\mu \\
&=\int_\mathfrak{X}  \tilde{f}(x;\tilde{\theta})\:\tilde{l}_i(x; \tilde{\theta})\: \tilde{l}_j(x; \tilde{\theta}) d\mu \\
&=e^{\theta^0} E_\theta[\tilde{l}_i(x; \tilde{\theta})\: \tilde{l}_j(x; \tilde{\theta}) ],
\end{align*}
where $\tilde{l}_i(x; \tilde{\theta})\triangleq \partial_i \log \tilde{f}(x; \tilde{\theta})$.
Actually $\tilde{g}_{ij}(\tilde{\theta})$ is given by
\begin{equation}
\label{tildeg_ij}
\tilde{g}_{ij}(\tilde{\theta})=
\begin{cases}
e^{\theta^0}g_{ij} (\theta)& \text{if} \ 1\leq i,j \leq p, \\
0    &  \text{if} \ (i,j)=(0,1),\dots,(0,p), (1,0),\dots, (p,0) \\
e^{\theta^0}   &   \text{if} \ i=j=0,
\end{cases}
\end{equation}
where $g_{ij}(\theta)\triangleq \langle \partial_i , \partial_j \rangle_{\theta}(1\leq i,j \leq p)$ is the components of the metric (Fisher information metric) on $\mathcal{P}$ defined by  
\begin{equation}
\label{metric1}
g_{ij}(\theta)\triangleq E_\theta[l_i(x; \theta)\: l_j(x; \theta) ].
\end{equation}
The second case of \eqref{tildeg_ij} indicates $\partial_i \perp \partial_0\ (i=1,\dots,p)$, which is derived from
\begin{align}
\label{orthog_i_0}
&\int_\mathfrak{X} \{\tilde{f}(x;\tilde{\theta})\}^{-1}\:\tilde{f}_i(x; \tilde{\theta})\: \tilde{f}_0(x; \tilde{\theta}) d\mu \nonumber \\
&=\int_\mathfrak{X}  \tilde{f}_i(x; \tilde{\theta})\:  d\mu
=e^{\theta^0}\int_{\mathfrak X} f_i(x;\theta)  d\mu =
e^{\theta^0}\partial_i \int_{\mathfrak X} f(x;\theta) d\mu=0.
\end{align}
Another expression of the metric on $\mathcal{P}$
\begin{equation}
\label{metric2}
g_{ij}(\theta)=-E_\theta[l_{ij}(x; \theta)]
\end{equation}
is obtained from the relationship 
$$
E_\theta[l_i(x; \theta)\: l_j(x; \theta)+l_{ij}(x; \theta)]=\int_{\mathfrak X} f_{ij}(x;\theta) d\mu=\partial_{i}\partial_{j}\int_{\mathfrak X} f(x;\theta) d\mu=0.
$$
We use the notation $\tilde{g}^{ij}(\tilde{\theta})$, $g^{ij}(\theta)$ respectively for the components of the inverse matrix of $(\tilde{g}_{ij}({\tilde{\theta}}))$, $(g_{ij}(\theta))$.

$\partial_i \ (0\leq i \leq p)$ is the vector field along  $\tilde{\mathcal{P}}$, hence its $\alpha$-covariant-derivative ($\alpha$-connection)  in the space of $\mathcal{M}$ in the direction of $\partial_j\ (0\leq j \leq p)$,  ${\oset{\scalebox{0.8}{$\alpha$}}{\overline \nabla}}_{\partial_j} \partial_i$, could be  considered.
Its $\alpha$-representation at $\tilde\theta$ is given by 
\begin{align*}
&\partial_j[\{\tilde{f}(x;\tilde{\theta}) \}^{-(1+\alpha)/2} \tilde{f}_i(x; \tilde{\theta})]\\&=
-\frac{1+\alpha}{2} \{\tilde{f}(x;\tilde{\theta}) \}^{-(3+\alpha)/2} \tilde{f}_i(x; \tilde{\theta})\tilde{f}_j(x; \tilde{\theta})+\{\tilde{f}(x;\tilde{\theta}) \}^{-(1+\alpha)/2}
\tilde{f}_{ij}(x; \tilde{\theta}).
\end{align*}
We are mainly concerned with the case $\alpha =\pm 1$. In those cases, more familiar names exist. $e$-representation and $e$-covariant-derivative ($e$-connection) for the case $\alpha=1$; $m$-representation and  $m$-covariant-derivative ($m$-connection) for the case $\alpha=-1$. It turns out that 
\begin{align*}
&\text{$e$-representation of ${\oset{\scalebox{0.8}{$e$}}{\overline \nabla}}_{\partial_j} \partial_i$ at $\tilde\theta$}\\
&=\tilde{l}_{ij}(x;\theta)=
\begin{cases}
l_{ij}(x;\theta), &\text{if $1\leq i,j \leq p$,} \\
0, &\text{otherwise,}
\end{cases}
\end{align*}
and
\begin{align*}
&\text{ $m$-representation of ${\oset{\scalebox{0.8}{$m$}}{\overline \nabla}}_{\partial_j} \partial_i $ at $\tilde\theta$}\\
&=\tilde{f}_{ij}(x;\theta)=
\begin{cases}
e^{\theta^0}f_{ij}(x;\theta), &\text{if $1\leq i,j \leq p$,} \\
e^{\theta^0}f_{i}(x;\theta), &\text{if $(i, j)=(1,0),\dots,(p,0)$,}\\
e^{\theta^0}f_{j}(x;\theta), &\text{if $(i, j)=(0,1),\dots,(0,p)$,}\\
e^{\theta^0}f(x;\theta), &\text{if $(i, j)=(0,0)$.}\\
\end{cases}
\end{align*}
Consider the two variations in $\mathcal{M}$,
\begin{align}
&\tilde{f}(x;\tilde{\theta}) \exp(u l_{ij}(x;\theta) ), \quad |u| < \epsilon,\label{evari}\\
&\tilde{f}(x;\tilde{\theta}) +t e^{\theta^0}f_{ij}(x;\theta), \quad |t| < \label{mvari}\epsilon,
\end{align}
for $1\leq i, j \leq p$. $\partial_u$ and $\partial_t$ respectively equals  ${\oset{\scalebox{0.8}{$e$}}{\overline \nabla}}_{\partial_i} \partial_j$ and ${\oset{\scalebox{0.8}{$m$}}{\overline \nabla}}_{\partial_i} \partial_j$ since the representations coincide. 

Let $T_{\tilde{\theta}}\tilde{\mathcal{P}}$ denote the tangent space of $\tilde{\mathcal{P}}$ at $\tilde{\theta}$ (i.e. $\tilde{f}(x;\tilde{\theta}$)). 
Suppose that  $A$ is a tangent vector of $\mathcal{M}$ at $\tilde{f}(x;\tilde{\theta})$.
The orthogonal projection $\tilde{\pi}$ of $A$ onto $T_{\tilde{\theta}}\tilde{\mathcal{P}}$ is given by
$$
\tilde{\pi}(A)=\sum_{0\leq i,j \leq p} \langle A, \partial_i \rangle \: \tilde{g}^{ij}\: \partial_j.
$$

$\alpha$-covariant-derivative ($\alpha$-connection) in the space of $\tilde{\mathcal{P}}$ (denoted by ${\oset{\scalebox{0.8}{$\alpha$}}{\widetilde\nabla}}_{\partial_i} \partial_j, 0\leq i,j\leq p$) is defined as the orthogonal projection of  ${\oset{\scalebox{0.8}{$\alpha$}}{\overline \nabla}}_{\partial_i} \partial_j$. Actually the following equations hold;
\begin{align*}
{\oset{\scalebox{0.8}{$\alpha$}}{\widetilde\nabla}}_{\partial_i} \partial_j 
&\triangleq \tilde{\pi}\bigl( {\oset{\scalebox{0.8}{$\alpha$}}{\overline \nabla}}_{\partial_i} \partial_j  \bigr)\\
&= \sum_{0\leq s,t \leq p} \langle  {\oset{\scalebox{0.8}{$\alpha$}}{\overline \nabla}}_{\partial_i} \partial_j, \partial_s \rangle \: \tilde{g}^{st}\: \partial_t \\
&=\sum_{0\leq s,t \leq p} \cristofsa{ij}{s} \: \tilde{g}^{st}\: \partial_t\\
&=e^{-\theta^0}\sum_{1 \leq s, t \leq p} \cristofsa{ij}{s}  \: {g}^{st}\: \partial_t + e^{-\theta^0}\cristofsa{ij}{0}\:\partial_0
\end{align*}
where for $0 \leq i,j,k \leq p$,  $-\infty < \alpha < \infty,$
\begin{align}
&\cristofsa{ij}{k}(\tilde{\theta})\triangleq\langle  {\oset{\scalebox{0.8}{$\alpha$}}{\overline \nabla}}_{\partial_i} \partial_j, \partial_k \rangle_{\tilde{\theta}} \nonumber\\
&= \int_{\mathfrak X} (\text{$\alpha$-representation of $ {\oset{\scalebox{0.8}{$\alpha$}}{\overline \nabla}}_{\partial_i} \partial_j$ at $\tilde{\theta}$})\times  (\text{$\alpha$-representation of $\partial_k$ at $\tilde{\theta}$})\nonumber \\
&\qquad\times
\tilde{f}^\alpha(x;\tilde{\theta}) d\mu \nonumber\\
&=-\frac{1+\alpha}{2}\int_{\mathfrak X} \tilde{f}_i(x;\tilde{\theta})\tilde{f}_j(x;\tilde{\theta})\tilde{f}_k(x;\tilde{\theta}) \tilde{f}^{-2}(x;\tilde{\theta}) d\mu \nonumber\\
&\qquad+\int_{\mathfrak X} \tilde{f}_{ij}(x;\tilde{\theta})\tilde{f}_k(x;\tilde{\theta})\tilde{f}^{-1}(x;\tilde{\theta}) d\mu \nonumber\\
&=\frac{1-\alpha}{2}\int_{\mathfrak X} \tilde{l}_i(x;\tilde{\theta})\tilde{l}_j(x;\tilde{\theta})\tilde{l}_k(x;\tilde{\theta}) \tilde{f}(x;\tilde{\theta}) d\mu \nonumber\\
&\qquad+\int_{\mathfrak X} \tilde{l}_{ij}(x;\tilde{\theta})\tilde{l}_k(x;\tilde{\theta})\tilde{f}(x;\tilde{\theta}) d\mu, \label{def_cristofa}
\end{align}
where $\tilde{l}_{ij}(x;\theta)\triangleq \partial_i \partial_j \log \tilde{f}(x;\tilde{\theta}).$

The notation $\cristofsa{ij}{k}$ is called Christoffel's second symbol. We also use Christoffel's first symbol $\cristoffa{ij}{k}(0\leq i,j,k \leq p)$ defined by
\begin{align*}
\cristoffa{ij}{k}(\tilde{\theta})&\triangleq\sum_{t=0}^{p} \cristofsa{ij}{t} (\tilde{\theta})\: \tilde{g}^{kt}(\tilde{\theta}),  \\
&=
\begin{cases}
e^{-\theta^0}\sum_{t=1}^p \cristofsa{ij}{t}(\tilde{\theta})g^{kt}(\theta),  \text{ if $1\leq k \leq p$,}\\
e^{-\theta^0}\cristofsa{ij}{0}(\tilde{\theta}), \text{ if $k=0$.}
\end{cases}
\end{align*}
Cristoffel's symbols are not tensors. For example, when the coordinates are changed from $(i, j, k, \ldots )$ to $(\alpha, \beta, \gamma, \ldots ) $, the following exchange rule holds.
\begin{equation}
\label{exchange_crisotffel}
\cristofsa{ij}{k}=\cristofsa{\alpha\beta}{\gamma}B_i^\alpha B_j^\beta B_k^\gamma+B_k^\alpha B_{ij}^\gamma g_{\alpha \gamma}
\end{equation}
When $\alpha=\pm 1$, Christoffel's symbols are  denoted by $\cristofse{ij}{k}$, $\cristoffe{ij}{k}$ ($\alpha=1$) and $\cristofsm{ij}{k}$, $\cristoffm{ij}{k}$ ($\alpha=-1$). The concrete forms of $\cristofse{ij}{k}$ and $\cristofsm{ij}{k}$ are given by 
\begin{align}
\cristofse{ij}{k}(\tilde{\theta})&=\int_{\mathfrak X} \tilde{l}_{ij}(x;\tilde{\theta}) \tilde{l}_{k}(x;\tilde{\theta}) \tilde{f}(x;\tilde{\theta}) d\mu \label{Cristofse},\\
\cristofsm{ij}{k}(\tilde{\theta})&=\int_{\mathfrak X} \bigl(\tilde{l}_{ij}(x;\tilde{\theta}) \tilde{l}_{k}(x;\tilde{\theta})+\tilde{l}_i(x;\tilde{\theta})\tilde{l}_j(x;\tilde{\theta})\tilde{l}_k(x;\tilde{\theta})\bigr)\tilde{f}(x;\tilde{\theta}) d\mu .\label{Cristofsm}
\end{align}
From \eqref{Cristofse} and \eqref{Cristofsm}, we notice that 
\begin{equation}
\label{another_def_Cristofsa}
\cristofsa{ij}{k}(\tilde{\theta})=\cristofse{ij}{k}(\tilde{\theta})+\frac{1-\alpha}{2}\bigl(\cristofsm{ij}{k}(\tilde{\theta})-\cristofse{ij}{k}(\tilde{\theta})\bigr)
\end{equation}
The following relation is an important property for information geometry. It shows the duality between $\alpha$-covariant derivative and $-\alpha$-covariant derivative. Let $A, B, C$ be vector fields on $\tilde{\mathcal P}$, then
\begin{equation}
\label{duality_+-alpha}
A\bigl(\langle B, \:C \rangle\bigr)=
\langle {\oset{\scalebox{0.8}{$\alpha$}}{\widetilde\nabla}}_{A} B, \:C\rangle +\langle {\oset{\scalebox{0.8}{-$\alpha$}}{\widetilde\nabla}}_{A} C, \:B\rangle.
\end{equation}
Especially for $0 \leq i, j, k \leq p$, it turns out that
\begin{align}
\label{derivative_metric}
\partial_k\tilde{g}_{ij}&=\langle {\oset{\scalebox{0.8}{$\alpha$}}{\widetilde\nabla}}_{\partial_k} \partial_i, \:\partial_j\rangle+\langle {\oset{\scalebox{0.8}{$-\alpha$}}{\widetilde\nabla}}_{\partial_k} \partial_j, \:\partial_i\rangle \nonumber \\
&=\langle {\oset{\scalebox{0.8}{$\alpha$}}{\overline\nabla}}_{\partial_k} \partial_i, \:\partial_j\rangle+\langle {\oset{\scalebox{0.8}{$-\alpha$}}{\overline\nabla}}_{\partial_k} \partial_j, \:\partial_i\rangle \nonumber \\
&=\cristofsa{ik}{j}+\cristofsma{jk}{i}.
\end{align}

We define $\alpha$-covariant derivative in the space of $\mathcal{P}$ (denoted by  ${\oset{\scalebox{0.8}{$\alpha$}}{\nabla}}_{\partial_i} \partial_j, 1\leq i,j\leq p$) as the tangent vector field whose value at $\theta$ is the orthogonal projection of ${\oset{\scalebox{0.8}{$\alpha$}}{\widetilde\nabla}}_{\partial_i} \partial_j$ at $(0, \theta)$ on the tangent space of $\mathcal{P}$ at $\theta$. Since $\partial_i \perp \partial_0\ (1\leq i \leq p)$, we easily notice that 
$$
{\oset{\scalebox{0.8}{$\alpha$}}{\nabla}}_{\partial_i} \partial_j =\sum_{1\leq s, t \leq p} \cristofsa{ij}{s}(\theta) g^{st}(\theta) \partial_t= \wat \cristoffa{ij}{t}(\theta) \partial_t,
$$
where $\cristofsa{ij}{s}(\theta)\triangleq \cristofsa{ij}{s}((0,\theta))$ and $\cristoffa{ij}{t}(\theta)\triangleq \cristoffa{ij}{t}((0,\theta))$.

Now we define the second fundamental forms $\secfsa{ij}(\tilde\theta) ( 0 \leq i, j \leq p, -\infty < \alpha < \infty)$ of $\tilde{\mathcal{P}}$ as 
\begin{equation}
\label{def_2nd_fund_1}
\secfsa{ij}(\tilde\theta)\triangleq{\oset{\scalebox{0.8}{$\alpha$}}{\overline \nabla}}_{\partial_i} \partial_j -{\oset{\scalebox{0.8}{$\alpha$}}{\widetilde\nabla}}_{\partial_i} \partial_j.
\end{equation}
Its $\alpha$-representation, ${\oset{\scalebox{0.8}{$\alpha$}}{a}}_{ij}(x; \tilde{\theta})$, is given by 
\begin{align*}
&{\oset{\scalebox{0.8}{$\alpha$}}{a}}_{ij}(x; \tilde{\theta})\\
&=\text{($\alpha$-representation of ${\oset{\scalebox{0.8}{$\alpha$}}{\overline \nabla}}_{\partial_i} \partial_j $)}-\text{($\alpha$-representation of ${\oset{\scalebox{0.8}{$\alpha$}}{\widetilde\nabla}}_{\partial_i} \partial_j$)}\\
&=-\frac{1+\alpha}{2} \{\tilde{f}(x;\tilde{\theta}) \}^{-(3+\alpha)/2} \tilde{f}_i(x; \tilde{\theta})\tilde{f}_j(x; \tilde{\theta})+\{\tilde{f}(x;\tilde{\theta}) \}^{-(1+\alpha)/2}
\tilde{f}_{ij}(x; \tilde{\theta})\\
&\qquad -e^{-\theta^0}\sum_{1 \leq s, t \leq p} \cristofsa{ij}{s}(\tilde{\theta})  \: {g}^{st}(\theta)
\{\tilde{f}(x;\tilde{\theta}) \}^{-(1+\alpha)/2}\tilde{f}_t(x; \tilde{\theta}) \\
&\qquad- e^{-\theta^0}\cristofsa{ij}{0}(\tilde{\theta})\{\tilde{f}(x;\tilde{\theta}) \}^{-(1+\alpha)/2}\tilde{f}_0(x; \tilde{\theta}).
\end{align*}
For the case $\alpha=\pm 1$, we use the notations $\secfse{ij}(\tilde\theta)$, ${\oset{\scalebox{0.8}{$e$}}{a}}_{ij}(x; \tilde{\theta})$ ($\alpha=1$) and $\secfsm{ij}(\tilde\theta)$, ${\oset{\scalebox{0.8}{$m$}}{a}}_{ij}(x; \tilde{\theta})$ ($\alpha=-1$). The concrete forms of ${\oset{\scalebox{0.8}{$e$}}{a}}_{ij}(x; \tilde{\theta})$ and ${\oset{\scalebox{0.8}{$m$}}{a}}_{ij}(x; \tilde{\theta})$ are given as follows;
\begin{align*}
{\oset{\scalebox{0.8}{$e$}}{a}}_{ij}(x; \tilde{\theta})&=\tilde{l}_{ij}(x;\tilde{\theta})-e^{-\theta^0}\sum_{1 \leq  t \leq p} \cristoffe{ij}{t}(\tilde{\theta})  \:  \tilde{l}_{t}(x;\tilde{\theta})
+ e^{-\theta^0}\tilde{g}_{ij}(\tilde{\theta}),\\
{\oset{\scalebox{0.8}{$m$}}{a}}_{ij}(x; \tilde{\theta})&=\tilde{f}_{ij}(x; \tilde{\theta})-e^{-\theta^0}\sum_{1 \leq  t \leq p} \cristoffm{ij}{t}(\tilde{\theta})  \:  \tilde{f}_{t}(x;\tilde{\theta})-e^{-\theta^0}\tilde{f}(x;\tilde{\theta})\:\int_{\mathfrak X} \tilde{f}_{ij}(x;\tilde{\theta})d\mu.
\end{align*}
Especially when $1\leq i, j \leq p$
\begin{align}
{\oset{\scalebox{0.8}{$e$}}{a}}_{ij}(x; \tilde{\theta})&=l_{ij}(x;\theta)-e^{-\theta^0}\sum_{1 \leq  t \leq p} \cristoffe{ij}{t}(\tilde{\theta})  \: l_{t}(x;\theta)
+g_{ij}(\theta), \label{secfunde}\\
{\oset{\scalebox{0.8}{$m$}}{a}}_{ij}(x; \tilde{\theta})&=\tilde{f}_{ij}(x; \tilde{\theta})-e^{-\theta^0}\sum_{1 \leq  t \leq p} \cristoffm{ij}{t}(\tilde{\theta}) \:  \tilde{f}_{t}(x;\tilde{\theta}),
\label{secfundm}
\end{align}
since
$$
\int_{\mathfrak X} \tilde{f}_{ij}(x;\tilde{\theta})d\mu=e^{\theta^0}\int_{\mathfrak X} f_{ij}(x;\theta)d\mu=e^{\theta^0}\partial_i \partial_j\int_{\mathfrak X} f(x;\theta)d\mu=0.
$$

From the definition of  the covariant derivative, the $\alpha$-representation of ${\oset{\scalebox{0.8}{$\alpha$}}{\overline \nabla}}_{\partial_h}\secfsa{ij} $ is given by $\partial_h{\oset{\scalebox{0.8}{$\alpha$}}{a}}_{ij}(x; \tilde{\theta})$. The following equation holds;
\begin{align}
\int_{\mathfrak X} \bigl(\partial_h{\oset{\scalebox{0.8}{$\alpha$}}{a}}_{ij}(x; \tilde{\theta})\bigr)\: \tilde{f}_{k}(x; \tilde{\theta}) \{\tilde{f}(x;
\tilde{\theta})\}^{(\alpha-1)/2} d\mu&=\bigl\langle 
{\oset{\scalebox{0.8}{$\alpha$}}{\overline \nabla}}_{\partial_h}\secfsa{ij}, \partial_k \bigr\rangle_{\tilde{\theta}}\nonumber\\
&=-\bigl\langle \secfsa{ij}, {\oset{\scalebox{0.8}{$-\alpha$}}{\overline \nabla}}_{\partial_h} \partial_k \bigr\rangle_{\tilde{\theta}}\nonumber\\
&=-\bigr\langle \secfsa{ij}, {\oset{\scalebox{0.8}{$-\alpha$}}{\overline \nabla}}_{\partial_h} \partial_k -\tilde{\pi}\bigl({\oset{\scalebox{0.8}{$-\alpha$}}{\overline \nabla}}_{\partial_h}  \partial_k\bigr)\bigr\rangle_{\tilde{\theta}}\nonumber\\
&=-\bigl\langle \secfsa{ij}, \secfsma{hk} \bigr\rangle_{\tilde{\theta}},\label{deriva_secfe}
\end{align}
where the second and third equations hold since $\langle \secfsa{ij}, \partial_k \rangle_{\tilde{\theta}}=0$. 

We often use another type of second fundamental form 
\begin{equation}
\label{def_2nd_fund_2}
\secffa{i}{j}(\tilde\theta)\triangleq \secfsa{ik}(\tilde\theta)g^{kj}(\tilde\theta)=\secfsa{ki}(\tilde\theta)g^{kj}(\tilde\theta).
\end{equation}
The simplified notations $\secfsa{ij}(\theta), \secffa{i}{j}(\theta)$ are also used instead of $\secfsa{ij}((0,\theta))$ and $\secffa{i}{j}((0,\theta))$.

Lastly, we refer to Riemannian curvature tensor of $\mathcal{P}$. For vector fields $A, B, C$ on $\mathcal{P}$, Riemannian curvature tensor with respect to $\oset{\scalebox{0.8}{$\alpha$}}{\nabla}$ is defined as 
$$
\overset{\;\scalebox{0.7}{$\alpha$}}{R}(A, B)\:C\triangleq {\oset{\scalebox{0.8}{$\alpha$}}{\nabla}}_A\bigl(
{\oset{\scalebox{0.8}{$\alpha$}}{\nabla}}_B C\bigr)-{\oset{\scalebox{0.8}{$\alpha$}}{\nabla}}_B\bigl({\oset{\scalebox{0.8}{$\alpha$}}{\nabla}}_A C\bigr)-{\oset{\scalebox{0.8}{$\alpha$}}{\nabla}}_{AB-BA} C.
$$
The components of Riemannian curvature tensor, 
$\overset{\;\alpha}{R}\hspace{-5pt}\begin{smallmatrix}{\ \  \ \  l}\\{ijk}\end{smallmatrix}(\theta)$
, are defined as the unique function on $\mathcal{P}$ which satisfies
$$
\overset{\;\alpha}{R}(\partial_i, \partial_j)\:\partial_k = \sum_{l=1}^p 
\overset{\;\alpha}{R}\hspace{-5pt}\begin{smallmatrix}{\ \  \ \  l}\\{ijk}\end{smallmatrix}(\theta)\partial_l.
$$
More specifically it is given by
\begin{equation}
\label{specific_form_R}
\overset{\;\alpha}{R}\hspace{-5pt}\begin{smallmatrix}{\ \  \ \  l}\\{ijk}\end{smallmatrix}
(\theta)=\partial_i \cristoffa{jk}{l}(\theta)-\partial_j\cristoffa{ik}{l}(\theta)+\cristoffa{ir}{l}(\theta)\cristoffa{jk}{r}(\theta)-\cristoffa{jr}{l}(\theta)\cristoffa{ik}{r}(\theta).
\end{equation} 
\subsection{Geometric Interpretation of Derivatives of Log-likelihood}
\label{interpre_log}
The expectation of the derivatives of the log-likelihood $l(x;\theta)$ can be expressed in the geometric terms introduced in the previous subsection.

\begin{lemma}
\label{int_log_der}
For $1\leq i,\ j,\ h,\ k \leq p$, The following relations hold.
\begin{align}
E_\theta[l_i(x;\theta)]&=0, \label{log_i}\\
E_\theta[l_i(x;\theta) l_j(x;\theta)]&=-E_\theta[l_{ij}(x;\theta)]=g_{ij}(\theta),\label{log_ij}\\
E_\theta[l_{ij}(x;\theta) l_k(x;\theta)]&=\cristofse{ij}{k}(\theta),\label{log_ij_k}\\
E_\theta[l_i(x;\theta) l_j(x;\theta) l_k(x;\theta)]&=\cristofsm{ij}{k}(\theta)-\cristofse{ij}{k}(\theta),\label{log_i_j_k}\\
E_\theta[l_{ijk}(x;\theta)]&=-\bigl(\cristofse{ij}{k}(\theta)+\cristofse{ik}{j}(\theta)+\cristofsm{jk}{i}(\theta)\bigr),\label{log_ijk}\\
E_\theta[l_{ijh}(x;\theta)l_k(x;\theta)]&=\wat \bigl(\partial_h \cristoffe{ij}{t}(\theta)\bigr)g_{tk}(\theta)+
\wat \cristoffe{ij}{t}(\theta)\cristofse{th}{k}(\theta)\label{log_ijh_k}\nonumber\\
&\quad-\langle\secfse{ij}(\theta), \secfsm{hk}(\theta)\rangle,\\
E_\theta[l_{ij}(x;\theta) l_{kh}(x;\theta)]&=\langle\secfse{ij}(\theta), \secfse{kh}(\theta)\rangle+\wat\cristoffe{ij}{t}(\theta) \cristofse{kh}{t}(\theta) \nonumber\\
&\quad+g_{ij}(\theta)g_{kh}(\theta),\label{log_ij_kh}\\
E_\theta[l_{ij}(x;\theta) l_k(x;\theta) l_h(x;\theta)]&=\langle\secfse{ij}(\theta), \bigl(\secfsm{kh}(\theta)-\secfse{kh}(\theta)\bigr)\rangle-g_{ij}(\theta)g_{kh}(\theta)\nonumber\\
&\quad+\was\cristofse{ij}{s}(\theta)\bigr(\cristoffm{kh}{s}(\theta)-\cristoffe{kh}{s}(\theta)\bigr),\label{log_ij_k_h}\\
E_\theta[l_{ijkh}(x;\theta)]&=-\bigl(\partial_k \cristofse{ij}{h}(\theta)+\partial_k\cristofse{ih}{j}(\theta)+\partial_k\cristofsm{jh}{i}(\theta)\bigr)\nonumber\\
&\quad-\wat\bigl(\partial_h \cristoffe{ij}{t}(\theta)\bigr)g_{tk}(\theta)-\wat \cristoffe{ij}{t}(\theta) \cristofse{th}{k}(\theta)\nonumber\\
&\quad+\langle\secfse{ij}(\theta), \secfsm{hk}(\theta)\rangle\label{log_ijkh}.
\end{align}
\end{lemma}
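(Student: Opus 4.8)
The plan is to derive all the identities from two ingredients. The first is the normalization $\int_{\mathfrak{X}} f(x;\theta)\,d\mu=1$, which under C.1 may be differentiated under the integral sign any number of times, yielding $\int_{\mathfrak{X}}\partial_{i_1}\cdots\partial_{i_r}f\,d\mu=0$ for every $r\ge1$. Using $f_i=f\,l_i$ repeatedly, each $\partial_{i_1}\cdots\partial_{i_r}f$ equals $f$ times a universal polynomial in the $l$-derivatives (the cumulant-to-moment expansion: $f_{ij}/f=l_{ij}+l_il_j$, $f_{ijk}/f=l_{ijk}+l_{ij}l_k+l_{ik}l_j+l_{jk}l_i+l_il_jl_k$, and so on), so these vanishing integrals become linear relations among the $E_\theta$ of products of $l$-derivatives. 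The second ingredient is the stock of concrete expressions of the geometric objects on $\mathcal P$ already recorded in Appendix~1, restricted to $\theta^0=0$: \eqref{metric1}--\eqref{metric2} for $g_{ij}$, \eqref{Cristofse}--\eqref{Cristofsm} (equivalently \eqref{another_def_Cristofsa}) for the connection symbols, \eqref{secfunde}--\eqref{secfundm} for the second fundamental forms, \eqref{deriva_secfe} for their covariant derivatives, and \eqref{derivative_metric} for $\partial_k g_{ij}$. Throughout, two structural facts carry the bookkeeping: $\secfse{ij}$ is orthogonal to the tangent directions, so $E_\theta[{\oset{\scalebox{0.8}{$e$}}{a}}_{ij}]=0$ and $E_\theta[{\oset{\scalebox{0.8}{$e$}}{a}}_{ij}\,l_s]=0$; and the inner products on the right-hand sides are expectations, $\langle\secfse{ij},\secfse{kh}\rangle=E_\theta[{\oset{\scalebox{0.8}{$e$}}{a}}_{ij}\,{\oset{\scalebox{0.8}{$e$}}{a}}_{kh}]$ and $\langle\secfse{ij},\secfsm{kh}\rangle=\int_{\mathfrak{X}}{\oset{\scalebox{0.8}{$e$}}{a}}_{ij}\,{\oset{\scalebox{0.8}{$m$}}{a}}_{kh}\,d\mu$, obtained by pairing the $\alpha$-representation of one factor against the $(-\alpha)$-representation of the other, exactly as in the definition of the metric on $\mathcal M$.

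With this setup the first few identities are immediate. Equation \eqref{log_i} is the $r=1$ normalization identity; \eqref{log_ij} is the $r=2$ identity read together with \eqref{metric1}--\eqref{metric2}; \eqref{log_ij_k} is literally \eqref{Cristofse} at $\theta^0=0$; and \eqref{log_i_j_k} is \eqref{Cristofsm} minus \eqref{Cristofse}. For \eqref{log_ijk} I would take the $r=3$ identity $E_\theta[l_{ijk}]=-E_\theta[l_{ij}l_k+l_{ik}l_j+l_{jk}l_i+l_il_jl_k]$ and substitute \eqref{log_ij_k} and \eqref{log_i_j_k}; the $\cristofse{jk}{i}$ contribution and the $E_\theta[l_il_jl_k]$ contribution combine into $\cristofsm{jk}{i}$, giving the stated form. (Equivalently one may differentiate \eqref{log_ij} in $\theta^k$ and use \eqref{derivative_metric}. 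The orthogonality relations above also follow quickly from \eqref{secfunde}, \eqref{log_ij} and \eqref{log_ij_k}.)

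The three ``product'' identities \eqref{log_ij_kh}, \eqref{log_ij_k_h}, \eqref{log_ijh_k} are where the work lies, and I would treat them by inverting \eqref{secfunde} to $l_{ij}={\oset{\scalebox{0.8}{$e$}}{a}}_{ij}+\sum_t\cristoffe{ij}{t}l_t-g_{ij}$ and substituting. For \eqref{log_ij_kh} and \eqref{log_ij_k_h} one substitutes this into $E_\theta[l_{ij}l_{kh}]$ and $E_\theta[l_{ij}l_kl_h]$, expands, and discards the many cross terms killed by $E_\theta[{\oset{\scalebox{0.8}{$e$}}{a}}_{ij}]=0$, $E_\theta[{\oset{\scalebox{0.8}{$e$}}{a}}_{ij}l_s]=0$ and \eqref{log_i}; the surviving terms are $\langle\secfse{ij},\secfse{kh}\rangle$ (respectively the combination $\langle\secfse{ij},\secfsm{kh}-\secfse{kh}\rangle$, which appears after also rewriting the factor $l_kl_h$ through \eqref{secfundm}) together with explicit Christoffel and metric products that one recognizes as the claimed right-hand sides after using \eqref{log_i_j_k} and the passage between the first and second Christoffel symbols. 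For \eqref{log_ijh_k} I would differentiate the same inverted relation in $\theta^h$, multiply by $l_k$, take $E_\theta$, evaluate $\sum_t(\partial_h\cristoffe{ij}{t})E_\theta[l_tl_k]$ by \eqref{log_ij} and $\sum_t\cristoffe{ij}{t}E_\theta[l_{th}l_k]$ by \eqref{log_ij_k}, and identify the remaining term $E_\theta[(\partial_h{\oset{\scalebox{0.8}{$e$}}{a}}_{ij})l_k]$ with $-\langle\secfse{ij},\secfsm{hk}\rangle$ via \eqref{deriva_secfe} taken at $\alpha=1$ and $\tilde\theta=(0,\theta)$. Finally \eqref{log_ijkh} needs no fresh computation: differentiating \eqref{log_ijk} (with its indices relabelled to $i,j,h$) in $\theta^k$ and using $\partial_k E_\theta[l_{ijh}]=E_\theta[l_{ijkh}]+E_\theta[l_{ijh}l_k]$ gives $E_\theta[l_{ijkh}]=\partial_k E_\theta[l_{ijh}]-E_\theta[l_{ijh}l_k]$, into which \eqref{log_ijk} and \eqref{log_ijh_k} substitute directly.

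The main obstacle is bookkeeping rather than any new idea: one must specialize the Appendix~1 formulas to $\theta^0=0$ carefully (the $\partial_0$ components drop, $\tilde g_{ij}=g_{ij}$, $\tilde l_i=l_i$), carry the cumulant-to-moment expansions accurately, and --- the one genuinely delicate point --- evaluate the two inner products correctly: the $m$-representation of $\secfse{kh}$ is $\tilde f\cdot{\oset{\scalebox{0.8}{$e$}}{a}}_{kh}$, whereas ${\oset{\scalebox{0.8}{$m$}}{a}}_{kh}$, the $m$-representation of $\secfsm{kh}$, is the $\tilde f$-derivative expression of \eqref{secfundm} and not the log-derivative one, and the pairing $\langle\cdot,\cdot\rangle$ always contracts an $\alpha$-representation against a $(-\alpha)$-representation. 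Once these are kept consistent, each identity reduces to a short manipulation in the symbols already tabulated in Appendix~1.
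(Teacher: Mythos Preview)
Your proposal is correct and follows essentially the same route as the paper's own proof: the elementary identities \eqref{log_i}--\eqref{log_i_j_k} are read off from the normalization and from \eqref{Cristofse}--\eqref{Cristofsm}; \eqref{log_ijk} comes from the $r=3$ cumulant-to-moment expansion; the ``product'' identities \eqref{log_ijh_k}, \eqref{log_ij_kh}, \eqref{log_ij_k_h} are obtained by inverting \eqref{secfunde} to write $l_{ij}={\oset{\scalebox{0.8}{$e$}}{a}}_{ij}+\sum_t\cristoffe{ij}{t}l_t-g_{ij}$, substituting, and using the orthogonality of $\secfse{ij}$ to tangent directions together with \eqref{deriva_secfe} and \eqref{secfundm}; and \eqref{log_ijkh} is obtained by differentiating \eqref{log_ijk} and subtracting \eqref{log_ijh_k}. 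The only cosmetic difference is that for \eqref{log_ij_k_h} the paper first splits $E_\theta[l_{ij}l_kl_h]=\int l_{ij}f_{kh}\,d\mu-E_\theta[l_{ij}l_{kh}]$ before substituting, whereas you substitute for $l_{ij}$ first and then handle $E_\theta[{\oset{\scalebox{0.8}{$e$}}{a}}_{ij}\,l_kl_h]$ via the same $l_kl_h=f_{kh}/f-l_{kh}$ identity; the computations are identical.
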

\noindent
-\textit{ Proof }-\\
Equation \eqref{log_ij} is the result already mentioned in \eqref{metric1} and \eqref{metric2}. \eqref{log_ij_k} and \eqref{log_i_j_k} are special cases of \eqref{Cristofse} and \eqref{Cristofsm} when $\tilde{\theta}=(0, \theta)$ and $ 1 \leq i, j, k,  \leq p$.
In the following proof, we will abbreviate $f(x;\theta)$,  $l_i(x;\theta)$, $l_{ij}(x;\theta),\cdots$ respectively to $f$, $l_i$, $l_{ij},\cdots.$\\
-\textit{ Proof of \eqref{log_i}}-
$$
E_\theta[l_i]=\int_{\mathfrak X} f_i(x;\theta) d\mu =\partial_i \int_{\mathfrak X} f(x;\theta) d\mu=0.
$$
-\textit{ Proof of \eqref{log_ijk}}-\\
By differentiating the both sides of  $l_{ij}=-l_i l_j +f_{ij}/f$, we get the following equation.
\begin{align*}
l_{ijk}&=-l_{ik}l_j-l_il_{jk}+\frac{f_{ijk}}{f} -\frac{f_{ij}f_k}{f^2}\\
&=-l_{ik}l_j-l_il_{jk}+\frac{f_{ijk}}{f}\\
&\qquad -\biggl(\frac{f_{ij}}{f}-\frac{f_i}{f}\frac{f_j}{f}+\frac{f_i}{f}\frac{f_j}{f}\biggr)\biggl(\frac{f_k}{f}\biggr)\\
&=-(l_{ij} l_k+l_{ik}l_j+l_{jk}l_i)-l_i l_j l_k+\frac{f_{ijk}}{f}.
\end{align*}
Take expectation of both sides of the above equation, and use \eqref{log_ij_k}, \eqref{log_i_j_k} and the relation
$$
E_\theta[f_{ijk}/f]=\int_{\mathfrak X} f_{ijk} d\mu=\partial_i\partial_j\partial_k \int_{\mathfrak X} f d\mu=0,
$$
then we obtain the results.\\
-\textit{ Proof of \eqref{log_ijh_k}}-\\
From \eqref{secfunde} for the case $\tilde{\theta}=(0, \theta)$, 
\begin{equation}
\label{form_l_ij}
l_{ij}(x;\theta)={\oset{\scalebox{0.8}{$e$}}{a}}_{ij}(x;\theta)+\sum_{t=1}^p \cristoffe{ij}{t}(\theta)  l_{t}(x;\theta)-g_{ij}(\theta).
\end{equation}
By differentiating both sides of this equation,  we have 
$$
l_{ijh}(x;\theta)=\partial_h {\oset{\scalebox{0.8}{$e$}}{a}}_{ij}(x;\theta)+\sum_{t=1}^p (\partial_h\cristoffe{ij}{t}(\theta))  l_{t}(x;\theta)+\sum_{t=1}^p \cristoffe{ij}{t}(\theta)  l_{th}(x;\theta)-\partial_h g_{ij}(\theta).
$$
Consequently 
\begin{align*}
&E_{\theta}[l_{ijh} l_k]\\
&=E_{\theta}[(\partial_h {\oset{\scalebox{0.8}{$e$}}{a}}_{ij}) l_k]+\sum_{t=1}^p (\partial_h\cristoffe{ij}{t}(\theta)) E_{\theta}[ l_t l_k]+
\sum_{t=1}^p \cristoffe{ij}{t}(\theta) E_\theta[ l_{th} l_k]-(\partial_h g_{ij}(\theta))E_\theta[l_k] \\
&=-\langle \secfse{ij}, \secfsm{hk} \rangle_\theta+\wat (\partial_h \cristoffe{ij}{t}(\theta))g_{tk}(\theta)+
\wat \cristoffe{ij}{t}(\theta)\cristofse{th}{k}(\theta),
\end{align*}
where the second equation comes from \eqref{deriva_secfe} (for the case $\tilde\theta=(0, \theta)$ and $\alpha=1$), \eqref{log_i}, \eqref{log_ij} and \eqref{log_ij_k}.\\
-\textit{ Proof of \eqref{log_ij_kh}}-\\
From the definition of the second fundamental form, it turns out that $\langle \secfse{ij}, \partial_t \rangle=0 \ (0\leq i,j,t \leq p)$, hence, for $1\leq i,j,t \leq p$,
$$
E_\theta[{\oset{\scalebox{0.8}{$e$}}{a}}_{ij} l_t]=E_\theta[{\oset{\scalebox{0.8}{$e$}}{a}}_{ij}]=0.
$$
Using \eqref{form_l_ij} for $l_{ij}$, $l_{kh}$ and the relation \eqref{log_i}, \eqref{log_ij}, we notice the following equation holds.
\begin{align*}
E_\theta[l_{ij} l_{kh}]&=E_\theta[{\oset{\scalebox{0.8}{$e$}}{a}}_{ij} {\oset{\scalebox{0.8}{$e$}}{a}}_{kh}]+\sum_{1\leq s,t \leq p}
\cristoffe{ij}{t}\cristoffe{kh}{s}E_\theta[l_t l_s]+g_{ij} g_{kh}\\
&=\langle \secfse{ij}, \secfse{kh} \rangle +\wat
\cristoffe{ij}{t}\cristofse{kh}{t}+g_{ij} g_{kh}.
\end{align*}
-\textit{ Proof of \eqref{log_ij_k_h}}-\\
Since $l_k l_h=f_{kh}/f-l_{kh}$, 
\begin{equation}
\label{E_l_ij_k_h}
E_\theta[l_{ij} l_k l_h]=\int_{\mathfrak X} l_{ij} (f_{kh}/f-l_{kh}) f d\mu=\int_{\mathfrak X} l_{ij} f_{kh} d\mu- E_{\theta}[l_{ij}l_{kh}].
\end{equation}
From \eqref{secfundm} for the case $\tilde{\theta}=(0,\theta)$, we have
\begin{equation}
\label{form_f_kh}
f_{kh}(x;\theta)={\oset{\scalebox{0.8}{$m$}}{a}}_{kh}(x; \theta)+\wat \cristoffm{kh}{t}(\theta) \: f_{t}(x;\theta).
\end{equation}
If we substitute  $l_{ij}$ and $f_{kh}$ in the integrand of \eqref{E_l_ij_k_h} with the right-hand sides of \eqref{form_l_ij} and \eqref{form_f_kh} respectively,  we have
\begin{align*}
&\int_{\mathfrak X} l_{ij} f_{kh} d\mu \\
&=\int_{\mathfrak X} \biggl({\oset{\scalebox{0.8}{$e$}}{a}}_{ij} {\oset{\scalebox{0.8}{$m$}}{a}}_{kh} +{\oset{\scalebox{0.8}{$e$}}{a}}_{ij}\wat \cristoffm{kh}{t} f_{t}+{\oset{\scalebox{0.8}{$m$}}{a}}_{kh}\wat\cristoffe{ij}{t}  l_{t}\\
&\qquad\qquad+\wat\cristoffe{ij}{t}  l_{t} \was\cristoffm{kh}{s} f_{s}-g_{ij}{\oset{\scalebox{0.8}{$m$}}{a}}_{kh}-g_{ij}\wat\cristoffm{kh}{t} f_{t}\biggr) d\mu \\
&=\langle \secfse{ij}, \secfsm{kh} \rangle+\wat \cristoffm{kh}{t}\langle \secfse{ij}, \partial_t \rangle+ \wat\cristoffe{ij}{t} \langle \partial_t, \secfsm{kh}\rangle \\
&\qquad +\sum_{1\leq s, t \leq p}  \cristoffm{kh}{s} \cristoffe{ij}{t} g_{st}- g_{ij}\langle \secfsm{kh}, \partial_0 \rangle -g_{ij} \wat \cristoffm{kh}{t}\langle \partial_t, \partial_0 \rangle\\
&=\langle \secfse{ij}, \secfsm{kh} \rangle+\was  \cristoffm{kh}{s} \cristofse{ij}{s}.
\end{align*}
since $\langle \secfse{ij}, \partial_k \rangle=\langle \secfsm{ij}, \partial_k \rangle=0$ for $0 \leq i,j,k \leq p$, and $\langle \partial_t, \partial_0 \rangle=0$ for $1\leq t \leq p.$ Combine this equation with \eqref{log_ij_kh} and \eqref{E_l_ij_k_h}, then we obtain the results.\\
-\textit{ Proof of \eqref{log_ijkh}}-\\
If we differentiate both sides of the equation 
$$
\int_{\mathfrak X} l_{ijh}(x;\theta) f(x;\theta) d\mu=-(\cristofse{ij}{h}(\theta)+\cristofse{ih}{j}(\theta)+\cristofsm{jh}{i}(\theta))\quad (\text{see \eqref{log_ijk}}),
$$
then we have 
$$
\partial_k \int_{\mathfrak X} l_{ijh}(x;\theta) f(x;\theta) d\mu=-(\partial_k\cristofse{ij}{h}(\theta)+\partial_k\cristofse{ih}{j}(\theta)+\partial_k\cristofsm{jh}{i}(\theta))
$$
The left hand side equals $E_\theta[l_{ijkh}]+E_\theta[l_{ijh}l_k]$. From \eqref{log_ijh_k}, we have
\begin{align*}
E_\theta[l_{ijkh}]&=-(\partial_k\cristofse{ij}{h}+\partial_k\cristofse{ih}{j}+\partial_k\cristofsm{jh}{i})-E_\theta[l_{ijh} l_k]\\
&=-(\partial_k\cristofse{ij}{h}+\partial_k\cristofse{ih}{j}+\partial_k\cristofsm{jh}{i})\\
&\qquad-\wat (\partial_h \cristoffe{ij}{t})g_{tk}-\wat \cristoffe{ij}{t}\cristofse{th}{k}+\langle \secfse{ij}, \secfsm{hk} \rangle.
\end{align*}
\subsection{Expansion of Divergence}
\label{section:expan}
Substitute $\theta_1$ and $\theta_2$ in \eqref{alphadive} respectively with $\theta$ and $\theta_0$.  Fix $\theta_0$ in $\overset{\alpha}{D}[\theta: \theta_0]$ and treat it as the function of $\theta.$ Then Taylor expansion of $\overset{\alpha}{D}(\theta)=\overset{\alpha}{D}[\theta: \theta_0]$ around $\theta_0$ is given by 
\begin{align}
&\overset{\alpha}{D}(\theta)\nonumber\\
&=\wai \bigl(\epsilon_i \overset{\alpha}{D}[\theta_0: \theta_0]\bigr)(\theta^i-\theta_0^i) +
\frac{1}{2}\wai \waj \bigl(\epsilon_i \epsilon_j \overset{\alpha}{D}[\theta_0: \theta_0]\bigr)(\theta^i-\theta_0^i) (\theta^j-\theta_0^j)\nonumber\\
&\ +\frac{1}{6}\wai \waj \wak \bigl(\epsilon_i \epsilon_j \epsilon_k \overset{\alpha}{D}[\theta_0: \theta_0]\bigr)(\theta^i-\theta_0^i) (\theta^j-\theta_0^j)(\theta^k-\theta_0^k) \nonumber\\
&\ +\frac{1}{24}\wai \waj \wak \wal \bigl(\epsilon_i \epsilon_j \epsilon_k \epsilon_l \overset{\alpha}{D}[\theta_0: \theta_0]\bigr)(\theta^i-\theta_0^i) (\theta^j-\theta_0^j)(\theta^k-\theta_0^k)(\theta^l-\theta_0^l)\nonumber \\
&\ +O(||\theta-\theta_0||^5), \label{expansion_divergence}
\end{align}
where $\epsilon_i$ is the partial differentiation of $\overset{\alpha}{D}[\theta_1: \theta_2]$ with respect to $\theta_1^i$.

According to Eguchi's relationship (see \cite{Eguchi1}), 
\begin{align}
\epsilon_i \overset{\alpha}{D}[\theta_0: \theta_0]&=0,\label{Egur1}\\
\epsilon_i \epsilon_j  \overset{\alpha}{D}[\theta_0: \theta_0]&=g_{ij}(\theta_0),\label{Egur2}\\
\epsilon_i \epsilon_j \epsilon_k \overset{\alpha}{D}[\theta_0: \theta_0]&=\cristofsa{ij}{k}(\theta_0)+\cristofsa{ik}{j}(\theta_0)+\cristofsma{kj}{i}(\theta_0),\label{Egur3}
\end{align}
where $g_{ij}$ and $\cristofsa{ij}{k}$ are respectively the components of Fisher information metric and $\alpha$-connection of Riemannian manifold $\mathcal{P}$ (see Section \ref{basic_con_info_geo}). 

In this section we reveal the geometrical meaning of the forth derivative term
$$
\epsilon_i \epsilon_j \epsilon_k \epsilon_l \overset{\alpha}{D}[\theta_0: \theta_0]
$$
(see Eguchi \cite{Eguchi2} as a related work ).
Let $\delta_i(i=1,\dots,p)$ denote the partial derivative w.r.t. $\theta_2^i$ of $\overset{\alpha}{D}[\theta_1: \theta_2]$.
From \eqref{Egur3} and \eqref{another_def_Cristofsa}, we have
\begin{align}
\epsilon_i \epsilon_j \epsilon_k \overset{\alpha}{D}[\theta: \theta]&=\cristofsa{ij}{k}(\theta)+\cristofsa{ik}{j}(\theta)+\cristofsma{kj}{i}(\theta)\nonumber\\
&=\cristofse{ij}{k}(\theta)+\cristofse{ik}{j}(\theta)+\cristofse{kj}{i}(\theta)\nonumber\\
&\qquad+\frac{3-\alpha}{2}\bigl(\cristofsm{ij}{k}(\theta)-\cristofse{ij}{k}\bigr(\theta)) \text{ (see \eqref{another_def_Cristofsa})}.
\end{align} 
Differentiate both sides of this equation in $\theta^l$, then we have
\begin{align}
\label{i_j_k_l_D}
 \epsilon_i \epsilon_j \epsilon_k  \epsilon_l\overset{\alpha}{D}[\theta: \theta]
&=-\delta_l \epsilon_i \epsilon_j \epsilon_k  \overset{\alpha}{D}[\theta: \theta]
+\partial_l\cristofse{ij}{k}(\theta)+\partial_l\cristofse{ik}{j}(\theta)+\partial_l\cristofse{kj}{i}(\theta)\nonumber \\
&\qquad+\frac{3-\alpha}{2}\partial_l\bigl(\cristofsm{ij}{k}-\cristofse{ij}{k}\bigr).
\end{align}
From \eqref{alphadive}, we have
\begin{align*}
&\delta_l \epsilon_i \epsilon_j \epsilon_k  \overset{\alpha}{D}[\theta_1: \theta_2]\\
&=-\frac{(1-\alpha)^2}{4}\int_{\mathfrak X}l_i(x; \theta_1) l_j(x; \theta_1) l_k(x; \theta_1) l_l(x; \theta_2) f^{(1-\alpha)/2}(x ;\theta_1)  f^{(1+\alpha)/2}(x ;\theta_2) d\mu\\
&\quad -\frac{1-\alpha}{2}\int_{\mathfrak X}l_{ik}(x; \theta_1) l_j(x; \theta_1) l_l(x; \theta_2) f^{(1-\alpha)/2}(x ;\theta_1)  f^{(1+\alpha)/2}(x ;\theta_2) d\mu\\
&\quad -\frac{1-\alpha}{2} \int_{\mathfrak X}l_{i}(x; \theta_1) l_{jk}(x; \theta_1) l_l(x; \theta_2) f^{(1-\alpha)/2}(x ;\theta_1)  f^{(1+\alpha)/2}(x ;\theta_2) d\mu \\
&\quad -\frac{1-\alpha}{2} \int_{\mathfrak X}l_{ij}(x; \theta_1) l_k(x; \theta_1) l_l(x; \theta_2) f^{(1-\alpha)/2}(x ;\theta_1)  f^{(1+\alpha)/2}(x ;\theta_2) d\mu\\
&\quad -\int_{\mathfrak X}l_{ijk}(x; \theta_1)  l_l(x; \theta_2) f^{(1-\alpha)/2}(x ;\theta_1)  f^{(1+\alpha)/2}(x ;\theta_2) d\mu.
\end{align*}
Consequently we have
\begin{align}
\delta_l \epsilon_i \epsilon_j \epsilon_k  \overset{\alpha}{D}[\theta: \theta]
&=-\frac{(1-\alpha)^2}{4}E_\theta[l_i l_j l_k l_l]\nonumber\\
&\quad -\frac{1-\alpha}{2}\Bigl\{E_\theta[l_{ik} l_j l_l] +E_\theta[l_{jk} l_i l_l] +E_\theta[l_{ij} l_k l_l] \Bigr\}\nonumber\\
&\quad- E_\theta[l_{ijk} l_l]. \label{l_i_j_k_D}
\end{align}
The relation 
\begin{align*}
\partial_l E_\theta[l_i l_j l_k]&=\partial_l \int_{\mathfrak X} l_i l_j l_k f d\mu \\
&=\int_{\mathfrak X} l_{il} l_j l_k f d\mu +\int_{\mathfrak X} l_i l_{jl} l_k f d\mu +\int_{\mathfrak X} l_i l_j l_{kl} d\mu +\int_{\mathfrak X} l_i l_j l_k l_l f d\mu \\
&=E_{\theta}[l_{il} l_j l_k]+E_\theta[l_i l_{jl} l_k]+E_\theta[l_i l_j l_{kl}]+E_\theta[l_i l_j l_k l_l],
\end{align*}
and \eqref{log_i_j_k} lead to 
\begin{equation}
\label{E_log_i_j_k_l}
E_{\theta}[l_i l_j l_k l_l]=\partial_l\bigl(\cristofsm{ij}{k}-\cristofse{ij}{k}\bigr)-\bigl(E_{\theta}[l_{il} l_j l_k]+E_\theta[l_i l_{jl} l_k]+E_\theta[l_i l_j l_{kl}]\bigr).
\end{equation}
Substitute \eqref{E_log_i_j_k_l}, \eqref{log_ijh_k} and \eqref{log_ij_k_h} into \eqref{l_i_j_k_D}, and let $\alphad$ denote $(1-\alpha)/2$, Then we have
\begin{align*}
&\delta_l \epsilon_i \epsilon_j \epsilon_k  \overset{\alpha}{D}[\theta: \theta]\\
&=-(\alphad)^2 \partial_l \cristofsm{ij}{k}+(\alphad)^2 \partial_l \cristofse{ij}{k}\\
&\qquad +(\alphad)^2\Bigl(
\langle \secfse{il}, \secfsm{jk} \rangle- \langle \secfse{il}, \secfse{jk} \rangle - g_{il}g_{jk}+\was\cristofse{il}{s}\cristoffm{jk}{s}-\was\cristofse{il}{s}\cristoffe{jk}{s}\\
&\hspace{25mm} +\langle \secfse{jl}, \secfsm{ik} \rangle- \langle \secfse{jl}, \secfse{ik} \rangle - g_{jl}g_{ik}+\was\cristofse{jl}{s}\cristoffm{ik}{s}-\was\cristofse{jl}{s}\cristoffe{ik}{s}\\
&\hspace{25mm} +\langle \secfse{kl}, \secfsm{ij} \rangle- \langle \secfse{kl}, \secfse{ij} \rangle - g_{kl}g_{ij}+\was\cristofse{kl}{s}\cristoffm{ij}{s}-\was\cristofse{kl}{s}\cristoffe{ij}{s}
\Bigr)\\
&\qquad -\alphad\Bigl(
\langle \secfse{ik}, \secfsm{jl} \rangle- \langle \secfse{ik}, \secfse{jl} \rangle - g_{ik}g_{jl}+\was\cristofse{ik}{s}\cristoffm{jl}{s}-\was\cristofse{ik}{s}\cristoffe{jl}{s}\\
&\hspace{25mm} +\langle \secfse{jk}, \secfsm{il} \rangle- \langle \secfse{jk}, \secfse{il} \rangle - g_{jk}g_{il}+\was\cristofse{jk}{s}\cristoffm{il}{s}-\was\cristofse{jk}{s}\cristoffe{il}{s}\\
&\hspace{25mm} +\langle \secfse{ij}, \secfsm{kl} \rangle- \langle \secfse{ij}, \secfse{kl} \rangle - g_{ij}g_{kl}+\was\cristofse{ij}{s}\cristoffm{kl}{s}-\was\cristofse{ij}{s}\cristoffe{kl}{s}
\Bigr)\\
&\qquad -\wat\bigl(\partial_k \cristoffe{ij}{t} \bigr)g_{tl}-\wat\cristoffe{ij}{t}\cristofse{tk}{l}+\langle \secfse{ij}, \secfsm{kl} \rangle,
\end{align*}
where the definition and the geometrical meaning of each notation are described in Section \ref{basic_con_info_geo}.
From this equation and \eqref{i_j_k_l_D}, we have 
\begin{align}
&\epsilon_i \epsilon_j \epsilon_k \epsilon_l  \overset{\alpha}{D}[\theta: \theta]\nonumber \\
&=(\alphad)^2 \partial_l \cristofsm{ij}{k}-(\alphad)^2 \partial_l \cristofse{ij}{k}\nonumber \\
&\qquad -(\alphad)^2\Bigl(
\langle \secfse{il}, \secfsm{jk} \rangle- \langle \secfse{il}, \secfse{jk} \rangle - g_{il}g_{jk}+\was\cristofse{il}{s}\cristoffm{jk}{s}-\was\cristofse{il}{s}\cristoffe{jk}{s}\nonumber \\
&\hspace{25mm} +\langle \secfse{jl}, \secfsm{ik} \rangle- \langle \secfse{jl}, \secfse{ik} \rangle - g_{jl}g_{ik}+\was\cristofse{jl}{s}\cristoffm{ik}{s}-\was\cristofse{jl}{s}\cristoffe{ik}{s}\nonumber \\
&\hspace{25mm} +\langle \secfse{kl}, \secfsm{ij} \rangle- \langle \secfse{kl}, \secfse{ij} \rangle - g_{kl}g_{ij}+\was\cristofse{kl}{s}\cristoffm{ij}{s}-\was\cristofse{kl}{s}\cristoffe{ij}{s}
\Bigr)\nonumber \\
&\qquad +\alphad\Bigl(
\langle \secfse{ik}, \secfsm{jl} \rangle- \langle \secfse{ik}, \secfse{jl} \rangle - g_{ik}g_{jl}+\was\cristofse{ik}{s}\cristoffm{jl}{s}-\was\cristofse{ik}{s}\cristoffe{jl}{s}\nonumber \\
&\hspace{20mm} +\langle \secfse{jk}, \secfsm{il} \rangle- \langle \secfse{jk}, \secfse{il} \rangle - g_{jk}g_{il}+\was\cristofse{jk}{s}\cristoffm{il}{s}-\was\cristofse{jk}{s}\cristoffe{il}{s}\nonumber \\
&\hspace{20mm} +\langle \secfse{ij}, \secfsm{kl} \rangle- \langle \secfse{ij}, \secfse{kl} \rangle - g_{ij}g_{kl}+\was\cristofse{ij}{s}\cristoffm{kl}{s}-\was\cristofse{ij}{s}\cristoffe{kl}{s}
\Bigr)\nonumber \\
&\qquad+ (\alphad+1)\partial_l \bigl(\cristofsm{ij}{k}-\cristofse{ij}{k}\bigr)+\partial_l \cristofse{ij}{k}+\partial_l\cristofse{ik}{j} +\partial_l \cristofse{kj}{i}\nonumber \\
&\qquad +\wat\bigl(\partial_k \cristoffe{ij}{t} \bigr)g_{tl}+\wat\cristoffe{ij}{t}\cristofse{tk}{l}-\langle \secfse{ij}, \secfsm{kl} \rangle. \label{epsi^4_Diverge}
\end{align}
\section{Appendix 2 -Complementary Proof-}
\subsection{Proof of \eqref{expan_barteta}}
\label{append2-1}

If we insert the right-hand side of \eqref{expand_barteta} into $\barteta{j},\barteta{k},\barteta{l},\barteta{m}$ of itself, we have the following equation. 
\begin{align}
&\barteta{s}\nonumber\\
&=\bareo{s}+\sum_j \aaa{j}{s}
\biggl(
\bareo{j}+\sum_i \aaa{i}{j}\barteta{i}+\sum_{i,k}\bee{ik}{j}\barteta{i}\barteta{k}+\sum_{i,k}\bbar{ik}{j}\barteta{i}\barteta{k}+
\sum_{i,k,l}\cee{ikl}{j}\barteta{i}\barteta{k}\barteta{l}\nonumber\\
&\hspace{30mm}+\sum_{ikl}\cbar{ikl}{j}\barteta{i}\barteta{k}\barteta{l}
+\sum_{i,k,l,m}\dee{iklm}{j}\barteta{i}\barteta{k}\barteta{l}\barteta{m}
\biggr)\nonumber\\
&+\sum_{j,k}\biggl(\bee{jk}{s}+\bbar{jk}{s}\biggr)\nonumber\\
&\hspace{20mm} \times\biggl(
\bareo{j}+\sum_i \aaa{i}{j}\barteta{i}+\sum_{i,k}\bee{ik}{j}\barteta{i}\barteta{k}+\sum_{i,k}\bbar{ik}{j}\barteta{i}\barteta{k}+
\sum_{i,k,l}\cee{ikl}{j}\barteta{i}\barteta{k}\barteta{l}\nonumber\\
&\hspace{30mm}+\sum_{ikl}\cbar{ikl}{j}\barteta{i}\barteta{k}\barteta{l}
+\sum_{i,k,l,m}\dee{iklm}{j}\barteta{i}\barteta{k}\barteta{l}\barteta{m}
\biggr)\nonumber\\
&\hspace{20mm} \times\biggl(
\bareo{k}+\sum_i \aaa{i}{k}\barteta{i}+\sum_{i,j}\bee{ij}{k}\barteta{i}\barteta{j}+\sum_{i,j}\bbar{ij}{k}\barteta{i}\barteta{j}+
\sum_{i,j,l}\cee{ijl}{k}\barteta{i}\barteta{j}\barteta{l}\nonumber\\
&\hspace{30mm}+\sum_{ijl}\cbar{ijl}{k}\barteta{i}\barteta{j}\barteta{l}
+\sum_{i,j,l,m}\dee{ijlm}{k}\barteta{i}\barteta{j}\barteta{l}\barteta{m}
\biggr)\nonumber\\
&+\sum_{j,k,l}\biggl(\cee{jkl}{s}+\cbar{jkl}{s}\biggr)\nonumber\\
&\hspace{20mm} \times\biggl(
\bareo{j}+\sum_i \aaa{i}{j}\barteta{i}+\sum_{i,k}\bee{ik}{j}\barteta{i}\barteta{k}+\sum_{i,k}\bbar{ik}{j}\barteta{i}\barteta{k}+
\sum_{i,k,l}\cee{ikl}{j}\barteta{i}\barteta{k}\barteta{l}\nonumber\\
&\hspace{30mm}+\sum_{ikl}\cbar{ikl}{j}\barteta{i}\barteta{k}\barteta{l}
+\sum_{i,k,l,m}\dee{iklm}{j}\barteta{i}\barteta{k}\barteta{l}\barteta{m}
\biggr)\nonumber\\
&\hspace{20mm} \times\biggl(
\bareo{k}+\sum_i \aaa{i}{k}\barteta{i}+\sum_{i,j}\bee{ij}{k}\barteta{i}\barteta{j}+\sum_{i,j}\bbar{ij}{k}\barteta{i}\barteta{j}+
\sum_{i,j,l}\cee{ijl}{k}\barteta{i}\barteta{j}\barteta{l}\nonumber\\
&\hspace{30mm}+\sum_{i,j,l}\cbar{ijl}{k}\barteta{i}\barteta{j}\barteta{l}+\sum_{i,j,l,m}\dee{ijlm}{k}\barteta{i}\barteta{j}\barteta{l}\barteta{m}
\biggr)\nonumber\\
&\hspace{20mm} \times\biggl(
\bareo{l}+\sum_i \aaa{i}{l}\barteta{i}+\sum_{i,j}\bee{ij}{l}\barteta{i}\barteta{j}+\sum_{i,j}\bbar{ij}{l}\barteta{i}\barteta{j}+
\sum_{i,j,k}\cee{ijk}{l}\barteta{i}\barteta{j}\barteta{k}\nonumber\\
&\hspace{30mm}+\sum_{i,j,k}\cbar{ijk}{l}\barteta{i}\barteta{j}\barteta{k}+\sum_{i,j,k,m}\dee{ijkm}{l}\barteta{i}\barteta{j}\barteta{k}\barteta{m}
\biggr)\nonumber\\
&+\sum_{j,k,l,m}\dee{jklm}{s}\nonumber\\
&\hspace{20mm} \times\biggl(
\bareo{j}+\sum_i \aaa{i}{j}\barteta{i}+\sum_{i,k}\bee{ik}{j}\barteta{i}\barteta{k}+\sum_{i,k}\bbar{ik}{j}\barteta{i}\barteta{k}+
\sum_{i,k,l}\cee{ikl}{j}\barteta{i}\barteta{k}\barteta{l}\nonumber\\
&\hspace{30mm}+\sum_{ikl}\cbar{ikl}{j}\barteta{i}\barteta{k}\barteta{l}
+\sum_{i,k,l,m}\dee{iklm}{j}\barteta{i}\barteta{k}\barteta{l}\barteta{m}
\biggr)\nonumber\\
&\hspace{20mm} \times\biggl(
\bareo{k}+\sum_i \aaa{i}{k}\barteta{i}+\sum_{i,j}\bee{ij}{k}\barteta{i}\barteta{j}+\sum_{i,j}\bbar{ij}{k}\barteta{i}\barteta{j}+
\sum_{i,j,l}\cee{ijl}{k}\barteta{i}\barteta{j}\barteta{l}\nonumber\\
&\hspace{30mm}+\sum_{i,j,l}\cbar{ijl}{k}\barteta{i}\barteta{j}\barteta{l}+\sum_{i,j,l,m}\dee{ijlm}{k}\barteta{i}\barteta{j}\barteta{l}\barteta{m}
\biggr)\nonumber\\
&\hspace{20mm} \times\biggl(
\bareo{l}+\sum_i \aaa{i}{l}\barteta{i}+\sum_{i,j}\bee{ij}{l}\barteta{i}\barteta{j}+\sum_{i,j}\bbar{ij}{l}\barteta{i}\barteta{j}+
\sum_{i,j,k}\cee{ijk}{l}\barteta{i}\barteta{j}\barteta{k}\nonumber\\
&\hspace{30mm}+\sum_{i,j,k}\cbar{ijk}{l}\barteta{i}\barteta{j}\barteta{k}+\sum_{i,j,k,m}\dee{ijkm}{l}\barteta{i}\barteta{j}\barteta{k}\barteta{m}
\biggr)\nonumber\\
&\hspace{20mm} \times\biggl(
\bareo{m}+\sum_i \aaa{i}{m}\barteta{i}+\sum_{i,j}\bee{ij}{m}\barteta{i}\barteta{j}+\sum_{i,j}\bbar{ij}{m}\barteta{i}\barteta{j}+
\sum_{i,j,k}\cee{ijk}{m}\barteta{i}\barteta{j}\barteta{k}\nonumber\\
&\hspace{30mm}+\sum_{i,j,k}\cbar{ijk}{m}\barteta{i}\barteta{j}\barteta{k}+\sum_{i,j,k,l}\dee{ijkl}{m}\barteta{i}\barteta{j}\barteta{k}\barteta{l}
\biggr) \label{expand_barteta_2}
\end{align}
Expanding the equation, counting the order of each term, we can rewrite \eqref{expand_barteta_2} as
\begin{align*}
\barteta{s}&=\bareo{s}+\sum_j \aaa{j}{s}
\biggl(
\bareo{j}+\sum_i \aaa{i}{j}\barteta{i}+\sum_{i,k}\bbar{ik}{j}\barteta{i}\barteta{k}
\biggr)\\
&\quad+\sum_{j,k}\biggl(\bee{jk}{s}+\bbar{jk}{s}\biggr)\\
&\hspace{20mm} \times\biggl(
\bareo{j}+\sum_i \aaa{i}{j}\barteta{i}+\sum_{i,k}\bbar{ik}{j}\barteta{i}\barteta{k}
\biggr)\\
&\hspace{20mm} \times\biggl(
\bareo{k}+\sum_i \aaa{i}{k}\barteta{i}+\sum_{i,j}\bbar{ij}{k}\barteta{i}\barteta{j}
\biggr)\\
&\quad+\sum_{j,k,l}\biggl(\cee{jkl}{s}+\cbar{jkl}{s}\biggr)\bareo{j}\bareo{k}\bareo{l}+Re1,
\end{align*}
where $Re1$  is the polynomial  with respect to the variables $\barteta{s}$, $\bareo{s}$, $\aaa{j}{s}$, $\bee{jk}{s}$, $\cee{jkl}{s}$, $\dee{jklm}{s}$ $(1\leq j, k, l, m, s \leq p)$, and each term is of at least fourth order with respect to $\barteta{s}$, $\bareo{s}$, $\aaa{j}{s}$, $\bee{jk}{s}$, $\cee{jkl}{s}$ $(1\leq j, k, l, s \leq p)$.
If we insert this result into the right-hand side of itself, then we  yield the result.
\begin{align*}
\barteta{s}&=\bareo{s}+\sum_j \aaa{j}{s}
\bareo{j}+\sum_{i,j}\aaa{j}{s}\aaa{i}{j}\bareo{i}+\sum_{i,j,k}\aaa{j}{s}\bbar{ik}{j}\bareo{i}\bareo{k}\\
&+\sum_{j,k}\biggl(\bee{jk}{s}+\bbar{jk}{s}\biggr)\\
&\hspace{20mm} \times\biggl(
\bareo{j}+\sum_i \aaa{i}{j}\bareo{i}+\sum_{i,l}\bbar{il}{j}\bareo{i}\bareo{l}
\biggr)\\
&\hspace{20mm} \times\biggl(
\bareo{k}+\sum_i \aaa{i}{k}\bareo{i}+\sum_{i,l}\bbar{il}{k}\bareo{i}\bareo{l}
\biggr)\\
&+\sum_{j,k,l}\biggl(\cee{jkl}{s}+\cbar{jkl}{s}\biggr)\bareo{j}\bareo{k}\bareo{l}+Re2,
\end{align*}
where $Re2$ has the same property of $Re1$.
\subsection{Proof of \eqref{expec_barteta^2}, \eqref{expec_barteta^3}, \eqref{expec_barteta^4}}
\label{append2-2}
In this subsection, for brevity, we use Einstein summation notation. \\
-\textit{ Proof of \eqref{expec_barteta^2}}- \\
From \eqref{expan_barteta}, we have
\begin{equation}
\label{barteta^2}
\begin{split}
&\barteta{i}\barteta{j}\\
&=\Bigl(\bareo{i}+\aaa{l}{i}\bareo{l}+\bbar{lm}{i}\bareo{l}\bareo{m}+\aaa{l}{i}\aaa{m}{l}\bareo{m}+\aaa{l}{i}\bbar{ms}{l}\bareo{m}\bareo{s}+\bee{lm}{i}\bareo{l}\bareo{m}+2\bbar{lm}{i}\aaa{s}{m}\bareo{l}\bareo{s}\\
&\qquad+2\bbar{lm}{i}\bbar{st}{m}\bareo{l}\bareo{s}\bareo{t}+\cbar{lmt}{i}\bareo{l}\bareo{m}\bareo{t}+Re(4)\Bigr)\\
&\times\Bigl(\bareo{j}+\aaa{l}{j}\bareo{l}+\bbar{lm}{j}\bareo{l}\bareo{m}+\aaa{l}{j}\aaa{m}{l}\bareo{m}+\aaa{l}{j}\bbar{ms}{l}\bareo{m}\bareo{s}+\bee{lm}{j}\bareo{l}\bareo{m}+2\bbar{lm}{j}\aaa{s}{m}\bareo{l}\bareo{s}\\
&\qquad+2\bbar{lm}{j}\bbar{st}{m}\bareo{l}\bareo{s}\bareo{t}+\cbar{lmt}{j}\bareo{l}\bareo{m}\bareo{t}+Re(4)\Bigr)\\
&=\bareo{i}\bareo{j}+\aaa{l}{j}\bareo{i}\bareo{l}+\aaa{l}{i}\bareo{j}\bareo{l}+\bbar{lm}{j}\bareo{i}\bareo{l}\bareo{m}+\bbar{lm}{i}\bareo{j}\bareo{l}\bareo{m}\\
&\quad+\aaa{l}{j}\aaa{m}{l}\bareo{i}\bareo{m}+\aaa{l}{j}\bbar{ms}{l}\bareo{i}\bareo{m}\bareo{s}+\bee{lm}{j}\bareo{i}\bareo{l}\bareo{m}\\
&\quad+2\bbar{lm}{j}\aaa{s}{m}\bareo{i}\bareo{l}\bareo{s}+2\bbar{lm}{j}\bbar{st}{m}\bareo{i}\bareo{l}\bareo{s}\bareo{t}+\cbar{lmt}{j}\bareo{i}\bareo{l}\bareo{m}\bareo{t}\\
&\quad+\aaa{l}{i}\aaa{m}{l}\bareo{j}\bareo{m}+\aaa{l}{i}\bbar{ms}{l}\bareo{j}\bareo{m}\bareo{s}+\bee{lm}{i}\bareo{j}\bareo{l}\bareo{m}\\
&\quad+2\bbar{lm}{i}\aaa{s}{m}\bareo{j}\bareo{l}\bareo{s}+2\bbar{lm}{i}\bbar{st}{m}\bareo{j}\bareo{l}\bareo{s}\bareo{t}+\cbar{lmt}{i}\bareo{j}\bareo{l}\bareo{m}\bareo{t}\\
&\quad+\aaa{l}{i}\aaa{m}{j}\bareo{l}\bareo{m}+\aaa{l}{i}\bbar{st}{j}\bareo{l}\bareo{s}\bareo{t}+\aaa{l}{j}\bbar{st}{i}\bareo{l}\bareo{s}\bareo{t}+\bbar{lm}{i}\bbar{st}{j}\bareo{l}\bareo{m}\bareo{s}\bareo{t}+Re(5),
\end{split}
\end{equation}
where $Re(5)$ is  the polynomial  with respect to the variables $\barteta{s}$, $\bareo{s}$, $\aaa{j}{s}$, $\bee{jk}{s}$, $\cee{jkl}{s}$, $\dee{jklm}{s}$ $(1\leq j, k, l, m, s \leq p)$, and each term is of at least fifth order with respect to $\barteta{s}$, $\bareo{s}$, $\aaa{j}{s}$, $\bee{jk}{s}$, $\cee{jkl}{s}$ $(1\leq j, k, l, s \leq p)$.
We calculate the expectation of each term on the right-hand side of this equation. We omit the subscript $\theta$ in the notation of the expectation.
\begin{align}
&E[\bareo{i}\bareo{j}]\nonumber\\
&=n^{-2}E\biggl[\biggl(\sum_{a=1}^n g^{il}\frac{\partial\hfill}{\partial \theta^l}\log f(X_a;\theta)\biggr)\biggl(\sum_{b=1}^n g^{jm}\frac{\partial\hfill}{\partial \theta^m}\log f(X_b;\theta)\biggr)
\biggr]\nonumber\\
&=n^{-1}E\biggl[g^{il}g^{jm}\frac{\partial\hfill }{\partial \theta^l}\log f(X;\theta)\frac{\partial\hfill }{\partial \theta^m}\log f(X;\theta)\biggr]\nonumber\\
&\quad+n^{-2}\sum_{a\ne b}g^{il}E\biggl[\frac{\partial\hfill }{\partial \theta^l}\log f(X_a;\theta)\biggr]g^{jm}E\biggl[ \frac{\partial\hfill}{\partial \theta^m}\log f(X_b;\theta)\biggr]\nonumber\\
&=n^{-1}g^{il}g^{jm}g_{lm}\nonumber\\
&=n^{-1}g^{ij}.\label{bareo_bareo}
\end{align}
\begin{equation}
\label{aaa_bareo_bareo}
\begin{split}
&E[\aaa{l}{j}\bareo{i}\bareo{l}]\\
&=E\biggl[
n^{-1}g^{sj} \sum_{c=1}^n\biggl(\frac{\partial^2\hfill}{\partial \theta^s\partial \theta^l}\log f(X_c;\theta)+g_{ls}\biggr)\\
&\qquad\times n^{-2}g^{it}\biggl(\sum_{a=1}^n \frac{\partial\hfill}{\partial\theta^t}\log f(X_a;\theta)\biggr)g^{lm}\biggl(\sum_{b=1}^n \frac{\partial\hfill}{\partial\theta^m}\log f(X_b;\theta)\biggr)
\biggr]\\
&=n^{-3}g^{sj}g^{it}g^{lm}\\
&\quad\times\biggl\{ n E\biggl[\biggl(\frac{\partial^2\hfill}{\partial \theta^s \partial \theta^l} \log f(X;\theta)\biggr)\biggl(\frac{\partial\hfill}{\partial\theta^t}\log f(X;\theta)\biggr)\biggl( \frac{\partial\hfill}{\partial\theta^m}\log f(X;\theta) \biggr)\biggr]\\
&\qquad\quad+ng_{ls}E\biggl[\biggl(\frac{\partial\hfill}{\partial\theta^t}\log f(X;\theta)\biggr)\biggl( \frac{\partial\hfill}{\partial\theta^m}\log f(X;\theta) \biggr)\biggr]\\
&\qquad\quad+ \sum_{a\ne c}E\biggl[
\frac{\partial^2\hfill}{\partial \theta^s\partial \theta^l}\log f(X_c;\theta)+g_{ls}
\biggr]\\
&\qquad\qquad\times E\biggl[\biggl(\frac{\partial\hfill}{\partial\theta^t}\log f(X_a;\theta)\biggr)\biggl( \frac{\partial\hfill}{\partial\theta^m}\log f(X_a;\theta) \biggr)\biggr]\\
&\qquad\quad+\sum_{a\ne b} E\biggl[\frac{\partial\hfill}{\partial\theta^m}\log f(X_b;\theta)\biggr]\\
&\qquad\qquad\times E\biggl[ \biggl(\frac{\partial^2\hfill}{\partial \theta^s\partial \theta^l}\log f(X_a;\theta)+g_{ls}\biggr)\biggl(\frac{\partial\hfill}{\partial\theta^t}\log f(X_a;\theta)\biggr)\biggr]\\
&\qquad\quad+\sum_{b\ne a}E\biggl[\frac{\partial\hfill}{\partial\theta^t}\log f(X_a;\theta)\biggr]\\
&\qquad\qquad\times E\biggl[ \biggl(\frac{\partial^2\hfill}{\partial \theta^s\partial \theta^l}\log f(X_b;\theta)+g_{ls}\biggr)\biggl(\frac{\partial\hfill}{\partial\theta^m}\log f(X_b;\theta)\biggr)\biggr]\\\
&\qquad\quad+\sum_{a\ne b, a\ne c, b\ne c}E\biggl[\frac{\partial^2\hfill}{\partial \theta^s\partial \theta^l}\log f(X_c;\theta)+g_{ls}\biggr]\\
&\qquad\qquad\times E\biggl[\frac{\partial\hfill}{\partial\theta^t}\log f(X_a;\theta)\biggr]E\biggl[\frac{\partial\hfill}{\partial\theta^m}\log f(X_b;\theta)\biggr]
\biggr\}\\
&=n^{-2}g^{sj}g^{it}g^{lm}\\
&\quad\times\biggl\{E\biggl[\biggl(\frac{\partial^2\hfill}{\partial \theta^s \partial \theta^l} \log f(X;\theta)\biggr)\biggl(\frac{\partial\hfill}{\partial\theta^t}\log f(X;\theta)\biggr)\biggl( \frac{\partial\hfill}{\partial\theta^m}\log f(X;\theta) \biggr)\biggr]+g_{ls}g_{tm}\biggr\}.
\end{split}
\end{equation}
\begin{equation}
\label{bbar_bareo^3}
\begin{split}
&E[\bbar{lm}{j}\bareo{i}\bareo{l}\bareo{m}]\\
&=\bbar{lm}{j}E[\bareo{i}\bareo{l}\bareo{m}]\\
&=n^{-3}\bbar{lm}{j}g^{ik}g^{ls}g^{mt}E\biggl[
\biggl(\sum_{a=1}^n\frac{\partial\hfill}{\partial\theta^k}\log f(X_a;\theta)\biggr)
\biggl(\sum_{b=1}^n\frac{\partial\hfill}{\partial\theta^s}\log f(X_b;\theta)\biggr)
\biggl(\sum_{c=1}^n\frac{\partial\hfill}{\partial\theta^t}\log f(X_c;\theta)\biggr)
\biggr]\\
&=n^{-3}\bbar{lm}{j}g^{ik}g^{ls}g^{mt}\\
&\quad\times\biggl\{nE\biggl[\biggl(\frac{\partial\hfill}{\partial \theta^k}\log f(X;\theta)\biggr)\biggl(\frac{\partial\hfill}{\partial \theta^s}\log f(X;\theta)\biggr)\biggl(\frac{\partial\hfill}{\partial \theta^t}\log f(X;\theta)\biggr)\biggr]\\
&\quad\qquad+\sum_{a\ne c}E\biggl[\biggl( \frac{\partial\hfill}{\partial \theta^k}\log f(X_a;\theta)\biggr)\biggl(\frac{\partial\hfill}{\partial \theta^s}\log f(X_a;\theta)\biggr)\biggr]
E\biggl[\frac{\partial\hfill}{\partial \theta^t}\log f(X_c;\theta)\biggr]\\
&\quad\qquad+\sum_{a\ne c}E\biggl[\biggl( \frac{\partial\hfill}{\partial \theta^k}\log f(X_a;\theta)\biggr)\biggl(\frac{\partial\hfill}{\partial \theta^t}\log f(X_a;\theta)\biggr)\biggr]
E\biggl[\frac{\partial\hfill}{\partial \theta^s}\log f(X_c;\theta)\biggr]\\
&\quad\qquad+\sum_{a\ne c}E\biggl[\biggl( \frac{\partial\hfill}{\partial \theta^s}\log f(X_a;\theta)\biggr)\biggl(\frac{\partial\hfill}{\partial \theta^t}\log f(X_a;\theta)\biggr)\biggr]
E\biggl[\frac{\partial\hfill}{\partial \theta^k}\log f(X_c;\theta)\biggr]\\
&\quad\qquad+\sum_{a\ne b, a\ne c, b\ne c} E\biggl[\frac{\partial\hfill}{\partial \theta^k}\log f(X_a;\theta)\biggr]E\biggl[\frac{\partial\hfill}{\partial \theta^s}\log f(X_b;\theta)\biggr]E\biggl[\frac{\partial\hfill}{\partial \theta^t}\log f(X_c;\theta)\biggr]
\biggr\}\\
&=n^{-2}\bbar{lm}{j}g^{ik}g^{ls}g^{mt}\\
&\quad\times E\biggl[\biggl(\frac{\partial\hfill}{\partial \theta^k}\log f(X;\theta)\biggr)\biggl(\frac{\partial\hfill}{\partial \theta^s}\log f(X;\theta)\biggr)\biggl(\frac{\partial\hfill}{\partial \theta^t}\log f(X;\theta)\biggr)\biggr].
\end{split}
\end{equation}
\begin{align}
&E[\aaa{l}{j}\aaa{m}{l}\bareo{i}\bareo{m}]\nonumber\\
&=n^{-4}E\biggl[\biggl\{g^{jk}\sum_{a=1}^n \biggl(\frac{\partial^2\hfill}{\partial \theta^k\partial \theta^l}\log f(X_a;\theta)+g_{kl}\biggr)\biggr\}\nonumber\\
&\qquad\qquad\times\biggl\{g^{lu} \sum_{b=1}^n \biggl(\frac{\partial^2\hfill}{\partial \theta^u\partial \theta^m}\log f(X_b;\theta)+g_{um}\biggr)\biggr\}\nonumber\\
&\qquad\qquad\times\biggl\{g^{is}\sum_{c=1}^n\frac{\partial\hfill}{\partial \theta^s}\log f(X_c;\theta)\biggr\}\biggl\{g^{mt}\sum_{d=1}^n\frac{\partial\hfill}{\partial \theta^t}\log f(X_d;\theta)\biggr\}\biggr]\nonumber\\
&=n^{-4}g^{jk}g^{lu}g^{is}g^{mt}\nonumber\\
&\quad\times\biggl\{\sum_{a=1}^nE\biggl[ \biggl(\frac{\partial^2\hfill}{\partial \theta^k\partial \theta^l}\log f(X_a;\theta)+g_{kl}\biggr)\biggl(\frac{\partial^2\hfill}{\partial \theta^u\partial \theta^m}\log f(X_a;\theta)+g_{um}\biggr)\nonumber\\
&\qquad\qquad\qquad\times\biggl(\frac{\partial\hfill}{\partial \theta^s}\log f(X_a;\theta)\biggr)\biggl(\frac{\partial\hfill}{\partial \theta^t}\log f(X_a;\theta)\biggr)\biggr]\nonumber\\
&\qquad\quad+\sum_{a\ne b} E\biggl[\frac{\partial^2\hfill}{\partial \theta^k\partial \theta^l}\log f(X_a;\theta)+g_{kl}\biggr]\nonumber\\
&\qquad\qquad\times
E\biggl[\biggl(\frac{\partial^2\hfill}{\partial \theta^u\partial \theta^m}\log f(X_b;\theta)+g_{um}\biggr)\biggl(\frac{\partial\hfill}{\partial \theta^s}\log f(X_b;\theta)\biggr)\biggl(\frac{\partial\hfill}{\partial \theta^t}\log f(X_b;\theta)\biggr)
\biggr]\nonumber\\
&\qquad\quad+\sum_{a\ne b} E\biggl[\frac{\partial^2\hfill}{\partial \theta^u\partial \theta^m}\log f(X_a;\theta)+g_{um}\biggr]\nonumber\\
&\qquad\qquad\times
E\biggl[\biggl(\frac{\partial^2\hfill}{\partial \theta^k\partial \theta^l}\log f(X_b;\theta)+g_{kl}\biggr)\biggl(\frac{\partial\hfill}{\partial \theta^s}\log f(X_b;\theta)\biggr)\biggl(\frac{\partial\hfill}{\partial \theta^t}\log f(X_b;\theta)\biggr)
\biggr]\nonumber\\
&\qquad\quad+ \sum_{a\ne c}E\biggl[\biggl(\frac{\partial^2\hfill}{\partial \theta^k\partial \theta^l}\log f(X_a;\theta)+g_{kl}\biggr)\biggl(\frac{\partial^2\hfill}{\partial \theta^u\partial \theta^m}\log f(X_a;\theta)+g_{um}\biggr)\nonumber\\
&\qquad\qquad\qquad\times\biggl(\frac{\partial\hfill}{\partial \theta^t}\log f(X_a;\theta)\biggr)\biggr] E\biggl[\frac{\partial\hfill}{\partial \theta^s}\log f(X_c;\theta)\biggr]\nonumber\\
&\qquad\quad+ \sum_{a\ne c}E\biggl[\biggl(\frac{\partial^2\hfill}{\partial \theta^k\partial \theta^l}\log f(X_a;\theta)+g_{kl}\biggr)\biggl(\frac{\partial^2\hfill}{\partial \theta^u\partial \theta^m}\log f(X_a;\theta)+g_{um}\biggr)\nonumber\\
&\qquad\qquad\qquad\times\biggl(\frac{\partial\hfill}{\partial \theta^s}\log f(X_a;\theta)\biggr)\biggr] E\biggl[\frac{\partial\hfill}{\partial \theta^t}\log f(X_c;\theta)\biggr]\nonumber\\
&\qquad\quad+\sum_{a\ne c} E\biggl[\biggl(\frac{\partial^2\hfill}{\partial \theta^k\partial \theta^l}\log f(X_a;\theta)+g_{kl}\biggr)\biggl(\frac{\partial^2\hfill}{\partial \theta^u\partial \theta^m}\log f(X_a;\theta)+g_{um}\biggr)\biggr]\nonumber\\
&\qquad\qquad\times E\biggl[\biggl(\frac{\partial\hfill}{\partial \theta^s}\log f(X_c;\theta)\biggr)\biggl(\frac{\partial\hfill}{\partial \theta^t}\log f(X_c;\theta)\biggr)\biggr]\nonumber\\
&\qquad\quad+\sum_{a\ne c} E\biggl[\biggl(\frac{\partial^2\hfill}{\partial \theta^k\partial \theta^l}\log f(X_a;\theta)+g_{kl}\biggr)\biggl(\frac{\partial\hfill}{\partial \theta^s}\log f(X_a;\theta)\biggr)\biggr]\nonumber\\
&\qquad\qquad\times E\biggl[\biggl(\frac{\partial^2\hfill}{\partial \theta^u\partial \theta^m}\log f(X_c;\theta)+g_{um}\biggr)\biggl(\frac{\partial\hfill}{\partial \theta^t}\log f(X_c;\theta)\biggr)\biggr]\nonumber\\
&\qquad\quad+\sum_{a\ne c} E\biggl[\biggl(\frac{\partial^2\hfill}{\partial \theta^k\partial \theta^l}\log f(X_a;\theta)+g_{kl}\biggr)\biggl(\frac{\partial\hfill}{\partial \theta^t}\log f(X_a;\theta)\biggr)\biggr]\nonumber\\
&\qquad\qquad\times E\biggl[\biggl(\frac{\partial^2\hfill}{\partial \theta^u\partial \theta^m}\log f(X_c;\theta)+g_{um}\biggr)\biggl(\frac{\partial\hfill}{\partial \theta^s}\log f(X_c;\theta)\biggr)\biggr]\nonumber\\
&\qquad\quad+\sum_{a\ne b, a\ne c, b\ne c}E\biggl[\biggl(\frac{\partial^2\hfill}{\partial \theta^k\partial \theta^l}\log f(X_a;\theta)+g_{kl}\biggr)\biggl(\frac{\partial^2\hfill}{\partial \theta^u\partial \theta^m}\log f(X_a;\theta)+g_{um}\biggr)
\biggr]\nonumber\\
&\qquad\qquad\times E\biggl[\biggl(\frac{\partial\hfill}{\partial \theta^s}\log f(X_b;\theta)\biggr)\biggr]E\biggl[\biggl(\frac{\partial\hfill}{\partial \theta^t}\log f(X_c;\theta)\biggr)\biggr]\nonumber\\
&\qquad\quad+\sum_{a\ne b, a\ne c, b\ne c}E\biggl[\biggl(\frac{\partial^2\hfill}{\partial \theta^k\partial \theta^l}\log f(X_a;\theta)+g_{kl}\biggr)
\biggl(\frac{\partial\hfill}{\partial \theta^s}\log f(X_a;\theta)\biggr)
\biggr]\nonumber\\
&\qquad\qquad\times E\biggl[\biggl(\frac{\partial^2\hfill}{\partial \theta^u\partial \theta^m}\log f(X_b\theta)+g_{um}\biggr)\biggr]E\biggl[\biggl(\frac{\partial\hfill}{\partial \theta^t}\log f(X_c;\theta)\biggr)\biggr]\nonumber\\
&\qquad\quad+\sum_{a\ne b, a\ne c, b\ne c}E\biggl[\biggl(\frac{\partial^2\hfill}{\partial \theta^k\partial \theta^l}\log f(X_a;\theta)+g_{kl}\biggr)
\biggl(\frac{\partial\hfill}{\partial \theta^t}\log f(X_a;\theta)\biggr)
\biggr]\nonumber\\
&\qquad\qquad\times E\biggl[\biggl(\frac{\partial^2\hfill}{\partial \theta^u\partial \theta^m}\log f(X_b\theta)+g_{um}\biggr)\biggr]E\biggl[\biggl(\frac{\partial\hfill}{\partial \theta^s}\log f(X_c;\theta)\biggr)\biggr]\nonumber\\
&\qquad\quad+\sum_{a\ne b, a\ne c, b\ne c}E\biggl[\biggl(\frac{\partial^2\hfill}{\partial \theta^u\partial \theta^m}\log f(X_a;\theta)+g_{um}\biggr)
\biggl(\frac{\partial\hfill}{\partial \theta^s}\log f(X_a;\theta)\biggr)
\biggr]\nonumber\\
&\qquad\qquad\times E\biggl[\biggl(\frac{\partial^2\hfill}{\partial \theta^k\partial \theta^l}\log f(X_b\theta)+g_{kl}\biggr)\biggr]E\biggl[\biggl(\frac{\partial\hfill}{\partial \theta^t}\log f(X_c;\theta)\biggr)\biggr]\nonumber\\
&\qquad\quad+\sum_{a\ne b, a\ne c, b\ne c}E\biggl[\biggl(\frac{\partial^2\hfill}{\partial \theta^u\partial \theta^m}\log f(X_a;\theta)+g_{um}\biggr)
\biggl(\frac{\partial\hfill}{\partial \theta^t}\log f(X_a;\theta)\biggr)
\biggr]\nonumber\\
&\qquad\qquad\times E\biggl[\biggl(\frac{\partial^2\hfill}{\partial \theta^k\partial \theta^l}\log f(X_b\theta)+g_{kl}\biggr)\biggr]E\biggl[\biggl(\frac{\partial\hfill}{\partial \theta^s}\log f(X_c;\theta)\biggr)\biggr]\nonumber\\
&\qquad\quad+\sum_{a\ne b, a\ne c, b\ne c}E\biggl[
\biggl(\frac{\partial\hfill}{\partial \theta^s}\log f(X_a;\theta)\biggr)
\biggl(\frac{\partial\hfill}{\partial \theta^t}\log f(X_a;\theta)\biggr)
\biggr]\nonumber\\
&\qquad\qquad\times E\biggl[\biggl(\frac{\partial^2\hfill}{\partial \theta^k\partial \theta^l}\log f(X_b;\theta)+g_{kl}\biggr)\biggr]E\biggl[\biggl(\frac{\partial^2\hfill}{\partial \theta^u\partial \theta^m}\log f(X_c;\theta)+g_{um}\biggr)\biggr]\nonumber\\
&\qquad\quad+\sum_{a\ne b, a\ne c, a\ne d, b\ne c, b\ne d, c\ne d}E\biggl[
\biggl(\frac{\partial\hfill}{\partial \theta^s}\log f(X_a;\theta)\biggr)\biggr]
E\biggl[\biggl(\frac{\partial\hfill}{\partial \theta^t}\log f(X_b;\theta)\biggr)
\biggr]\nonumber\\
&\qquad\qquad\times E\biggl[\biggl(\frac{\partial^2\hfill}{\partial \theta^k\partial \theta^l}\log f(X_c;\theta)+g_{kl}\biggr)\biggr]E\biggl[\biggl(\frac{\partial^2\hfill}{\partial \theta^u\partial \theta^m}\log f(X_d;\theta)+g_{um}\biggr)\biggr]
\biggr\}\nonumber\\
&=n^{-2}g^{jk}g^{lu}g^{is}g^{mt}\nonumber\\
&\qquad\times \biggl\{ E\biggl[\biggl(\frac{\partial^2\hfill}{\partial \theta^k\partial \theta^l}\log f(X;\theta)+g_{kl}\biggr)\biggl(\frac{\partial^2\hfill}{\partial \theta^u\partial \theta^m}\log f(X;\theta)+g_{um}\biggr)\biggr]\nonumber\\
&\qquad\qquad\times E\biggl[\biggl(\frac{\partial\hfill}{\partial \theta^s}\log f(X;\theta)\biggr)\biggl(\frac{\partial\hfill}{\partial \theta^t}\log f(X;\theta)\biggr)\biggr]\nonumber\\
&\qquad\quad+ E\biggl[\biggl(\frac{\partial^2\hfill}{\partial \theta^k\partial \theta^l}\log f(X;\theta)+g_{kl}\biggr)\biggl(\frac{\partial\hfill}{\partial \theta^s}\log f(X;\theta)\biggr)\biggr]\nonumber\\
&\qquad\qquad\times E\biggl[\biggl(\frac{\partial^2\hfill}{\partial \theta^u\partial \theta^m}\log f(X;\theta)+g_{um}\biggr)\biggl(\frac{\partial\hfill}{\partial \theta^t}\log f(X;\theta)\biggr)\biggr]\nonumber\\
&\qquad\quad+ E\biggl[\biggl(\frac{\partial^2\hfill}{\partial \theta^k\partial \theta^l}\log f(X;\theta)+g_{kl}\biggr)\biggl(\frac{\partial\hfill}{\partial \theta^t}\log f(X;\theta)\biggr)\biggr]\nonumber\\
&\qquad\qquad\times E\biggl[\biggl(\frac{\partial^2\hfill}{\partial \theta^u\partial \theta^m}\log f(X;\theta)+g_{um}\biggr)\biggl(\frac{\partial\hfill}{\partial \theta^s}\log f(X;\theta)\biggr)\biggr]\biggr\}+O(n^{-3}). \label{aaa^2_bareo^2}
\end{align}
\begin{equation}
\label{aaa_bbar_bareo^3}
\begin{split}
&E[\aaa{l}{j}\bbar{ms}{l}\bareo{i}\bareo{m}\bareo{s}]\\
&=n^{-4}\bbar{ms}{l}E\biggl[g^{jk}g^{it}g^{mu}g^{sv}\biggl(\sum_{a=1}^n \biggl(\frac{\partial^2\hfill}{\partial\theta^l\partial\theta^k}\log f(X_a;\theta)+g_{kl}\biggr)\biggr)\biggl(\sum_{b=1}^n\frac{\partial\hfill}{\partial\theta^t}\log f(X_b;\theta)\biggr)\\
&\hspace{25mm}\times\biggl(\sum_{c=1}^n\frac{\partial\hfill}{\partial\theta^u}\log f(X_c;\theta)\biggr)\biggl(\sum_{d=1}^n\frac{\partial\hfill}{\partial\theta^v}\log f(X_d;\theta)\biggr)\biggr]\\
&=n^{-4}\bbar{ms}{l}g^{jk}g^{it}g^{mu}g^{sv}\\
&\qquad\times\biggl\{\sum_{a\ne b} E\biggl[\biggl(\frac{\partial^2\hfill}{\partial\theta^l\partial\theta^k}\log f(X_a;\theta)+g_{kl}\biggr)\biggl(\frac{\partial\hfill}{\partial\theta^t}\log f(X_a;\theta)\biggr)\biggr]\\
&\qquad\qquad\qquad\times E\biggl[\biggl(\frac{\partial\hfill}{\partial\theta^u}\log f(X_b;\theta)\biggr)\biggl(\frac{\partial\hfill}{\partial\theta^v}\log f(X_b;\theta)\biggr)
\biggr]\\
&\qquad\qquad+\sum_{a\ne b} E\biggl[\biggl(\frac{\partial^2\hfill}{\partial\theta^l\partial\theta^k}\log f(X_a;\theta)+g_{kl}\biggr)\biggl(\frac{\partial\hfill}{\partial\theta^u}\log f(X_a;\theta)\biggr)\biggr]\\
&\qquad\qquad\qquad\times E\biggl[\biggl(\frac{\partial\hfill}{\partial\theta^t}\log f(X_b;\theta)\biggr)\biggl(\frac{\partial\hfill}{\partial\theta^v}\log f(X_b;\theta)\biggr)
\biggr]\\
&\qquad\qquad+\sum_{a\ne b} E\biggl[\biggl(\frac{\partial^2\hfill}{\partial\theta^l\partial\theta^k}\log f(X_a;\theta)+g_{kl}\biggr)\biggl(\frac{\partial\hfill}{\partial\theta^v}\log f(X_a;\theta)\biggr)\biggr]\\
&\qquad\qquad\qquad\times E\biggl[\biggl(\frac{\partial\hfill}{\partial\theta^u}\log f(X_b;\theta)\biggr)\biggl(\frac{\partial\hfill}{\partial\theta^t}\log f(X_b;\theta)\biggr)
\biggr]
\biggr\}+O(n^{-3})\\
&=n^{-2}\bbar{ms}{l}g^{jk}g^{it}g^{mu}g^{sv}\\
&\qquad\times\biggl\{
E\biggl[\biggl(\frac{\partial^2\hfill}{\partial\theta^l\partial\theta^k}\log f(X;\theta)+g_{kl}\biggr)\biggl(\frac{\partial\hfill}{\partial\theta^t}\log f(X;\theta)\biggr)\biggr]\\
&\qquad\qquad\qquad\times E\biggl[\biggl(\frac{\partial\hfill}{\partial\theta^u}\log f(X;\theta)\biggr)\biggl(\frac{\partial\hfill}{\partial\theta^v}\log f(X;\theta)\biggr)\biggr]
\biggr\}\\
&\qquad\qquad+E\biggl[\biggl(\frac{\partial^2\hfill}{\partial\theta^l\partial\theta^k}\log f(X;\theta)+g_{kl}\biggr)\biggl(\frac{\partial\hfill}{\partial\theta^u}\log f(X;\theta)\biggr)\biggr]\\
&\qquad\qquad\qquad\times E\biggl[\biggl(\frac{\partial\hfill}{\partial\theta^t}\log f(X;\theta)\biggr)\biggl(\frac{\partial\hfill}{\partial\theta^v}\log f(X;\theta)\biggr)
\biggr]\\
&\qquad\qquad+E\biggl[\biggl(\frac{\partial^2\hfill}{\partial\theta^l\partial\theta^k}\log f(X;\theta)+g_{kl}\biggr)\biggl(\frac{\partial\hfill}{\partial\theta^v}\log f(X;\theta)\biggr)\biggr]\\
&\qquad\qquad\qquad\times E\biggl[\biggl(\frac{\partial\hfill}{\partial\theta^t}\log f(X;\theta)\biggr)\biggl(\frac{\partial\hfill}{\partial\theta^u}\log f(X;\theta)\biggr)
\biggr]\biggr\}+O(n^{-3}).
\end{split}
\end{equation}
\begin{align}
&E[\bee{lm}{j}\bareo{i}\bareo{l}\bareo{m}]\nonumber\\
&=n^{-4}2^{-1}g^{js}g^{it}g^{lu}g^{mv}\nonumber\\
&\quad\times E\biggl[\sum_{a=1}^n\biggl(\frac{\partial^3\hfill}{\partial\theta^s\partial\theta^l\partial\theta^m} \log f(X_a;\theta)-E\biggl[\frac{\partial^3\hfill}{\partial\theta^s\partial\theta^l\partial\theta^m} \log f(X_a;\theta)\biggr]\biggr)\nonumber\\
&\qquad\qquad\times \biggl(\sum_{b=1}^n\frac{\partial\hfill}{\partial\theta^t}\log f(X_b;\theta)\biggr)\biggl(\sum_{c=1}^n\frac{\partial\hfill}{\partial\theta^u}\log f(X_c;\theta)\biggr)\biggl(\sum_{d=1}^n\frac{\partial\hfill}{\partial\theta^v}\log f(X_d;\theta)\biggr)
\biggr]\nonumber\\
&=n^{-2}2^{-1}g^{js}g^{it}g^{lu}g^{mv}\nonumber\\
&\quad\times \biggl\{E\biggl[\biggl(\frac{\partial^3\hfill}{\partial\theta^s\partial\theta^l\partial\theta^m} \log f(X;\theta)-E\biggl[\frac{\partial^3\hfill}{\partial\theta^s\partial\theta^l\partial\theta^m} \log f(X;\theta)\biggr]\biggr) \biggl(\frac{\partial\hfill}{\partial\theta^t}\log f(X;\theta)\biggr)\biggr]\nonumber\\
&\qquad\qquad\times E\biggl[\biggl(\frac{\partial\hfill}{\partial\theta^u}\log f(X;\theta)\biggr)\biggl(\frac{\partial\hfill}{\partial\theta^v}\log f(X;\theta)\biggr)
\biggr]\nonumber\\
&\qquad+E\biggl[\biggl(\frac{\partial^3\hfill}{\partial\theta^s\partial\theta^l\partial\theta^m} \log f(X;\theta)-E\biggl[\frac{\partial^3\hfill}{\partial\theta^s\partial\theta^l\partial\theta^m} \log f(X;\theta)\biggr]\biggr) \biggl(\frac{\partial\hfill}{\partial\theta^u}\log f(X;\theta)\biggr)\biggr]\nonumber\\
&\qquad\qquad\times E\biggl[\biggl(\frac{\partial\hfill}{\partial\theta^t}\log f(X;\theta)\biggr)\biggl(\frac{\partial\hfill}{\partial\theta^v}\log f(X;\theta)\biggr)
\biggr]\nonumber\\
&\qquad+E\biggl[\biggl(\frac{\partial^3\hfill}{\partial\theta^s\partial\theta^l\partial\theta^m} \log f(X;\theta)-E\biggl[\frac{\partial^3\hfill}{\partial\theta^s\partial\theta^l\partial\theta^m} \log f(X;\theta)\biggr]\biggr) \biggl(\frac{\partial\hfill}{\partial\theta^v}\log f(X;\theta)\biggr)\biggr]\nonumber\\
&\qquad\qquad\times E\biggl[\biggl(\frac{\partial\hfill}{\partial\theta^t}\log f(X;\theta)\biggr)\biggl(\frac{\partial\hfill}{\partial\theta^u}\log f(X;\theta)\biggr)
\biggr]\biggr\}+O(n^{-3}).\label{bee_bareo^3}
\end{align}
\begin{align}
&E[\bbar{lm}{j}\aaa{s}{m}\bareo{i}\bareo{l}\bareo{s}]\nonumber\\
&=n^{-4}\bbar{lm}{j}g^{mt}g^{iu}g^{lv}g^{sw}\nonumber\\
&\quad\times E\biggl[\biggl(\sum_{a=1}^p\biggl(\frac{\partial^2\hfill}{\partial\theta^t\partial\theta^s}\log f(X_a;\theta)+g_{ts}\biggr)\biggr)\biggl(\sum_{b=1}^n\frac{\partial\hfill}{\partial\theta^u}\log f(X_b;\theta)\biggr)\nonumber\\
&\qquad\qquad\times\biggl(\sum_{c=1}^n\frac{\partial\hfill}{\partial\theta^v}\log f(X_c;\theta)\biggr)\biggl(\sum_{d=1}^n\frac{\partial\hfill}{\partial\theta^w}\log f(X_d;\theta)\biggr)
\biggr]\nonumber\\
&=n^{-4}\bbar{lm}{j}g^{mt}g^{iu}g^{lv}g^{sw}\nonumber\\
&\quad\times \biggl\{
\sum_{a\ne b}E\biggl[ \biggl(\frac{\partial^2\hfill}{\partial\theta^t\partial\theta^s}\log f(X_a;\theta)+g_{ts}\biggr)\biggl(\frac{\partial\hfill}{\partial\theta^u}\log f(X_a;\theta)\biggr)\biggr]\nonumber\\
&\qquad\qquad\times E\biggl[\biggl(\frac{\partial\hfill}{\partial\theta^v}\log f(X_b;\theta)\biggr)\biggl(\frac{\partial\hfill}{\partial\theta^w}\log f(X_b;\theta)\biggr)\biggr]\nonumber\\
&\qquad\quad+\sum_{a\ne b}E\biggl[ \biggl(\frac{\partial^2\hfill}{\partial\theta^t\partial\theta^s}\log f(X_a;\theta)+g_{ts}\biggr)\biggl(\frac{\partial\hfill}{\partial\theta^v}\log f(X_a;\theta)\biggr)\biggr]\nonumber\\
&\qquad\qquad\times E\biggl[\biggl(\frac{\partial\hfill}{\partial\theta^u}\log f(X_b;\theta)\biggr)\biggl(\frac{\partial\hfill}{\partial\theta^w}\log f(X_b;\theta)\biggr)\biggr]\nonumber\\
&\qquad\quad+\sum_{a\ne b}E\biggl[ \biggl(\frac{\partial^2\hfill}{\partial\theta^t\partial\theta^s}\log f(X_a;\theta)+g_{ts}\biggr)\biggl(\frac{\partial\hfill}{\partial\theta^w}\log f(X_a;\theta)\biggr)\biggr]\nonumber\\
&\qquad\qquad\times E\biggl[\biggl(\frac{\partial\hfill}{\partial\theta^u}\log f(X_b;\theta)\biggr)\biggl(\frac{\partial\hfill}{\partial\theta^v}\log f(X_b;\theta)\biggr)\biggr]
\biggr\}+O(n^{-3})\nonumber\\
&=n^{-2}\bbar{lm}{j}g^{mt}g^{iu}g^{lv}g^{sw}\nonumber\\
&\quad\times\biggl\{
E\biggl[ \biggl(\frac{\partial^2\hfill}{\partial\theta^t\partial\theta^s}\log f(X;\theta)+g_{ts}\biggr)\biggl(\frac{\partial\hfill}{\partial\theta^u}\log f(X;\theta)\biggr)\biggr]\nonumber\\
&\qquad\qquad\times E\biggl[\biggl(\frac{\partial\hfill}{\partial\theta^v}\log f(X;\theta)\biggr)\biggl(\frac{\partial\hfill}{\partial\theta^w}\log f(X;\theta)\biggr)\biggr]\nonumber\\
&\qquad\quad+E\biggl[ \biggl(\frac{\partial^2\hfill}{\partial\theta^t\partial\theta^s}\log f(X;\theta)+g_{ts}\biggr)\biggl(\frac{\partial\hfill}{\partial\theta^v}\log f(X;\theta)\biggr)\biggr]\nonumber\\
&\qquad\qquad\times E\biggl[\biggl(\frac{\partial\hfill}{\partial\theta^u}\log f(X;\theta)\biggr)\biggl(\frac{\partial\hfill}{\partial\theta^w}\log f(X;\theta)\biggr)\biggr]\nonumber\\
&\qquad\quad+E\biggl[ \biggl(\frac{\partial^2\hfill}{\partial\theta^t\partial\theta^s}\log f(X;\theta)+g_{ts}\biggr)\biggl(\frac{\partial\hfill}{\partial\theta^w}\log f(X;\theta)\biggr)\biggr]\nonumber\\
&\qquad\qquad\times E\biggl[\biggl(\frac{\partial\hfill}{\partial\theta^u}\log f(X;\theta)\biggr)\biggl(\frac{\partial\hfill}{\partial\theta^v}\log f(X;\theta)\biggr)\biggr]
\biggr\}+O(n^{-3}). \label{bbar_aaa_bareo^3}
\end{align}
\begin{align}
&E[\bbar{lm}{j}\bbar{st}{m}\bareo{i}\bareo{l}\bareo{s}\bareo{t}]\nonumber\\
&=\bbar{lm}{j}\bbar{st}{m}E[\bareo{i}\bareo{l}\bareo{s}\bareo{t}]\nonumber\\
&=n^{-4}\bbar{lm}{j}\bbar{st}{m}g^{ik}g^{lu}g^{sv}g^{tw}\nonumber\\
&\quad\times E\biggl[\biggl(\sum_{a=1}^n\frac{\partial\hfill}{\partial\theta^k}\log f(X_a;\theta)\biggr)\biggl(\sum_{b=1}^n\frac{\partial\hfill}{\partial\theta^u}\log f(X_b;\theta)\biggr)\nonumber\\
&\qquad\qquad\times\biggl(\sum_{c=1}^n\frac{\partial\hfill}{\partial\theta^v}\log f(X_c;\theta)\biggr)\biggl(\sum_{d=1}^n\frac{\partial\hfill}{\partial\theta^w}\log f(X_d;\theta)\biggr)\biggr]\nonumber\\
&=n^{-2}\bbar{lm}{j}\bbar{st}{m}g^{ik}g^{lu}g^{sv}g^{tw}\nonumber\\
&\quad\times E\biggl[\biggl(\frac{\partial\hfill}{\partial\theta^k}\log f(X;\theta)\biggr)\biggl(\frac{\partial\hfill}{\partial\theta^u}\log f(X;\theta)\biggr)\biggr]\nonumber\\
&\qquad\quad\times E\biggl[\biggl(\frac{\partial\hfill}{\partial\theta^v}\log f(X;\theta)\biggr)\biggl(\frac{\partial\hfill}{\partial\theta^w}\log f(X;\theta)\biggr)\biggr]\nonumber\\
&\quad\quad+ E\biggl[\biggl(\frac{\partial\hfill}{\partial\theta^k}\log f(X;\theta)\biggr)\biggl(\frac{\partial\hfill}{\partial\theta^v}\log f(X;\theta)\biggr)\biggr]\nonumber\\
&\qquad\quad\times E\biggl[\biggl(\frac{\partial\hfill}{\partial\theta^u}\log f(X;\theta)\biggr)\biggl(\frac{\partial\hfill}{\partial\theta^w}\log f(X;\theta)\biggr)\biggr]\nonumber\\
&\quad\quad+ E\biggl[\biggl(\frac{\partial\hfill}{\partial\theta^k}\log f(X;\theta)\biggr)\biggl(\frac{\partial\hfill}{\partial\theta^w}\log f(X;\theta)\biggr)\biggr]\nonumber\\
&\qquad\quad\times E\biggl[\biggl(\frac{\partial\hfill}{\partial\theta^u}\log f(X;\theta)\biggr)\biggl(\frac{\partial\hfill}{\partial\theta^v}\log f(X;\theta)\biggr)\biggr]\biggr\}+O(n^{-3}).\label{bbar^2_bareo^4}
\end{align}
\begin{align}
&E[\cbar{lmt}{j}\bareo{i}\bareo{l}\bareo{m}\bareo{t}]\nonumber\\
&=n^{-4}\cbar{lmt}{j}g^{ik}g^{ls}g^{mu}g^{tv}\nonumber\\
&\quad\times E\biggl[\biggl(\sum_{a=1}^n\frac{\partial\hfill}{\partial\theta^k}\log f(X_a;\theta)\biggr)\biggl(\sum_{b=1}^n\frac{\partial\hfill}{\partial\theta^s}\log f(X_b;\theta)\biggr)\nonumber\\
&\qquad\qquad\times\biggl(\sum_{c=1}^n\frac{\partial\hfill}{\partial\theta^u}\log f(X_c;\theta)\biggr)\biggl(\sum_{d=1}^n\frac{\partial\hfill}{\partial\theta^v}\log f(X_d;\theta)\biggr)\biggr]\nonumber\\
&=n^{-2}\cbar{lmt}{j}g^{ik}g^{ls}g^{mu}g^{tv}\nonumber\\
&\quad\times E\biggl[\biggl(\frac{\partial\hfill}{\partial\theta^k}\log f(X;\theta)\biggr)\biggl(\frac{\partial\hfill}{\partial\theta^s}\log f(X;\theta)\biggr)\biggr]\nonumber\\
&\qquad\quad\times E\biggl[\biggl(\frac{\partial\hfill}{\partial\theta^u}\log f(X;\theta)\biggr)\biggl(\frac{\partial\hfill}{\partial\theta^v}\log f(X;\theta)\biggr)\biggr]\nonumber\\
&\quad\quad+ E\biggl[\biggl(\frac{\partial\hfill}{\partial\theta^k}\log f(X;\theta)\biggr)\biggl(\frac{\partial\hfill}{\partial\theta^u}\log f(X;\theta)\biggr)\biggr]\nonumber\\
&\qquad\quad\times E\biggl[\biggl(\frac{\partial\hfill}{\partial\theta^s}\log f(X;\theta)\biggr)\biggl(\frac{\partial\hfill}{\partial\theta^v}\log f(X;\theta)\biggr)\biggr]\nonumber\\
&\quad\quad+ E\biggl[\biggl(\frac{\partial\hfill}{\partial\theta^k}\log f(X;\theta)\biggr)\biggl(\frac{\partial\hfill}{\partial\theta^v}\log f(X;\theta)\biggr)\biggr]\nonumber\\
&\qquad\quad\times E\biggl[\biggl(\frac{\partial\hfill}{\partial\theta^s}\log f(X;\theta)\biggr)\biggl(\frac{\partial\hfill}{\partial\theta^u}\log f(X;\theta)\biggr)\biggr]\biggr\}+O(n^{-3}).\label{cbar_bareo^4}
\end{align}
\begin{align}
&E[\aaa{l}{i}\aaa{m}{j}\bareo{l}\bareo{m}]\nonumber\\
&=n^{-4}g^{ik}g^{js}g^{lt}g^{mu}\nonumber\\
&\times E\biggl[\biggl(\sum_{a=1}^n\biggl(\frac{\partial^2\hfill}{\partial\theta^k\partial\theta^l}\log f(X_a;\theta)+g_{kl}\biggr)\biggr)\biggl(\sum_{b=1}^n\biggl(\frac{\partial^2\hfill}{\partial\theta^s\partial\theta^m}\log f(X_b;\theta)+g_{sm}\biggr)\biggr)\nonumber\\
&\qquad\times \biggl(\sum_{c=1}^n\frac{\partial\hfill}{\partial\theta^t}\log f(X_c;\theta)\biggr)\biggl(\sum_{d=1}^n\frac{\partial\hfill}{\partial\theta^u}\log f(X_d;\theta)\biggr)\biggr]\nonumber\\
&=n^{-2}g^{ik}g^{js}g^{lt}g^{mu}\nonumber\\
&\times \biggl\{
E\biggl[\biggl(\frac{\partial^2\hfill}{\partial\theta^k\partial\theta^l}\log f(X;\theta)+g_{kl}\biggr)\biggl(\frac{\partial^2\hfill}{\partial\theta^s\partial\theta^m}\log f(X;\theta)+g_{sm}\biggr)\biggr]\nonumber\\
&\qquad\times E\biggl[\biggl(\frac{\partial\hfill}{\partial\theta^t}\log f(X;\theta)\biggr)\biggl(\frac{\partial\hfill}{\partial\theta^u}\log f(X;\theta)\biggr)\biggr]\nonumber\\
&\quad+E\biggl[ \biggl(\frac{\partial^2\hfill}{\partial\theta^k\partial\theta^l}\log f(X;\theta)+g_{kl}\biggr)\biggl(\frac{\partial\hfill}{\partial\theta^t}\log f(X;\theta)\biggr)\biggr]\nonumber\\
&\qquad\times E\biggl[\biggl(\frac{\partial^2\hfill}{\partial\theta^s\partial\theta^m}\log f(X;\theta)+g_{sm}\biggr)\biggl(\frac{\partial\hfill}{\partial\theta^u}\log f(X;\theta)\biggr)\biggr]\nonumber\\
&\quad+E\biggl[ \biggl(\frac{\partial^2\hfill}{\partial\theta^k\partial\theta^l}\log f(X;\theta)+g_{kl}\biggr)\biggl(\frac{\partial\hfill}{\partial\theta^u}\log f(X;\theta)\biggr)\biggr]\nonumber\\
&\qquad\times E\biggl[\biggl(\frac{\partial^2\hfill}{\partial\theta^s\partial\theta^m}\log f(X;\theta)+g_{sm}\biggr)\biggl(\frac{\partial\hfill}{\partial\theta^t}\log f(X;\theta)\biggr)\biggr]\label{aaa^2_bareo^2_2}
\bigg\}+O(n^{-3}).
\end{align}
\begin{align}
&E[\aaa{l}{i}\bbar{st}{j}\bareo{l}\bareo{s}\bareo{t}]\nonumber\\
&=n^{-4}\bbar{st}{j}g^{ik}g^{lu}g^{sv}g^{tw}\nonumber\\
&\quad\times E\biggl[ \biggl(\sum_{a=1}^n\biggl(\frac{\partial^2\hfill}{\partial\theta^k\partial\theta^l}\log f(X_a;\theta)+g_{kl}\biggr)\biggr)
 \biggl(\sum_{b=1}^n\frac{\partial\hfill}{\partial\theta^u}\log f(X_b;\theta)\biggr)\nonumber\\
&\qquad \qquad\times\biggl(\sum_{c=1}^n\frac{\partial\hfill}{\partial\theta^v}\log f(X_c;\theta)\biggr)\biggl(\sum_{d=1}^n\frac{\partial\hfill}{\partial\theta^w}\log f(X_d;\theta)\biggr)
\biggr]\nonumber\\
&=n^{-2}\bbar{st}{j}g^{ik}g^{lu}g^{sv}g^{tw}\nonumber\\
&\quad \times\biggl\{
 E\biggl[\biggl(\frac{\partial^2\hfill}{\partial\theta^k\partial\theta^l}\log f(X;\theta)+g_{kl}\biggr)\biggl(\frac{\partial\hfill}{\partial\theta^u}\log f(X;\theta)\biggr)\biggr]\nonumber\\
&\qquad\qquad \times E\biggl[\biggl(\frac{\partial\hfill}{\partial\theta^v}\log f(X;\theta)\biggr)\biggl(\frac{\partial\hfill}{\partial\theta^w}\log f(X;\theta)\biggr)\biggr]\nonumber\\
&\qquad\quad+ E\biggl[\biggl(\frac{\partial^2\hfill}{\partial\theta^k\partial\theta^l}\log f(X;\theta)+g_{kl}\biggr)\biggl(\frac{\partial\hfill}{\partial\theta^v}\log f(X;\theta)\biggr)\biggr]\nonumber\\
&\qquad\qquad \times E\biggl[\biggl(\frac{\partial\hfill}{\partial\theta^u}\log f(X;\theta)\biggr)\biggl(\frac{\partial\hfill}{\partial\theta^w}\log f(X;\theta)\biggr)\biggr]\nonumber\\
&\qquad\quad+ E\biggl[\biggl(\frac{\partial^2\hfill}{\partial\theta^k\partial\theta^l}\log f(X;\theta)+g_{kl}\biggr)\biggl(\frac{\partial\hfill}{\partial\theta^w}\log f(X;\theta)\biggr)\biggr]\nonumber\\
&\qquad\qquad \times E\biggl[\biggl(\frac{\partial\hfill}{\partial\theta^u}\log f(X;\theta)\biggr)\biggl(\frac{\partial\hfill}{\partial\theta^v}\log f(X;\theta)\biggr)\biggr]
\biggr\}+O(n^{-3}).\label{aaa_bbar_bareo^3_2}
\end{align}
\begin{align}
&E[\bbar{lm}{i}\bbar{st}{j}\bareo{l}\bareo{m}\bareo{s}\bareo{t}]\nonumber\\
&=n^{-4}\bbar{lm}{i}\bbar{st}{j}g^{lk}g^{mu}g^{sv}g^{tw}\nonumber\\
&\quad\times 
E\biggl[\biggl(\sum_{a=1}^n\frac{\partial\hfill}{\partial\theta^k}\log f(X_a;\theta)\biggr)\biggl(\sum_{b=1}^n\frac{\partial\hfill}{\partial\theta^u}\log f(X_b;\theta)\biggr)\nonumber\\
&\qquad\qquad\times\biggl(\sum_{c=1}^n\frac{\partial\hfill}{\partial\theta^v}\log f(X_c;\theta)\biggr)\biggl(\sum_{d=1}^n\frac{\partial\hfill}{\partial\theta^w}\log f(X_d;\theta)\biggr)\biggr]\nonumber\\
&=n^{-2}\bbar{lm}{i}\bbar{st}{j}g^{lk}g^{mu}g^{sv}g^{tw}\nonumber\\
&\quad\times \biggl\{
E\biggl[\biggl(\frac{\partial\hfill}{\partial\theta^k}\log f(X;\theta)\biggr)\biggl(\frac{\partial\hfill}{\partial\theta^u}\log f(X;\theta)\biggr)\biggr]\nonumber\\
&\qquad\quad\times E\biggl[\biggl(\frac{\partial\hfill}{\partial\theta^v}\log f(X;\theta)\biggr)\biggl(\frac{\partial\hfill}{\partial\theta^w}\log f(X;\theta)\biggr)\biggr]\nonumber\\
&\quad\quad+ E\biggl[\biggl(\frac{\partial\hfill}{\partial\theta^k}\log f(X;\theta)\biggr)\biggl(\frac{\partial\hfill}{\partial\theta^v}\log f(X;\theta)\biggr)\biggr]\nonumber\\
&\qquad\quad\times E\biggl[\biggl(\frac{\partial\hfill}{\partial\theta^u}\log f(X;\theta)\biggr)\biggl(\frac{\partial\hfill}{\partial\theta^w}\log f(X;\theta)\biggr)\biggr]\nonumber\\
&\quad\quad+ E\biggl[\biggl(\frac{\partial\hfill}{\partial\theta^k}\log f(X;\theta)\biggr)\biggl(\frac{\partial\hfill}{\partial\theta^w}\log f(X;\theta)\biggr)\biggr]\nonumber\\
&\qquad\quad\times E\biggl[\biggl(\frac{\partial\hfill}{\partial\theta^u}\log f(X;\theta)\biggr)\biggl(\frac{\partial\hfill}{\partial\theta^v}\log f(X;\theta)\biggr)\biggr]\biggr\}+O(n^{-3}).\label{bbar^2_bareo^3}
\end{align}
Combining all equations from \eqref{barteta^2}
to \eqref{bbar^2_bareo^3} and using Lemma \ref{int_log_der}, we obtain the result.\\
-\textit{ Proof of \eqref{expec_barteta^3}}-
\begin{align}
&\barteta{i}\barteta{j}\barteta{k}\nonumber\\
&=\bareo{i}\bareo{j}\bareo{k}+\aaa{s}{i}\bareo{s}\bareo{j}\bareo{k}+\aaa{s}{j}\bareo{s}\bareo{i}\bareo{k}+\aaa{s}{k}\bareo{s}\bareo{i}\bareo{j}\nonumber\\
&\quad+\bbar{st}{i}\bareo{s}\bareo{t}\bareo{j}\bareo{k}+\bbar{st}{j}\bareo{s}\bareo{t}\bareo{i}\bareo{k}+\bbar{st}{k}\bareo{s}\bareo{t}\bareo{i}\bareo{j}+Re(5), \label{barteta^3}
\end{align}
where  $Re(5)$ is defined as before.
We evaluate the expectation of each term.
\begin{align}
&E[\bareo{i}\bareo{j}\bareo{k}]\nonumber\\
&=n^{-3}E \biggl[\biggl(g^{il}\sum_{a=1}^n\biggl(\frac{\partial \hfill}{\partial \theta^l} \log f(X_a;\theta)\biggr)\biggr)\biggl(g^{jl}\sum_{b=1}^n\biggl(\frac{\partial \hfill}{\partial \theta^l} \log f(X_b;\theta)\biggr)\biggr)\nonumber\\
&\qquad\times\biggl(g^{kl}\sum_{c=1}^n\biggl(\frac{\partial \hfill}{\partial \theta^l} \log f(X_c;\theta)\biggr)\biggr)
\biggr]\nonumber\\
&=n^{-3}\sum_{a=1}^n g^{is}g^{jt}g^{ku} E\biggl[\biggl(\frac{\partial \hfill}{\partial \theta^s} \log f(X_a;\theta)\biggr)\biggl(\frac{\partial \hfill}{\partial \theta^t} \log f(X_a;\theta)\biggr)\biggl(\frac{\partial \hfill}{\partial \theta^u} \log f(X_a;\theta)\biggr)
\biggr]\nonumber\\
&\quad+n^{-3}\sum_{a\ne b} g^{is}g^{jt}g^{ku}\nonumber\\
&\quad\qquad\times \biggl\{ E\biggl[\biggl(\frac{\partial \hfill}{\partial \theta^s} \log f(X_a;\theta)\biggr)\biggl(\frac{\partial \hfill}{\partial \theta^t} \log f(X_a;\theta)\biggr)\biggr]E\biggl[\frac{\partial \hfill}{\partial \theta^u} \log f(X_b;\theta)\biggr]\nonumber\\
&\quad\qquad\qquad+E\biggl[\biggl(\frac{\partial \hfill}{\partial \theta^s} \log f(X_a;\theta)\biggr)\biggl(\frac{\partial \hfill}{\partial \theta^u} \log f(X_a;\theta)\biggr)\biggr]E\biggl[\frac{\partial \hfill}{\partial \theta^t} \log f(X_b;\theta) \biggr]\nonumber\\
&\quad\qquad\qquad+E\biggl[\biggl(\frac{\partial \hfill}{\partial \theta^t} \log f(X_a;\theta)\biggr)\biggl(\frac{\partial \hfill}{\partial \theta^u} \log f(X_a;\theta)\biggr)\biggr]E\biggl[\frac{\partial \hfill}{\partial \theta^s} \log f(X_b;\theta) \biggr]\biggr\}\nonumber\\
&\quad+n^{-3}\sum_{a\ne b, a\ne c, b \ne c} g^{is}g^{jt}g^{ku}\nonumber\\
&\qquad\qquad\times E\biggl[\frac{\partial \hfill}{\partial \theta^s} \log f(X_a;\theta)\biggr]E\biggl[\frac{\partial \hfill}{\partial \theta^t} \log f(X_b;\theta)\biggr]E\biggl[\frac{\partial \hfill}{\partial \theta^u} \log f(X_c;\theta)\biggr]\nonumber\\
&=n^{-2}g^{is}g^{jt}g^{ku}E\biggl[\biggl(\frac{\partial \hfill}{\partial \theta^s} \log f(X;\theta)\biggr)\biggl(\frac{\partial \hfill}{\partial \theta^t} \log f(X;\theta)\biggr)\biggl(\frac{\partial \hfill}{\partial \theta^u} \log f(X;\theta)\biggr)\biggr]\nonumber\\
&=n^{-2}g^{is}g^{jt}g^{ku}(\cristofsm{st}{u}-\cristofse{st}{u}).\label{bareo^4}
\end{align}
\begin{align}
&E[\aaa{s}{i}\bareo{s}\bareo{j}\bareo{k}]\nonumber\\
&=n^{-4}g^{it}g^{su}g^{jv}g^{kw}\sum_{1\leq a, b, c, d \leq n} E\biggl[
\biggl(\frac{\partial^2\hfill}{\partial\theta^s\partial\theta^t}\log f(X_a;\theta)+g_{st}\biggr)\nonumber\\
&\quad\times\biggl(\frac{\partial \hfill}{\partial \theta^u} \log f(X_b;\theta)\biggr)\biggl(\frac{\partial \hfill}{\partial \theta^v} \log f(X_c;\theta)\biggr)\biggl(\frac{\partial \hfill}{\partial \theta^w} \log f(X_d;\theta)\biggr)
\biggr]\nonumber\\
&=n^{-4}g^{it}g^{su}g^{jv}g^{kw}\nonumber\\
&\quad\times \sum_{a\ne b}\biggl\{E\biggl[\biggl(\frac{\partial^2\hfill}{\partial\theta^s\partial\theta^t}\log f(X_a;\theta)+g_{st}\biggr)\biggl(\frac{\partial \hfill}{\partial \theta^u} \log f(X_a;\theta)\biggr)\biggr]\nonumber\\
&\qquad\qquad\times E\biggl[\biggl(\frac{\partial \hfill}{\partial \theta^v} \log f(X_b;\theta)\biggr)\biggl(\frac{\partial \hfill}{\partial \theta^w} \log f(X_b;\theta)\biggr)\biggr]\nonumber\\
&\qquad\qquad+E\biggl[\biggl(\frac{\partial^2\hfill}{\partial\theta^s\partial\theta^t}\log f(X_a;\theta)+g_{st}\biggr)\biggl(\frac{\partial \hfill}{\partial \theta^v} \log f(X_a;\theta)\biggr)\biggr]\nonumber\\
&\qquad\qquad\times E\biggl[\biggl(\frac{\partial \hfill}{\partial \theta^u} \log f(X_b;\theta)\biggr)\biggl(\frac{\partial \hfill}{\partial \theta^w} \log f(X_b;\theta)\biggr)\biggr]\nonumber\\
&\qquad\qquad+E\biggl[\biggl(\frac{\partial^2\hfill}{\partial\theta^s\partial\theta^t}\log f(X_a;\theta)+g_{st}\biggr)\biggl(\frac{\partial \hfill}{\partial \theta^w} \log f(X_a;\theta)\biggr)\biggr]\nonumber\\
&\qquad\qquad\times E\biggl[\biggl(\frac{\partial \hfill}{\partial \theta^u} \log f(X_b;\theta)\biggr)\biggl(\frac{\partial \hfill}{\partial \theta^v} \log f(X_b;\theta)\biggr)\biggr]\biggr\}+O(n^{-3})\nonumber\\
&=n^{-2}g^{it}g^{su}g^{jv}g^{kw}(\cristofse{st}{u}g_{vw}+\cristofse{st}{v}g_{uw}+\cristofse{st}{w}g_{uv})+O(n^{-3})\nonumber\\
&=n^{-2}(\cristoffe{st}{s}g^{it}g^{jk}+\cristoffe{st}{j}g^{sk}g^{it}+\cristoffe{st}{k}g^{sj}g^{it})+O(n^{-3}). \label{aaa_bareo^3}
\end{align}
\begin{align}
&E[\bbar{st}{i}\bareo{s}\bareo{t}\bareo{j}\bareo{k}]\nonumber\\
&=n^{-4}\bbar{st}{i}g^{su}g^{tv}g^{jw}g^{km}\nonumber\\
&\quad\times\sum_{1 \leq a,b,c,d \leq n}E\biggl[\biggl(\frac{\partial \hfill}{\partial \theta^u} \log f(X_a;\theta)\biggr)\biggl(\frac{\partial \hfill}{\partial \theta^v} \log f(X_b;\theta)\biggr)\nonumber\\
&\qquad\qquad\times\biggl(\frac{\partial \hfill}{\partial \theta^w} \log f(X_c;\theta)\biggr)\biggl(\frac{\partial \hfill}{\partial \theta^m} \log f(X_d;\theta)\biggr)
\biggr]\nonumber\\
&=n^{-2}\bbar{st}{i}g^{su}g^{tv}g^{jw}g^{km}\nonumber\\
&\quad\times\biggl\{
E\biggl[ \biggl(\frac{\partial \hfill}{\partial \theta^u} \log f(X;\theta)\biggr)\biggl(\frac{\partial \hfill}{\partial \theta^v} \log f(X;\theta)\biggr)\biggr]\nonumber\\
&\qquad\qquad\times E\biggl[ \biggl(\frac{\partial \hfill}{\partial \theta^w} \log f(X;\theta)\biggr)\biggl(\frac{\partial \hfill}{\partial \theta^m} \log f(X;\theta)\biggr)\biggr]\nonumber\\
&\quad\qquad+E\biggl[ \biggl(\frac{\partial \hfill}{\partial \theta^u} \log f(X;\theta)\biggr)\biggl(\frac{\partial \hfill}{\partial \theta^w} \log f(X;\theta)\biggr)\biggr]\nonumber\\
&\qquad\qquad\times E\biggl[ \biggl(\frac{\partial \hfill}{\partial \theta^v} \log f(X;\theta)\biggr)\biggl(\frac{\partial \hfill}{\partial \theta^m} \log f(X;\theta)\biggr)\biggr]\nonumber\\
&\quad\qquad+E\biggl[ \biggl(\frac{\partial \hfill}{\partial \theta^u} \log f(X;\theta)\biggr)\biggl(\frac{\partial \hfill}{\partial \theta^m} \log f(X;\theta)\biggr)\biggr]\nonumber\\
&\qquad\qquad\times E\biggl[ \biggl(\frac{\partial \hfill}{\partial \theta^v} \log f(X;\theta)\biggr)\biggl(\frac{\partial \hfill}{\partial \theta^w} \log f(X;\theta)\biggr)\biggr]\bigg\}+O(n^{-3})\nonumber\\
&=n^{-2}\bbar{st}{i} g^{su}g^{tv}g^{jw}g^{km}(g_{uv}g_{wm}+g_{uw}g_{vm}+g_{um}g_{vw})\nonumber\\
&=n^{-2}\bbar{st}{i}(g^{ts}g^{jk}+g^{js}g^{kt}+g^{ks}g^{jt})+O(n^{-3}).\label{bbar_bareo^4}
\end{align}
From \eqref{barteta^3}, \eqref{bareo^4}, \eqref{aaa_bareo^3}, \eqref{bbar_bareo^4}, the result is obtained.\\
-\textit{ Proof of \eqref{expec_barteta^4}}-\\
It is obtained from the following facts;
\begin{equation}
\barteta{i}\barteta{j}\barteta{k}\barteta{l}=\bareo{i}\bareo{j}\bareo{k}\bareo{l}+Re(5),
\end{equation}
where $Re(5)$ is defined as before.
\begin{align}
&E[\bareo{i}\bareo{j}\bareo{k}\bareo{l}]\nonumber\\
&=n^{-2}(g^{ij}g^{kl}+g^{ik}g^{jl}+g^{il}g^{jk})+O(n^{-3}).
\end{align}
\subsection{Proof of \eqref{expan_ED_final} and related resutls}
\label{append2-3}
-\textit{ Proof of \eqref{expan_ED_final}}-\\
In this section, we use again Einstein summation convention and following rather unconventional notations: For $1\leq i, j, k, u, s, t \leq p$, 
$$
\cristoffe{i,j}{k,}\triangleq \cristofse{is}{j} g^{sk},\quad 
\cristoffe{ij,}{,k}\triangleq \cristofse{ij}{s} g^{sk},\quad
\cristoffe{i,}{j,k}\triangleq \cristofse{is}{t} g^{sj}g^{tk}, \quad
\cristofte{ij}{k}\triangleq \cristofse{st}{u} g^{si}g^{tj}g^{uk}
$$
Though $\cristoffe{ij,}{,k}$ equals Christoffel's first symbol $\cristoffe{ij}{k}$,  we use the notation $\cristoffe{ij,}{,k}$ in this section, which clarifies the position of scripts with the commas. $\cristoffm{i,j}{k,}$, $\cristoffm{ij,}{,k}$, $\cristoffm{i,}{j,k}$ and $\cristoftm{ij}{k}$ are similarly defined. 

We also use the partial differentiation notation $\partial^i (1\leq i \leq p)$ defined as
$$
\partial^i \triangleq g^{ij}\partial_j=g^{ij} \frac{\partial \hfill}{\partial \theta^j}
$$

We use the following relations later; for $1 \leq i, j, k \leq p, $
\begin{equation}
\begin{split}
 \langle {\oset{\scalebox{0.8}{$\alpha$}}{\nabla}}_{\partial^i} \partial^j, \partial^k \rangle&= g^{ku} g^{is} \langle  {\oset{\scalebox{0.8}{$\alpha$}}{\nabla}}_{\partial_s}(g^{jt} \partial_t), \partial_u \rangle\\
&=g^{ku}g^{is}g^{jt}\cristofsa{st}{u}+g^{ku}g^{is}g_{tu}(\partial_s g^{jt})\\
&=\cristofta{ij}{k}+\partial^i g^{jk}.
\end{split}
\end{equation}
\begin{equation}
\label{partial^k g^ij}
-\partial^k g^{ij}=\cristofte{kj}{i}+\cristoftm{ki}{j},
\end{equation}
since 
\begin{equation}
\begin{split}
\partial^k g^{ij}&=\partial^k \langle \partial^j, \partial^i \rangle \\
&=\langle {\oset{\scalebox{0.8}{$e$}}{\nabla}}_{\partial^k} \partial^j, \: \partial^i \rangle +\langle {\oset{\scalebox{0.8}{$m$}}{\nabla}}_{\partial^k} \partial^i, \: \partial^j \rangle\\
&=\cristofte{kj}{i}+\partial^k g^{ij}+\cristoftm{ki}{j}+\partial^k g^{ij}.
\end{split}
\end{equation}
\begin{equation}
\label{partial_j cristoffa{i,}{i,j}}
\begin{split}
\partial_j \cristoffa{i,}{i,j}&=g_{jl}\partial^{l}\cristoffa{i,}{i,j}\\
&=\partial^{l} \cristoffa{i,l}{i,}-(\partial^{l}g_{jl})\cristoffa{i,}{i,j}\\
&=\partial^{l}\cristoffa{i,l}{i,}-(g^{lm}\partial_m g_{jl})\cristoffa{i,}{i,j}\\
&=\partial^{l}\cristoffa{i,l}{i,}-g^{lm}(\cristofsa{ml}{j}+\cristofsma{mj}{l})\cristoffa{i,}{i,j}\\
&=\partial^{l}\cristoffa{i,l}{i,}-(\cristoffa{l,j}{l,}+\cristoffma{j,l}{l,})\cristoffa{i,}{i,j}.
\end{split}
\end{equation}

We will check each term in \eqref{expan_ED} one by one.
First since $\epsilon_i \overset{\alpha}{D}[\theta: \theta]=0$ (see \eqref{Egur1}), the first term of  \eqref{expan_ED} vanishes. 

Since $\epsilon_i \epsilon_j  \overset{\alpha}{D}[\theta: \theta]=g_{ij}$ (see \eqref{Egur2}), we need to evaluate 
$$g_{ij}E_\theta[(\hat{\theta}^i-\theta^i) (\hat{\theta}^j-\theta^j)].$$ 
From \eqref{expec_barteta^2}, we have
\begin{equation}
\label{g_ij_expec_barteta^2}
\begin{split}
&g_{ij}E_\theta[(\hat{\theta}^i-\theta^i) (\hat{\theta}^j-\theta^j)]\\
&=n^{-1}g_{ij}g^{ij}+n^{-2}\times \\
&\biggl[
g_{ij}g^{sj}g^{it}g^{lm}\langle \secfse{sl}, \secfsm{tm}-\secfse{tm}\rangle
+g_{ij}g^{si}g^{jt}g^{lm}\langle \secfse{sl}, \secfsm{tm}-\secfse{tm}\rangle\\
&\ +g_{ij}g^{sj}g^{it}g^{lm}\cristofse{sl}{u}\bigl(\cristoffm{tm,}{,u}-\cristoffe{tm,}{,u}\bigr)
+g_{ij}g^{si}g^{jt}g^{lm}\cristofse{sl}{u}\bigl(\cristoffm{tm,}{,u}-\cristoffe{tm,}{,u}\bigr)\\
&\ +g_{ij}\bbar{lm}{j}g^{ik}g^{ls}g^{mt}\bigl(\cristofsm{ks}{t}-\cristofse{ks}{t}\bigr)
+g_{ij}\bbar{lm}{i}g^{jk}g^{ls}g^{mt}\bigl(\cristofsm{ks}{t}-\cristofse{ks}{t}\bigr)\\
&\ +g_{ij}g^{jk}g^{lu}g^{is}g^{mt}\bigl(\langle \secfse{kl}, \secfse{um}\rangle g_{st}
+\cristoffe{kl,}{,v}\cristoffe{um,}{,w}g_{vw}g_{st}+\cristofse{kl}{s}\cristofse{um}{t}+\cristofse{kl}{t}\cristofse{um}{s}\bigr)\\
&\ +g_{ij}g^{ik}g^{lu}g^{js}g^{mt}\bigl(\langle \secfse{kl}, \secfse{um}\rangle g_{st}
+\cristoffe{kl,}{,v}\cristoffe{um,}{,w}g_{vw}g_{st}+\cristofse{kl}{s}\cristofse{um}{t}+\cristofse{kl}{t}\cristofse{um}{s}\bigr)\\
&\ +g_{ij}\bbar{ms}{l}g^{jk}g^{it}g^{mu}g^{sv}\bigl(\cristofse{kl}{t}g_{uv}+\cristofse{kl}{u}g_{tv}+\cristofse{kl}{v}g_{tu}\bigr)\\
&\ +g_{ij}\bbar{ms}{l}g^{ik}g^{jt}g^{mu}g^{sv}\bigl(\cristofse{kl}{t}g_{uv}+\cristofse{kl}{u}g_{tv}+\cristofse{kl}{v}g_{tu}\bigr)\\
&\ +\frac{1}{2}g_{ij}g^{js}g^{it}g^{lu}g^{mv}\bigl\{
\bigl((\partial_m\cristoffe{sl,}{,k})g_{tk}+\cristoffe{sl,}{,k}\cristofse{km}{t}-\langle \secfse{sl}, \secfsm{mt}\rangle \bigr)g_{uv}\\
&\hspace{30mm}+\bigl((\partial_m\cristoffe{sl,}{,k})g_{uk}+\cristoffe{sl,}{,k}\cristofse{km}{u}-\langle \secfse{sl}, \secfsm{mu}\rangle \bigr)g_{tv}\\
&\hspace{30mm}+\bigl((\partial_m\cristoffe{sl,}{,k})g_{vk}+\cristoffe{sl,}{,k}\cristofse{km}{v}-\langle \secfse{sl}, \secfsm{mv}\rangle \bigr)g_{ut}\bigr\}\\
&\ +\frac{1}{2}g_{ij}g^{is}g^{jt}g^{lu}g^{mv}\bigl\{
\bigl((\partial_m\cristoffe{sl,}{,k})g_{tk}+\cristoffe{sl,}{,k}\cristofse{km}{t}-\langle \secfse{sl}, \secfsm{mt}\rangle \bigr)g_{uv}\\
&\hspace{30mm}+\bigl((\partial_m\cristoffe{sl,}{,k})g_{uk}+\cristoffe{sl,}{,k}\cristofse{km}{u}-\langle \secfse{sl}, \secfsm{mu}\rangle \bigr)g_{tv}\\
&\hspace{30mm}+\bigl((\partial_m\cristoffe{sl,}{,k})g_{vk}+\cristoffe{sl,}{,k}\cristofse{km}{v}-\langle \secfse{sl}, \secfsm{mv}\rangle \bigr)g_{ut}\bigr\}\\
&\ +2g_{ij}\bbar{lm}{j}g^{mt}g^{iu}g^{lv}g^{sw}\bigl(\cristofse{st}{u}g_{vw}+\cristofse{st}{v}g_{uw}+\cristofse{st}{w}g_{uv}\bigr)\\
&\ +2g_{ij}\bbar{lm}{i}g^{mt}g^{ju}g^{lv}g^{sw}\bigl(\cristofse{st}{u}g_{vw}+\cristofse{st}{v}g_{uw}+\cristofse{st}{w}g_{uv}\bigr)\\
&\ +2g_{ij}\bbar{lm}{j}\bbar{st}{m}g^{ik}g^{lu}g^{sv}g^{tw}\bigl(g_{ku}g_{vw}+g_{kv}g_{uw}+g_{kw}g_{uv}\bigr)\\
&\ +2g_{ij}\bbar{lm}{i}\bbar{st}{m}g^{jk}g^{lu}g^{sv}g^{tw}\bigl(g_{ku}g_{vw}+g_{kv}g_{uw}+g_{kw}g_{uv}\bigr)\\
&\ +g_{ij}\cbar{lmt}{j} g^{ik} g^{ls} g^{mu} g^{tv}\bigl( g_{ks}g_{uv}+g_{ku}g_{sv}+g_{kv}g_{su}\bigr)\\
&\ +g_{ij}\cbar{lmt}{i} g^{jk} g^{ls} g^{mu} g^{tv}\bigl( g_{ks}g_{uv}+g_{ku}g_{sv}+g_{kv}g_{su}\bigr)\\
&\ +g_{ij}g^{ik} g^{js} g^{lt} g^{mu}\bigl( \langle \secfse{kl}, \secfse{sm}\rangle g_{tu}+
\cristoffe{kl,}{,v}\cristoffe{sm,}{,w}g_{vw}g_{tu}+\cristofse{kl}{t}\cristofse{sm}{u}+\cristofse{kl}{u}\cristofse{sm}{t}\bigr)\\
&\ +g_{ij}\bbar{st}{j} g^{ik}g^{lu}g^{sv}g^{tw}\bigl(\cristofse{kl}{u}g_{vw}+\cristofse{kl}{v}g_{uw}+\cristofse{kl}{w}g_{uv}\bigr)\\
&\ +g_{ij}\bbar{st}{i} g^{jk}g^{lu}g^{sv}g^{tw}\bigl(\cristofse{kl}{u}g_{vw}+\cristofse{kl}{v}g_{uw}+\cristofse{kl}{w}g_{uv}\bigr)\\
&\ +g_{ij}\bbar{lm}{i}\bbar{st}{j}g^{lk}g^{mu}g^{sv}g^{tw}\bigl(g_{ku}g_{vw}+g_{kv}g_{uw}+g_{kw}g_{uv}\bigr)\biggr]+O(n^{-5/2}).
\end{split}
\end{equation}
Taking into account the fact $g_{ij}=g_{ji}$, we have the following equation.
\begin{equation}
\label{g_ij_expec_barteta^2_contruction}
\begin{split}
&g_{ij}E_\theta[(\hat{\theta}^i-\theta^i) (\hat{\theta}^j-\theta^j)]\\
&=n^{-1}g_{ij}g^{ij}\text{(\textit{ say} $T_0$)}+n^{-2}\times \\
&\biggl[
2g_{ij}g^{sj}g^{it}g^{lm}\langle \secfse{sl}, \secfsm{tm}-\secfse{tm}\rangle \text{(\textit{ say} $T_1$)}\\
&\ +2g_{ij}g^{sj}g^{it}g^{lm}\cristofse{sl}{u}\bigl(\cristoffm{tm,}{,u}-\cristoffe{tm,}{,u}\bigr)
\text{(\textit{ say} $T_2$)}\\
&\ +2g_{ij}\bbar{lm}{j}g^{ik}g^{ls}g^{mt}\bigl(\cristofsm{ks}{t}-\cristofse{ks}{t}\bigr)\text{(\textit{ say} $T_3$)}\\
&\ +2g_{ij}g^{jk}g^{lu}g^{is}g^{mt}\bigl(\langle \secfse{kl}, \secfse{um}\rangle g_{st}
+\cristoffe{kl,}{,v}\cristoffe{um,}{,w}g_{vw}g_{st}+\cristofse{kl}{s}\cristofse{um}{t}+\cristofse{kl}{t}\cristofse{um}{s}\bigr)\text{(\textit{ say} $T_4$)}\\
&\ +2g_{ij}\bbar{ms}{l}g^{jk}g^{it}g^{mu}g^{sv}\bigl(\cristofse{kl}{t}g_{uv}+\cristofse{kl}{u}g_{tv}+\cristofse{kl}{v}g_{tu}\bigr)\text{(\textit{ say} $T_5$)}\\
&\ +g_{ij}g^{js}g^{it}g^{lu}g^{mv}\bigl\{
\bigl((\partial_m\cristoffe{sl,}{,k})g_{tk}+\cristoffe{sl,}{,k}\cristofse{km}{t}-\langle \secfse{sl}, \secfsm{mt}\rangle \bigr)g_{uv}\\
&\hspace{30mm}+\bigl((\partial_m\cristoffe{sl,}{,k})g_{uk}+\cristoffe{sl,}{,k}\cristofse{km}{u}-\langle \secfse{sl}, \secfsm{mu}\rangle \bigr)g_{tv}\\
&\hspace{30mm}+\bigl((\partial_m\cristoffe{sl,}{,k})g_{vk}+\cristoffe{sl,}{,k}\cristofse{km}{v}-\langle \secfse{sl}, \secfsm{mv}\rangle \bigr)g_{ut}\bigr\}\text{(\textit{ say} $T_6$)}\\
&\ +4g_{ij}\bbar{lm}{j}g^{mt}g^{iu}g^{lv}g^{sw}\bigl(\cristofse{st}{u}g_{vw}+\cristofse{st}{v}g_{uw}+\cristofse{st}{w}g_{uv}\bigr)\text{(\textit{ say} $T_7$)}\\
&\ +4g_{ij}\bbar{lm}{j}\bbar{st}{m}g^{ik}g^{lu}g^{sv}g^{tw}\bigl(g_{ku}g_{vw}+g_{kv}g_{uw}+g_{kw}g_{uv}\bigr)\text{(\textit{ say} $T_8$)}\\
&\ +2g_{ij}\cbar{lmt}{j} g^{ik} g^{ls} g^{mu} g^{tv}\bigl( g_{ks}g_{uv}+g_{ku}g_{sv}+g_{kv}g_{su}\bigr)\text{(\textit{ say} $T_9$)}\\
&\ +g_{ij}g^{ik} g^{js} g^{lt} g^{mu}\bigl( \langle \secfse{kl}, \secfse{sm}\rangle g_{tu}+
\cristoffe{kl,}{,v}\cristoffe{sm,}{,w}g_{vw}g_{tu}+\cristofse{kl}{t}\cristofse{sm}{u}+\cristofse{kl}{u}\cristofse{sm}{t}\bigr)\text{(\textit{ say} $T_{10}$)}\\
&\ +2g_{ij}\bbar{st}{j} g^{ik}g^{lu}g^{sv}g^{tw}\bigl(\cristofse{kl}{u}g_{vw}+\cristofse{kl}{v}g_{uw}+\cristofse{kl}{w}g_{uv}\bigr)\text{(\textit{ say} $T_{11}$)}\\
&\ +g_{ij}\bbar{lm}{i}\bbar{st}{j}g^{lk}g^{mu}g^{sv}g^{tw}\bigl(g_{ku}g_{vw}+g_{kv}g_{uw}+g_{kw}g_{uv}\bigr)\text{(\textit{ say} $T_{12}$)}\biggr]+O(n^{-5/2}).
\end{split}
\end{equation}
We will check each term of the above equation one by one. We will gather the terms including $\bbar{ij}{k}$ ($T_3, T_5, T_7, T_8, T_{11}, T_{12}$) and treat them as a whole later.
\begin{equation}
\label{epec_barteta^2*g_ij_0}
T_0: n^{-1}g_{ij}g^{ij}=n^{-1}p.
\end{equation}
\begin{equation}
\label{epec_barteta^2*g_ij_1}
\begin{split}
&T_1: 2g_{ij}g^{sj}g^{it}g^{lm}\langle \secfse{sl}, \secfsm{tm}-\secfse{tm}\rangle=2g^{st}g^{lm}\langle \secfse{sl}, \secfsm{tm}-\secfse{tm}\rangle\\
&\qquad=2\langle \secffe{s}{m}, \secffm{m}{s}-\secffe{m}{s} \rangle.
\end{split}
\end{equation}
\begin{equation}
\label{epec_barteta^2*g_ij_2}
\begin{split}
&T_2:2g_{ij}g^{sj}g^{it}g^{lm}\cristofse{sl}{u}\bigl(\cristoffm{tm,}{,u}-\cristoffe{tm,}{,u}\bigr)
=2g^{st}g^{lm}\cristofse{sl}{u}T_{tm}^u=2\cristofse{sl}{u}T^{slu}.
\end{split}
\end{equation}
\begin{equation}
\label{epec_barteta^2*g_ij_4}
\begin{split}
&T_4: 2g_{ij}g^{jk}g^{lu}g^{is}g^{mt}\bigl(\langle \secfse{kl}, \secfse{um}\rangle g_{st}
+\cristoffe{kl,}{,v}\cristoffe{um,}{,w}g_{vw}g_{st}+\cristofse{kl}{s}\cristofse{um}{t}+\cristofse{kl}{t}\cristofse{um}{s}\bigr)\\
&=2\bigl\{g^{km}g^{lu}\bigl(\langle \secfse{kl}, \secfse{um}\rangle
+\cristofse{kl}{w}\cristoffe{um,}{,w}\bigr)+g^{lu}g^{ks}g^{mt}\bigl(\cristofse{kl}{s}\cristofse{um}{t}+\cristofse{kl}{t}\cristofse{um}{s}\bigr)\bigr\}\\
&=2\bigl\{\langle \secffe{k}{u}, \secffe{u}{k}\rangle+\cristoffe{l,s}{m,}\cristoffe{m,}{l,s}+\bigl(\cristoffe{k,}{u,k}\cristoffe{u,t}{t,}+\cristoffe{k,}{u,m}\cristoffe{um,}{,k}\bigr)\bigr\}.
\end{split}
\end{equation}
We break $T_6$ into parts.
\begin{equation}
\label{epec_barteta^2*g_ij_6_1}
\begin{split}
&g_{ij}g^{js}g^{it}g^{lu}g^{mv}(\partial_m\cristoffe{sl,}{,k})g_{tk}g_{uv}=g^{st}g^{ml}(\partial_m\cristoffe{sl,}{,k})g_{tk}\\
&=g^{ml}(\partial_m\cristoffe{kl,}{,k})=(\partial^l \cristoffe{kl,}{,k}).
\end{split}
\end{equation}
\begin{equation}
\label{epec_barteta^2*g_ij_6_2}
\begin{split}
&g_{ij}g^{js}g^{it}g^{lu}g^{mv}\cristoffe{sl,}{,k}\cristofse{km}{t}g_{uv}=g^{st}g^{ml}\cristoffe{sl,}{,k}\cristofse{km}{t}=\cristoffe{l,}{t,k}\cristoffe{k,t}{l,}.
\end{split}
\end{equation}
\begin{equation}
\label{epec_barteta^2*g_ij_6_3}
\begin{split}
&-g_{ij}g^{js}g^{it}g^{lu}g^{mv}\langle \secfse{sl}, \secfsm{mt}\rangle g_{uv}=-g^{st}g^{ml}\langle \secfse{sl}, \secfsm{mt}\rangle=-\langle \secffe{l}{t}, \secffm{t}{l} \rangle.
\end{split}
\end{equation}
\begin{equation}
\label{epec_barteta^2*g_ij_6_4}
\begin{split}
&g_{ij}g^{js}g^{it}g^{lu}g^{mv}(\partial_m\cristoffe{sl,}{,k})g_{uk}g_{tv}=g^{st}g^{mv}(\partial_m\cristoffe{sl,}{,l})g_{tv}\\
&=g^{ms}(\partial_m\cristoffe{sl,}{,l})=(\partial^s \cristoffe{sl,}{,l}).
\end{split}
\end{equation}
\begin{equation}
\label{epec_barteta^2*g_ij_6_5}
\begin{split}
&g_{ij}g^{js}g^{it}g^{lu}g^{mv}\cristoffe{sl,}{,k}\cristofse{km}{u}g_{tv}=g^{sm}g^{lu}\cristoffe{sl,}{,k}\cristofse{km}{u}=\cristoffe{s,}{u,k}\cristoffe{k,u}{s,}.
\end{split}
\end{equation}
\begin{equation}
\label{epec_barteta^2*g_ij_6_6}
\begin{split}
&-g_{ij}g^{js}g^{it}g^{lu}g^{mv}\langle \secfse{sl}, \secfsm{mu}\rangle g_{tv}=-g^{lu}g^{ms}\langle \secfse{sl}, \secfsm{mu}\rangle =-\langle \secffe{s}{u}, \secffm{u}{s} \rangle.
\end{split}
\end{equation}
\begin{equation}
\label{epec_barteta^2*g_ij_6_7}
\begin{split}
&g_{ij}g^{js}g^{it}g^{lu}g^{mv}(\partial_m\cristoffe{sl,}{,k})g_{vk}g_{ut}=g^{st}g^{lu}g^{mv}(\partial_m\cristoffe{sl,}{,k})g_{vk}g_{ut}\\
&=g^{ls}(\partial_k\cristoffe{sl,}{,k})=g^{ls}\bigl\{ \partial_k(\cristoffe{s,}{t,k}g_{tl})\bigr\}\\
&=g^{ls}\bigl\{ (\partial_k\cristoffe{s,}{t,k})g_{tl}+(\partial_k g_{tl})(\cristoffe{s,}{t,k})\bigr\}=\bigl(\partial_k\cristoffe{s,}{s,k}+\cristoffe{tk,}{,s}\cristoffe{s,}{t,k}+\cristoffm{k,t}{s,}\cristoffe{s,}{t,k}\bigr)\\
&=\bigl\{\partial^{k}\cristoffe{s,k}{s,}-\bigl(\cristoffe{s,t}{s,}+\cristoffm{t,s}{s,}\bigr)\cristoffe{i,}{i,t}+\cristofse{ij}{k}\cristofte{ik}{j}+\cristofsm{ij}{k}\cristofte{ik}{j}\}\text{ (see \eqref{partial_j cristoffa{i,}{i,j}})}.
\end{split}
\end{equation}
\begin{equation}
\label{epec_barteta^2*g_ij_6_8}
\begin{split}
&g_{ij}g^{js}g^{it}g^{lu}g^{mv}\cristoffe{sl,}{,k}\cristofse{km}{v}g_{ut}=g^{st}g^{mv}\cristoffe{st,}{,k}\cristofse{km}{v}=\cristoffe{s,}{s,k}\cristoffe{k,v}{v,}.
\end{split}
\end{equation}
\begin{equation}
\label{epec_barteta^2*g_ij_6_9}
\begin{split}
&-g_{ij}g^{js}g^{it}g^{lu}g^{mv}\langle \secfse{sl}, \secfsm{mv}\rangle g_{ut}=-g^{st}g^{mv}\langle \secfse{st}, \secfsm{mv}\rangle=-\langle \secffe{t}{t}, \secffm{v}{v} \rangle.
\end{split}
\end{equation}
If we sum up all the right-hand sides of the equations from \eqref{epec_barteta^2*g_ij_6_1} to \eqref{epec_barteta^2*g_ij_6_9}, we obtain
\begin{equation}
\label{epec_barteta^2*g_ij_6}
\begin{split}
&T_6: 2\partial^m \cristoffe{km,}{,k}+\partial^k\cristoffe{s,k}{s,}+3\cristofse{ij}{k}\cristofte{ik}{j}+\cristoffe{s,}{s,k}\cristoffe{k,v}{v,}+\cristoffe{s,}{t,k}\cristoffm{k,t}{s,}\\
&\qquad\quad-(\cristoffe{s,t}{s,}+\cristoffm{t,s}{s,})\cristoffe{i,}{i,t}-2\langle \secffe{l}{t},\: \secffm{t}{l}\rangle-\langle \secffe{t}{t},\: \secffm{v}{v}\rangle.
\end{split}
\end{equation}
We  will use Kronecker's delta $\delta_{ij}$ defined by $\delta_{ij}=1,\text{ if } i=j$, and $\delta_{ij}=0, \text{ if } i\ne j$.  
\begin{equation}
\label{epec_barteta^2*g_ij_9}
\begin{split}
&T_9: 2\cbar{lmt}{k}\bigl(\delta_{lk} \delta_{mv}g^{tv}+\delta_{km}\delta_{st}g^{ls}+\delta_{lu}\delta_{kt}g^{mu}\bigr)\\
&=2\bigl( \cbar{lvt}{l}g^{tv}+\cbar{lks}{k}g^{ls}+\cbar{umt}{t}g^{mu}\bigr)=6\cbar{lvt}{l}g^{vt}\\
&=-g^{li}g^{tv}\bigl(\partial_t\cristofse{il}{v}+\partial_t\cristofse{iv}{l}+\partial_t\cristofsm{lv}{i}+(\partial_v\cristoffe{il,}{,u})g_{ut}+\cristoffe{il,}{,u}\cristofse{uv}{t}-\langle \secfse{il}, \secfsm{vt}\rangle \bigr),
\end{split}
\end{equation}
where we used the relation 
\begin{equation}
\begin{split}
&\cbar{jks}{t}(\theta)\triangleq \frac{1}{3!}\wai  g^{it}(\theta) E_\theta \biggl[\frac{\partial^3 \bareu{i}(\xx;\theta)}{\partial \theta^j \partial\theta^k \partial\theta^s}\biggr]\\
&=\frac{1}{6}g^{ti}E_\theta[l_{ijks}(x;\theta)]\\
&=-\frac{1}{6}g^{ti}\biggl\{\bigl(\partial_s\cristofse{ij}{k}+\partial_s\cristofse{ik}{j}+\partial_s\cristofsm{jk}{i}\bigr)+(\partial_k \cristoffe{ij,}{,u})g_{us}+\cristoffe{ij,}{,u}\cristofse{uk}{s}-\langle \secfse{ij}, \secfsm{ks}\rangle\biggr\}\\
&\qquad \text{(see \eqref{log_ijkh}).}
\end{split}
\end{equation}
If we continue  the calculation of $T_9$, we have the following result.
\begin{equation}
\begin{split}
&T_9: -g^{li}\partial^v\cristofse{il}{v}-g^{li}\partial^v\cristofse{iv}{l}-g^{li}\partial^v\cristofsm{lv}{i}-g^{li}g^{tv}(\partial_v\cristoffe{il,}{,u})g_{ut}-\cristoffe{i,}{i,u}\cristoffe{u,t}{t,}+\langle \secffe{i}{i}, \secffm{j}{j} \rangle\\
&=-\partial^v (g^{li}\cristofse{il}{v})+(\partial^v g^{li})\cristofse{il}{v}-\partial^v(g^{li}\cristofse{iv}{l})+(\partial^v g^{li})\cristofse{iv}{l}\\
&\quad-\partial^v(g^{li}\cristofsm{lv}{i})+(\partial^v g^{li})\cristofsm{lv}{i}-\partial^u\cristoffe{i,u}{i,}+(\cristoffe{u,}{t,u}+\cristoffm{u,}{u,t})\cristoffe{i,t}{i,}\\
&\quad-(\cristofte{ul}{i}+\cristoftm{ui}{l})\cristofse{il}{u}-\cristoffe{i,}{i,u}\cristoffe{u,t}{t,}+\langle \secffe{i}{i}, \secffm{j}{j} \rangle\\
&=-\partial^v\cristoffe{i,v}{i,}-(\cristofte{vl}{i}+\cristoftm{vi}{l})\cristofse{il}{v}-\partial^v\cristoffe{v,l}{l,}-(\cristofte{vl}{i}+\cristoftm{vi}{l})\cristofse{iv}{l}\\
&\quad-\partial^v\cristoffm{v,i}{i,}-(\cristofte{vl}{i}+\cristoftm{vi}{l})\cristofsm{lv}{i}-\partial^u\cristoffe{i,u}{i,}\\
&\quad+\cristoffe{i,t}{i,}(\cristoffe{u,}{t,u}+\cristoffm{u,}{u,t})-(\cristofte{ul}{i}+\cristoftm{ui}{l})\cristofse{il}{u}-\cristoffe{i,}{i,u}\cristoffe{u,t}{t,}+\langle \secffe{i}{i}, \secffm{j}{j} \rangle \text{ (see \eqref{partial^k g^ij}),}
\end{split}
\end{equation}
where the second equation is due to the fact 
\begin{equation}
\begin{split}
&-g^{li}g^{tv}(\partial_v\cristoffe{il,}{,u})g_{ut}=-g^{li}(\partial_u\cristoffe{il,}{,u})=-\partial_u(g^{li}\cristoffe{il,}{,u})+(\partial_u g^{li})\cristoffe{il,}{,u}\\
&=-\partial_u\cristoffe{i,}{i,u}+(\partial^u g^{li})\cristofse{il}{u}=-g_{su}(\partial^s\cristoffe{i,}{i,u})+(\partial^u g^{li})\cristofse{il}{u}\\
&=-g_{su}\partial^s(\cristoffe{i,t}{i,}g^{tu})-(\cristofte{ul}{i}+\cristoftm{ui}{l})\cristofse{il}{u} \text{ (see \eqref{partial^k g^ij})} \\
&=-\partial^t\cristoffe{i,t}{i,}+g_{su}\cristoffe{i,t}{i,}(\cristofte{st}{u}+\cristoftm{su}{t})-(\cristofte{ul}{i}+\cristoftm{ui}{l})\cristofse{il}{u} \text{ (see \eqref{partial^k g^ij})}\\
&=-\partial^u\cristoffe{i,u}{i,}+\cristoffe{i,t}{i,}(\cristoffe{u,}{t,u}+\cristoffm{u,}{u,t})-(\cristofte{ul}{i}+\cristoftm{ui}{l})\cristofse{il}{u}.
\end{split}
\end{equation}
\begin{equation}
\label{epec_barteta^2*g_ij_10}
\begin{split}
&T_{10}: g^{ks}g^{lt}\bigl\{\langle \secfse{kl},\secfse{st} \rangle+\cristoffe{kl,}{,v}\cristoffe{st,}{,w}g_{vw}+\cristofse{kl}{t}\cristoffe{s,u}{u,}+\cristofse{kl}{u}\cristoffe{s,t}{u,}\bigr\}\\
&=\bigl(\langle\secffe{l}{s},\secffe{s}{l}\rangle +\cristoffe{k,w}{m,}\cristoffe{m,}{k,w}+\cristoffe{l,}{s,l}\cristoffe{s,u}{u,}+\cristoffe{l,u}{s,}\cristoffe{s,}{u,l}\bigr).
\end{split}
\end{equation}
Now we treat those terms that include $\bbar{ij}{k}$.
\begin{equation}
\begin{split}
&T_3: 2g_{ij}\bbar{lm}{j}g^{ik}g^{ls}g^{mt}\bigl(\cristofsm{ks}{t}-\cristofse{ks}{t}\bigr)
=2g_{ij}\bbar{lm}{j}g^{ik}g^{ls}g^{mt}T_{kst}\\
&=2\bbar{lm}{k}T_{k}^{lm}.
\end{split}
\end{equation}
\begin{equation}
\begin{split}
&T_5: 2g_{ij}\bbar{ms}{l}g^{jk}g^{it}g^{mu}g^{sv}\bigl(\cristofse{kl}{t}g_{uv}+\cristofse{kl}{u}g_{tv}+\cristofse{kl}{v}g_{tu}\bigr)\\
&=2\delta_{ik}\delta_{su}\bbar{ms}{l}g^{it}g^{mu}\cristofse{kl}{t}+
2\delta_{ik}\delta_{ts}\bbar{ms}{l}g^{it}g^{mu}\cristofse{kl}{u}+
2\delta_{ik}\delta_{tm}\bbar{ms}{l}g^{it}g^{sv}\cristofse{kl}{v}\\
&=2\bbar{ms}{l}g^{ms}\cristoffe{l,t}{t,}+2\bbar{mt}{l}\cristoffe{l,}{t,m}+2\bbar{ts}{l}\cristoffe{l,}{t,s}.
\end{split}
\end{equation}
\begin{equation}
\begin{split}
&T_7: 4g_{ij}\bbar{lm}{j}g^{mt}g^{iu}g^{lv}g^{sw}\bigl(\cristofse{st}{u}g_{vw}+\cristofse{st}{v}g_{uw}+\cristofse{st}{w}g_{uv}\bigr)\\
&=4\delta_{sv}\bbar{lm}{u}g^{mt}g^{lv}\cristofse{st}{u}+4\delta_{su}\bbar{lm}{u}g^{mt}g^{lv}\cristofse{st}{v}+4\delta_{ul}\bbar{lm}{u}g^{mt}g^{sw}\cristofse{st}{w}\\
&=4\bbar{lm}{u}g^{mt}\cristoffe{t,u}{l,}+4\bbar{lm}{u}g^{mt}\cristoffe{ut,}{,l}+4\bbar{um}{u}g^{mt}\cristoffe{t,w}{w,}.
\end{split}
\end{equation}
\begin{equation}
\begin{split}
&T_8: 4g_{ij}\bbar{lm}{j}\bbar{st}{m}g^{ik}g^{lu}g^{sv}g^{tw}\bigl(g_{ku}g_{vw}+g_{kv}g_{uw}+g_{kw}g_{uv}\bigr)\\
&=4  \bbar{lm}{k}\bbar{st}{m}\delta_{kl}\delta_{tv}g^{sv}+4  \bbar{lm}{k}\bbar{st}{m}\delta_{sk}\delta_{tu}g^{lu}+4  \bbar{lm}{k}\bbar{st}{m}\delta_{kt}\delta_{us}g^{lu}\\
&=4 \bbar{lm}{l}\bbar{st}{m}g^{st}+4\bbar{lm}{s}\bbar{su}{m}g^{lu}+4\bbar{lm}{t}\bbar{st}{m}g^{sl}.
\end{split}
\end{equation}
\begin{equation}
\begin{split}
&T_{11}: 2g_{ij}\bbar{st}{j} g^{ik}g^{lu}g^{sv}g^{tw}\bigl(\cristofse{kl}{u}g_{vw}+\cristofse{kl}{v}g_{uw}+\cristofse{kl}{w}g_{uv}\bigr)\\
&=2\bbar{st}{k}g^{lu}g^{sv}\delta_{tv}\cristofse{kl}{u}+2\bbar{st}{k}g^{lu}g^{sv}\delta_{tu}\cristofse{kl}{v}+2\bbar{st}{k}g^{lu}g^{tw}\delta_{us}\cristofse{kl}{w}\\
&=2\bbar{st}{k}g^{st}\cristoffe{k,u}{u,}+2\bbar{st}{k}\cristoffe{k,}{t,s}+2\bbar{st}{k}\cristoffe{k,}{s,t}.
\end{split}
\end{equation}
\begin{equation}
\begin{split}
&T_{12}: g_{ij}\bbar{lm}{i}\bbar{st}{j}g^{lk}g^{mu}g^{sv}g^{tw}\bigl(g_{ku}g_{vw}+g_{kv}g_{uw}+g_{kw}g_{uv}\bigr)\\
&=\bbar{lm}{i}\bbar{st}{j}g_{ij}g^{lk}g^{sv}\delta_{mk}\delta_{vt}+\bbar{lm}{i}\bbar{st}{j}g_{ij}g^{lk}g^{mu}\delta_{sk}\delta_{tu}+\bbar{lm}{i}\bbar{st}{j}g_{ij}g^{lk}g^{mu}\delta_{us}\delta_{tk}\\
&=\bbar{lm}{i}\bbar{st}{j}g^{lm}g^{st}g_{ij}+\bbar{lm}{i}\bbar{st}{j}g^{sl}g^{tm}g_{ij}+\bbar{lm}{i}\bbar{st}{j}g^{sm}g^{tl}g_{ij}.
\end{split}
\end{equation}
We notice from \eqref{log_ijk} that $\bbar{jk}{s}$ can be rewritten as 
\begin{align*}
&\bbar{jk}{s}(\theta)\triangleq \frac{1}{2}\wai  g^{is}(\theta) E_\theta \biggl[\frac{\partial^2 \bareu{i}(\xx;\theta)}{\partial \theta^j \partial\theta^k}\biggr]=\frac{1}{2}g^{si}E_\theta[l_{ijk}(x;\theta)]\\
&=-\frac{1}{2}g^{si}\bigl(\cristofse{ij}{k}+\cristofse{ik}{j}+\cristofsm{jk}{i}\bigr).
\end{align*}
 Further calculation using this relationship shows the following results. 
\begin{equation}
\begin{split}
&T_3: -g^{ki}\bigl(\cristofse{il}{m}+\cristofse{im}{l}+\cristofsm{lm}{i}\bigr)T_k^{lm}\\
&=-2\cristofse{ij}{k}T^{ijk}-\cristofsm{ij}{k}T^{ijk}=-2\cristofse{ij}{k}T^{ijk}-(T_{ijk}+\cristofse{ij}{k})T^{ijk}\\
&=-T_{ijk}T^{ijk}-3\cristofse{ij}{k}T^{ijk}.
\end{split}
\end{equation}
\begin{equation}
\begin{split}
&T_5+T_{11}: 4\bbar{ms}{l}g^{ms}\cristoffe{l,t}{t,}+8\bbar{mt}{l}\cristoffe{l,}{t,m}\\
&=-2g^{li}\bigl(\cristofse{im}{s}+\cristofse{is}{m}+\cristofsm{ms}{i}\bigr)g^{ms}\cristoffe{l,t}{t,}-4g^{li}\bigl(\cristofse{im}{t}+\cristofse{it}{m}+\cristofsm{tm}{i}\bigr)\cristoffe{l,}{t,m}\\
&=-2\cristoffe{m,}{l,m}\cristoffe{l,t}{t,}-2\cristoffe{s,}{l,s}\cristoffe{l,t}{t,}-2\cristoffm{s,}{s,l}\cristoffe{l,t}{t,}-4\cristofse{ij}{k}\cristofte{ik}{j}-4\cristofse{ij}{k}\cristofte{ij}{k}-4\cristofsm{ij}{k}\cristofte{ik}{j}
\end{split}
\end{equation}
\begin{equation}
\begin{split}
&T_7: -2g^{ui}\bigl(\cristofse{il}{m}+\cristofse{im}{l}+\cristofsm{ml}{i}\bigr)\cristoffe{t,u}{l,}g^{mt}-2g^{ui}\bigl(\cristofse{il}{m}+\cristofse{im}{l}+\cristofsm{ml}{i}\bigr)\cristoffe{ut,}{,l}g^{mt}\\
&\quad -2g^{ui}\bigl(\cristofse{iu}{m}+\cristofse{im}{u}+\cristofsm{um}{i}\bigr)\cristoffe{t,s}{s,}g^{mt}\\
&=-2\cristofse{ij}{k}\cristofte{ik}{j}-2\cristofse{ij}{k}\cristofte{ik}{j}-2\cristofsm{ij}{k}\cristofte{ij}{k}\\
&\quad-2\cristofse{ij}{k}\cristofte{ik}{j}-2\cristofse{ij}{k}\cristofte{ij}{k}-2\cristofsm{ij}{k}\cristofte{ik}{j}\\
&\quad-2\cristoffe{i,}{i,t}\cristoffe{t,s}{s,}-2\cristoffe{i,}{t,i}\cristoffe{t,s}{s,}-2\cristoffm{u,}{t,u}\cristoffe{t,s}{s,}
\end{split}
\end{equation}
\begin{equation}
\begin{split}
&T_8: g^{lt}\bigl(\cristofse{tl}{m}+\cristofse{tm}{l}+\cristofsm{lm}{t}\bigr)g^{mu}\bigl(\cristofse{us}{t}+\cristofse{ut}{s}+\cristofsm{st}{u}\bigr)g^{st}\\
&+g^{st}\bigl(\cristofse{tl}{m}+\cristofse{tm}{l}+\cristofsm{lm}{t}\bigr)g^{mv}\bigl(\cristofse{vs}{u}+\cristofse{vu}{s}+\cristofsm{us}{v}\bigr)g^{lu}\\
&+g^{ti}\bigl(\cristofse{il}{m}+\cristofse{im}{l}+\cristofsm{ml}{i}\biggr)g^{mu}\bigl(\cristofse{us}{t}+\cristofse{ut}{s}+\cristofsm{st}{u}\bigr)g^{sl}\\
&=\cristoffe{l,}{l,u}\cristoffe{u,t}{t,}+\cristoffe{l,}{l,u}\cristoffe{u,s}{s,}+\cristoffe{l,}{l,u}\cristoffm{t,u}{t,}+\cristoffe{m,l}{l,}\cristoffe{s,}{m,s}+\cristoffe{m,l}{l,}\cristoffe{t,}{m,t}+\cristoffe{m,l}{l,}\cristoffm{t,}{t,m}\\
&\quad+\cristoffm{m,t}{t,}\cristoffe{s,}{m,s}+\cristoffm{m,t}{t,}\cristoffe{s,}{m,s}+\cristoffm{m,t}{t,}\cristoffm{s,}{s,m}\\
&\quad+\cristofte{ij}{k}\cristofse{ik}{j}+\cristofte{ij}{k}\cristofse{ik}{j}+\cristofte{ij}{k}\cristofsm{ij}{k}+\cristofte{ij}{k}\cristofse{ij}{k}+\cristofte{ij}{k}\cristofse{ik}{j}+\cristofte{ij}{k}\cristofsm{ik}{j}
\\
&\quad+\cristoftm{ij}{k}\cristofse{ik}{j}+\cristoftm{ij}{k}\cristofse{ij}{k}+\cristoftm{ij}{k}\cristofsm{ik}{j}\\
&\quad+\cristofte{ij}{k}\cristofse{ik}{j}+\cristofte{ij}{k}\cristofse{ik}{j}+\cristofte{ij}{k}\cristofsm{ij}{k}+\cristofte{ij}{k}\cristofse{ik}{j}+\cristofte{ij}{k}\cristofse{ij}{k}+\cristofte{ij}{k}\cristofsm{ik}{j}
\\
&\quad+\cristoftm{ij}{k}\cristofse{ij}{k}+\cristoftm{ij}{k}\cristofse{ik}{j}+\cristoftm{ij}{k}\cristofsm{ik}{j}.
\end{split}
\end{equation}
\begin{equation}
\begin{split}
&T_{12}:  \frac{1}{4}g^{iu}\bigl(\cristofse{ul}{m}+\cristofse{um}{l}+\cristofsm{lm}{u}\bigr)g^{jv}\bigl(\cristofse{vs}{t}+\cristofse{vt}{s}+\cristofsm{st}{v}\bigr)g^{lm}g^{st}g_{ij}\\
&\quad+ \frac{1}{4}g^{iu}\bigl(\cristofse{ul}{m}+\cristofse{um}{l}+\cristofsm{lm}{u}\bigr)g^{jv}\bigl(\cristofse{vs}{t}+\cristofse{vt}{s}+\cristofsm{st}{v}\bigr)g^{sl}g^{tm}g_{ij}\\
&\quad+ \frac{1}{4}g^{iu}\bigl(\cristofse{ul}{m}+\cristofse{um}{l}+\cristofsm{lm}{u}\bigr)g^{jv}\bigl(\cristofse{vs}{t}+\cristofse{vt}{s}+\cristofsm{st}{v}\bigr)g^{sm}g^{tl}g_{ij}\\
&=\frac{1}{4}\bigl(\cristofse{ul}{m}+\cristofse{um}{l}+\cristofsm{lm}{u}\bigr)\bigl(\cristofse{vs}{t}+\cristofse{vt}{s}+\cristofsm{st}{v}\bigr)g^{uv}g^{lm}g^{st}\\
&\quad+\frac{1}{4}\bigl(\cristofse{ul}{m}+\cristofse{um}{l}+\cristofsm{lm}{u}\bigr)\bigl(\cristofse{vs}{t}+\cristofse{vt}{s}+\cristofsm{st}{v}\bigr)g^{uv}g^{sl}g^{tm}\\
&\quad+\frac{1}{4}\bigl(\cristofse{ul}{m}+\cristofse{um}{l}+\cristofsm{lm}{u}\bigr)\bigl(\cristofse{vs}{t}+\cristofse{vt}{s}+\cristofsm{st}{v}\bigr)g^{uv}g^{sm}g^{tl}\\
&=\frac{1}{4}\bigl(\cristoffe{l,}{v,l}\cristoffe{v,t}{t,}+\cristoffe{l,}{v,l}\cristoffe{v,s}{s,}+\cristoffe{l,}{v,l}\cristoffm{s,v}{s,}+\cristoffe{m,}{v,m}\cristoffe{v,t}{t,}+\cristoffe{m,}{v,m}\cristoffe{v,s}{s,}+\cristoffe{m,}{v,m}\cristoffm{t,v}{t,}\\
&\qquad +\cristoffm{m,}{m,v}\cristoffe{v,t}{t,}+\cristoffm{m,}{m,v}\cristoffe{v,s}{s,}+\cristoffm{m,}{m,v}\cristoffm{t,v}{t,}\\
&\qquad+\cristofse{ij}{k}\cristofte{ij}{k}+\cristofse{ij}{k}\cristofte{ik}{j}+\cristofse{ij}{k}\cristoftm{ik}{j}+\cristofse{ij}{k}\cristofte{ik}{j}+\cristofse{ij}{k}\cristofte{ij}{k}+\cristofse{ij}{k}\cristoftm{ik}{j}\\
&\qquad +\cristofsm{ij}{k}\cristofte{ik}{j}+\cristofsm{ij}{k}\cristofte{ik}{j}+\cristofsm{ij}{k}\cristoftm{ij}{k}\\
&\qquad+\cristofse{ij}{k}\cristofte{ik}{j}+\cristofse{ij}{k}\cristofte{ij}{k}+\cristofse{ij}{k}\cristoftm{ik}{j}+\cristofse{ij}{k}\cristofte{ij}{k}+\cristofse{ij}{k}\cristofte{ik}{j}+\cristofse{ij}{k}\cristoftm{ik}{j}\\
&\qquad +\cristofsm{ij}{k}\cristofte{ik}{j}+\cristofsm{ij}{k}\cristofte{ik}{j}+\cristofsm{ij}{k}\cristoftm{ij}{k}\bigr)
\end{split}
\end{equation}
Now we combine each result on $T_i,\ i=1,\ldots,12$ that we have gained so far.  We count the times of appearance through $T_i$s' for the terms consisting of geometrical properties. Counting order is  $T_1, T_2, T_4, T_6, T_9, T_{10}, T_3, T_5+T_{11}, T_7, T_8,  T_{12}$.
\begin{align*}
&\langle \secffe{j}{i}, \secffe{i}{j}\rangle: -2+2+1=1\\
&\langle \secffe{j}{i}, \secffm{i}{j}\rangle: 2-2=0\\
&\langle \secffe{i}{i}, \secffm{j}{j}\rangle: -1+1=0\\
&\cristofse{ij}{k}T^{ijk}: 2-3=-1\\
&T_{ijk}T^{ijk}: -1\\
&\partial^i\cristoffe{i,j}{j,}: 2-1=1\\
&\partial^i\cristoffe{j,i}{j,}: 1-1-1=-1\\
&\partial^i\cristoffm{i,j}{j,}: -1\\
&\cristofse{ij}{k}\cristofte{ij}{k}: 2+1-4-2+1+1+1/4+1/4+1/4+1/4=0\\
&\cristofse{ij}{k}\cristofte{ik}{j}: 2+3-1-1-1+1-4-2-2-2+1+1+1+1+1+1\\
&\qquad\qquad\quad+1/4+1/4+1/4+1/4=0\\
&\cristoffe{j,i}{i,}\cristoffe{s,}{j,s}: 2+1-2-2-2+1+1+1/4+1/4+1/4+1/4=0\\
&\cristoffe{i,j}{i,}\cristoffe{s,}{j,s}: 1+1-1-2+1+1=1\\
&\cristoffe{i,j}{i,}\cristoffe{s,}{s,j}: -1\\
&\cristofse{ij}{k}\cristoftm{ij}{k}: -1-1-2+1+1+1+1=0\\
&\cristofse{ij}{k}\cristoftm{ik}{j}: 1-1-1-4-2+1+1+1+1\\
&\qquad\qquad\quad+1/4+1/4+1/4+1/4+1/4+1/4+1/4+1/4=-1\\
&\cristoffe{j,i}{i,}\cristoffm{s,}{j,s}: -2+1+1=0\\
&\cristoffe{j,i}{i,}\cristoffm{s,}{s,j}: -2+1+1/4+1/4+1/4+1/4=0\\
&\cristoffe{i,}{i,j}\cristoffm{j,s}{s,}: -1\\
&\cristoffe{i,j}{i,}\cristoffm{s,}{s,j}: 1+1=2\\
&\cristofsm{ij}{k}\cristoftm{ij}{k}: 1/4+1/4=1/2\\
&\cristofsm{ij}{k}\cristoftm{ik}{j}: -1+1+1=1\\
&\cristoffm{j,i}{i,}\cristoffm{s,}{s,j}: 1\\
&\cristoffm{i,j}{i,}\cristoffm{s,}{s,j}: 1/4
\end{align*}
Finally it turns out that the following term in \eqref{expan_ED}
$$
\frac{1}{2}\wai \waj \bigl(\epsilon_i \epsilon_j \overset{\alpha}{D}[\theta: \theta]\bigr)E_\theta[(\hat{\theta}^i-\theta^i) (\hat{\theta}^j-\theta^j)]
$$
equals
\begin{equation}
\label{2nd_term_final_ED}
\begin{split}
&\frac{p}{2}n^{-1}+\frac{1}{2}n^{-2}\\
&\times \biggl[ \langle \secffe{j}{i}, \secffe{i}{j}\rangle-\cristofse{ij}{k}T^{ijk}-T_{ijk}T^{ijk}+\partial^i\cristoffe{i,j}{j,}-\partial^i\cristoffe{j,i}{j,}-\partial^i\cristoffm{i,j}{j,}\\
&\qquad+\cristoffe{i,j}{i,}\cristoffe{s,}{j,s}-\cristoffe{i,j}{i,}\cristoffe{s,}{s,j}-\cristofse{ij}{k}\cristoftm{ik}{j}-\cristoffe{i,}{i,j}\cristoffm{j,s}{s,}+2\cristoffe{i,j}{i,}\cristoffm{s,}{s,j}\\
&\qquad+\frac{1}{2}\cristofsm{ij}{k}\cristoftm{ij}{k}+\cristofsm{ij}{k}\cristoftm{ik}{j} +\cristoffm{j,i}{i,}\cristoffm{s,}{s,j}+\frac{1}{4}\cristoffm{i,j}{i,}\cristoffm{s,}{s,j}\biggl]
+O(n^{-5/2}).
\end{split}
\end{equation}
Now we will evaluate the third term in \eqref{expan_ED}
$$
\frac{1}{6}\wai \waj \wak \bigl(\epsilon_i \epsilon_j \epsilon_k \overset{\alpha}{D}[\theta: \theta]\bigr)E_\theta[(\hat{\theta}^i-\theta^i) (\hat{\theta}^j-\theta^j)(\hat{\theta}^k-\theta^k)]
$$
From \eqref{Egur3}, the above term equals
\begin{equation}
\label{third_term_expan_ED}
\begin{split}
&\frac{1}{6}\wai\waj\wak\bigl(2\cristofsa{ij}{k}+\cristofsma{ij}{k}\bigr)E_\theta[(\hat{\theta}^i-\theta^i) (\hat{\theta}^j-\theta^j)(\hat{\theta}^k-\theta^k)]\\
&=\frac{1}{6}\wai\waj\wak \bigl(3\cristofse{ij}{k}+\frac{3-\alpha}{2}(\cristofsm{ij}{k}-\cristofse{ij}{k})\bigr)\\
&\hspace{30mm}\times E_\theta[(\hat{\theta}^i-\theta^i) (\hat{\theta}^j-\theta^j)(\hat{\theta}^k-\theta^k)]\text{ (see \eqref{another_def_Cristofsa})}\\
&=\frac{1}{6}\wai\waj\wak \bigl(\cristofsm{ij}{k}+2\cristofse{ij}{k}+\alpha'(\cristofsm{ij}{k}-\cristofse{ij}{k})\bigr)\\
&\hspace{30mm}\times E_\theta[(\hat{\theta}^i-\theta^i) (\hat{\theta}^j-\theta^j)(\hat{\theta}^k-\theta^k)],
\end{split}
\end{equation}
where $\alpha'\triangleq(1-\alpha)/2$. Because of  \eqref{expec_barteta^3}, the left-hand side of \eqref{third_term_expan_ED} equals
\begin{align*}
&\frac{1}{6}n^{-2}\wai\waj\wak \bigl(\cristofsm{ij}{k}+2\cristofse{ij}{k}+\alpha'(\cristofsm{ij}{k}-\cristofse{ij}{k})\bigr)\\
&\times \biggl\{g^{is}g^{jt}g^{ku}\bigl(\cristofsm{st}{u}-\cristofse{st}{u}\bigr)
+\bigl(\cristoffe{st,}{,s}g^{it}g^{jk}+\cristoffe{st,}{,j}g^{sk}g^{it}+\cristoffe{st,}{,k}g^{sj}g^{it}\bigr)\\
&\quad+\bigl(\cristoffe{st,}{,s}g^{jt}g^{ik}+\cristoffe{st,}{,i}g^{sk}g^{jt}+\cristoffe{st,}{,k}g^{si}g^{jt}\bigr)+\bigl(\cristoffe{st,}{,s}g^{kt}g^{ji}+\cristoffe{st,}{,j}g^{si}g^{kt}+\cristoffe{st,}{,i}g^{sj}g^{kt}\bigr)\\
&\quad-\frac{1}{2}g^{iu}\bigl(\cristofse{us}{t}+\cristofse{ut}{s}+\cristofsm{st}{u}\bigr)\bigl(g^{ts}g^{jk}+g^{js}g^{kt}+g^{ks}g^{jt}\bigr)\\
&\quad-\frac{1}{2}g^{ju}\bigl(\cristofse{us}{t}+\cristofse{ut}{s}+\cristofsm{st}{u}\bigr)\bigl(g^{ts}g^{ik}+g^{is}g^{kt}+g^{ks}g^{it}\bigr)\\
&\quad-\frac{1}{2}g^{ku}\bigl(\cristofse{us}{t}+\cristofse{ut}{s}+\cristofsm{st}{u}\bigr)\bigl(g^{ts}g^{ji}+g^{js}g^{it}+g^{is}g^{jt}\bigr)\biggr\}+O(n^{-5/2}).
\end{align*}
Further calculation shows that the summation over $i, j, k$ equals
\begin{align*}
&\bigl(\cristofsm{ij}{k}+2\cristofse{ij}{k}\bigr)\bigl(\cristoftm{ij}{k}-\cristofte{ij}{k}\bigr)+\bigl(\cristoffm{i,k}{k,}+2\cristoffe{i,k}{k,}\bigr)\cristoffe{s,}{i,s}+\bigl(\cristofsm{ij}{k}+2\cristofse{ij}{k}\bigr)\cristofte{ki}{j}\\
&+\bigl(\cristofsm{ij}{k}+2\cristofse{ij}{k}\bigr)\cristofte{ij}{k}+\bigl(\cristoffm{j,k}{k,}+2\cristoffe{j,k}{k,}\bigr)\cristoffe{s,}{j,s}+\bigl(\cristofsm{ij}{k}+2\cristofse{ij}{k}\bigr)\cristofte{kj}{i}\\
&+\bigl(\cristofsm{ij}{k}+2\cristofse{ij}{k}\bigr)\cristofte{ij}{k}+\bigl(\cristoffm{i,k}{i,}+2\cristoffe{i,k}{i,}\bigr)\cristoffe{s,}{k,s}+\bigl(\cristofsm{ij}{k}+2\cristofse{ij}{k}\bigr)\cristofte{ik}{j}\\
&+\bigl(\cristofsm{ij}{k}+2\cristofse{ij}{k}\bigr)\cristofte{jk}{i}-\bigl(\cristoffm{i,k}{k,}+2\cristoffe{i,k}{k,}\bigr)\cristoffe{t,}{i,t}-\bigl(\cristofsm{ij}{k}+2\cristofse{ij}{k}\bigr)\cristofte{ij}{k}\\
&-\bigl(\cristofsm{ij}{k}+2\cristofse{ij}{k}\bigr)\cristofte{ik}{j}-\bigl(\cristoffm{i,k}{k,}+2\cristoffe{i,k}{k,}\bigr)\cristoffe{t,}{i,t}-\bigl(\cristofsm{ij}{k}+2\cristofse{ij}{k}\bigr)\cristofte{ik}{j}\\
&-\bigl(\cristofsm{ij}{k}+2\cristofse{ij}{k}\bigr)\cristofte{ij}{k}-\bigl(\cristoffm{i,k}{k,}+2\cristoffe{i,k}{k,}\bigr)\cristoffm{s,}{s,i}-\bigl(\cristofsm{ij}{k}+2\cristofse{ij}{k}\bigr)\cristoftm{jk}{i}\\
&-\bigl(\cristofsm{ij}{k}+2\cristofse{ij}{k}\bigr)\cristoftm{kj}{i}\\
&-\frac{1}{2}\bigl(\cristoffm{i,}{i,u}+2\cristoffe{i,}{i,u}\bigr)\cristoffe{u,t}{t,}-\frac{1}{2}\bigl(\cristofsm{ij}{k}+2\cristofse{ij}{k}\bigr)\cristofte{kj}{i}-\frac{1}{2}\bigl(\cristofsm{ij}{k}+2\cristofse{ij}{k}\bigr)\cristofte{ki}{j}\\
&-\frac{1}{2}\bigl(\cristoffm{i,}{i,u}+2\cristoffe{i,}{i,u}\bigr)\cristoffe{u,t}{t,}-\frac{1}{2}\bigl(\cristofsm{ij}{k}+2\cristofse{ij}{k}\bigr)\cristofte{ki}{j}-\frac{1}{2}\bigl(\cristofsm{ij}{k}+2\cristofse{ij}{k}\bigr)\cristofte{kj}{i}\\
&-\frac{1}{2}\bigl(\cristoffm{i,}{i,u}+2\cristoffe{i,}{i,u}\bigr)\cristoffm{t,u}{t,}-\frac{1}{2}\bigl(\cristofsm{ij}{k}+2\cristofse{ij}{k}\bigr)\cristoftm{ji}{k}-\frac{1}{2}\bigl(\cristofsm{ij}{k}+2\cristofse{ij}{k}\bigr)\cristoftm{ij}{k}\\
&+\alpha'\bigl(\cristofsm{ij}{k}-\cristofse{ij}{k}\bigr)\bigl(\cristoftm{ij}{k}-\cristofte{ij}{k}\bigr)\\
&+3\alpha'\bigl(\cristoffm{i,k}{k,}-\cristoffe{i,k}{k,}\bigr)\cristoffe{s,}{i,s}+3\alpha'\bigl(\cristofsm{ij}{k}-\cristofse{ij}{k}\bigr)\cristofte{ki}{j}+3\alpha'\bigl(\cristofsm{ij}{k}-\cristofse{ij}{k}\bigr)\cristofte{ij}{k}\\
&-\frac{3}{2}\alpha'\bigl(\cristoffm{i,k}{k,}-\cristoffe{i,k}{k,}\bigr)\cristoffe{t,}{i,t}-\frac{3}{2}\alpha'\bigl(\cristofsm{ij}{k}-\cristofse{ij}{k}\bigr)\cristofte{ij}{k}-\frac{3}{2}\alpha'\bigl(\cristofsm{ij}{k}-\cristofse{ij}{k}\bigr)\cristofte{ik}{j}\\
&-\frac{3}{2}\alpha'\bigl(\cristoffm{i,k}{k,}-\cristoffe{i,k}{k,}\bigr)\cristoffe{t,}{i,t}-\frac{3}{2}\alpha'\bigl(\cristofsm{ij}{k}-\cristofse{ij}{k}\bigr)\cristofte{ik}{j}-\frac{3}{2}\alpha'\bigl(\cristofsm{ij}{k}-\cristofse{ij}{k}\bigr)\cristofte{ij}{k}\\
&-\frac{3}{2}\alpha'\bigl(\cristoffm{i,k}{k,}-\cristoffe{i,k}{k,}\bigr)\cristoffm{s,}{s,i}-\frac{3}{2}\alpha'\bigl(\cristofsm{ij}{k}-\cristofse{ij}{k}\bigr)\cristoftm{jk}{i}-\frac{3}{2}\alpha'\bigl(\cristofsm{ij}{k}-\cristofse{ij}{k}\bigr)\cristoftm{kj}{i}\\
\end{align*}
First we consider the terms without $\alpha'$. We count the times of appearance for each geometric property there.
\begin{align*}
&\cristofse{ij}{k}\cristofte{ij}{k}: -2+2+2-2-2=-2\\
&\cristofse{ij}{k}\cristofte{ik}{j}: 2+2+2+2-2-2-1-1-1-1=0\\
&\cristoffe{j,i}{i,}\cristoffe{s,}{j,s}: 2+2-2-2=0\\
&\cristoffe{i,j}{i,}\cristoffe{s,}{j,s}: 2-1-1=0\\
&\cristofse{ij}{k}\cristoftm{ij}{k}: 2-1+1+1-1-1-1-1=-1\\
&\cristofse{ij}{k}\cristoftm{ik}{j}: 1+1+1+1-1-1-2-2-1/2-1/2-1/2-1/2=-4\\
&\cristoffe{j,i}{i,}\cristoffm{s,}{j,s}: 1+1-1-1=0\\
&\cristoffe{j,i}{i,}\cristoffm{s,}{s,j}: 1-2-1/2-1/2=-2\\
&\cristoffe{i,j}{i,}\cristoffm{s,}{s,j}: -1\\
&\cristofsm{ij}{k}\cristoftm{ij}{k}: 1-1/2-1/2=0\\
&\cristofsm{ij}{k}\cristoftm{ik}{j}: -1-1=-2\\
&\cristoffm{j,i}{i,}\cristoffm{s,}{s,j}: -1\\
&\cristoffm{i,j}{i,}\cristoffm{s,}{s,j}: -1/2
\end{align*}
Therefore the terms without $\alpha'$ is summarized as
\begin{align*}
&-2\cristofse{ij}{k}\cristofte{ij}{k}-\cristofse{ij}{k}\cristoftm{ij}{k}-4\cristofse{ij}{k}\cristoftm{ik}{j}-2\cristoffe{i,k}{k,}\cristoffm{s,}{s,i}-\cristoffe{i,}{i,u}\cristoffm{t,u}{t,}\\
&-2\cristofsm{ij}{k}\cristoftm{ik}{j}-\cristoffm{i,k}{k,}\cristoffm{s,}{s,i}-1/2\cristoffm{i,}{i,u}\cristoffm{t,u}{t,}.
\end{align*}
Next we notice that the terms with $\alpha'$ equals
\begin{align*}
&\alpha'T_{ijk}T^{ijk}+3\alpha'T_{ik}^k\bigl(\cristoffm{s,}{i,s}-T_s^{is}\bigr)+3\alpha'T_{ijk}\bigl(\cristoftm{ki}{j}-T^{kij}\bigr)+3\alpha'T_{ijk}\bigl(\cristoftm{ij}{k}-T^{ijk}\bigr)\\
&-\frac{3}{2}\alpha'T_{ik}^{k}\bigl(\cristoffm{t,}{i,t}-T_t^{it}\bigr)-\frac{3}{2}\alpha'T_{ijk}\bigl(\cristoftm{ij}{k}-T^{ijk}\bigr)-\frac{3}{2}\alpha'T_{ijk}\bigl(\cristoftm{ik}{j}-T^{ikj}\bigr)\\
&-\frac{3}{2}\alpha'T_{ik}^k\bigl(\cristoffm{t,}{i,t}-T_t^{it}\bigr)-\frac{3}{2}\alpha'T_{ijk}\bigl(\cristoftm{ik}{j}-T^{ikj}\bigr)-\frac{3}{2}\alpha'T_{ijk}\bigl(\cristoftm{ij}{k}-T^{ijk}\bigr)\\
&-\frac{3}{2}\alpha'T_{ik}^k\cristoffm{s,}{s,i}-\frac{3}{2}\alpha'T_{ijk}\cristoftm{jk}{i}-\frac{3}{2}\alpha'T_{ijk}\cristoftm{kj}{i}\\
&=\alpha'\bigl(T_{ijk}T^{ijk}-3T^{ijk}\cristofsm{ij}{k}-\frac{3}{2}T_{j}^{js}\cristoffm{i,s}{i,}\bigr)
\end{align*}
Consequently it turns out that the following term in  \eqref{expan_ED}
$$
\frac{1}{6}\wai \waj \wak \bigl(\epsilon_i \epsilon_j \epsilon_k \overset{\alpha}{D}[\theta: \theta]\bigr)E_\theta[(\hat{\theta}^i-\theta^i) (\hat{\theta}^j-\theta^j)(\hat{\theta}^k-\theta^k)]
$$
equals

\begin{align}
&\frac{1}{6}n^{-2}\nonumber \\
&\times\biggl[\alpha'\Bigl\{T_{ijk}T^{ijk}-3T^{ijk}\cristofsm{ij}{k}-\frac{3}{2}T_{j}^{js}\cristoffm{i,s}{i,}\Bigr\}\nonumber \\
&\qquad-2\cristofse{ij}{k}\cristofte{ij}{k}-\cristofse{ij}{k}\cristoftm{ij}{k}-4\cristofse{ij}{k}\cristoftm{ik}{j}-2\cristoffe{i,k}{k,}\cristoffm{s,}{s,i}-\cristoffe{i,}{i,u}\cristoffm{t,u}{t,}\nonumber \\
&\qquad-2\cristofsm{ij}{k}\cristoftm{ik}{j}-\cristoffm{i,k}{k,}\cristoffm{s,}{s,i}-\frac{1}{2}\cristoffm{i,}{i,u}\cristoffm{t,u}{t,}\biggr]+O(n^{-5/2})\nonumber \\
&=\frac{1}{6}n^{-2}\nonumber \\
&\times\biggl[\alpha'\Bigl\{-2T_{ijk}T^{ijk}-\frac{3}{2}T_{j}^{js}T^i_{is}-3T^{ijk}\cristofse{ij}{k}-\frac{3}{2}T_j^{js}\cristoffe{i,s}{i,}\Bigr\}\nonumber \\
&\qquad-2\cristofse{ij}{k}\cristofte{ij}{k}-\cristofse{ij}{k}\cristoftm{ij}{k}-4\cristofse{ij}{k}\cristoftm{ik}{j}-2\cristoffe{i,k}{k,}\cristoffm{s,}{s,i}-\cristoffe{i,}{i,u}\cristoffm{t,u}{t,}\nonumber \\
&\qquad-2\cristofsm{ij}{k}\cristoftm{ik}{j}-\cristoffm{i,k}{k,}\cristoffm{s,}{s,i}-\frac{1}{2}\cristoffm{i,}{i,u}\cristoffm{t,u}{t,}\biggr]+O(n^{-5/2})\label{3rd_term_final_ED} 
\end{align}
Now, it is the turn to focus on the forth term in \eqref{expan_ED}, 
$$
\frac{1}{24}\wai \waj \wak \wal \bigl(\epsilon_i \epsilon_j \epsilon_k \epsilon_l \overset{\alpha}{D}[\theta: \theta]\bigr)E_\theta[(\hat{\theta}^i-\theta^i) (\hat{\theta}^j-\theta^j)(\hat{\theta}^k-\theta^k)(\hat{\theta}^l-\theta^l)].
$$
From \eqref{epsi^4_Diverge} and \eqref{expec_barteta^4}, the forth term in \eqref{expan_ED} equals
\begin {align}
&\frac{1}{24} \bigl(\epsilon_i \epsilon_j \epsilon_k \epsilon_l \overset{\alpha}{D}[\theta: \theta]\bigr)E_\theta[\barteta{i}\barteta{j}\barteta{k}\barteta{k}]\nonumber \\
&=\frac{1}{24}n^{-2}\bigl(\epsilon_i \epsilon_j \epsilon_k \epsilon_l \overset{\alpha}{D}[\theta: \theta]\bigr)\bigl(g^{ij}g^{kl}+g^{ik}g^{jl}+g^{il}g^{jk}\bigr)+O(n^{-5/2})\nonumber \\
&=\frac{1}{24}n^{-2}\nonumber \\
&\quad\times\biggl[(\alpha')^2\nonumber \\
&\qquad\times\biggl(\partial^k(\cristoffm{i,k}{i,}-\cristoffe{i,k}{i,})+\partial^j(\cristoffm{j,k}{k,}-\cristoffe{j,k}{k,})+\partial^i(\cristoffm{i,k}{k,}-\cristoffe{i,k}{k,})\nonumber \\
&\qquad\qquad-(\partial^k g^{ij})T_{ijk}-(\partial^j g^{ik})T_{ijk}-(\partial^i g^{jk})T_{ijk}\nonumber \\
&\qquad\qquad -\langle \secffe{i}{k}, \secffm{k}{i}\rangle+\langle \secffe{i}{k}, \secffe{k}{i} \rangle +p -\cristoffe{l,s}{j,}\cristoffm{j,}{l,s}+\cristoffe{l,s}{j,}\cristoffe{j,}{l,s}\nonumber \\
&\qquad\qquad -\langle \secffe{l}{k}, \secffm{k}{l}\rangle+\langle \secffe{l}{k}, \secffe{k}{l} \rangle +p -\cristoffe{l,s}{k,}\cristoffm{k,}{l,s}+\cristoffe{l,s}{k,}\cristoffe{k,}{l,s}\nonumber \\
&\qquad\qquad -\langle \secffe{l}{l}, \secffm{k}{k}\rangle+\langle \secffe{l}{l}, \secffe{k}{k} \rangle +p^2 -\cristoffe{l,s}{l,}\cristoffm{k,}{k,s}+\cristoffe{l,s}{l,}\cristoffe{k,}{k,s}\nonumber \\
&\qquad\qquad -\langle \secffe{l}{i}, \secffm{i}{l}\rangle+\langle \secffe{l}{i}, \secffe{i}{l} \rangle +p -\cristoffe{l,s}{i,}\cristoffm{i,}{l,s}+\cristoffe{l,s}{i,}\cristoffe{i,}{l,s}\nonumber \\
&\qquad\qquad -\langle \secffe{j}{j}, \secffm{i}{i}\rangle+\langle \secffe{j}{j}, \secffe{i}{i} \rangle +p^2 -\cristoffe{l,s}{l,}\cristoffm{i,}{i,s}+\cristoffe{l,s}{l,}\cristoffe{i,}{i,s}\nonumber \\
&\qquad\qquad -\langle \secffe{j}{i}, \secffm{i}{j}\rangle+\langle \secffe{j}{i}, \secffe{i}{j} \rangle +p -\cristoffe{j,s}{i,}\cristoffm{i,}{j,s}+\cristoffe{j,s}{i,}\cristoffe{i,}{j,s}\nonumber \\
&\qquad\qquad -\langle \secffe{k}{k}, \secffm{i}{i}\rangle+\langle \secffe{k}{k}, \secffe{i}{i} \rangle +p^2 -\cristoffe{k,s}{k,}\cristoffm{j,}{j,s}+\cristoffe{k,s}{k,}\cristoffe{j,}{j,s}\nonumber \\
&\qquad\qquad -\langle \secffe{l}{i}, \secffm{i}{l}\rangle+\langle \secffe{l}{i}, \secffe{i}{l} \rangle +p -\cristoffe{l,s}{i,}\cristoffm{i,}{l,s}+\cristoffe{l,s}{i,}\cristoffe{i,}{l,s}\nonumber \\
&\qquad\qquad -\langle \secffe{k}{i}, \secffm{i}{k}\rangle+\langle \secffe{k}{i}, \secffe{i}{k} \rangle +p -\cristoffe{k,s}{i,}\cristoffm{i,}{k,s}+\cristoffe{k,s}{i,}\cristoffe{i,}{k,s}\biggr)\nonumber \\
&\qquad+\alpha'\nonumber \\
&\qquad\times\biggl(\langle \secffe{k}{j}, \secffm{j}{k} \rangle -\langle \secffe{k}{j}, \secffe{j}{k}\rangle-p+\cristoffe{k,s}{j,}\cristoffm{j,}{k,s}-\cristoffe{k,s}{j,}\cristoffe{j,}{k,s}\nonumber \\
&\qquad\qquad+ \langle \secffe{i}{i}, \secffm{j}{j} \rangle -\langle \secffe{i}{i}, \secffe{j}{j}\rangle-p^2+\cristoffe{i,s}{i,}\cristoffm{j,}{j,s}-\cristoffe{i,s}{i,}\cristoffe{j,}{j,s}\nonumber \\
&\qquad\qquad+ \langle \secffe{k}{l}, \secffm{l}{k} \rangle -\langle \secffe{k}{l}, \secffe{l}{k}\rangle-p+\cristoffe{k,s}{l,}\cristoffm{l,}{k,s}-\cristoffe{k,s}{l,}\cristoffe{l,}{k,s}\nonumber \\
&\qquad\qquad+ \langle \secffe{k}{i}, \secffm{i}{k} \rangle -\langle \secffe{k}{i}, \secffe{i}{k}\rangle-p+\cristoffe{k,s}{i,}\cristoffm{i,}{k,s}-\cristoffe{k,s}{i,}\cristoffe{i,}{k,s}\nonumber \\
&\qquad\qquad+ \langle \secffe{j}{i}, \secffm{i}{j} \rangle -\langle \secffe{j}{i}, \secffe{i}{j}\rangle-p+\cristoffe{j,s}{i,}\cristoffm{i,}{j,s}-\cristoffe{j,s}{i,}\cristoffe{i,}{j,s}\nonumber \\
&\qquad\qquad+ \langle \secffe{i}{i}, \secffm{j}{j} \rangle -\langle \secffe{i}{i}, \secffe{j}{j}\rangle-p^2+\cristoffe{k,s}{k,}\cristoffm{i,}{i,s}-\cristoffe{k,s}{k,}\cristoffe{i,}{i,s}\nonumber \\
&\qquad\qquad+ \langle \secffe{i}{i}, \secffm{j}{j} \rangle -\langle \secffe{i}{i}, \secffe{j}{j}\rangle-p^2+\cristoffe{i,s}{i,}\cristoffm{l,}{l,s}-\cristoffe{i,s}{i,}\cristoffe{l,}{l,s}\nonumber \\
&\qquad\qquad+\langle \secffe{j}{k}, \secffm{k}{j} \rangle -\langle \secffe{j}{k}, \secffe{k}{j}\rangle-p+\cristoffe{j,s}{k,}\cristoffm{k,}{j,s}-\cristoffe{j,s}{k,}\cristoffe{k,}{j,s}\nonumber \\
&\qquad\qquad+\langle \secffe{j}{l}, \secffm{l}{j} \rangle -\langle \secffe{j}{l}, \secffe{l}{j}\rangle-p+\cristoffe{j,s}{l,}\cristoffm{l,}{j,s}-\cristoffe{j,s}{l,}\cristoffe{l,}{j,s}\biggr)\nonumber \\
&\qquad+(1+\alpha')\nonumber \\
&\qquad\times\biggl(\partial^k \cristoffm{i,k}{i,}-\partial^k \cristoffe{i,k}{i,}-(\partial^k g^{ij})T_{ijk}\nonumber \\
&\qquad\qquad+\partial^j \cristoffm{j,k}{k,}-\partial^j \cristoffe{j,k}{k,}-(\partial^j g^{ik})T_{ijk}\nonumber \\
&\qquad\qquad+\partial^i \cristoffm{i,k}{k,}-\partial^i \cristoffe{i,k}{k,}-(\partial^i g^{jk})T_{ijk}\biggr)\nonumber \\
&\qquad+\partial^k \cristoffe{i,k}{i,}+\partial^k \cristoffe{k,j}{j,}+\partial^k \cristoffe{k,i}{i,}
+\partial^j \cristoffe{j,k}{k,}+\partial^j \cristoffe{k,j}{k,}+\partial^j \cristoffe{j,i}{i,}
+\partial^i \cristoffe{i,k}{k,}+\partial^i \cristoffe{i,j}{j,}+\partial^i \cristoffe{j,i}{j,}\nonumber \\
&\qquad+\partial_t(\cristoffe{ij,}{,t}g^{ij})-(\partial_t g^{ij})\cristoffe{ij,}{,t}+\cristoffe{i,}{i,t}\cristoffe{t,l}{l,}-\langle \secffe{i}{i}, \secffm{l}{l}\rangle \nonumber \\
&\qquad+\partial^i \cristoffe{ij,}{,j}+\cristoffe{j,}{k,t}\cristoffe{tk,}{,j}-\langle \secffe{j}{k}, \secffm{k}{j} \rangle \nonumber \\
&\qquad+\partial^j \cristoffe{ij,}{,i}+\cristoffe{j,}{l,t}\cristoffe{t,l}{j,}-\langle \secffe{j}{l}, \secffm{l}{j} \rangle \nonumber\\
&\qquad+3(\cristofte{ki}{j}+\cristoftm{kj}{i})\cristofse{ij}{k}+3(\cristofte{ji}{k}+\cristoftm{jk}{i})\cristofse{ij}{k}+3(\cristofte{ij}{k}+\cristoftm{ik}{j})\cristofse{ij}{k}\biggl]\nonumber \\
&\qquad+O(n^{-5/2}) \label{4th term of ED_alpha}
\end{align}
Note  that
$$
\partial^s g_{st}=g^{us}\partial_u g_{st}=g^{us}(\cristofse{us}{t}+\cristofsm{ut}{s})=\cristoffe{s,t}{s,}+\cristoffm{t,s}{s,},\\
$$
where we used \eqref{derivative_metric}. From this equation, we find that the following equation holds.
\begin{align*}
& \{\text {the twelfth and thirteenth terms from the last in the bracket of \eqref{4th term of ED_alpha}}\} \\
&=\partial_t(\cristoffe{ij,}{,t}g^{ij})-(\partial_t g^{ij})\cristoffe{ij,}{,t}\\
&=\partial_t\cristoffe{i,}{i,t}-(\partial^t g^{ij})\cristofse{ij}{t}\\
&=\partial_t\cristoffe{i,}{i,t}+(\cristofte{it}{j}+\cristoftm{jt}{i})\cristofse{ij}{t}\quad \text{(see \eqref{partial^k g^ij})}\\
&=g_{st}(\partial^s \cristoffe{i,}{i,t})+(\cristofte{it}{j}+\cristoftm{jt}{i})\cristofse{ij}{t}\\
&=\partial^s\cristoffe{i,s}{i,}-(\partial^s g_{st})\cristoffe{i,}{i,t}+(\cristofte{it}{j}+\cristoftm{jt}{i})\cristofse{ij}{t}\\
&=\partial^s\cristoffe{i,s}{i,}+(\cristofte{it}{j}+\cristoftm{jt}{i})\cristofse{ij}{t}-(\cristoffe{s,t}{s,}+\cristoffm{t,s}{s,})\cristoffe{i,}{i,t}
\end{align*}
Together with this result, adding and subtracting the same terms in \eqref{4th term of ED_alpha}, we have the following equation.
\begin{align}
&\frac{1}{24}\wai \waj \wak \wal \bigl(\epsilon_i \epsilon_j \epsilon_k \epsilon_l \overset{\alpha}{D}[\theta: \theta]\bigr)E_\theta[(\hat{\theta}^i-\theta^i) (\hat{\theta}^j-\theta^j)(\hat{\theta}^k-\theta^k)(\hat{\theta}^l-\theta^l)] \nonumber\\
&=\frac{1}{24}n^{-2}\nonumber\\
&\times \biggl[ (\alpha')^2\biggl\{3\partial^k(\cristoffm{i,k}{i,}-\cristoffe{i,k}{i,})-6\langle \secffe{i}{j}, (\secffm{j}{i}-\secffe{j}{i}) \rangle - 3\langle \secffe{i}{i}, (\secffm{j}{j}-\secffe{j}{j}) \rangle \nonumber\\
&\qquad \qquad +3p^2+6p-6\cristofse{ij}{k}(\cristoftm{ij}{k}-\cristofte{ij}{k})-3\cristoffe{i,s}{i,}(\cristoffm{j,}{j,s}-\cristoffe{j,}{j,s})+3\cristofte{ij}{k}T_{ijk}+3\cristoftm{ij}{k}T_{ijk}\biggr\}\nonumber\\
&\qquad+\alpha'\biggl\{3\partial^k(\cristoffm{i,k}{i,}-\cristoffe{i,k}{i,})+6\langle \secffe{i}{j}, (\secffm{j}{i}-\secffe{j}{i}) \rangle + 3\langle \secffe{i}{i}, (\secffm{j}{j}-\secffe{j}{j}) \rangle \nonumber\\
&\qquad \qquad -3p^2-6p+6\cristofse{ij}{k}(\cristoftm{ij}{k}-\cristofte{ij}{k})+3\cristoffe{i,s}{i,}(\cristoffm{j,}{j,s}-\cristoffe{j,}{j,s})+3\cristofte{ij}{k}T_{ijk}+3\cristoftm{ij}{k}T_{ijk}\biggr\}\nonumber\\
&\qquad +\partial^k\cristoffm{i,k}{i,}+2\partial^k \cristoffm{k,i}{i,}+3\partial^k \cristoffe{i,k}{i,}+6\partial^k \cristoffe{k,i}{i,}\nonumber\\
&\qquad+6\cristofte{ij}{k}\cristofse{ij}{k}+6\cristofte{ij}{k}\cristofse{ik}{j}+10\cristofte{ij}{k}\cristofsm{ik}{j}+\cristoffe{i,}{i,t}\cristoffe{t,j}{j,}\nonumber\\
&\qquad+3(\cristofte{ij}{k}T_{ijk}+\cristoftm{ij}{k}T_{ijk})-(\cristoffe{s,t}{s,}+\cristoffm{t,s}{s,})\cristoffe{i,}{i,t}-2\langle \secffe{j}{i}, \secffm{i}{j} \rangle -\langle \secffe{i}{i}, \secffm{j}{j} \rangle \biggl]\nonumber\\
&\quad+O(n^{-5/2})\nonumber\\
&=\frac{1}{24}n^{-2}\nonumber\\
&\times \biggl[ (\alpha')^2\biggl\{3(\partial^k  T_{ik}^i-\cristoffe{i,s}{i,}T_{j}^{js})+3 T^{ijk}T_{ijk}-6\langle \secffe{i}{j}, (\secffm{j}{i}-\secffe{j}{i}) \rangle - 3\langle \secffe{i}{i}, (\secffm{j}{j}-\secffe{j}{j})\rangle +3p^2+6p \biggr\}\nonumber\\
&\qquad+\alpha'\biggl\{3(\partial^k  T_{ik}^i-\cristoffe{i,s}{i,}T_{j}^{js})+6\cristoffe{i,s}{i,}T_j^{js}+12\cristofse{ij}{k}T^{ijk}+3 T^{ijk}T_{ijk}\nonumber\\
&\qquad\qquad+6\langle \secffe{i}{j}, (\secffm{j}{i}-\secffe{j}{i}) \rangle +3\langle \secffe{i}{i}, (\secffm{j}{j}-\secffe{j}{j})\rangle -3p^2-6p\biggr\}\nonumber\\
&\qquad+\partial^k\cristoffm{i,k}{i,}+2\partial^k \cristoffm{k,i}{i,}+3\partial^k \cristoffe{i,k}{i,}+6\partial^k \cristoffe{k,i}{i,}\nonumber\\
&\qquad+3\cristofte{ij}{k}\cristofse{ij}{k}+6\cristofte{ij}{k}\cristofse{ik}{j}+\cristoffe{i,}{i,t}\cristoffe{t,j}{j,}-\cristoffe{s,t}{s,}\cristoffe{i,}{i,t}+10\cristofte{ij}{k}\cristofsm{ik}{j}-\cristoffe{i,}{i,t}\cristoffm{t,s}{s,}+3\cristoftm{ij}{k}\cristofsm{ij}{k}\nonumber\\
&\qquad-2\langle \secffe{j}{i}, \secffm{i}{j} \rangle -\langle \secffe{i}{i}, \secffm{j}{j} \rangle \biggl]\nonumber\\
&\quad+O(n^{-5/2}) \label{4th_term_final_ED}
\end{align}
Finally we combine \eqref{2nd_term_final_ED}, \eqref{3rd_term_final_ED} and \eqref{4th_term_final_ED} to get the total evaluation of $\overset{\alpha}{E\!D}(\theta)$.
\begin{align}
&\overset{\alpha}{E\!D}(\theta)\nonumber\\
&=\frac{1}{2}\wai \waj \bigl(\epsilon_i \epsilon_j \overset{\alpha}{D}[\theta: \theta]\bigr)E_\theta[(\hat{\theta}^i-\theta^i) (\hat{\theta}^j-\theta^j)]\nonumber\\
&\ +\frac{1}{6}\wai \waj \wak \bigl(\epsilon_i \epsilon_j \epsilon_k \overset{\alpha}{D}[\theta: \theta]\bigr)E_\theta[(\hat{\theta}^i-\theta^i) (\hat{\theta}^j-\theta^j)(\hat{\theta}^k-\theta^k)]\nonumber\\
&\ +\frac{1}{24}\wai \waj \wak \wal \bigl(\epsilon_i \epsilon_j \epsilon_k \epsilon_l \overset{\alpha}{D}[\theta: \theta]\bigr)E_\theta[(\hat{\theta}^i-\theta^i) (\hat{\theta}^j-\theta^j)(\hat{\theta}^k-\theta^k)(\hat{\theta}^l-\theta^l)]\nonumber \\
&\ +O(n^{-5/2}) \nonumber\\
&=\frac{p}{2}n^{-1}+\frac{1}{24}n^{-2}\nonumber\\
&\  \times \biggl[ (\alpha')^2\biggl\{3(\partial^k  T_{ik}^i-\cristoffe{i,s}{i,}T_{j}^{js})+3 T^{ijk}T_{ijk}-6\langle \secffe{i}{j}, (\secffm{j}{i}-\secffe{j}{i}) \rangle - 3\langle \secffe{i}{i}, (\secffm{j}{j}-\secffe{j}{j})\rangle +3p^2+6p \biggr\}\nonumber\\
&\qquad+\alpha'\biggl\{3(\partial^k  T_{ik}^i-\cristoffe{i,s}{i,}T_{j}^{js})-6T_{is}^i T_j^{js}-5T^{ijk}T_{ijk}\nonumber\\
&\qquad\qquad+6\langle \secffe{i}{j}, (\secffm{j}{i}-\secffe{j}{i}) \rangle +3\langle \secffe{i}{i}, (\secffm{j}{j}-\secffe{j}{j})\rangle -3p^2-6p\biggr\}\nonumber \\
&\qquad+12\langle \secffe{j}{i}, \secffe{i}{j} \rangle -2 \langle \secffe{j}{i} , \secffm{i}{j} \rangle - \langle \secffe{i}{i}, \secffm{j}{j} \rangle-12T_{ijk}T^{ijk}\nonumber \\
&\qquad+18 \partial^k \cristoffe{k,i}{i,}-9 \partial^k \cristoffe{i,k}{i,}-10\partial^k \cristoffm{k,i}{i,}+\partial^k \cristoffm{i,k}{i,} \text{ (say $A$ for the summation over this line)} \nonumber \\
&\qquad-5\cristofte{ij}{k}\cristofse{ij}{k}+6\cristofte{ij}{k}\cristofse{ik}{j}+13\cristoffe{j,i}{i,}\cristoffe{s,}{s,j}-13\cristoffe{s,t}{s,}\cristoffe{i,}{i,t}\text{ (say $B1$ as above)}\nonumber \\
&\qquad-4\cristofte{ij}{k}\cristofsm{ij}{k}-18\cristofte{ij}{k}\cristofsm{ik}{j}-8\cristoffe{s,i}{i,}\cristoffm{j,}{j,s}+20\cristoffe{i,s}{i,}\cristoffm{j,}{j,s}-13\cristoffe{s,t}{s,}\cristoffm{i,}{t,i}\text{ (say $B2$ as above)}\nonumber \\
&\qquad+9\cristoftm{ij}{k}\cristofsm{ij}{k}+4\cristoftm{ij}{k}\cristofsm{ik}{j}+8\cristoffm{j,i}{i,}\cristoffm{s,}{s,j}+\cristoffm{i,s}{i,}\cristoffm{j,}{j,s}-12\cristofse{ij}{k}T^{ijk}\text{ (say $B3$ as above)}\biggr]\nonumber\\
&\ +O(n^{-5/2}) \label{near_final_form_ED}
\end{align}
We defined $\overset{\;\alpha}{R}\hspace{-10pt}\begin{smallmatrix}{\ \ \ \ ij}\\{ij}\end{smallmatrix}$
as
$$
\overset{\;\alpha}{R}\hspace{-10pt}\begin{smallmatrix}{\ \ \ \ ij}\\{ij}\end{smallmatrix}\triangleq\overset{\;\alpha}{R}\hspace{-5pt}\begin{smallmatrix}{\ \ \ \ j}\\{ijk}\end{smallmatrix}g^{ik}.
$$
From \eqref{specific_form_R}, the  specific form for $\alpha=1$ is given by the following equation.
\begin{align}
&\overset{\;e}{R}\hspace{-10pt}\begin{smallmatrix}{\ \ \ \ ij}\\{ij}\end{smallmatrix}\nonumber\\
&=g^{ik}\bigl(\partial_i \cristoffe{jk,}{,j}-\partial_j \cristoffe{ik,}{,j}+\cristoffe{ir,}{,j}\cristoffe{jk,}{,r}-\cristoffe{jr,}{,j}\cristoffe{ik,}{,r}\bigr)\nonumber\\
&=\partial^k \cristoffe{k,j}{j,}-\bigl(\partial_j \cristoffe{i,}{i,j}-(\partial_j g^{ik})\cristoffe{ik,}{,j}\bigr)+\cristofse{ij}{k}\cristofte{ik}{j}-\cristoffe{r,j}{j,}\cristoffe{i,}{i,r}\nonumber\\
&=\partial^k \cristoffe{k,j}{j,}-\partial^j \cristoffe{i,j}{i,}+(\cristoffe{s,t}{s,}+\cristoffm{t,s}{s,})\cristoffe{i,}{i,t}\nonumber\\
&\quad +(\partial^j g^{ik})\cristofse{ik}{j}+\cristofse{ij}{k}\cristofte{ik}{j}-\cristoffe{r,j}{j,}\cristoffe{i,}{i,r} \text{( see \eqref{partial_j cristoffa{i,}{i,j}})} \nonumber\\
&=\partial^k \cristoffe{k,j}{j,}-\partial^k \cristoffe{i,k}{i,}+(\cristoffe{s,t}{s,}+\cristoffm{t,s}{s,})\cristoffe{i,}{i,t}\nonumber\\
&\quad-(\cristofte{ij}{k}+\cristoftm{jk}{i})\cristofse{ik}{j}+\cristofse{ij}{k}\cristofte{ik}{j}-\cristoffe{r,j}{j,}\cristoffe{i,}{i,r}\text{ (see \eqref{partial^k g^ij}) }\nonumber\\
&=\partial^k \cristoffe{k,j}{j,}-\partial^k \cristoffe{i,k}{i,}+(\cristoffe{s,t}{s,}+\cristoffm{t,s}{s,})\cristoffe{i,}{i,t}-\cristoftm{jk}{i}\cristofse{ik}{j}-\cristoffe{r,j}{j,}\cristoffe{i,}{i,r} \label{Form_1_R_ij^ij}
\end{align}
This equation is equivalent to
\begin{equation}
\label{d^k crist{kj}{j}-d^k cristoffe{ik}{i}}
\partial^k \cristoffe{k,j}{j,}-\partial^k \cristoffe{i,k}{i,}=\overset{\;e}{R}\hspace{-10pt}\begin{smallmatrix}{\ \ \ \ ij}\\{ij}\end{smallmatrix}-(\cristoffe{s,t}{s,}+\cristoffm{t,s}{s,})\cristoffe{i,}{i,t}+\cristoftm{jk}{i}\cristofse{ik}{j}+\cristoffe{r,j}{j,}\cristoffe{i,}{i,r}
\end{equation}
Since 
$$
\cristoffe{s,j}{s,}+\cristoffm{j,s}{s,}=g^{ts}(\cristofse{ts}{j}+\cristofsm{tj}{s})=g^{ts}\partial_t g_{sj}=g^{ts}\partial_t g_{js}=g^{ts}(\cristofse{tj}{s}+\cristofsm{ts}{j})=\cristoffe{j,s}{s,}+\cristoffm{s,j}{s,},
$$
we can rewrite $\overset{\;e}{R}\hspace{-10pt}\begin{smallmatrix}{\ \  \ \ ij}\\{ij}\end{smallmatrix}$ 
as
\begin{align}
\overset{\;e}{R}\hspace{-10pt}\begin{smallmatrix}{\ \  \ \ ij}\\{ij}\end{smallmatrix}&=\partial^k \cristoffe{k,j}{j,}-\partial^k \cristoffe{i,k}{i,}+(\cristoffm{s,t}{s,}+\cristoffe{t,s}{s,})\cristoffe{i,}{i,t}-\cristoftm{jk}{i}\cristofse{ik}{j}-\cristoffe{r,j}{j,}\cristoffe{i,}{i,r}\nonumber\\
&=\partial^k \cristoffe{k,j}{j,}-\partial^k \cristoffe{i,k}{i,}+\cristoffm{s,t}{s,}\cristoffe{i,}{i,t}-\cristoftm{jk}{i}\cristofse{ik}{j}
\label{Form_2_R_ij^ij}
\end{align}
From the definition of $\overset{\:e}{F}$ (see  \eqref{def_F}) , we have
\begin{equation}
\label{d^k T_ik^i}
\partial^k T_{ik}^i=\overset{\:e}{F}+\cristoffe{i,s}{i,}T_j^{js}
\end{equation}
Taking into account the relations $\cristoffm{k,i}{i,}=T_{ik}^i+\cristoffe{k,i}{i,},\quad \cristoffm{i,k}{i,}=T_{ik}^i+\cristoffe{i,k}{i,}$ , we find that 
\begin{equation}
\label{A_ver1}
A=8(\partial^k \cristoffe{k,i}{i,}-\partial^k \cristoffe{i,k}{i,})-9\partial^k T_{ik}^i
\end{equation}
If we substitute the right-hand sides of \eqref{d^k crist{kj}{j}-d^k cristoffe{ik}{i}} and \eqref{d^k T_ik^i} into the above equation, we have 
\begin{align}
A&=8\overset{\;e}{R}\hspace{-10pt}\begin{smallmatrix}{\ \ \ \ ij}\\{ij}\end{smallmatrix}-8(\cristoffe{s,t}{s,}+\cristoffm{t,s}{s,})\cristoffe{i,}{i,t}+8\cristoftm{ij}{k}\cristofse{ik}{j}+8\cristoffe{r,j}{j,}\cristoffe{i,}{i,r}-9\overset{\:e}{F}-9\cristoffe{i,s}{i,}T_j^{js}\nonumber\\
&=8\overset{\;e}{R}\hspace{-10pt}\begin{smallmatrix}{\ \ \ \ ij}\\{ij}\end{smallmatrix}-8(\cristoffe{s,t}{s,}+\cristoffe{t,s}{s,})\cristoffe{i,}{i,t}+8\cristofte{ij}{k}\cristofse{ik}{j}\nonumber\\
&\quad+8\cristoffe{r,j}{j,}\cristoffe{i,}{i,r}-9\overset{\:e}{F}
-8\cristoffe{i,}{i,t}T_{ts}^s+8\cristofse{ij}{k}T^{ijk}-9\cristoffe{i,s}{i,}T_j^{js}\\
&=8\overset{\;e}{R}\hspace{-10pt}\begin{smallmatrix}{\ \ \ \ ij}\\{ij}\end{smallmatrix}-8\cristoffe{s,t}{s,}\cristoffe{i,}{i,t}+8\cristofte{ij}{k}\cristofse{ik}{j}-9\overset{\:e}{F}-17\cristoffe{i,}{i,t}T_{ts}^s+8\cristofse{ij}{k}T^{ijk}
\label{A_ver2}\end{align}
On the other hand 
\begin{align*}
B2&=-4\cristofte{ij}{k}(\cristofse{ij}{k}+T_{ijk})-18\cristofte{ij}{k}(\cristofse{ik}{j}+T_{ijk})
-8\cristoffe{s,i}{i,}(\cristoffe{j,}{j,s}+T_j^{js})\\
&\quad+20\cristoffe{i,s}{i,}(\cristoffe{j,}{j,s}+T_j^{js})
-13\cristoffe{s,t}{s,}(\cristoffe{i,}{t,i}+T_i^{ti})\\
&=-4\cristofte{ij}{k}\cristofse{ij}{k}-18\cristofte{ij}{k}\cristofse{ik}{j}-21\cristoffe{s,i}{i,}\cristoffe{j,}{j,s}+20\cristoffe{i,s}{i,}\cristoffe{j,}{j,s}\\
&\quad-22\cristofte{ij}{k}T_{ijk}-8\cristoffe{s,i}{i,}T_j^{js}+7\cristoffe{i,s}{i,}T_j^{js}
\end{align*}
\begin{align*}
B3&=9(\cristofte{ij}{k}+T^{ijk})(\cristofse{ij}{k}+T_{ijk})+4(\cristofte{ij}{k}+T^{ijk})(\cristofse{ik}{j}+T_{ijk})\\
&\quad+8(\cristoffe{j,i}{i,}+T_{ij}^i)(\cristoffe{s,}{s,j}+T_s^{sj})
+(\cristoffe{i,s}{i,}+T_{is}^i)(\cristoffe{j,}{j,s}+T_j^{js})-12\cristofse{ij}{k}T^{ijk}\\
&=9\cristofte{ij}{k}\cristofse{ij}{k}+4\cristofte{ij}{k}\cristofse{ik}{j}+8\cristoffe{j,i}{i,}\cristoffe{s,}{s,j}+\cristoffe{i,s}{i,}\cristoffe{j,}{j,s}\\
&\quad+14\cristofse{ij}{k}T^{ijk}+10\cristoffe{i,s}{i,}T_j^{js}+8\cristoffe{j,i}{i,}T_s^{sj}+13T_{ijk}T^{ijk}+9T_{is}^i T_j^{js}
\end{align*}
\begin{align*}
B&\triangleq B1+B2+B3\\
&=-8\cristofte{ij}{k}\cristofse{ik}{j}+8\cristoffe{i,s}{i,}\cristoffe{j,}{j,s}-8\cristofte{ij}{k}T_{ijk}+17\cristoffe{i,s}{i,}T_j^{js}+13T_{ijk}T^{ijk}+9T_{is}^i T_j^{js}
\end{align*}
Consequently 
\begin{equation}
A+B=8\overset{\;e}{R}\hspace{-10pt}\begin{smallmatrix}{\ \ \ \ ij}\\{ij}\end{smallmatrix}-9\overset{\:e}{F}+13T^{ijk}T_{ijk}+9T_{is}^iT_j^{js} \label{A+B}
\end{equation}
If we substitute the right-hand side of \eqref{A+B} into \eqref{near_final_form_ED}, we get the equation
\eqref{expan_ED_final}.
\\
\\
-\textit{ Proof of the parameter invariance of $\overset{\:\alpha}{F}$}-\\
We prove that 
$$
\overset{\:\alpha}{F}\triangleq \partial^k T_{ik}^i-\cristoffa{i,j}{i,}T_k^{kj}
$$ 
is parameter invariant. $\overset{\:e}{F}$ of \eqref{def_F} is the special case for $\alpha=1$. Consider the change of coordinates
$$
(i, j, k, \ldots) \leftrightarrow (\alpha, \beta, \gamma, \ldots)
$$
Let
$$
B_\alpha^i \triangleq \frac{\partial i}{\partial \alpha},  \quad B_i^\alpha\triangleq \frac{\partial \alpha}{\partial i}, \quad 
B_{ik}^\beta \triangleq \frac{\partial^2 \beta}{\partial i \: \partial k}.
$$
Then we have
\begin{align*}
&\partial^k T_{ik}^i=B_\beta^k \partial^\beta \Bigl( T_{\tau \gamma}^\alpha B_\alpha^i B^\tau_i B_k^\gamma\Bigr)=B_\beta^k \partial^\beta \Bigl( T_{\alpha \gamma}^\alpha  B_k^\gamma\Bigr)\\
&=B_\beta^k B_k^\gamma\Bigl(\partial^\beta T_{\alpha \gamma}^\alpha \Bigr)+B_\beta^k T_{\alpha \gamma}^\alpha \Bigl(\partial^\beta B_k^\gamma \Bigr)=\partial^\beta T_{\alpha \beta}^\alpha +\Bigl(\partial^k B_k^\gamma\Bigr) T_{\alpha \gamma}^\alpha,
\end{align*}
while using \eqref{exchange_crisotffel}, we have
\begin{align*}
\cristoffa{i,j}{i,}&=\cristofsa{ik}{j}g^{ik}=\Bigl(\cristofsa{\alpha\gamma}{\beta}B_i^\alpha B_k^\gamma B_j^\beta+B_j^\alpha B^\beta_{ik} g_{\alpha \beta}\Bigr) g^{\tau \xi} B_\tau^i B^k_\xi \\
&=\cristoffa{\tau, \beta}{\tau,}B_j^\beta+B_j^\alpha g_{\alpha \beta}g^{\tau \xi}(\partial_\xi B_i^\beta )B_\tau^i\\
&=\cristoffa{\tau, \beta}{\tau,}B_j^\beta+B_j^\alpha g_{\alpha \beta}(\partial^i B_i^\beta ).
\end{align*}
Furthermore, 
\begin{align*}
\cristoffa{i,j}{i,}T_k^{k,j}&=\Bigl(\cristoffa{\tau, \beta}{\tau,}B_j^\beta+B_j^\alpha g_{\alpha \beta}(\partial^i B_i^\beta )\Bigr) T_\rho^{\rho \phi} B_\phi^j \\
&=\cristoffa{\tau, \beta}{\tau,}T_\rho^{\rho \beta}+(\partial^i B_i^\beta) T_{\rho \beta}^\rho.
\end{align*}
Consequently 
$$
\partial^k T_{ik}^i-\cristoffa{i,j}{i,}T_k^{k,j}=\partial^\beta T_{\alpha \beta}^\alpha-\cristoffa{\tau,\beta}{\tau,}T_\rho^{\rho \beta}.
$$
\\
\\
-\textit{ Proof of the positivity of $T_{ijk}T^{ijk}$ and $T_{i}^{is}T_{js}^j$}-\\
 Note that 
$$
T_{ijk}T^{ijk}=T_{ijk}T_{stu}g^{is}g^{jt}g^{ku}.
$$
For a general $p$-dimensional real symmetric matrices $A(\geq 0)$ and $B$, 
\begin{equation}
\textrm{tr}(BABA)\geq 0,
\label{positiv_trBABA}
\end{equation}
since 
$$
\textrm{tr}(BABA)=\textrm{tr}\Bigl(A^{\frac{1}{2}}BA^{\frac{1}{2}}A^{\frac{1}{2}}BA^{\frac{1}{2}}\Bigr)=\textrm{tr}\Bigl(A^{\frac{1}{2}}BA^{\frac{1}{2}}\Bigr)^2=\sum_{1\leq i \leq p}\lambda_i^2\geq 0, 
$$
where $\lambda_i,\ i=1,\ldots, p$ are  eigenvalues of $A^{\frac{1}{2}}BA^{\frac{1}{2}}$.

Now let 
$$
A\triangleq (a_{ku}), \quad a_{ku}\triangleq T_{ijk}T^{stu}g^{is}g^{jt}.
$$
From \eqref{positiv_trBABA}, we notice $A$ is nonnegative, because for any vector $(x^1, \ldots, x^p)$, 
$$
a_{ku}x^k x^u=(T_{ijk}x^k)(T_{stu}x^u)g^{is}g^{jt}=\textrm{ tr}(CG^{-1}CG^{-1}),
$$
where $C$ is a symmetric matrix defined by $(C)_{ij}\triangleq T_{ijk}x^k$. Since $A\geq 0$ and $G>0$, 
$G^{-1/2}AG^{-1/2}\geq 0.$ Consequently 
$$
T_{ijk}T^{ijk}=\textrm{ tr}(AG^{-1})=\textrm{ tr}(G^{-1/2}AG^{-1/2})\geq 0.
$$
If we define $A=(a_{ku})$ as 
$$
a_{ku}\triangleq T_{ijk}T_{stu}g^{ij}g^{st},
$$
we notice $A$ is nonnegative in a similar way to the above argument. Therefore we have
$$
T_{is}^i T_j^{js}=\textrm{ tr}(AG^{-1})=\textrm{ tr}(G^{-1/2}AG^{-1/2})\geq 0.
$$
\subsection{Other Proofs}
\label{other_proofs}
-\textit{ Proof of \eqref{convert_eF_mF}}-
\begin{align*}
\overset{\:e}{F}(\theta)&\triangleq\sum_{i,k,s}g^{ks}(\theta)\partial_s T_{ik}^i(\theta)-\sum_{i,j,s,t}g^{ti}(\theta)\cristofse{it}{s}(\theta) T_j^{js}(\theta) \\
&=\sum_{i,k,s}g^{ks}(\theta)\partial_s T_{ik}^i(\theta)-\sum_{i,j,s,t}g^{ti}(\theta)(\cristofsm{it}{s}(\theta) -T_{its}(\theta))T_j^{js}(\theta)\\
&=\overset{\:m}{F}(\theta)+\sum_{i,j,s}T^{i}_{is}(\theta)T_{j}^{js}(\theta)
\end{align*}
-\textit{ Proof of \eqref{T_ijkT^ijk_multinomi}, \eqref{T_is^iT^js_j_multinomi}, \eqref{mF_multinomi},
\eqref{expan_ED_multinom}}-\\
Let $\mu$ be the  counting measure on ${\mathfrak X}=\{x_0, x_1,\ldots, x_p\}$, then the density function is given by
$$
f(x; m)=\prod_{i=0}^p m_i^{I_i(x)},
$$
where $I_i(x)$ is the  indicator function for the point $x_i$.   We have
\begin{align*}
\log f(x; m) &=\sum_{i=0}^p I_i(x) \log m_i \\
&=\sum_{i=1}^p I_i(x) \log m_i + I_0(x) \log m_0\\
&=\sum_{i=1}^p I_i(x) \log m_i + (1-\sum_{i=1}^p I_i(x)) \log m_0\\
&=\sum_{i=1}^p I_i(x) \log(m_i/m_0)+\log m_0 \\
&=\sum_{i=1}^p I_i(x)\theta^i -\psi(\theta),
\end{align*}
where $\theta\triangleq (\theta^1,\ldots,\theta^p) $ with $\theta^i \triangleq  \log(m_i/m_0),\ i=0,\ldots,p$, and $\psi(\theta)\triangleq \log (\sum_{i=0}^p e^{\theta^i})=-\log m_0$. $\theta$ is an $e$-affine coordinate system, while $m=(m_1,\ldots, m_p)$ is a $m$-affine coordinate system, since $E_m[I_i(X)]=m_i$. Using the basic facts on an exponential family (see, e.g. \cite{Amari2} or \cite{Amari&Nagaoka}.) we can calculate the geometrical properties as follows;
\begin{align*}
&T_{\theta^i \theta^j \theta^k}\\
&=\frac{\partial }{\partial \theta^i}\frac{\partial }{\partial \theta^j}\frac{\partial }{\partial \theta^k} \psi(\theta)\\
&=\frac{\partial }{\partial \theta^i}\frac{\partial }{\partial \theta^j}\frac{e^{\theta^k}}{\sum_{s=0}^p e^{\theta^s}}\\
&=\frac{\partial }{\partial \theta^i}\biggl(\delta_{kj}\frac{e^{\theta^k}}{\sum_{s=0}^p e^{\theta^s}}-\frac{e^{\theta^k}e^{\theta^j}}{(\sum_{s=0}^k e^{\theta^s})^2}\biggr)\\
&=\delta_{kj}\delta_{ij}\frac{e^{\theta^k}}{\sum_{s=0}^p e^{\theta^s}}-\delta_{kj}\frac{e^{\theta^k}e^{\theta^i}}{(\sum_{s=0}^k e^{\theta^s})^2}
-\delta_{ki}\frac{e^{\theta^k}e^{\theta^j}}{(\sum_{s=0}^k e^{\theta^s})^2}
-\delta_{ij}\frac{e^{\theta^k}e^{\theta^i}}{(\sum_{s=0}^k e^{\theta^s})^2}
+2\frac{e^{\theta^k}e^{\theta^i}e^{\theta^j}}{(\sum_{s=0}^k e^{\theta^s})^3}\\
&=\delta_{ij}\delta_{kj}m_k-\delta_{kj}m_k m_i-\delta_{ki} m_k m_j-\delta_{ij}m_i m_k+2m_i m_k m_j
\end{align*}
\begin{align*}
g_{\theta^i \theta^j}
&=\frac{\partial }{\partial \theta^i}\frac{\partial }{\partial \theta^j}\psi(\theta)\\
&=\delta_{ij}m_i-m_i m_j
\end{align*}
, which means 
$$
(g_{\theta^i \theta^j})=\textrm{ diag}(m_1,\ldots,m_p)-{\bm{m}}{\bm{m}}^t,\qquad {\bm{m}}=(m_1,\ldots,m_p)^t.
$$
Since 
\begin{align*}
\frac{\partial \log f(x; m)}{\partial m_i}&=\frac{1}{m_0}\bigl(\sum_{s=1}^p I_s(x)\bigr)+\frac{1}{m_i}I_i(x)-\frac{1}{m_0},\\
-\frac{\partial^2 \log f(x;m)}{\partial m_j \partial m_i}&=-\frac{1}{m_0^2}\bigl(\sum_{s=1}^p I_s(x)\bigr)+\delta_{ij}\frac{1}{m_i^2}I_i(x)+\frac{1}{m_0^2},
\end{align*}
\begin{align*}
g^{\theta^i \theta^j}&=g_{m_i m_j}\\
&=E_m\biggl[-\frac{\partial^2 \log f(x;m)}{\partial m_j \partial m_i}\biggr]\\
&=(m_0-1)\frac{1}{m_0^2}+\frac{1}{m_i}\delta_{ij}+\frac{1}{m_0^2}\\
&=\frac{1}{m_0}+\delta_{ij}\frac{1}{m_i},
\end{align*}
which means
$$
(g^{\theta^i \theta^j})=m_0^{-1}\bm{1}{\bm{1}}^t+\diag(m_1^{-1},\ldots,m_p^{-1}), \qquad \bm{1}\triangleq (1,\ldots,1)^t.
$$
\begin{align*}
&\sum_{i=1}^p T_{\theta^i \theta^j \theta^k}g^{\theta^i \theta^s}\\
&=m_0^{-1}\biggl(\delta_{kj}m_k-\delta_{kj}m_k\sum_{i=1}^p m_i-m_j m_k -m_k m_j+2m_j m_k \sum_{i=1}^p m_i\biggr)\\
&\quad +\delta_{sk}\delta_{kj} m_k m_s^{-1}-\delta_{kj} m_k-m_k m_j m_s^{-1}\delta_{ks}-m_k\delta_{sj}+2m_j m_k,\\
&\sum_{1\leq i, j \leq p}T_{\theta^i \theta^j \theta^k}g^{\theta^i \theta^s}g^{\theta^j \theta^t}\\
&=m_0^{-2}\bigl\{m_k-m_k(1-m_0)-2m_k(1-m_0)+2m_k(1-m_0)^2\bigr\}\\
&\quad +m_0^{-1}\bigl\{\delta_{sk} m_k m_s^{-1}-m_k-m_k(1-m_0)m_s^{-1}\delta_{ks}-m_k+2m_k(1-m_0)\bigr\}\\
&\quad +m_0^{-1}\bigl\{\delta_{kt}m_k m_t^{-1}-\delta_{kt}m_k m_t^{-1}(1-m_0)-m_k-m_k+2m_k(1-m_0)\bigr\}\\
&\quad +\delta_{sk}\delta_{kt} m_k m_s^{-1} m_t^{-1}-\delta_{kt}m_k m_t^{-1}-\delta_{ks}m_k m_s^{-1}-\delta_{st}m_k m_t^{-1}+2m_k
\end{align*}
\begin{align*}
&T^{\theta^s \theta^t \theta^u}\\
&=\sum_{1\leq i,j,k \leq p} T_{\theta^i \theta^j \theta^k} g^{\theta^i \theta^s} g^{\theta^j \theta^t} g^{\theta^k \theta^u}\\
&=m_0^{-3}\bigl\{(1-m_0)-3(1-m_0)^2+2(1-m_0)^3\bigr\}\\
&\quad +m_0^{-2}\bigl\{1-3(1-m_0)+2(1-m_0)^2\bigr\}+m_0^{-2}\bigl\{1-(1-m_0)-2(1-m_0)m_0\bigr\}\\
&\quad +m_0^{-1}\bigl\{\delta_{st}m_s^{-1}-1-1-(1-m_0)\delta_{st}m_t^{-1}+2(1-m_0)\bigr\}\\
&\quad +m_0^{-2}\bigl\{1-3(1-m_0)+2(1-m_0)^2\bigr\}\\
&\quad +m_0^{-1}\bigl\{\delta_{su}m_s^{-1}-\delta_{su}m_s^{-1}(1-m_0)-2m_0\bigr\}+m_0^{-1}\bigl\{\delta_{ut}m_u^{-1}-\delta_{ut}m_u^{-1}(1-m_0)-2m_0\bigr\}\\
&\quad +\delta_{st}\delta_{tu}m_s^{-1}m_t^{-1}-\delta_{ut}m_t^{-1}+\delta_{su}m_u^{-1}-\delta_{st}m_t^{-1}+2\\
&=m_0^{-3}\bigl\{2(1-m_0)^3-3(1-m_0)^2+(1-m_0)\bigr\}\\
&\quad +m_0^{-2}(6(1-m_0)^2-9(1-m_0)+3)\\
&\quad +(\delta_{st}m_s^{-1}+\delta_{us}m_u^{-1}+\delta_{ut}m_t^{-1}-6)\\
&\quad +\delta_{st}m_s^{-1}\delta_{us}m_u^{-1}-\bigl\{\delta_{tu}m_t^{-1}+\delta_{us}m_u^{-1}+\delta_{st}m_s^{-1}\bigr\}+2\\
&=m_0^{-3}(-2m_0^3+3m_0^2-m_0)\\
&\quad+m_0^{-2}(6m_0^2-3m_0)+\delta_{st}m_s^{-1}\delta_{ut}m_t^{-1}-4\\
&=-m_0^{-2}+\delta_{st}m_s^{-1}\delta_{ut}m_t^{-1}.
\end{align*}
Since 
\begin{align*}
T_{\theta^s \theta^t \theta^u}&=\delta_{st}\delta_{tu} m_s-\delta_{tu}m_s m_u-\delta_{ts}m_t m_u-\delta_{su}m_s m_t +2m_s m_t m_u\\
&=m_s m_t m_u \bigl\{\delta_{st}m_s^{-1} \delta_{us}m_u^{-1}-(\delta_{st}m_s^{-1}+\delta_{us}m_u^{-1}+\delta_{tu} m_t^{-1})+2\bigr\},
\end{align*}
the following relations hold;
\begin{align*}
\sum_{1\leq s,t,u \leq p} T_{\theta^s \theta^t \theta^u}&=(1-m_0)-3(1-m_0)^2+2(1-m_0)^3,\\
\sum_{1\leq s \leq p} T_{\theta^s \theta^t \theta^u} \delta_{st} m_s^{-1}&=\delta_{tu}-\delta_{tu}m_u-m_u-\delta_{tu}m_t+2m_t m_u,\\
\sum_{1\leq s,u, t \leq p}T_{\theta^s \theta^t \theta^u}\delta_{st}m_s^{-1}\delta_{ut}m_t^{-1}&=\sum_{1\leq t \leq p}m_t^{-1}-3p+2(1-m_0).
\end{align*}
We now have the relation \eqref{T_ijkT^ijk_multinomi} as follows;
\begin{align*}
&T^{\theta^s \theta^t \theta^u}T_{\theta^s \theta^t \theta^u}\\
&=-m_0^{-2}\{2(1-m_0)^3-3(1-m_0)^2+(1-m_0)\}+\sum_{1\leq t \leq p}m_t^{-1}-3p+2(1-m_0)\\
&=-(-2m_0+3-m_0^{-1})+\sum_{1\leq t \leq p}m_t^{-1}-3p+2-2m_0\\
&=-3p-1+\sum_{t=0}^pm_t^{-1}.
\end{align*}
Because
$$
T_{m_s m_t m_u}=T^{\theta^s \theta^t \theta^u}=-m_0^{-2}+\delta_{st}m_s^{-1}\delta_{ut}m_t^{-1},\qquad g^{m_s m_u}=g_{\theta^s \theta^u}=\delta_{su}m_s-m_s m_u,
$$
we get
\begin{align*}
\sum_{1\leq u \leq p}T_{m_s m_t m_u}g^{m_s m_u}&=-m_0^{-2}(m_s-(1-m_0)m_s)+\delta_{st}m_t^{-1}-\delta_{st}\\
&=-m_0^{-1}m_s+\delta_{st}(m_t^{-1}-1)\\
T_{m_s m_t}^{m_s}=\sum_{1\leq s,u \leq p}T_{m_s m_t m_u}g^{m_s m_u}&=\sum_{1\leq s \leq p}(-m_0^{-1}m_s+\delta_{st}(m_t^{-1}-1))\\
&=-m_0^{-1}(1-m_0)+(m_t^{-1}-1)\\
&=-\Bigl(\sum_{i=0}^p e^{\theta^i}\Bigr)+e^{-\theta^t}\Bigl(\sum_{i=0}^p e^{\theta^i}\Bigr)\\
\overset{\:m}{F}=\sum_{1\leq t \leq p}\partial^{m_t}T_{m_s m_t}^{m_s}&=\sum_{1\leq t \leq p}
\partial_{\theta^t}T_{m_s m_t}^{m_s}\\
&=\sum_{1\leq t \leq p}\Bigl\{-e^{\theta^t}-e^{-\theta^t}\Bigl( \sum_{i=0}^p e^{\theta^i}\Bigr) +e^{-\theta^t}e^{\theta^t}\Bigr\}\\
&=-\sum_{1\leq t \leq p}e^{\theta^t}-\Bigl(\sum_{1\leq t \leq p}e^{-\theta^t}\Bigr)\Bigl(\sum_{i=0}^p e^{\theta^i}\Bigr)+p\\
&=-\Bigl(\sum_{t=0}^p e^{\theta^t}-1\Bigr)-\Bigl(\sum_{1\leq t \leq p}m_0 m_t^{-1}\Bigr)\Bigl(\sum_{i=0}^p e^{\theta^i}\Bigr)+p\\
&=-m_0^{-1}+1-\Bigl(\sum_{1 \leq t \leq p}m_t^{-1}\Bigr)+p\\
&=-\Bigl(\sum_{t=0}^pm_t^{-1}\Bigr)+p+1 \quad \text{( \eqref{mF_multinomi} holds )}.
\end{align*}
We also have
\begin{align*}
&T_{m_s m_t}^{m_s} T_{m_v}^{m_v m_t}\\
&=T_{m_s m_t}^{m_s} T_{m_v m_u}^{m_v} g^{m_u m_t}\\
&=\sum_{1 \leq u, t \leq p}(-m_0^{-1}+m_t^{-1})(-m_0^{-1}+m_u^{-1})(\delta_{ut}m_u-m_um_t)\\
&=\sum_{1 \leq u, t \leq p}(m_0^{-2}-m_0^{-1}m_t^{-1}-m_0^{-1}m_u^{-1}+m_u^{-1}m_t^{-1})(\delta_{ut}m_u-m_u m_t)\\
&=\sum_{1\leq t \leq p}(m_0^{-2}m_t-m_0^{-1}-m_0^{-1}+m_t^{-1})-\sum_{1\leq t \leq p}m_0^{-2}m_t(1-m_0)+\sum_{1\leq t \leq p}m_0^{-1}(1-m_0)\\
&\qquad+\sum_{1\leq t \leq p}m_0^{-1}m_t p- \sum_{1\leq t \leq p}p\\
&=m_0^{-2}(1-m_0)-2m_0^{-1}p+\sum_{1\leq t \leq p}m_t^{-1}-(1-m_0)^2m_0^{-2}+(1-m_0)m_0^{-1}p\\
&\qquad+m_0^{-1}p(1-m_0)-p^2\\
&=m_0^{-2}-m_0^{-1}-2pm_0^{-1}+\sum_{1\leq t \leq p}m_t^{-1}-1+2m_0^{-1}-m_0^{-2}+m_0^{-1}p-p\\
&\qquad+m_0^{-1}p-p-p^2\\
&=\sum_{t=0}^p m_t^{-1}-(p+1)^2 \quad \text{(\eqref{T_is^iT^js_j_multinomi} holds)}.
\end{align*}
Finally we find the concrete form of $\overset{\alpha}{E\!D}$ as follows;
\begin{align*}
&\overset{\alpha}{E\!D} (m)\\
&=\frac{p}{2n}\\
&\quad+\frac{1}{24n^{2}}\biggl[(\alpha')^2\{3\overset{\:m}{F}+3T^{ijk}T_{ijk}+3T^i_{is}T^{js}_j+3p^2+6p\}\\
&\qquad \qquad+\alpha'\{3\overset{\:m}{F}-5T^{ijk}T_{ijk}-3T^i_{is}T^{js}_j-3p^2-6p\}\\
&\qquad\qquad+T^{ijk}T_{ijk}-9\overset{\:m}{F}\biggr]+o(n^{-2})\\
&=\frac{p}{2n}\\
&\quad+\frac{1}{24n^{2}}\biggl[(\alpha')^2\{3(-M+p+1)+3(-3p+M-1)+3(M-p^2-2p-1)+3p^2+6p\}\\
&\qquad \qquad+\alpha'\{3(-M+p+1)-5(-3p+M-1)-3(M-p^2-2p-1)-3p^2-6p\}\\
&\qquad\qquad+(-3p+M-1)-9(-M+p+1)\biggr]+o(n^{-2})\\
&=\frac{p}{2n}\\
&\quad+\frac{1}{24n^{2}}\biggl[(\alpha')^2\{-3M+3p+3-9p+3M-3+3M-3p^2-6p-3+3p^2+6p\}\\
&\qquad \qquad+\alpha'\{-3M+3p+3+15p-5M+5-3M+3p^2+6p+3-3p^2-6p\}\\
&\qquad\qquad+10M-12p-10\biggr]+o(n^{-2})\\
&=\frac{p}{2n}\\
&\quad+\frac{1}{24n^{2}}\biggl[(\alpha')^2(3M-6p-3)+\alpha'(-11M+18p+11)+10M-12p-10\biggr]+o(n^{-2}).\\
\end{align*}

As we mentioned in the main text, we can derive straightforwardly the asymptotic expansion of $\overset{\alpha}{E\!D} (m)$ not using the knowledge of information geometry because the risk of the m.l.e. could be expressed in a simple form for the multinomial distribution. We will demonstrate this.

Let $x_j, j=1,\ldots,n$ be the i.i.d. samples. The maximum likelihood estimator for $m$, $\bar{m}\triangleq(\bar{m}_1,\ldots, \bar{m}_p)$, is given by 
$$
\bar{m}_i\triangleq \sum_{j=1}^n I_i(x_j)/n,\quad i=0,\ldots,p.
$$

First suppose that $\alpha \ne \pm 1$.
The function $(1+x/a)^{(1-\alpha)/2}$ could be  expanded around $x=0$ as follows;
\begin{align*}
\Bigl(1+\frac{x}{a}\Bigr)^{(1-\alpha)/2}&=1+\frac{1-\alpha}{2}\frac{x}{a}-\frac{(1-\alpha^2)}{8}\frac{x^2}{a^2}+\frac{(1-\alpha^2)(3+\alpha)}{48}\frac{x^3}{a^3}-\frac{(1-\alpha^2)(3+\alpha)(5+\alpha)}{384}\frac{x^4}{a^4}\\
&\quad+\frac{(1-\alpha^2)(3+\alpha)(5+\alpha)(7+\alpha)}{3840}\frac{x^5}{a^5}\Bigl(1+\frac{\xi_a(x)}{a}\Bigr)^{-(9+\alpha)/2}\quad(0<\xi_a(x)<x).
\end{align*}
If we substitute $x$ and $a$ respectively with $\bar{m}_i-m_i$ and $m_i$, we have
\begin{align*}
\Bigl(\frac{\bar{m}_i}{m_i}\Bigr)^{(1-\alpha)/2}&=1+\frac{1-\alpha}{2}\frac{\bar{m}_i-m_i}{m_i}-\frac{(1-\alpha^2)}{8}\frac{(\bar{m}_i-m_i)^2}{m_i^2}\\
&\quad+\frac{(1-\alpha^2)(3+\alpha)}{48}\frac{(\bar{m}_i-m_i)^3}{m_i^3}-\frac{(1-\alpha^2)(3+\alpha)(5+\alpha)}{384}\frac{(\bar{m}_i-m_i)^4}{m_i^4}\\
&\quad+\frac{(1-\alpha^2)(3+\alpha)(5+\alpha)(7+\alpha)}{3840}\frac{(\bar{m}_i-m_i)^5}{m_i^5}\Bigl(1+\frac{\xi_{m_i}(\bar{m}_i-m_i)}{m_i}\Bigr)^{-(9+\alpha)/2}.
\end{align*}
If we use the basic relations for an exponential family (see, e.g., p.121 of Amari \cite{Amari2}), we have the following equations; 
\begin{align*}
E_m[\bar{m}_i-m_i]&=0, \\
E_m[(\bar{m}_i-m_i)^2]&=\frac{1}{n}\frac{\partial^2}{\partial^2 \theta^i}\psi(\theta)\\
&=\frac{1}{n}\frac{\partial}{\partial \theta^i}\biggl(\frac{e^{\theta^i}}{\sum_{j=0}^p e^{\theta^j}}\biggr)\\
&=\frac{1}{n}\biggl(\frac{e^{\theta^i}}{\sum_{j=0}^p e^{\theta^j}}-\frac{e^{2\theta^i}}{\bigl(\sum_{j=0}^p e^{\theta^j}\bigr)^2}\biggr)\\
&=\frac{1}{n}(m_i-m_i^2),\\
E_m[(\bar{m}_i-m_i)^3]&=\frac{1}{n^2}\frac{\partial^3}{\partial^3 \theta^i}\psi(\theta)\\
&=\frac{1}{n^2}\biggl(\frac{e^{\theta^i}}{\sum_{j=0}^p e^{\theta^j}}-\frac{e^{2\theta^i}}{\bigl(\sum_{j=0}^p e^{\theta^j}\bigr)^2}-\frac{2e^{2\theta^i}}{\bigl(\sum_{j=0}^p e^{\theta^j}\bigr)^2})+\frac{2e^{3\theta^i}}{\bigl(\sum_{j=0}^p e^{\theta^j}\bigr)^3}\biggr)\\
&=\frac{1}{n^2}(m_i-3m_i^2+2m_i^3),\\
E_m[(\bar{m}_i-m_i)^4]&=\frac{3}{n^2}\biggl(\frac{\partial^2}{\partial^2\theta^i}\psi(\theta)\biggr)^2+o(n^{-2})\\
&=\frac{3}{n^2}(m_i-m_i^2)^2+o(n^{-2}).
\end{align*}
Using this result, we have
\begin{align*}
&E_m\Bigl[\sum_{i=0}^p m_i\Bigl(\frac{\bar{m}_i}{m_i}\Bigr)^{(1-\alpha)/2} \Bigr]\\
&=1-\frac{1-\alpha^2}{8}\frac{1}{n}\sum_{i=0}^p \frac{1}{m_i}(m_i-m_i^2)\\
&\qquad+\frac{1-\alpha^2}{48}(3+\alpha)\frac{1}{n^2}\sum_{i=0}^p \frac{1}{m_i^2}(m_i-3m_i^2+2m_i^3)\\
&\qquad-\frac{1}{384}(1-\alpha^2)(3+\alpha)(5+\alpha)\frac{3}{n^2}\sum_{i=0}^p\frac{1}{m_i^3}(m_i-m_i^2)^2+o(n^{-2})\\
&=1-(1-\alpha^2)\frac{p}{8n}\\
&\qquad+(1-\alpha^2)\frac{1}{384n^2}\Bigl\{8(3+\alpha)\bigl(M-3(p+1)+2\bigr)-3(3+\alpha)(5+\alpha)\bigl(M-2(p+1)+1\bigr)\Bigr\}\\
&\qquad+o(n^{-2})\\
&=1-(1-\alpha^2)\frac{p}{8n}\\
&\qquad+(1-\alpha^2)\frac{1}{384n^2}\Bigl\{-(3+\alpha)(7+3\alpha)M+(3+\alpha)(6+6\alpha)p+(3+\alpha)(7+3\alpha))\Bigr\}\\
&\qquad+o(n^{-2})\\
&=1-(1-\alpha^2)\frac{p}{8n}\\
&\qquad+(1-\alpha^2)\frac{1}{384n^2}\Bigl\{(3+\alpha)(7+3\alpha)(1-M)+6(\alpha+3)(1+\alpha)p\Bigr\}+o(n^{-2}).
\end{align*}
From this, we have 
\begin{align}
\overset{\alpha}{E\!D} (m)&=\frac{4}{1-\alpha^2}\Bigl\{1-E_m\Bigl[\sum_{i=0}^p m_i\Bigl(\frac{\bar{m}_i}{m_i}\Bigr)^{(1-\alpha)/2} \Bigr]\Bigr\}\nonumber\\
&=\frac{p}{2n}+\frac{1}{96n^2}\Bigl\{(3+\alpha)(7+3\alpha)(M-1)-6(\alpha+3)(\alpha+1)p\Bigr\}+o(n^{-2}),\label{alte_expan_ED_multinom}
\end{align}
which is equal to \eqref{expan_ED_multinom}.

Now we consider the case $\alpha=-1$. The $-1$-divergence, i.e. Kullback-Leibler divergence between $f(x;\tilde{m})$ and $f(x; m)$ is given by
\begin{align*}
\overset{-1}{D}[\tilde{m}: m]&=E_{\tilde{m}}[\log f(x;\tilde{m})-\log f(x; m)]\\
&=E_{\tilde{m}}\Bigl[\sum_{i=0}^p I_i(x)(\log \tilde{m}_i-\log m_i)\Bigr]\\
&=\sum_{i=0}^p \tilde{m}_i\log (\tilde{m}_i/m_i).
\end{align*}
We have
\begin{align*}
\overset{-1}{D}[\bar{m}: m]&=\sum_{i=0}^p \bar{m}_i\log (\bar{m}_i/m_i)\\
&=\sum_{i=0}^p(\bar{m}_i\log \bar{m}_i-\bar{m}_i\log m_i),\\
\overset{-1}{E\!D} (m)&=E_m\Bigl[\overset{-1}{D}[\bar{m}: m]\Bigr]\\
&=\sum_{i=0}^p\Bigl(E_m[\bar{m}_i \log \bar{m}_i]-(\log m_i)E_m[\bar{m}_i]\Bigr)\\
&=\sum_{i=0}^p\Bigl(E_m[\bar{m}_i \log \bar{m}_i]-m_i (\log m_i) \Bigr).\\
\end{align*}
If we use Taylor expansion of $x\log x$ around $x_0$
\begin{align*}
x\log x &=x_0 \log x_0 +(1+\log x_0)(x-x_0)\\
&\quad+\frac{1}{2}\frac{1}{x_0}(x-x_0)^2-\frac{1}{6}\frac{1}{x_0^2}(x-x_0)^3+\frac{2}{24}\frac{1}{x_0^3}(x-x_0)^4\\
&\quad-\frac{6}{120}\frac{1}{\xi^4(x)} (x-x_0)^5,\qquad |\xi(x)-x_0| < |x-x_0|,
\end{align*}
then we have
\begin{align*}
\bar{m}_i \log\bar{m}_i&=m_i \log m_i+(1+\log m_i)(\bar{m}_i-m_i)\\
&\quad +\frac{1}{2}\frac{1}{m_i}(\bar{m}_i-m_i)^2-\frac{1}{6}\frac{1}{m_i^2}(\bar{m}_i-m_i)^3+\frac{1}{12}\frac{1}{m_i^3}(\bar{m}_i-m_i)^4\\
&\quad-\frac{1}{20}\frac{1}{\xi^4(\bar{m}_i)}(\bar{m}_i-m_i)^5, \qquad |\xi(\bar{m}_i)-m_i|<|\bar{m}_i-m_i|.
\end{align*}

Thus, we have
\begin{align*}
E_m[\bar{m}_i \log \bar{m}_i]&=m_i \log m_i +\frac{1}{2n}(1-m_i)-\frac{1}{6n^2}(m_i^{-1}-3+2m_i)\\
&\quad+\frac{1}{4n^2}(m_i-m_i^2)^2 m_i^{-3}+o(n^{-2})\\
&=m_i \log m_i+\frac{1}{2n}(1-m_i)\\
&\quad+\frac{1}{12n^2}\{(-2m_i^{-1}+6-4m_i)+3(m_i^{-1}-2+m_i)\}+o(n^{-2})\\
&=m_i \log m_i+\frac{1}{2n}(1-m_i)+\frac{1}{12n^2}(m_i^{-1}-m_i)+o(n^{-2}).
\end{align*}
Consequently we have the same result as \eqref{expan_KL_multinom};
\begin{align*}
\overset{-1}{E\!D} (m)&=\frac{1}{2n}\sum_{i=0}^p(1-m_i)+\frac{1}{12n^2}\sum_{i=0}^p(m_i^{-1}-m_i)+o(n^{-2})\\
&=\frac{p}{2n}+\frac{1}{12n^2}\sum_{i=0}^p (m_i^{-1}-m_i)+o(n^{-2})\\
&=\frac{p}{2n}+\frac{1}{12n^2}\Bigl(\sum_{i=0}^p m_i^{-1}-1 \Bigr)+o(n^{-2}).
\end{align*}
When $\alpha=1$, we can similarly derive the result. \\
-\textit{ Proof of \eqref{Normal Model T^ijk T_ijk}, \eqref{Normal Model T_ij^i T_s^sj}, 
\eqref{Normal Model F}, \eqref{expan_ED_nomal_vacova}}-\\
For brevity, we use the following notations;
\begin{align*}
&\sum_{(a,b)}\triangleq\sum_{1\leq a \leq b \leq p},\quad \sum_{(a,b)(c,d)}\triangleq\sum_{1\leq a \leq b \leq p, 1\leq c \leq d \leq p},\cdots
\end{align*}
From the density function of $f(x; \Sigma)$ of $p$-variate normal distribution $N_p(0, \Sigma)$, we have
$$
\log f(x;\Sigma)= \sum_{(i,j)} y_{ij}(x)\theta^{ij}-\psi(\Sigma),
$$
where for $1 \leq i \leq j \leq p$,
$$
y_{ij}(x)\triangleq x_i x_j, \qquad \theta^{ij}=-\frac{1}{1+\delta_{ij}}\sigma^{ij},
$$
and 
$$
\psi(\Sigma)=\frac{p}{2}\log 2\pi+\frac{1}{2}\log |\Sigma|.
$$
Since $E[y_{ij}]=\sigma_{ij}$,  $\theta^{ij}(1\leq i \leq j \leq p)$ is $e$-affine coordinate and $\sigma_{ij}(1\leq i \leq j \leq p)$ is $m$-affine coordinate. We use the notations
\begin{align*}
&\partial_{(i,j)}\triangleq \frac{\partial}{\partial \theta^{ij}}, \qquad g_{(i,j)(k,l)}\triangleq <\partial_{(i,j)}, \partial_{(k,l)}>=g_{\theta^{ij}\theta^{kl}},\\
&\partial^{(i,j)}\triangleq \frac{\partial}{\partial \sigma_{ij}}, \qquad g^{(i,j)(k,l)}\triangleq <\partial^{(i,j)}, \partial^{(k,l)}>=g_{\sigma_{ij}\sigma_{kl}}\\
\end{align*}
for $1\leq i \leq j \leq p,\ 1\leq k \leq l \leq p.$
Actually the following relations hold;
\begin{align}
&\frac{\partial \sigma_{kl}}{\partial \theta^{ij}}=g_{(i,j)(k,l)},\qquad\partial_{(i,j)}=\sum_{(k,l)}g_{(i,j)(k,l)}\partial^{(k,l)}, \label{exp_partial^(ij)}\\
&\frac{\partial \theta^{kl}}{\partial \sigma_{ij}}=g^{(i,j)(k,l)},\qquad\partial^{(i,j)}=\sum_{(k,l)}g^{(i,j)(k,l)}\partial_{(k,l)}, \label{another_exp_partial^(ij)}\\
&\sum_{(k,l)}g_{(i,j)(k,l)}g^{(k,l)(s,t)}=
\begin{cases}
1, & \text{if $(i,j)=(s,t)$,} \\
0, & \text{otherwise.}
\end{cases}\label{orthogo_g^{(ij)(kl)}_and_g_{(ij)(kl)}}
\end{align}
First the metric  of $\mathcal{P}$ is given as follows (see e.g., \cite{Skovgaard} or \cite{Eguchi1});
\begin{align}
g_{(i,j)(k,l)}&=\sigma_{ik}\sigma_{jl}+\sigma_{il}\sigma_{jk},\label{metric_e}\\
g^{(i,j)(k,l)}&=(1+\delta_{ij})^{-1}(1+\delta_{kl})^{-1}(\sigma^{ik}\sigma^{jl}+\sigma^{il}\sigma^{jk}). \label{metric_m}
\end{align}
\eqref{metric_e} is easily obtained if we use the fact 
\begin{align*}
g_{(i,j)(k,l)}&=\frac{\partial^2\ }{\partial \theta^{ij} \partial \theta^{kl}}\psi(\Sigma)\\
&=(1+\delta_{ij})(1+\delta_{kl})\frac{\partial^2\ }{\partial \sigma^{ij} \partial \sigma^{kl}}\psi(\Sigma)\\
&=-2^{-1}(1+\delta_{ij})(1+\delta_{kl})\frac{\partial^2\ }{\partial \sigma^{ij} \partial \sigma^{kl}}
\log |\Sigma^{-1}|
\end{align*}
and the following formula; For a symmetric matrix $X=(x_{ij})$ and its inverse $X^{-1}=(x^{ij})$, the following relationship holds.
\begin{align}
&\frac{\partial\ }{\partial x_{ij}}\log |X|=\frac{2}{1+\delta_{ij}}x^{ij},\label{del_log|x|}\\
&\frac{\partial x^{ij}}{\partial x_{kl}}=-(1+\delta_{kl})^{-1}(x^{ik}x^{jl}+x^{il}x^{jk}).\label{del_x^{-1}}
\end{align}
\eqref{metric_m} is proved by showing as follows that \eqref{orthogo_g^{(ij)(kl)}_and_g_{(ij)(kl)}} holds true for $g_{(i,j)(k,l)} $ and $g^{(i,j)(k,l)}$ given respectively by \eqref{metric_e} and \eqref{metric_m};
If $s<t$, then we have
\begin{align*}
\sum_{(k,l)}g_{(i,j)(k,l)}g^{(k,l)(s,t)}&=(1+\delta_{kl})^{-1}\sum_{(k,l)}(\sigma_{ik}\sigma_{jl}+\sigma_{il}\sigma_{jk})(\sigma^{sk}\sigma^{tl}+\sigma^{sl}\sigma^{tk})\\
&=2^{-1}\sum_{k,l}(\sigma_{ik}\sigma_{jl}+\sigma_{il}\sigma_{jk})(\sigma^{sk}\sigma^{tl}+\sigma^{sl}\sigma^{tk})\\
&=2^{-1}(\delta_{is}\delta_{jt}+\delta_{is}\delta_{jt}+\delta_{it}\delta_{js}+\delta_{it}\delta_{js})\\
&=
\begin{cases}
1, & \text{if $(i,j)=(s,t)$,}\\
0, & \text{otherwise,}
\end{cases}
\end{align*}
while if $s=t$, then we have 
\begin{align*}
\sum_{(k,l)}g_{(i,j)(k,l)}g^{(k,l)(s,t)}&=4^{-1}\sum_{k,l}(\sigma_{ik}\sigma_{jl}+\sigma_{il}\sigma_{jk})(\sigma^{sk}\sigma^{tl}+\sigma^{sl}\sigma^{tk})\\
&=2^{-1}(\delta_{is}\delta_{jt}+\delta_{it}\delta_{js})\\
&=
\begin{cases}
1, & \text{if $(i,j)=(s,t)$,}\\
0, & \text{otherwise.}
\end{cases}
\end{align*}
Christoffel's symbol w.r.t. $\alpha$-connection is given as follows;
$$
\cristofsa{\sigma_{ij}\sigma_{kl}}{\sigma_{st}}=-\frac{1+\alpha}{2}(1+\delta_{ij})^{-1}(1+\delta_{kl})^{-1}(1+\delta_{st})^{-1} \sum_{1-(ij)(kl)(st)}\sigma^{ab}\sigma^{cd}\sigma^{ef}, 
$$
where $\sum_{1-(ij)(kl)(st)}\sigma^{ab}\sigma^{cd}\sigma^{ef}$ means the summation over all $a, b(1\leq a \leq b \leq p), c, d(1\leq c \leq d \leq p), e, f(1\leq e \leq f \leq p)$ such  that the union of parings $\{(ab), (cd), (ef), (ij), (kl), (st)\} $ makes one ring. More specifically 
\begin{align}
\cristofsa{\sigma_{ij}\sigma_{kl}}{\sigma_{st}}&=-\frac{1+\alpha}{2}(1+\delta_{ij})^{-1}(1+\delta_{kl})^{-1}(1+\delta_{st})^{-1} \nonumber\\
&\qquad \times \{\sigma^{ik}\sigma^{js}\sigma^{lt}+\sigma^{ik}\sigma^{jt}\sigma^{ls}+\sigma^{il}\sigma^{js}\sigma^{kt}+\sigma^{il}\sigma^{jt}\sigma^{ks}
\nonumber\\
&\qquad\quad+\sigma^{is}\sigma^{jk}\sigma^{lt}+\sigma^{is}\sigma^{jl}\sigma^{kt}+\sigma^{it}\sigma^{jk}\sigma^{ls}+\sigma^{it}\sigma^{jl}\sigma^{ks}\}
\nonumber\\
&=-\frac{1+\alpha}{2}(1+\delta_{ij})^{-1}(1+\delta_{kl})^{-1}(1+\delta_{st})^{-1} \sum_{i \leftrightarrow j, k\leftrightarrow l, s\leftrightarrow t} \sigma^{ik}\sigma^{js}\sigma^{lt},\label{Chris_for_sigma}
\end{align}
where  $\sum_{i \leftrightarrow j, k\leftrightarrow l, s\leftrightarrow t}$ means the summation over all the cases where each of the three interchange $i \leftrightarrow j, k\leftrightarrow l, s\leftrightarrow t$ happens or not. 

For the coordinate $\theta^{ij}$, we have
\begin{align}
\cristofsa{\theta^{ij}\theta^{kl}}{\theta^{st}}&=\frac{1-\alpha}{2} \sum_{1-(ij)(kl)(st)}\sigma_{ab}\sigma_{cd}\sigma_{ef}\nonumber\\
&=\frac{1-\alpha}{2}\sum_{i \leftrightarrow j, k\leftrightarrow l, s\leftrightarrow t}\sigma_{ik}\sigma_{js}\sigma_{lt}.\label{Chris_for_theta}
\end{align}
We will prove \eqref{Chris_for_sigma} and \eqref{Chris_for_theta}. First we prove that
\begin{equation}
T_{\sigma_{ij}\sigma_{kl}\sigma_{st}}=(1+\delta_{ij})^{-1}(1+\delta_{kl})^{-1}(1+\delta_{st})^{-1} \sum_{i \leftrightarrow j, k\leftrightarrow l, s\leftrightarrow t} \sigma^{ik}\sigma^{js}\sigma^{lt}. \label{T_sigma}
\end{equation}
Because $\mathcal{P}=\{N(0,\Sigma)\}$ is an exponential family,  it is $m$-flat, hence $\cristofsm{\sigma_{ij}\sigma_{kl}}{\sigma_{st}}=0$.
Once \eqref{T_sigma} is proved, from the relationship
\begin{align*}
\cristofsm{\sigma_{ij}\sigma_{kl}}{\sigma_{st}}&=\cristofse{\sigma_{ij}\sigma_{kl}}{\sigma_{st}}+T_{\sigma_{ij}\sigma_{kl}\sigma_{st}},\\
\cristofsa{\sigma_{ij}\sigma_{kl}}{\sigma_{st}}&=\cristofse{\sigma_{ij}\sigma_{kl}}{\sigma_{st}}+\frac{1-\alpha}{2}T_{\sigma_{ij}\sigma_{kl}\sigma_{st}},
\end{align*}
\eqref{Chris_for_sigma} immediately follows. 

From the definition, we have
\begin{equation}
T_{\sigma_{ij}\sigma_{kl}\sigma_{st}}=E[(\partial^{(i,j)}\log f )(\partial^{(k,l)}\log f)(\partial^{(s,t)}\log f)]. \label{def_T_sigma}
\end{equation}
Using \eqref{del_log|x|} and \eqref{del_x^{-1}}, we have
\begin{align*}
\partial^{(i,j)}\log f &= \sum_{(a,b)} y_{ab}(\partial^{(i,j)}\theta^{ab})-2^{-1}\partial^{(i,j)}\log |\Sigma|\\
&=-\sum_{(a,b)}  y_{ab} (1+\delta_{ab})^{-1} (\partial^{(i,j)}\sigma^{ab})-
(1+\sigma_{ij})^{-1}\sigma^{ij}\\
&=(1+\sigma_{ij})^{-1}\sum_{(a,b)}y_{ab}(1+\delta_{ab})^{-1}(\sigma^{ai}\sigma^{bj}+\sigma^{aj}\sigma^{bi})-(1+\sigma_{ij})^{-1}\sigma^{ij}\\
&=(1+\sigma_{ij})^{-1}2^{-1}\sum_{a,b}y_{ab}(\sigma^{ai}\sigma^{bj}+\sigma^{aj}\sigma^{bi})-(1+\sigma_{ij})^{-1}\sigma^{ij}.
\end{align*}
If we substitute this result into \eqref{def_T_sigma}, then
\begin{equation}
\begin{split}
&T_{\sigma_{ij}\sigma_{kl}\sigma_{st}}\\
&=(1+\sigma_{ij})^{-1}(1+\sigma_{kl})^{-1}(1+\sigma_{st})^{-1}\\
&\qquad\times  E\Bigl[\bigl(2^{-1}\sum_{a,b}y_{ab}(\sigma^{ai}\sigma^{bj}+\sigma^{aj}\sigma^{bi})-\sigma^{ij}\bigr)\\
&\qquad\qquad\times\bigl(2^{-1}\sum_{c,d}y_{cd}(\sigma^{ck}\sigma^{dl}+\sigma^{cl}\sigma^{dk})-\sigma^{kl}\bigr)\\
&\qquad\qquad\times\bigl(2^{-1}\sum_{e,f}y_{ef}(\sigma^{es}\sigma^{ft}+\sigma^{et}\sigma^{fs})-\sigma^{st}\bigr)\Bigr]\\
&=(1+\sigma_{ij})^{-1}(1+\sigma_{kl})^{-1}(1+\sigma_{st})^{-1}\\
&\qquad\times\sum_{a,b,c,d,e,f}\\
&\qquad\quad\Bigl\{8^{-1}\sum_{a,b,c,d,e,f}E[y_{ab}y_{cd}y_{ef}](\sigma^{ai}\sigma^{bj}+\sigma^{aj}\sigma^{bi})(\sigma^{ck}\sigma^{dl}+\sigma^{cl}\sigma^{dk})(\sigma^{es}\sigma^{ft}+\sigma^{et}\sigma^{fs})\\
&\qquad\qquad-4^{-1}\sum_{a,b,c,d}E[y_{ab}y_{cd}](\sigma^{ai}\sigma^{bj}+\sigma^{aj}\sigma^{bi})(\sigma^{ck}\sigma^{dl}+\sigma^{cl}\sigma^{dk})\sigma^{st}\\
&\qquad\qquad-4^{-1}\sum_{a,b,e,f}E[y_{ab}y_{ef}](\sigma^{ai}\sigma^{bj}+\sigma^{aj}\sigma^{bi})(\sigma^{es}\sigma^{ft}+\sigma^{et}\sigma^{fs})\sigma^{kl}\\
&\qquad\qquad-4^{-1}\sum_{c,d,e,f}E[y_{cd}y_{ef}](\sigma^{ck}\sigma^{dl}+\sigma^{cl}\sigma^{dk})(\sigma^{es}\sigma^{ft}+\sigma^{et}\sigma^{fs})\sigma^{ij}\\
&\qquad\qquad+2^{-1}\sum_{a,b}E[y_{ab}](\sigma^{ai}\sigma^{bj}+\sigma^{aj}\sigma^{bi})\sigma^{kl}\sigma^{st}\\
&\qquad\qquad+2^{-1}\sum_{c,d}E[y_{cd}](\sigma^{ck}\sigma^{dl}+\sigma^{cl}\sigma^{dk})\sigma^{ij}\sigma^{st}\\
&\qquad\qquad+2^{-1}\sum_{e,f}E[y_{ef}](\sigma^{es}\sigma^{ft}+\sigma^{et}\sigma^{fs})\sigma^{ij}\sigma^{kl}\\
&\qquad\qquad-\sigma^{ij}\sigma^{kl}\sigma^{st}\Bigr\}.
\end{split}
\label{T_{sigma_ij sigma_kl sigma_st}}
\end{equation}
Since 
\begin{align*}
E[y_{ab}y_{cd}y_{ef}]&=E[x_a x_b x_c x_d x_e x_f]\\
&=\sigma_{ab}(\sigma_{cd}\sigma_{ef}+\sigma_{ce}\sigma_{df}+\sigma_{cf}\sigma_{de})\\
&\quad+\sigma_{ac}(\sigma_{bd}\sigma_{ef}+\sigma_{be}\sigma_{df}+\sigma_{bf}\sigma_{de})\\
&\quad+\sigma_{ad}
(\sigma_{bc}\sigma_{ef}+\sigma_{be}\sigma_{cf}+\sigma_{bf}\sigma_{ce})\\
&\quad+\sigma_{ae}
(\sigma_{bc}\sigma_{df}+\sigma_{bd}\sigma_{cf}+\sigma_{bf}\sigma_{cd})\\
&\quad+\sigma_{af}
(\sigma_{bc}\sigma_{de}+\sigma_{bd}\sigma_{ce}+\sigma_{be}\sigma_{cd}),
\end{align*}
we have
\begin{equation}
\begin{split}
&\sum_{a,b,c,d,e,f}E[y_{ab}y_{cd}y_{ef}](\sigma^{ai}\sigma^{bj}+\sigma^{aj}\sigma^{bi})(\sigma^{ck}\sigma^{dl}+\sigma^{cl}\sigma^{dk})(\sigma^{es}\sigma^{ft}+\sigma^{et}\sigma^{fs})\\
&=8\sum_{a,b,c,d,e,f}E[y_{ab}y_{cd}y_{ef}](\sigma^{ai}\sigma^{bj}\sigma^{ck}\sigma^{dl}\sigma^{es}\sigma^{ft})\\
&=8(\sigma^{ij}\sigma^{kl}\sigma^{st}+\sigma^{ij}\sigma^{ks}\sigma^{lt}+\sigma^{ij}\sigma^{kt}\sigma^{ls}\\
&\qquad+\sigma^{ik}\sigma^{jl}\sigma^{st}+\sigma^{ik}\sigma^{js}\sigma^{lt}+\sigma^{ik}\sigma^{jt}\sigma^{ls}\\
&\qquad+\sigma^{il}\sigma^{jk}\sigma^{st}+\sigma^{il}\sigma^{js}\sigma^{kt}+\sigma^{il}\sigma^{jt}\sigma^{ks}\\
&\qquad+\sigma^{is}\sigma^{jk}\sigma^{lt}+\sigma^{is}\sigma^{jl}\sigma^{kt}+\sigma^{is}\sigma^{jt}\sigma^{kl}\\
&\qquad+\sigma^{it}\sigma^{jk}\sigma^{ls}+\sigma^{it}\sigma^{jl}\sigma^{ks}+\sigma^{it}\sigma^{js}\sigma^{kl})\\
&=8\sum_{\text{pairing of $\{i,j,k,l,s,t\}$}}\sigma^{ab}\sigma^{cd}\sigma^{ef}, \label{E[y_ab y_cd y_ef]}
\end{split}
\end{equation}
where the summation in the last line is carried out over all possible parings $(ab), (cd), (ef)$ from $\{i, j, k, l, s, t\}$.
We also have 
\begin{equation}
\begin{split}
&\sum_{a,b,c,d}E[y_{ab}y_{cd}](\sigma^{ai}\sigma^{bj}+\sigma^{aj}\sigma^{bi})(\sigma^{ck}\sigma^{dl}+\sigma^{cl}\sigma^{dk})\sigma^{st}\\
&=4\sum_{a,b,c,d}E[y_{ab}y_{cd}]\sigma^{ai}\sigma^{bj}\sigma^{ck}\sigma^{dl}\sigma^{st}\\
&=4\sum_{a,b,c,d}(\sigma^{ab}\sigma^{cd}+\sigma^{ac}\sigma^{bd}+\sigma^{ad}\sigma^{bc})(\sigma^{ai}\sigma^{bj}\sigma^{ck}\sigma^{dl}\sigma^{st})\\
&=4(\sigma^{ij}\sigma^{kl}+\sigma^{ik}\sigma^{jl}+\sigma^{il}\sigma^{jk})\sigma^{st}. \label{E[y_ab y_cd]}
\end{split}
\end{equation}
Similarly we have
\begin{align}
&\sum_{a,b,e,f}E[y_{ab}y_{ef}](\sigma^{ai}\sigma^{bj}+\sigma^{aj}\sigma^{bi})(\sigma^{es}\sigma^{ft}+\sigma^{et}\sigma^{fs})\sigma^{kl}\nonumber\\
&=4(\sigma^{ij}\sigma^{st}+\sigma^{is}\sigma^{jt}+\sigma^{it}\sigma^{js})\sigma^{kl}, \label{E[y_ab y_ef]}\\
&\sum_{c,d,e,f}E[y_{cd}y_{ef}](\sigma^{ck}\sigma^{dl}+\sigma^{cl}\sigma^{dk})(\sigma^{es}\sigma^{ft}+\sigma^{et}\sigma^{fs})\sigma^{ij}\nonumber\\
&=4(\sigma^{kl}\sigma^{st}+\sigma^{ks}\sigma^{lt}+\sigma^{kt}\sigma^{ls})\sigma^{ij}.
\label{E[y_cd y_ef]}
\end{align}
Furthermore we have
\begin{align}
\sum_{a,b}E[y_{ab}](\sigma^{ai}\sigma^{bj}+\sigma^{aj}\sigma^{bi})\sigma^{kl}\sigma^{st}
=2\sum_{a,b}E[y_{ab}]\sigma^{ai}\sigma^{bj}\sigma^{kl}\sigma^{st}=2\sigma^{ij}\sigma^{kl}\sigma^{st},
\label{E[y_ab]}\\
\sum_{c,d}E[y_{cd}](\sigma^{ck}\sigma^{dl}+\sigma^{cl}\sigma^{dk})\sigma^{ij}\sigma^{st}
=2\sum_{c,d}E[y_{cd}]\sigma^{ck}\sigma^{dl}\sigma^{ij}\sigma^{st}=2\sigma^{ij}\sigma^{kl}\sigma^{st},
\label{E[y_cd]}\\
\sum_{e,f}E[y_{ef}](\sigma^{es}\sigma^{ft}+\sigma^{et}\sigma^{fs})\sigma^{ij}\sigma^{kl}
=2\sum_{e,f}E[y_{ef}]\sigma^{es}\sigma^{ft}\sigma^{ij}\sigma^{kl}=2\sigma^{ij}\sigma^{kl}\sigma^{st}.
\label{E[y_ef]}
\end{align}
If we substitute the result from \eqref{E[y_ab y_cd y_ef]} to \eqref{E[y_ef]} into the right-hand side of \eqref{T_{sigma_ij sigma_kl sigma_st}}, we get the equation \eqref{T_sigma}.

\eqref{Chris_for_theta} is proved as follows; Because $\theta^{ij}$ is $e$-affine coordinator, we have
$$
\cristofsa{\theta^{ij}\theta^{kl}}{\theta^{st}}=\frac{(1-\alpha)}{2}T_{\theta^{ij}\theta^{kl}\theta^{st}},
$$
while, using \eqref{del_log|x|} and \eqref{del_x^{-1}}, we have
\begin{align*}
T_{\theta^{ij}\theta^{kl}\theta^{st}}&=\partial_{(i,j)}\partial_{(k,l)}\partial_{(s,t)}\psi(\Sigma)\\
&=2^{-1}(1+\delta_{ij})(1+\delta_{kl})(1+\delta_{st})\frac{\partial\ }{\partial \sigma^{ij}}\frac{\partial\ }{\partial \sigma^{kl}}\frac{\partial\ }{\partial \sigma^{st}}\log|\Sigma^{-1}|\\
&=(1+\delta_{ij})(1+\delta_{kl})\frac{\partial\ }{\partial \sigma^{ij}}\frac{\partial\ }{\partial \sigma^{kl}}\sigma_{st}\\
&=-(1+\delta_{ij})\frac{\partial\ }{\partial \sigma^{ij}}(\sigma_{ks}\sigma_{lt}+\sigma_{kt}\sigma_{ls})\\
&=(\sigma_{ik}\sigma_{js}+\sigma_{is}\sigma_{jk})\sigma_{lt}+(\sigma_{il}\sigma_{jt}+\sigma_{it}\sigma_{jl})\sigma_{ks}\\
&\qquad+(\sigma_{ik}\sigma_{jt}+\sigma_{it}\sigma_{jk})\sigma_{ls}+(\sigma_{il}\sigma_{js}+\sigma_{is}\sigma_{jl})\sigma_{kt}.
\end{align*}
Now we are ready to prove \eqref{Normal Model T^ijk T_ijk}, \eqref{Normal Model T_ij^i T_s^sj}, \eqref{Normal Model F} and \eqref{expan_ED_nomal_vacova}.
From \eqref{T_sigma}, we have
\begin{align*}
T_{\sigma_{ij}\sigma_{kl}\sigma_{st}}&=(1+\delta_{ij})^{-1}(1+\delta_{kl})^{-1}(1+\delta_{st})^{-1}\sum_{i\leftrightarrow j, k\leftrightarrow l, s \leftrightarrow t} \sigma^{ik}\sigma^{ls}\sigma^{tj}\\
T_{\sigma_{ab}\sigma_{cd}\sigma_{ef}}&=(1+\delta_{ab})^{-1}(1+\delta_{cd})^{-1}(1+\delta_{ef})^{-1}\sum_{a\leftrightarrow b, c\leftrightarrow d, e\leftrightarrow f} \sigma^{ac}\sigma^{de}\sigma^{bf}.
\end{align*}
From \eqref{metric_e}, we have
\begin{align*}
&g^{\sigma_{ab}\sigma_{ij}}\triangleq\Bigl((g_{\sigma_{cd}\sigma_{ef}})^{-1}\Bigr)_{(ab)(ij)}=\Bigl((g^{(cd)(ef)})^{-1}\Bigr)_{(ab)(ij)}=\Bigl((g_{(cd)(ef)})\Bigr)_{(ab)(ij)}\\
&=g_{(ab)(ij)}=\sigma_{ai}\sigma_{bj}+\sigma_{aj}\sigma_{bi}.
\end{align*}
From above three equations, \eqref{Normal Model T^ijk T_ijk} is proved as follows;
\begin{align*}
&\sum_{(i,j)(k,l)(s,t)}T_{\sigma_{ij}\sigma_{kl}\sigma_{st}}T^{\sigma_{ij}\sigma_{kl}\sigma_{st}}\\
&=\sum_{(i,j)(k,l)(s,t)(a,b)(c,d)(e,f)}T_{\sigma_{ij}\sigma_{kl}\sigma_{st}}T_{\sigma_{ab}\sigma_{cd}\sigma_{ef}}
g^{\sigma_{ab}\sigma_{ij}}g^{\sigma_{cd}\sigma_{kl}}g^{\sigma_{ef}\sigma_{st}}\\
&=\sum_{(i,j)(k,l)(s,t)(a,b)(c,d)(e,f)}(1+\delta_{ij})^{-1}(1+\delta_{kl})^{-1}(1+\delta_{st})^{-1}
(1+\delta_{ab})^{-1}(1+\delta_{cd})^{-1}(1+\delta_{ef})^{-1}\\
&\hspace{30mm}\times\Bigl(\sum_{i\leftrightarrow j, k\leftrightarrow l, s \leftrightarrow t}\sigma^{ik}\sigma^{ls}\sigma^{tj}\Bigr)\Bigl(\sum_{a\leftrightarrow b, c\leftrightarrow d, e\leftrightarrow f} \sigma^{ac}\sigma^{de}\sigma^{fb}\Bigr)\\
&\hspace{30mm}\times (\sigma_{ai}\sigma_{bj}+\sigma_{aj}\sigma_{bi})(\sigma_{ck}\sigma_{dl}+\sigma_{cl}\sigma_{kd})(\sigma_{es}\sigma_{ft}+\sigma_{et}\sigma_{fs})\\
&=2^{-9}\sum_{i,j,k,l,s,t,a,b,c,d,e,f}\Bigl(\sum_{i\leftrightarrow j, k\leftrightarrow l, s \leftrightarrow t}\sigma^{ik}\sigma^{ls}\sigma^{tj}\Bigr)\Bigl(\sum_{a\leftrightarrow b, c\leftrightarrow d, e\leftrightarrow f} \sigma^{ac}\sigma^{de}\sigma^{fb}\Bigr)\\
&\hspace{30mm}\times\Bigl(\sum_{a \leftrightarrow b, i\leftrightarrow j}\sigma_{ai}\sigma_{bj}\Bigr)\Bigl(\sum_{c \leftrightarrow d, k\leftrightarrow l}\sigma_{ck}\sigma_{dl}\Bigr)\Bigl(\sum_{e \leftrightarrow f, s\leftrightarrow t}\sigma_{es}\sigma_{ft}\Bigr)\\
&=2^{-9}(512p^3+1536p^2+2048p) \\
&=p^3+3p^2+4p.
\end{align*}
We used the R program "T\_ijkT\^{}ijk.R" in Appendix \ref{r programs} for the last but one equation.

Now we will prove \eqref{Normal Model T_ij^i T_s^sj}.  Notice that
\begin{align*}
&\sum_{(i,j)}T_{\sigma_{ij}\sigma_{st}}^{\sigma_{ij}}\\
&=\sum_{(k,l)(i,j)}T_{\sigma_{ij}\sigma_{kl}\sigma_{st}}g^{\sigma_{ij}\sigma_{kl}}\\
&=\sum_{(k,l)(i,j)}(1+\sigma_{ij})^{-1}(1+\sigma_{kl})^{-1}(1+\sigma_{st})^{-1}\Bigl(\sum_{i \leftrightarrow j, k \leftrightarrow l, s \leftrightarrow t}\sigma^{ik}\sigma^{ls}\sigma^{tj}\Bigr)\\
&\qquad\times(\sigma_{ik}\sigma_{jl}+\sigma_{il}\sigma_{jk})\\
&=4^{-1}\sum_{i,j,k,l}(1+\delta_{st})^{-1}\\
&\qquad\quad \times (\sigma^{ik}\sigma^{ls}\sigma^{tj}+\sigma^{ik}\sigma^{js}\sigma^{lt}+\sigma^{il}\sigma^{js}\sigma^{kt}+\sigma^{il}\sigma^{jt}\sigma^{ks}\\
&\qquad\qquad+\sigma^{is}\sigma^{jk}\sigma^{lt}+\sigma^{is}\sigma^{jl}\sigma^{kt}+\sigma^{it}\sigma^{jk}\sigma^{ls}+\sigma^{it}\sigma^{jl}\sigma^{ks})(\sigma_{ik}\sigma_{jl}+\sigma_{il}\sigma_{jk})\\
&=4^{-1}(1+\delta_{st})^{-1}\\
&\qquad\times(p\sigma^{st}+\sigma^{st}+p\sigma^{st}+\sigma^{st}+\sigma^{st}+p\sigma^{st}+\sigma^{st}+p\sigma^{st}\\
&\qquad\qquad+\sigma^{st}+p\sigma^{st}+p\sigma^{st}+\sigma^{st}+\sigma^{st}+p\sigma^{st}+p\sigma^{st}+\sigma^{st})\\
&=4^{-1}(1+\sigma_{st})^{-1}(8p+8)\sigma^{st}.
\end{align*}
Hence we have
\begin{align*}
&\sum_{(i,j)(k,l)(s,t)}T_{\sigma_{ij}\sigma_{st}}^{\sigma_{ij}}T_{\sigma_{kl}}^{\sigma_{kl}\sigma_{st}}\\
&=\sum_{(i,j)(k,l)(s,t)(u,v)}T_{\sigma_{ij}\sigma_{st}}^{\sigma_{ij}}T_{\sigma_{uv}\sigma_{kl}}^{\sigma_{uv}}g^{\sigma_{st}\sigma_{kl}}\\
&=\sum_{(s,t)(k,l)}\{2(1+\sigma_{st})^{-1}(p+2)\sigma^{st}\}\{2(1+\sigma_{kl})^{-1}(p+2)\sigma^{kl}\}(\sigma_{sk}\sigma_{tl}+\sigma_{sl}\sigma_{kt})\\
&=(p+2)^2\sum_{s,t}\sum_{k,l}\sigma^{st}\sigma^{kl}(\sigma_{sk}\sigma_{tl}+\sigma_{sl}\sigma_{kt})\\
&=2(p+2)^2\sum_{s,t}\sigma^{st}\sigma_{st}\\
&=2(p+2)^2p\\
&=2p^3+8p^2+8p.
\end{align*}
Since $\sum_{(i,j)}\cristoffm{\sigma_{ij}\sigma_{st}}{\sigma_{ij}}=0$, $
\overset{\;\scalebox{0.6}{$m$}}{F}=\sum_{(i,j)(s,t)}\partial^{\sigma_{st}}T_{\sigma_{ij}\sigma_{st}}^{\sigma_{ij}}.
$
We also have
$$
\partial^{\sigma_{st}}\triangleq\sum_{(i,j)}g^{\sigma_{ij}\sigma_{st}}\partial_{\sigma_{ij}}
=\sum_{(i,j)}g_{\theta^{ij}\theta^{st}}\partial_{\sigma_{ij}}=\partial_{\theta^{st}}=-(1+\sigma_{st})\frac{\partial\ }{\partial \sigma^{st}}.
$$
\eqref{Normal Model F} follows from the next equations.
\begin{align*}
\sum_{(i,j)(s,t)}\partial^{\sigma_{st}}T_{\sigma_{ij}\sigma_{st}}^{\sigma_{ij}}&=-4^{-1}(8p+8)\sum_{(s,t)}\frac{\partial\sigma^{st}}{\partial\sigma^{st}}\\
&=-8^{-1}(8p+8)p(p+1)\\
&=-(p+1)^2p\\
&=-p^3-2p^2-p.
\end{align*}
Finally we gain \eqref{expan_ED_nomal_vacova} as follows;
\begin{align}
&\overset{\alpha}{E\!D} \nonumber\\
&=\frac{p(p+1)}{4n}+\frac{1}{24n^{^2}}\nonumber\\
&\times
\biggl[ 
(\alpha')^2\bigl\{3\overset{\:m}{F}+3T^{\sigma_{ij}\sigma_{kl}\sigma_{st}}T_{\sigma_{ij}\sigma_{kl}\sigma_{st}}+3T_{\sigma_{ij}\sigma_{st}}^{\sigma^{ij}}T_{\sigma_{kl}}^{\sigma_{kl}\sigma_{st}}+3p^2+6p\bigr\}\nonumber\\
&\hspace{8mm}+
\alpha'\bigl\{3\overset{\:m}{F}-5T^{\sigma_{ij}\sigma_{kl}\sigma_{st}}T_{\sigma_{ij}\sigma_{kl}\sigma_{st}}-3T_{\sigma_{ij}\sigma_{st}}^{\sigma^{ij}}T_{\sigma_{kl}}^{\sigma_{kl}\sigma_{st}}-3p^2-6p\bigr\}\nonumber\\
&\hspace{8mm}
+T^{\sigma_{ij}\sigma_{kl}\sigma_{st}}T_{\sigma_{ij}\sigma_{kl}\sigma_{st}}-9\overset{\:m}{F}\biggr]+o(n^{-2})\nonumber\\
&=\frac{p(p+1)}{4n}+\frac{1}{24n^{^2}}\nonumber\\
&\times\{(\alpha')^2(6p^3+30p^2+39p)-\alpha'(14p^3+48p^2+53p)+10p^3+21p^2+13p\}+o(n^{-2}).\nonumber
\end{align}
As we mentioned in the main text,  Kullback-Leibler divergence $\overset{-1}{D}[\Sigma_1: \Sigma_2]$ is explicitly given, hence we can derive the asymptotic expansion of $\overset{-1}{E\!D}$ in a more straightforward way. We will see this. First $\overset{-1}{D}[\Sigma_1: \Sigma_2]$ is given as follows;
\begin{align*}
&\overset{-1}{D}[\Sigma_1: \Sigma_2]\\
&=E_{\Sigma_1}[\log \bigl(f(X;\Sigma_1)/f(X;\Sigma_2)\bigr)]\\
&=E_{\Sigma_1}\Bigl[2^{-1}(X^t \Sigma_{2}^{-1} X-X^t \Sigma_1^{-1} X)+2^{-1}\log|\Sigma_2 \Sigma_1^{-1}|\Bigr]\\
&=E_{\Sigma_1}\Bigl[2^{-1}\textrm{ tr}(\Sigma_2^{-1}-\Sigma_1^{-1})X X^t+2^{-1}\log|\Sigma_2 \Sigma_1^{-1}|\Bigr]\\
&=2^{-1}\textrm{ tr}(\Sigma_2^{-1}\Sigma_1)-2^{-1}\log|\Sigma_2^{-1}\Sigma_1|-2^{-1}p.
\end{align*}
For i.i.d. sample $X_i=(X_{i1},\ldots,X_{ip})^t,\ i=1,\ldots,n$, m.l.e. is given by the sample variance-covariance matrix $\hat{\Sigma}=n^{-1} \sum_{i=1}^n X_i (X_i)^t$. Therefore we have
\begin{align}
\overset{-1}{E\!D}(\Sigma)&=E_{\Sigma}\Bigl[\overset{-1}{D}[\hat{\Sigma}: \Sigma]\Bigr]\nonumber\\
&=2^{-1}E_{\Sigma}[\textrm{ tr}\Sigma^{-1}\hat{\Sigma}-\log|\Sigma^{-1}\hat{\Sigma}|-p]\nonumber\\
&=2^{-1}\Bigl(p \log n-\sum_{i=1}^pE[\log\chi^2_{n-i+1}]\Bigr), \quad \text{ (see e.g., 3.2.15. of Muirhead \cite{Muirhead} )}\nonumber \\
&=2^{-1}\Bigl(p \log n -p \log 2- \sum_{i}^p \psi((n-i+1)/2)\Bigr), \label{E^{-1}(Sigma)}\\
& \Bigl(\because E[\log (\chi^2_k)]=\log 2+\psi(k/2)\Bigr) \nonumber
\end{align}
where $\psi$ is the di-gamma function. The expansion of the di-gamma function for large $n$ is given as follows;
\begin{align*}
\psi\Bigl(\frac{n-i+1}{2}\Bigr)&=\log\Big(\frac{n-i+1}{2}\Bigr)-\frac{1}{n-i+1}-\frac{1}{3(n-i-1)^2}+o(n^{-2}).\\
\sum_{i=1}^p\psi\Bigl(\frac{n-i+1}{2}\Bigr)&=\log \prod_{i=1}^p \frac{n-i+1}{2}-\sum_{i=1}^p\frac{1}{n-i+1}-\frac{p}{3n^2}+o(n^{-2})\\
&=-p\log 2+p \log n+ \log \prod_{i=1}^p \Bigl(1-\frac{i-1}{n}\Bigr)-\sum_{i=1}^p\frac{1}{n-i+1}\\
&\qquad-\frac{p}{3n^2}+o(n^{-2})\\
&=-p\log 2+p \log n+  \sum_{i=1}^p \log\Bigl(1-\frac{i-1}{n}\Bigr)-\sum_{i=1}^p\frac{1}{n-i+1}\\
&\qquad-\frac{p}{3n^2}+o(n^{-2})\\
&=-p\log 2+p \log n-\sum_{i=1}^p \Bigl(\frac{i-1}{n}+\frac{(i-1)^2}{2n^2}\Bigr) -\sum_{i=1}^n\frac{1}{n-i+1}\\
&\qquad-\frac{p}{3n^2}+o(n^{-2}).
\end{align*}
Substitute this result into \eqref{E^{-1}(Sigma)}, then we have
\begin{align*}
&\overset{-1}{E\!D}(\Sigma)\\
&=\frac{1}{2}\Bigl\{\sum_{i=1}^p \Bigl(\frac{i-1}{n}+\frac{1}{n-i+1}\Bigr)+\frac{1}{2n^2}\sum_{i=1}^p (i-1)^2 +\frac{p}{3n^2}\Bigr\}+o(n^{-2})\\
&=\frac{1}{2}\Bigl\{\frac{1}{n}\sum_{i=1}^p(i-1)+\frac{p}{n}+\frac{1}{n^2}\sum_{i=1}^p(i-1)+\frac{1}{2n^2}\sum_{i=1}^p(i-1)^2+\frac{p}{3n^2}\Bigr\}+o(n^{-2})\\
&\qquad\biggl(\because \frac{1}{n-(i-1)}=\frac{1}{n}\Bigl(\frac{1}{1-(i-1)/n}\Bigr)\\
&\qquad\qquad=\frac{1}{n}\Bigl(1+\frac{i-1}{n}+\frac{(i-1)^2}{n^2}+\cdots\Bigr)=\frac{1}{n}+\frac{i-1}{n^2}+o(n^{-2})\biggr)\\
&=\frac{1}{2}\Bigl\{-\frac{p}{n}+\frac{p(p+1)}{2n}+\frac{p}{n}+\frac{1}{n^2}\Bigl(\frac{p(p+1)}{2}-p\Bigr)\\
&\qquad+\frac{1}{2n^2}\Bigl(\frac{1}{6}(p-1)p(2p-1)\Bigr)+\frac{p}{3n^2}\Bigr\}+o(n^{-2})\\
&=\frac{p(p+1)}{4n}+\frac{1}{4n^2}(p^2-p)+\frac{1}{4n^2}\Bigl(\frac{1}{6}(2p^3-3p^2+p)\Bigr)+\frac{p}{6n^2}+o(n^{-2})\\
&=\frac{p(p+1)}{4n}+\frac{1}{24n^2}(6p^2-6p+2p^3-3p^2+p+4p)+o(n^{-2})\\
&=\frac{p(p+1)}{4n}+\frac{1}{24n^2}(2p^3+3p^2-p)+o(n^{-2}),
\end{align*}
which is equal to the right-hand side of \eqref{expan_KL_normal_vacova}. 
\\
\\
-\textit{ Proof of \eqref{int_exp_g}, \eqref{int_exp_F}, \eqref{int_exp_T_ikkT^ijk}, \eqref{int_exp_T_is^iT_j^js}, \eqref{int_exp_R}, \eqref{int_exp_secffe{i}{j}_secffe{i}{j}}, \eqref{int_exp_secffe{i}{i}_secffe{j}{j}}, \eqref{int_exp_secffm{i}{j}_secffm{i}{j}}, \eqref{int_exp_secffm{i}{i}_secffm{j}{j}}}-\\
We use Einstein's convention for summation again.

\eqref{int_exp_g} is obvious from \eqref{log_ij}. Fort the proof of \eqref{int_exp_F}, we use the following equations; 
\begin{align}
\partial_s g^{ij}&=g_{us}(\partial^{u}g^{ij})\nonumber\\
&=-g_{us}(\cristofte{ui}{j}+\cristoftm{uj}{i})\ (\because \eqref{partial^k g^ij})\nonumber\\
&=-(\cristoffe{s,}{i,j}+\cristoffm{s,}{j,i})\nonumber\\
&=-\cristofse{su}{l}g^{ui}g^{lj}-\cristofsm{su}{l}g^{uj}g^{li},\\
\partial_s T_{ijk}&=\partial_s E_\theta[l_i l_j l_k]\nonumber\\
&=\partial_s \int_{\mathcal{X}} l_i l_j l_k f(x; \theta) d\mu \nonumber\\
&=E_\theta[l_{is} l_j l_k]+E_\theta[l_i l_{js} l_k]+E_\theta[l_i l_j l_{ks}]+E_\theta[l_i l_j l_k l_s].
\end{align}
From these relations, we have
\begin{align*}
\overset{\:\alpha}{F}&=g^{ks}(\partial_s T_{ijk})g^{ij}+g^{ks}T_{ijk}(\partial_s g^{ij})-g^{ti}\cristofsa{it}{s}T_{juk}g^{uj}g^{ks}\\
&=g^{ij}g^{ks}(L_{(is)jk}+L_{(js)ik}+L_{(ks)ij}+L_{ijks})\\
&\qquad-g^{ks}g^{ui}g^{lj}L_
{ijk}L_{(su)l}-g^{ks}g^{uj}g^{li}L_{ijk}(L_{(su)l}+L_{sul})\\
&\qquad-g^{ti}g^{uj}g^{ks}(L_{(it)s}+((1-\alpha)/2)L_{its})L_{juk}\\
&=g^{ij}g^{ks}(2L_{(is)jk}+L_{(ks)ij}+L_{ijks})\\
&\qquad-g^{ks}g^{uj}g^{li}L_{ijk}(2L_{(su)l}+L_{sul})\\
&\qquad-g^{ti}g^{uj}g^{ks}(L_{(it)s}+((1-\alpha)/2)L_{its})L_{juk}.\\
\end{align*}

\eqref{int_exp_T_ikkT^ijk} and \eqref{int_exp_T_is^iT_j^js} immediately follow from the definitions 
\eqref{notation_T_ijk}, \eqref{def_T_{ij}^k}, \eqref{def_T_i^{jk}}, \eqref{def_T^{ijk}} and the relation \eqref{log_i_j_k}.

In order to prove \eqref{int_exp_R}, we use the following relations;
\begin{align*}
\partial^k \cristoffe{k,j}{j,}
&=\partial^k(\cristofse{ki}{j}g^{ij})\\
&=(\partial^k \cristofse{ki}{j})g^{ij}+\cristofse{ki}{j}(\partial^k g^{ij})\\
&=g^{ij}g^{sk}(\partial_s \cristofse{ki}{j})-\cristofse{ki}{j}(\cristofte{ki}{j}+\cristoftm{kj}{i})\\
&=g^{ij}g^{sk}\partial_s\int_{\mathcal{X}} l_{ki} l_j f(x; \theta) d\mu-\cristofse{ki}{j}\cristofse{st}{u}g^{sk}g^{ti}g^{uj}-\cristofse{ki}{j}\cristofsm{st}{u}g^{sk}g^{tj}g^{ui}\\
&=g^{ij}g^{sk}(L_{(kis)j}+L_{(ki)(sj)}+L_{(ki)js})-L_{(ki)j}L_{(st)u}g^{sk}g^{ti}g^{uj}\\
&\qquad-L_{(ki)j}(L_{(st)u}+L_{stu})g^{sk}g^{tj}g^{ui},\\
\partial^k \cristoffe{i,k}{i,}&=\partial^k(\cristofse{ij}{k}g^{ij})\\
&=(\partial^k \cristofse{ij}{k})g^{ij}+\cristofse{ij}{k}(\partial^k g^{ij})\\
&=g^{sk}(\partial_s L_{(ij)k})g^{ij}-L_{(ij)k}(\cristofte{ik}{j}+\cristoftm{jk}{i})\\
&=g^{ij}g^{sk}(L_{(ijs)k}+L_{(ij)(ks)}+L_{(ij)ks})-L_{(ij)k}L_{(st)u}g^{si}g^{tk}g^{uj}\\
&\qquad-L_{(ij)k}(L_{(st)u}+L_{stu})g^{sj}g^{tk}g^{ui},\\
\cristoffm{s,t}{s,}\cristoffe{i,}{i,t}&=(L_{(su)t}+L_{sut})g^{us}L_{(ij)k}g^{ij}g^{kt},\\
\cristoftm{jk}{i}\cristofse{ik}{j}&=(L_{(st)u}+L_{stu})g^{sj}g^{tk}g^{ui}L_{(ik)j}.
\end{align*}
Using these relations, we have
\begin{align*}
\overset{\;e}{R}\hspace{-10pt}\begin{smallmatrix}{\ \  \ \ ij}\\{ij}\end{smallmatrix}&=\partial^k \cristoffe{k,j}{j,}-\partial^k \cristoffe{i,k}{i,}+\cristoffm{s,t}{s,}\cristoffe{i,}{i,t}-\cristoftm{jk}{i}\cristofse{ik}{j}\ (\because \eqref{Form_2_R_ij^ij})\\
&=g^{ij}g^{sk}(L_{(kis)j}+L_{(ki)(sj)}+L_{(ki)js}-L_{(ijs)k}-L_{(ij)(ks)}-L_{(ij)ks})\\
&\qquad+g^{sk}g^{ti}g^{uj}\{L_{(kj)i}L_{(st)u}+L_{(jk)i}(L_{(st)u}+L_{stu})-L_{(ki)j}L_{(st)u}\\
&\qquad-L_{(kj)i}(L_{(st)u}+L_{stu})+(L_{(it)s}+L_{sit})L_{(uj)k}-(L_{(st)u}+L_{stu})L_{(ji)k}\}\\
&=g^{ij}g^{sk}(L_{(kis)j}-L_{(ijs)k}+L_{(ki)(sj)}-L_{(ij)(ks)}+L_{(ki)js}-L_{(ij)ks})\\
&\qquad+g^{sk}g^{ti}g^{uj}(L_{(kj)i}L_{(st)u}-L_{(ki)j}L_{(st)u}+L_{(it)s}L_{(uj)k}+L_{sit}L_{(uj)k}\\
&\hspace{30mm}-L_{(st)u}L_{(ij)k}-L_{stu}L_{(ij)k})\\
&=g^{ij}g^{sk}(L_{(ki)(sj)}-L_{(ij)(ks)}+L_{(ki)js}-L_{(ij)ks})\\
&\qquad+g^{sk}g^{ti}g^{uj}(-L_{(ki)j}L_{(st)u}+L_{(it)s}L_{(uj)k}+L_{sit}L_{(uj)k}-L_{stu}L_{(ij)k}).\\
\end{align*}

From \eqref{log_ij_kh}, we have 
$$
\langle\secfse{ik}, \secfse{jl}\rangle = L_{(ik)(jl)}-g^{st}L_{(ik)s}L_{(jl)t}-g_{ik}g_{jl}.
$$
\eqref{int_exp_secffe{i}{j}_secffe{i}{j}}, \eqref{int_exp_secffe{i}{i}_secffe{j}{j}} is calculated from this equation as follows; 
\begin{align*}
\langle \secffe{i}{j}, \secffe{j}{i} \rangle &= g^{jk}g^{li}\langle\secfse{ik}, \secfse{jl}\rangle\\
&=g^{jk}g^{li}L_{(ik)(jl)}-g^{jk}g^{li}g^{st}L_{(ik)s}L_{(jl)t}-p,\\
\langle \secffe{i}{i}, \secffe{j}{j} \rangle &= g^{ik}g^{jl}\langle\secfse{ik}, \secfse{jl}\rangle\\
&=g^{ik}g^{jl}L_{(ik)(jl)}-g^{ik}g^{jl}g^{st}L_{(ik)s}L_{(jl)t}-p^2.
\end{align*}
From \eqref{log_ij_k_h}, we have
\begin{align*}
\langle\secfse{ik}, \secfsm{jl}\rangle &= L_{(ik)jl}+\langle\secfse{ik}, \secfse{jl}\rangle+g_{ik}g_{jl}-\cristofse{ik}{s}T_{jl}^s\\
&=L_{(ik)jl}+L_{(ik)(jl)}-g^{st}L_{(ik)s}L_{(jl)t}-L_{(ik)s}L_{jlt}g^{st}.
\end{align*}
From this equation, \eqref{int_exp_secffm{i}{j}_secffm{i}{j}} and \eqref{int_exp_secffm{i}{i}_secffm{j}{j}} hold as follows;
\begin{align*}
\langle \secffe{i}{j}, \secffm{j}{i} \rangle &= g^{jk}g^{li}\langle\secfse{ik}, \secfsm{jl}\rangle\\
&=g^{jk}g^{li}L_{(ik)jl}+g^{jk}g^{li}L_{(ik)(jl)}-g^{jk}g^{li}g^{st}L_{(ik)s}L_{(jl)t}-g^{jk}g^{li}g^{st}L_{(ik)s}L_{jlt},\\
\langle \secffe{i}{i}, \secffm{j}{j} \rangle &= g^{ik}g^{jl}\langle\secfse{ik}, \secfsm{jl}\rangle\\
&=g^{ik}g^{jl}L_{(ik)jl}+g^{ik}g^{jl}L_{(ik)(jl)}-g^{ik}g^{jl}g^{st}L_{(ik)s}L_{(jl)t}-g^{ik}g^{jl}g^{st}L_{(ik)s}L_{jlt}.
\end{align*}
\subsection{R Programs}
\label{r programs}
-\textit{ T\_ijkT\^{}ijk.R }-\\
This program is based on the result of Takeuchi  and Takemura \cite{Takeuchi&Takemura}.
\begin{verbatim}
####################### Description ##########################
#The summation of $\sigma^{ik}\simga^{ls}\sigma^{kj}\sigma^{ac}
#\sigma^{de}\sigma^{fb}\simga_{ai}\sigma_{bj}\sigma_{ck}
#\sigma_{dl}\sigma_{es}\sigma_{ft} 
#over $1\leq i,j,k,l,s,t,a,b,c,d,e,d,e,f \leq p$.
#This program calculates the power of p (the matrix dimension) 

########## Def of function "Combinesegmens" ###################
#This function combines two segments F1 and F2
# (each of which is a 2-dimensional vector). 
#If they have a common end, then this function returns two vectors 
#(the combined one and "NA"). 
#IF F1 and F2 do not have a common end, 
#it returns them as they are.

Combinesegments <- function(F1, F2)
{
for (i in 1:2) 
{
 for (j in 1:2)
 {
  if (F1[i]==F2[j]) 
  {
  AF1 <- c(min(F1[-i],F2[-j]), max(F1[-i],F2[-j]))
  AF2 <- NA
  return(list(AF1,AF2))
  }
 }
}
return(list(F1,F2))
}
##########################################################

########### Def of "nloop"function ###########################
#This function tells the numbers of loops
# after the following procedure;
#1) Make a pairing of segments from LOS1 and LOS2 with a common 
#end, and change the used segments into "NA".
#2) If the combined one has common ends, that is,  a loop is made, 
#then raise the counter "Lcounter" by one, 
#otherwise put the combined one into "delta".
#3) Combine the segment in "delta" with the one in LOS1
#that has a common end, 
#and alter the used one in LOS1 with the combined one.
#4) Repeat 1) to 3) until all the segments in LOS1 become NA.
#Input 
#k: the number of the segments
#LOS1, LOS2: a list whose k objects are 2-dimensional vectors, 
#where each vector corresponds to a segment.
#Segments are the parings of integers from 1:2k, 
#e.g., (2,3),(4,6),(1,5) for k=3.
#LOS1 and LOS2 corresponds respectively to
# $\sigma^{ij}$ and $\sigma_{ij}$.
#Output
#Lcounter:  the number of loops


nloop <- function(k,LOS1,LOS2)
{
 Lcounter <- 0
  
  #Pair each segment in LOS2 with a segment in LOS1 
  #under the condition the paired segments have a common end. 
  #Put the combined segments into "delta", 
  #and put "NA" into LOS1 & LOS2

  
  for (i in 1:k)
  {
   if (identical(is.na(LOS1[[i]]),TRUE)) {next} #neglect "NA"
   for (j in 1:k)
   {
    if (identical(is.na(LOS2[[j]]),TRUE)) {next} #neglect "NA"
    AF <- Combinesegments(LOS1[[i]],LOS2[[j]])
    if (identical(is.na(AF[[2]]),TRUE)) 
    #is.na(AF[[2]])=TRUE means the condition AF[[2]]=NA, 
    #which means a paring has occurred 
    {
    #1) the paired segments are changed into NA
    #2) if the combined segments has a common end, 
    #then raise "Lcounter" by one,
    #3) otherwise put the combined segments into "delta".
    #4) combine the segment in "delta" with  a segment in LO1 
    #with a common end, and alter the used segment in LO1 
    #with the combined one.
    # "Lcounter" corresponds to the number of loops, 
    #that is the power of p (the given dimension of the matrix)

     LOS1[[i]] <- NA; LOS2[[j]] <- NA
    
     if (AF[[1]][1]==AF[[1]][2]) {Lcounter <- Lcounter+1} else 
     # "}" and else must be on the same line !
     {
      delta <- AF[[1]] ; 
      for (t in 1:k)
      {
       if (identical(is.na(LOS1[[t]]),TRUE)) {next}
       #neglect "NA"
       AK<- Combinesegments(delta,LOS1[[t]])
       if (identical(is.na(AK[[2]]),TRUE))
       {LOS1[[t]] <- AK[[1]]}
      }
     }
     break
    }
   }
  }
 Lcounter
}
###########################################################

#We count the power of p for every possible set 
#of LO1 and LO2 for T_{ijk}T^{ijk}

k <- 6 #the number of segments
#Original input for segments are made by 2k-dim vector
#e.g., for k=3,  (1,3,4,6,2,5) means 
#segments(1,3), (4,6), (2,5)


#the exchange pattern for T
#(1,2,3,4,5,6,7) means 
#(i,k,l,s,t,j) or (a,c,d,e,f,b) in the text
kristorder1<-c(6,2,3,4,5,1) #e.g., exchange between i and j
kristorder2<-c(1,3,2,4,5,6) #e.g., exchange between k and l
kristorder3<-c(1,2,3,5,4,6) #e.g., exchange between s and t
#the exchange pattern for g
#(1,2,3,4) means (a,i,b,j), (c,k,d,l) (e,s,f,t)
metricorder1<-c(3,2,1,4)  #e.g., exchange between a and b
metricorder2<-c(1,4,3,2)  #e.g., exchange between i and j

kekka <- c()

#fk1, fk2, fk3 is the indicator of exchange patterns
#kristorder1, kristorder2, kristorder3 for the first T.
#sk1, sk2, sk3 is the indicator of exchange patterns
#kristorder1, kristorder2, kristorder3 for the second T.
#fm1 and fm2 is the indicator of  exchange patterns
#mericorder1 and metricorder2 for the first g.
#sm1 and sm2 is the indicator of exchange patterns
#mericorder1 and metricorder2 for the second g.
#tm1 and tm2 is the indicator of exchange patterns 
#mericorder1 and metricorder2 for the third g.
for (fk1 in 0:1)
 {for (fk2 in 0:1)
  {for (fk3 in 0:1)
   {for (sk1 in 0:1)
    {for (sk2 in 0:1)
     {for (sk3 in 0:1)
      {for (fm1 in 0:1)
       {for (fm2 in 0:1)
        {for (sm1 in 0:1)
         {for (sm2 in 0:1)
          {for (tm1 in 0:1)
           {for (tm2 in 0:1)
            {fkristoff<-c(1,2,3,4,5,6)  
             #(i,k)(l,s)(t,j) in the text
             skristoff<-c(7,8,9,10,11,12) 
             #(a,c)(d,e)(f,b) in the text
             fmetric <- c(1,7,6,12) #(a,i)(b,j) in the text
             smetric <- c(2,8,3,9) #(c,k)(d,l) in the text
             tmetric <- c(4,10,5,11) #(e,s)(f,t) in the text

             if (fk1==1){fkristoff <- fkristoff[kristorder1]}
             if (fk2==1){fkristoff <- fkristoff[kristorder2]}
             if (fk3==1){fkristoff <- fkristoff[kristorder3]}
             if (sk1==1){skristoff <- skristoff[kristorder1]}
             if (sk2==1){skristoff <- skristoff[kristorder2]}
             if (sk3==1){skristoff <- skristoff[kristorder3]}
             if (fm1==1){fmetric <- fmetric[metricorder1]}
             if (fm2==1){fmetric <- fmetric[metricorder2]}
             if (sm1==1){smetric <- smetric[metricorder1]}
             if (sm2==1){smetric <- smetric[metricorder2]}
             if (tm1==1){tmetric <- tmetric[metricorder1]}
             if (tm2==1){tmetric <- tmetric[metricorder2]}
             kristoff <- c(fkristoff,skristoff)
             metric <- c(fmetric,smetric,tmetric)         
             # We put original input into a matrix OS1, OS2 
             OS1 <- matrix(kristoff,ncol=2, nrow=k, byrow=T)
             OS2 <- matrix(metric,ncol=2, nrow=k, byrow=T)
             #each row of OS1,OS2 is converted into 
             #the elements of list LOS1, LOS2
             LOS1 <- list()
             LOS2 <- list()

             for (i in 1:k) 
             {
              LOS1 <- c(LOS1,list(OS1[i,])) 
              LOS2 <- c(LOS2,list(OS2[i,]))
             }

             Counter <- c(fk1,fk2,fk3,sk1,sk2,sk3,
                                fm1,fm2,sm1,sm2,tm1,tm2)
             kekka <- rbind(kekka,c(Counter,nloop(6,LOS1,LOS2)))
            }
           }
          }
         }
        }
       }
      }
     }
    }
   }
  }
 }

kekka

result <- kekka[,13]
table(result)
\end{verbatim}

\end{document}